\theoremstyle{plain}
\newtheorem{theo}{Theorem}[section]
\newtheorem{rmk}[theo]{Remark}
\newtheorem{lem}[theo]{Lemma}
\newtheorem{co}[theo]{Corollary}
\newtheorem{prop}[theo]{Proposition}
\theoremstyle{definition}
\newcommand{\supp}{\text{\rm{supp}}}
\newcommand{\sign}{\text{\rm{sign}}}
\newcommand{\Reg}{\text{\rm{\textbf{Reg}}}}
\newcommand{\capp}{\text{\rm{cap}}}
\newcommand{\Co}{\text{\rm{Co}}}
\begin{document}

\begin{frontmatter}



\title{Asymptotics of multiple orthogonal polynomials for a
system of two measures supported on a starlike set}


\author{A. L\'{o}pez Garc\'{i}a \corref{cor1}}\cortext[cor1]{Current address: Department of Mathematics, Katholieke
Universiteit Leuven, Celestijnenlaan 200B, B-3001, Leuven,
Belgium. Telephone number: +32 16 327053. Fax number: +32 16
327998.}

\ead{abey.lopezgarcia@wis.kuleuven.be}

\address{Department of Mathematics, Vanderbilt University, Nashville, TN 37240, USA}

\begin{abstract}
For a system of two measures supported on a starlike set in the
complex plane, we study asymptotic properties of associated
multiple orthogonal polynomials $Q_{n}$ and their recurrence
coefficients. These measures are assumed to form a Nikishin-type
system, and the polynomials $Q_{n}$ satisfy a three-term
recurrence relation of order three with positive coefficients.
Under certain assumptions on the orthogonality measures, we prove
that the sequence of ratios $\{Q_{n+1}/Q_{n}\}$ has four different
periodic limits, and we describe these limits in terms of a
conformal representation of a compact Riemann surface. Several
relations are found involving these limiting functions and the
limiting values of the recurrence coefficients. We also study the
$n$th root asymptotic behavior and zero asymptotic distribution of
$Q_{n}$.
\end{abstract}

\begin{keyword}
Higher-order recurrences \sep Nikishin systems \sep ratio
asymptotics \sep $n$th root asymptotics \sep zero asymptotic
distribution.

\MSC 30C15 \sep 31A15 \sep 30E25.
\end{keyword}

\end{frontmatter}


\section{Introduction and statement of main results}
\label{sectionprimeraMOP}

This work was motivated by recent investigations of Aptekarev et
al. \cite{AptKalSaff} on asymptotic properties of monic
polynomials $Q_{n}$ generated by the higher-order three-term
recurrence relation
\begin{equation}\label{higherorderrecurrence}
z Q_{n}=Q_{n+1}+a_{n}Q_{n-p},\qquad n\geq p,\quad
p\in\mathbb{N},\quad a_{n}>0,
\end{equation}
with initial conditions
\begin{equation}\label{initialconditionsAKS}
Q_{j}(z)=z^{j},\qquad j=0,\ldots,p.
\end{equation}
In \cite{AptKalSaff}, strong asymptotics of $Q_{n}$ was studied
assuming that the recurrence coefficients satisfy
\begin{equation}\label{perturbationcond}
\sum_{n=p}^{\infty}|a_{n}-a|<\infty, \qquad a>0.
\end{equation}

An important element in the asymptotic analysis of the polynomials
$Q_{n}$ is the starlike set
\[
\widetilde{S}_{0}:=\bigcup_{k=0}^{p}\exp(2\pi i
k/(p+1))\,[0,\alpha], \qquad
\alpha:=[(p+1)/p^{p/(p+1)}]a^{1/(p+1)}.
\]
In fact, \cite[Theorem 7.2]{AptKalSaff} asserts that
\[
\lim_{n\rightarrow\infty}\frac{Q_{n}(z)}{w_{0}^{n}(z)}=F_{0}(z),
\qquad\mbox{uniformly on compact subsets of}\quad
\overline{\mathbb{C}}\setminus \widetilde{S}_{0},
\]
where $w_{0}(z)$ is the unique branch of the algebraic equation
$w^{p+1}-z w^{p}+a=0$ that is meromorphic at infinity and has an
analytic continuation in $\mathbb{C}\setminus\widetilde{S}_{0}$.

We remark that notable families of polynomials satisfy
(\ref{higherorderrecurrence}) in the constant coefficients case,
for example the classical monic Tchebyshev polynomials
$T_{n}(x)=2\cos n(\cos^{-1}(x/2))$ for the segment $[-2,2]$ ($p=1,
a_{n}=1$ for all $n$). More generally, it was shown by He and Saff
\cite{HeSaff} that the Faber polynomials associated with the
closed domain bounded by the $(p+1)$-cusped hypocycloid with
parametric equation
\[
z=\exp(i\,\theta)+\frac{1}{p} \exp(-p\, i\, \theta),\qquad 0\leq
\theta<2\pi,\quad p\geq 2,
\]
are also generated by the recurrence relation
(\ref{higherorderrecurrence}) with constant coefficients
$a_{n}=a=1/p$, and their zeros are contained in
$\widetilde{S}_{0}$. Many other properties of the zeros of these
Faber polynomials were obtained in \cite{HeSaff} and \cite{EV}.

Using operator theoretic techniques, in \cite{AptKalVanI} it was
proved that the polynomials $Q_{n}$ generated by
(\ref{higherorderrecurrence})--(\ref{initialconditionsAKS}) are in
fact multiple orthogonal polynomials with respect to a system of
$p$ measures supported on
\[
\bigcup_{k=0}^{p}\exp(2\pi i k/(p+1))\,[0,\infty).
\]
Moreover, if (\ref{perturbationcond}) holds then the orthogonality
measures have a specific hierarchy structure; they form a
Nikishin-type system (see Section 8 and Theorem 9.1 in
\cite{AptKalSaff}). This system is the system of spectral measures
of the banded Hessenberg operator (with only two nonzero
diagonals) associated with (\ref{higherorderrecurrence}).

In this paper we study, among other topics, ratio and $n$th root
asymptotics of multiple orthogonal polynomials associated with a
Nikishin-type system of two measures (p=2) supported on a star,
starting from assumptions on these measures of orthogonality. For
simplicity we assume that these measures are given by weights.
Under similar assumptions, analogous results can be obtained for
general measures. We start with the definition of the
orthogonality weights and the associated polynomials.

Let
\[
S_{0}:=\bigcup_{k=0}^{2}\exp(2\pi i k/3)\,[0,\alpha],\qquad
0<\alpha<\infty.
\]
We emphasize that $\alpha$ is here arbitrary. Assume that $s_{1}$
is a complex-valued function defined on $S_{0}$, such that
\[
s_{1}\geq 0 \quad \mbox{on} \quad (0,\alpha),\qquad s_{1}\in
L^{1}(0,\alpha),
\]
\begin{equation}\label{eq:symsigma1}
s_{1}(e^{\frac{2\pi i}{3}}z)=e^{\frac{4\pi i}{3}}s_{1}(z),\quad
z\in S_{0}\setminus\{0, \alpha, e^{\frac{2\pi i}{3}}\alpha,
e^{\frac{4\pi i}{3}}\alpha\}.
\end{equation}
Set
\[
f(z):=z^2\int_{-b}^{-a}\frac{s_{2}(t)}{z^{3}-t^{3}}\,dt,\qquad
0<a<b<\infty,
\]
where $s_{2}$ is a non-negative integrable function defined on
$[-b,-a]$. We assume of course that $s_{1}(t)\,dt,
s_{2}(t)\,dt\neq 0$.

Let $\{Q_{n}\}_{n=0}^{\infty}$ be the sequence of \textit{monic}
polynomials of lowest degree that satisfy the following
conditions:
\begin{equation}\label{eq:ortQn1}
\int_{S_{0}}Q_{2n}(t)\,t^{k}\,s_{1}(t)\,dt=0,\qquad
k=0,\ldots,n-1,
\end{equation}
\begin{equation}\label{eq:ortQnn}
\int_{S_{0}}Q_{2n}(t)\,t^{k}\,f(t)\,s_{1}(t)\,dt=0,\qquad
k=0,\ldots,n-1,
\end{equation}
\begin{equation}\label{eq:ortQn2}
\int_{S_{0}}Q_{2n+1}(t)\,t^{k}\,s_{1}(t)\,dt=0,\qquad
k=0,\ldots,n,
\end{equation}
\begin{equation}\label{eq:ortQn3}
\int_{S_{0}}Q_{2n+1}(t)\,t^{k}\,f(t)\,s_{1}(t)\,dt=0,\qquad
k=0,\ldots,n-1.
\end{equation}

These are the polynomials whose algebraic and asymptotic
properties we investigate.

\begin{prop}\label{propmaximal}
The degree of each polynomial $Q_{n}$ is maximal, i.e.
$\deg{Q_{n}}=n$. Moreover, if $n=3j$, then $Q_{n}$ has exactly $j$
simple zeros on the interval $(0,\alpha)$. If $n=3j+1$, then
$Q_{n}$ has a simple zero at the origin and $j$ simple zeros on
$(0,\alpha)$. Finally, if $n=3j+2$, then $Q_{n}$ has a double zero
at the origin and $j$ simple zeros on $(0,\alpha)$. The remaining
zeros of $Q_{n}$ are located on the rays $\exp(2\pi i
/3)\,(0,\alpha)$, $\exp(4\pi i /3)\,(0,\alpha)$, and are rotations
of the zeros on $(0,\alpha)$.
\end{prop}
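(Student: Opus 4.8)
The plan is to exploit the three\-fold rotational symmetry built into the weights and thereby reduce everything to a multiple orthogonality problem on the single interval $[0,\alpha^3]$. Write $\omega:=e^{2\pi i/3}$. First I would record that the minimal\-degree monic solution of the homogeneous conditions is unique: if two monic polynomials of the least degree $d$ satisfied all the relations, their difference would be a nonzero polynomial of degree $<d$ satisfying the same relations, contradicting minimality. Next, from $(\ref{eq:symsigma1})$ one has $s_{1}(\omega z)=\omega^{2}s_{1}(z)$, and since $(\omega z)^3=z^3$ the definition of $f$ gives $f(\omega z)=\omega^{2}f(z)$; moreover $\omega S_{0}=S_{0}$. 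Carrying out the change of variables $u=\omega t$ in each of $(\ref{eq:ortQn1})$–$(\ref{eq:ortQn3})$ and using these identities, I would show that $\omega^{-d}Q_{n}(\omega z)$ (with $d=\deg Q_{n}$) is again monic of degree $d$ and satisfies exactly the same conditions as $Q_{n}$. By uniqueness $Q_{n}(\omega z)=\omega^{d}Q_{n}(z)$, which forces every monomial of $Q_{n}$ to have exponent $\equiv d\pmod 3$; hence
\[
Q_{n}(z)=z^{r}\,P_{n}(z^{3}),\qquad r:=d\bmod 3,\quad 0\le r\le 2,
\]
for a polynomial $P_{n}$ with $\deg Q_{n}=r+3\deg P_{n}$. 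This already produces a zero at the origin of multiplicity exactly $r$ (none, simple, or double), so the origin part of the statement reduces to checking $r=n\bmod 3$.

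The second step is the reduction to the positive axis. Substituting $y=z^{3}$ I would verify, for each admissible $k$, that a condition $\int_{S_{0}}Q_{n}(t)t^{k}\,s_{1}(t)\,dt=0$ (respectively $\int_{S_{0}}Q_{n}(t)t^{k}\,f(t)s_{1}(t)\,dt=0$) is identically satisfied unless $r+k\equiv 0$ (respectively $r+k\equiv 1$) modulo $3$, in which case the three rays add coherently and it becomes
\[
\int_{0}^{\alpha^{3}}P_{n}(y)\,y^{\ell}\,d\sigma_{1}(y)=0,\qquad\text{resp.}\qquad \int_{0}^{\alpha^{3}}P_{n}(y)\,y^{\ell'}\,d\sigma_{2}(y)=0,
\]
where $d\sigma_{1}(y)=\tilde s_{1}(y)\,dy$ with $\tilde s_{1}(y)=y^{-2/3}s_{1}(y^{1/3})\ge 0$ and $\int_{0}^{\alpha^{3}}\tilde s_{1}(y)\,dy=3\int_{0}^{\alpha}s_{1}(x)\,dx<\infty$, and $d\sigma_{2}=\rho\,d\sigma_{1}$ with
\[
\rho(y)=\int_{-b}^{-a}\frac{s_{2}(\tau)}{y-\tau^{3}}\,d\tau=\int_{[-b^{3},-a^{3}]}\frac{d\check\sigma_{2}(u)}{y-u}.
\]
Thus $\rho$ is the Markov function of a positive measure on $[-b^{3},-a^{3}]$, which is disjoint from $[0,\alpha^{3}]$; in particular $\rho>0$ on $(0,\alpha^{3})$, so $\sigma_{1},\sigma_{2}$ are positive measures there and form a Nikishin system. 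A direct count of the conditions lying in the two relevant residue classes modulo $3$ identifies the associated multi\-index and shows that the number of genuine conditions equals $\lfloor n/3\rfloor$; together with the normality established below this forces $\deg P_{n}=\lfloor n/3\rfloor$, hence $r=n\bmod 3$ and $\deg Q_{n}=n$.

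For the zeros I would run the classical AT (Chebyshev\-system) argument for the reduced system. The genuine conditions read $\int_{0}^{\alpha^{3}}P_{n}\,\phi_{i}\,dy=0$, with each $\phi_{i}$ belonging to the family $\{y^{a}\tilde s_{1}\}\cup\{y^{b}\rho\tilde s_{1}\}$. Using $\tilde s_{1}>0$ together with the strict monotonicity of $\rho$ and of its shifts $y^{c}\rho$ (which follows from $\supp\check\sigma_{2}\subset(-\infty,0)$), I would verify that this family is a Chebyshev system on $(0,\alpha^{3})$: any nontrivial real combination of $m$ of them vanishes at most $m-1$ times there. The standard sign\-change argument then applies. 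If $P_{n}$ had fewer than $\deg P_{n}$ sign changes in $(0,\alpha^{3})$, one could interpolate those sign changes by a nontrivial combination $\Phi=\sum_{i}c_{i}\phi_{i}$, making $P_{n}\Phi$ one\-signed so that $\int_{0}^{\alpha^{3}}P_{n}\Phi\,dy\neq 0$, in contradiction with $\sum_{i}c_{i}\int P_{n}\phi_{i}\,dy=0$. Hence $P_{n}$ has exactly $\deg P_{n}=\lfloor n/3\rfloor$ simple zeros, all in $(0,\alpha^{3})$ and none at $y=0$. Transporting back through $y=z^{3}$, each such zero $y_{m}$ produces the three simple zeros $y_{m}^{1/3}\in(0,\alpha)$, $\omega y_{m}^{1/3}$, $\omega^{2}y_{m}^{1/3}$ of $Q_{n}$, which by the symmetry relation are rotations of one another; together with the factor $z^{r}$ this exhausts the $n$ zeros and yields precisely the claimed distribution.

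The main obstacle is the verification of the Chebyshev property of the reduced system (equivalently, the perfectness of the Nikishin system $(\sigma_{1},\sigma_{2})$), complicated here by the index shifts produced by the factor $z^{r}$ and by the $z^{2}$ appearing in $f$: the starting powers $a,b$ of the two blocks need not coincide, so the standard Nikishin AT statement cannot be quoted verbatim and one must re\-derive the Chebyshev property from the disjointness of $[0,\alpha^{3}]$ and $[-b^{3},-a^{3}]$ and the resulting strict monotonicity of $y^{c}\rho(y)$. The accompanying bookkeeping—matching residue classes to the correct multi\-index and confirming the counts $\lfloor n/3\rfloor$ and $n\bmod 3$—is routine but must be carried out carefully to pin down the exact multiplicity at the origin.
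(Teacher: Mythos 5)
Your architecture is the paper's own: the uniqueness-plus-rotation step producing $Q_{n}(z)=z^{r}P_{n}(z^{3})$ is Proposition \ref{ortint}, the reduction of the orthogonality relations to the two weights $\tilde s_{1}$ and (a power of $y$ times) $\rho\,\tilde s_{1}$ on $[0,\alpha^{3}]$ is exactly (\ref{eq:orttilde2n1})--(\ref{eq:orttilde2n3}), and the sign-change interpolation with a case analysis over the residue of $\deg Q_{n}$ mod $3$ is precisely how the paper concludes. So the question is only whether your one deferred ingredient — the Chebyshev property of the shifted family — can be supplied the way you indicate, and there is a genuine gap there.

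The crux, which you correctly isolate, is that $P_{1}(y)+y^{b}P_{2}(y)\rho(y)$ with $\deg P_{1}\le n_{1}-1$, $\deg P_{2}\le n_{2}-1$ has at most $n_{1}+n_{2}-1$ zeros on $(0,\infty)$; this is the paper's Lemma \ref{AT} (its $H_{1}$ is the case $b=1$, since $\sqrt[3]{t}\,f(\sqrt[3]{t})=t\,\widehat{\sigma}(t)$, while $H_{2}$ factors as $t(P_{1}+P_{2}\widehat{\sigma})$ and reduces to the classical statement). Your proposed source, ``strict monotonicity of $y^{c}\rho$,'' does not yield this: monotonicity controls only two-element combinations $c_{1}+c_{2}\,y^{c}\rho$, and there is no Rolle-type bootstrap from it to a combination in which many powers of $y$ multiply both blocks. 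Moreover the sharpness of the count is load-bearing: in the case $b=1$ the polynomial $yP_{2}$ multiplying the Markov function has degree $n_{2}$, so quoting the generic AT statement for $\{1,\widehat{\sigma}\}$ gives only the bound $n_{1}+n_{2}$ — one zero too many — and then your interpolant $\Phi$ could have an unprescribed sign change in $(0,\alpha^{3})$, destroying the conclusion that $P_{n}\Phi$ is one-signed. The paper closes exactly this by a complex-analytic argument, not a monotonicity one: interpolate the putative $n_{1}+n_{2}$ zeros by a polynomial $T$, note $H_{1}/T=O(z^{-n_{2}-1})$ at infinity (since $zP_{2}(z)\widehat{\sigma}(z)=O(z^{n_{2}-1})$), and convert that decay by contour integration into the moment equations $\int_{-b^{3}}^{-a^{3}}\tau^{\nu+1}P_{2}(\tau)\,s_{2}(\sqrt[3]{\tau})\,T(\tau)^{-1}\tau^{-2/3}\,d\tau=0$ for $0\le\nu\le n_{2}-1$, whose weight has constant sign because all zeros of $T$ lie in $(0,\infty)$ — forcing $n_{2}$ sign changes of $P_{2}$, a contradiction; the unbalanced case $n_{1}<n_{2}$ additionally needs the Krein--Nudelman decomposition $1/\widehat{\sigma}=l+\widehat{\mu}$. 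Until you supply an argument of this strength, the proof is incomplete at its central step; and note that the residue-class bookkeeping you call routine also leans on this lemma in both its $b=0$ and $b=1$ variants, since the wrong values of $\deg Q_{n}\bmod 3$ are eliminated by the same interpolation argument applied to $H_{2}$ and $H_{1}$ respectively.
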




\begin{prop}\label{proprecurrence}
The monic polynomials $Q_{n}$ satisfy the following three-term
recurrence relation
\begin{equation}\label{recurrence}
z Q_{n}=Q_{n+1}+a_{n}Q_{n-2},\qquad n\geq 2,\quad
a_{n}\in\mathbb{R},
\end{equation}
where
\begin{equation}\label{initialcond}
Q_{j}(z)=z^{j},\quad j=0,1,2.
\end{equation}
The coefficients $a_{n}$ are given by the formulas
\begin{equation}\label{intrepa2n}
a_{2n}=\frac{\int_{0}^{\alpha}t^{n}\,Q_{2n}(t)\,s_{1}(t)\,dt}
{\int_{0}^{\alpha}t^{n-1}\,Q_{2n-2}(t)\,s_{1}(t)\,dt},\qquad\qquad
a_{2n+1}=\frac{\int_{0}^{\alpha}t^{n}\,Q_{2n+1}(t)\,f(t)\,s_{1}(t)\,dt}
{\int_{0}^{\alpha}t^{n-1}\,Q_{2n-1}(t)\,f(t)\,s_{1}(t)\,dt}.
\end{equation}
Moreover, $a_{n}>0$ for all $n\geq 2$.
\end{prop}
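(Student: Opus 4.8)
The initial conditions $Q_j(z)=z^j$ for $j=0,1,2$ follow at once from Proposition \ref{propmaximal}: for these indices there are no zeros on $(0,\alpha)$, so $Q_0=1$, while $Q_1$ has a simple and $Q_2$ a double zero at the origin. For the recurrence itself, the first step is to record the rotational symmetry $Q_n(e^{2\pi i/3}z)=e^{2\pi i n/3}Q_n(z)$, which is immediate from the description of the zeros in Proposition \ref{propmaximal} (every zero of $Q_n$ off the origin occurs together with its two rotations by $e^{2\pi i/3}$, and the origin is a zero of order $n \bmod 3$). Setting $R_n:=zQ_n-Q_{n+1}$, both terms are monic of degree $n+1$, so $\deg R_n\leq n$; but the symmetry forces $R_n(e^{2\pi i/3}z)=e^{2\pi i(n+1)/3}R_n(z)$, so only monomials of degree $\equiv n+1\equiv n-2 \pmod 3$ occur in $R_n$. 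Since the largest such degree not exceeding $n$ is $n-2$, we conclude $\deg R_n\leq n-2$, which is the crucial gain provided by the symmetry.

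Next I would show that $R_n$ is a scalar multiple of $Q_{n-2}$ by verifying that it satisfies the orthogonality relations defining $Q_{n-2}$. Treating the even and odd cases separately, one substitutes $R_{2m}=zQ_{2m}-Q_{2m+1}$ (resp.\ $R_{2m+1}=zQ_{2m+1}-Q_{2m+2}$) into the defining integrals of $Q_{2m-2}$ (resp.\ $Q_{2m-1}$); the factor $z$ turns $t^k$ into $t^{k+1}$, and a direct comparison of the index ranges in (\ref{eq:ortQn1})--(\ref{eq:ortQn3}) shows that every required integral of $R_n$ against $t^k s_1$ and $t^k f s_1$ vanishes. Thus $R_n$ is a polynomial of degree $\leq n-2$ satisfying all the conditions characterizing $Q_{n-2}$. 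Because $\deg Q_{n-2}=n-2$ is the minimal degree of a nonzero solution of those conditions (Proposition \ref{propmaximal}), the space of solutions of degree $\leq n-2$ is one dimensional, so $R_n=a_nQ_{n-2}$ for a scalar $a_n$, which is exactly the recurrence (\ref{recurrence}).

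To obtain the formulas (\ref{intrepa2n}) I would integrate the recurrence against a well chosen weight over $S_0$ and exploit the symmetry. Using (\ref{eq:symsigma1}) together with $f(e^{2\pi i/3}z)=e^{4\pi i/3}f(z)$ (immediate since $(e^{2\pi i/3})^3=1$) and the symmetry of $Q_n$, each integral over $S_0$ collapses to three times the corresponding integral over $(0,\alpha)$ when a residue condition modulo $3$ is met, and vanishes otherwise. Integrating $zQ_{2n}=Q_{2n+1}+a_{2n}Q_{2n-2}$ against $t^{n-1}s_1$ over $S_0$, the term in $Q_{2n+1}$ drops out by (\ref{eq:ortQn2}) while the residue conditions make the two remaining integrals reduce to integrals over $(0,\alpha)$, giving the first formula in (\ref{intrepa2n}); integrating $zQ_{2n+1}=Q_{2n+2}+a_{2n+1}Q_{2n-1}$ against $t^{n-1}f s_1$, the term in $Q_{2n+2}$ drops out by (\ref{eq:ortQnn}), yielding the second formula.

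Finally, for positivity write the two formulas as $a_{2n}=h_{2n}/h_{2n-2}$ and $a_{2n+1}=g_{2n+1}/g_{2n-1}$, where $h_{2n}:=\int_0^\alpha t^nQ_{2n}(t)\,s_1(t)\,dt$ and $g_{2n+1}:=\int_0^\alpha t^nQ_{2n+1}(t)\,f(t)\,s_1(t)\,dt$; it then suffices to prove $h_{2n}>0$ and $g_{2n+1}>0$. Here I would pass to the variable $u=t^3$: writing $Q_n(t)=t^{\,n\bmod 3}\,\pi(t^3)$ with $\pi$ monic, and observing that $f(t)/t^2=\int_{-b}^{-a}s_2(\tau)\,(t^3-\tau^3)^{-1}\,d\tau$ becomes, after $u=t^3$ and $v=\tau^3$, the Cauchy transform $\int d\sigma_2(v)/(u-v)$ of a positive measure $\sigma_2$ on $[-b^3,-a^3]$, the reduced orthogonality relations identify the $\pi$'s as the type II multiple orthogonal polynomials of a genuine Nikishin system $\mathcal N(\mu_0,\sigma_2)$ on the disjoint real intervals $[0,\alpha^3]$ and $[-b^3,-a^3]$, with $d\mu_0=u^{-2/3}s_1(u^{1/3})\,du\geq 0$. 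Under this identification $h_{2n}$ and $g_{2n+1}$ are precisely the leading (first non orthogonal) moments of these polynomials. I expect the positivity of these moments to be the main obstacle: it does not follow from a single measure quadrature argument, but from the perfectness (AT property) of $\mathcal N(\mu_0,\sigma_2)$, by which each such leading moment is a ratio of strictly positive Gram determinants. Establishing this strict positivity of the relevant Hankel type determinants is the delicate step; once it is in hand, $a_n>0$ for all $n\geq 2$ is immediate.
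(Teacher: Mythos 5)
Your treatment of the recurrence itself and of the formulas (\ref{intrepa2n}) is correct, and it takes a genuinely different (and arguably cleaner) route than the paper. The paper expands $zQ_{2n}$ in the basis $\{Q_{j}\}_{j\le 2n+1}$ and kills the coefficients $b_{0},\ldots,b_{2n-3}$ pairwise by an induction that integrates the expansion against $s_{1}\,dt$ and $f\,s_{1}\,dt$, invoking the symmetry (\ref{sympropQnrev}) only to dispose of the top two coefficients; you instead use the symmetry once to force $\deg(zQ_{n}-Q_{n+1})\le n-2$ (with only exponents $\equiv n-2 \pmod 3$), then verify that $zQ_{n}-Q_{n+1}$ satisfies the orthogonality conditions defining $Q_{n-2}$ and conclude by the one-dimensionality of the solution space in degree $\le n-2$, which indeed follows from minimality together with $\deg Q_{n-2}=n-2$ (Proposition \ref{propmaximal}). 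Your index bookkeeping in both parities checks out, and your reduction of the $S_{0}$-integrals to $(0,\alpha)$-integrals via the symmetries of $Q_{n}$, $s_{1}$ and $f$ is exactly the computation the paper leaves implicit when it says the formulas ``follow directly'' from (\ref{recurrence}).

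The genuine gap is the positivity $a_{n}>0$. You correctly identify it as the hard step, but what you offer in its place is not a proof: the assertion that perfectness (the AT property) of the reduced Nikishin system makes $h_{2n}$ and $g_{2n+1}$ ``ratios of strictly positive Gram determinants'' is precisely what needs to be established. Perfectness yields only that the relevant moment determinants are \emph{nonzero}; extracting a definite sign requires an additional argument (a deformation/continuity argument, or a quadrature identity of the type below), and you supply none. This is why the paper defers positivity to Proposition \ref{proppositrec} and first builds the machinery of Sections \ref{sectionQnandzeros}--\ref{sectionsecondtypeQn2}: the functions $\Psi_{n}$, the count of their zeros (Proposition \ref{propzerosPsi1}), the polynomials $Q_{n,2}$, and the varying-measure orthogonality (\ref{eq:ortQ6lvar})--(\ref{eq:ortQ6l2var}). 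With those in hand, since $Q_{n,2}>0$ on $(0,\alpha)$, the even-index numerators become genuine squares, e.g. $\int_{0}^{\alpha}t^{3l}Q_{6l}(t)\,s_{1}(t)\,dt=\int_{0}^{\alpha}Q_{6l}^{2}(t)\,s_{1}(t)/Q_{6l,2}(t)\,dt>0$ (and similarly with an extra factor $t^{\pm 1}$ in the other residue classes), which settles $a_{2n}>0$; the odd coefficients are handled not through $g_{2n+1}$ at all, but by pushing the recurrence to the functions $\Psi_{n}$ on the second star and running a sign analysis based on (\ref{eq:rep4})--(\ref{eq:rep6}) and (\ref{eq:sign1}). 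Some substitute for this machinery is indispensable, and your proposal leaves exactly that step open.
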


Propositions \ref{propmaximal} and \ref{proprecurrence} are proved
in Section \ref{sectionQnandzeros}. Let
\begin{equation*}\label{defnPsin1}
\Psi_{n}(z):=\int_{S_{0}}\frac{Q_{n}(t)}{t-z}\,s_{1}(t)\,dt.
\end{equation*}
The functions $\Psi_{n}$ (usually called \textit{functions of
second type}) satisfy:
\begin{equation}\label{eq:BVPPsin1}
\left\{
\begin{array}{ll}
\Psi_{n}\in H(\overline{\mathbb{C}}\setminus S_{0}),\\
\\
\Psi_{2n}(z)=O(1/z^{n+1}),& z\rightarrow\infty,\\
\\
\Psi_{2n+1}(z)=O(1/z^{n+2}),& z\rightarrow\infty.
\end{array}
\right.
\end{equation}
It is important for our analysis to determine the exact number of
zeros of $\Psi_{n}$ outside $S_{0}$, and their location. The
following result, proved in Section \ref{sectionsecondtypeQn2},
gives the answers to these questions.

\begin{prop}\label{propzerosPsi1}
For each $j\in\{0,1,2,3,5\}$, the function $\Psi_{6l+j}$ has
exactly $3l$ simple zeros in $\mathbb{C}\setminus S_{0}$, of which
$l$ zeros are located in $(-b,-a)$, and the remaining $2l$ zeros
are rotations of these $l$ zeros by angles of $2\pi/3$ and
$4\pi/3$; $\Psi_{6l+j}$ has no other zeros in $\mathbb{C}\setminus
S_{0}$. The function $\Psi_{6l+4}$ has exactly $3l+3$ simple zeros
in $\mathbb{C}\setminus S_{0}$, of which $l+1$ zeros are located
in $(-b,-a)$, and the remaining $2l+2$ zeros are rotations of
these $l+1$ zeros by angles of $2\pi/3$ and $4\pi/3$;
$\Psi_{6l+4}$ has no other zeros in $\mathbb{C}\setminus S_{0}$.
\end{prop}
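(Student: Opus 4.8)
The plan is to exploit the threefold rotational symmetry of $\Psi_n$ to reduce the whole count to a single ray, to extract from the conditions (\ref{eq:ortQnn})/(\ref{eq:ortQn3}) an honest orthogonality relation for $\Psi_n$ on $(-b,-a)$ that produces the zeros as sign changes, and finally to pin down the exact number with a matching upper bound obtained after the substitution $w=z^3$.

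First I would record the symmetries. Writing $\omega:=e^{2\pi i/3}$, one checks from the recurrence (\ref{recurrence})--(\ref{initialcond}) (equivalently, from (\ref{eq:ortQn1})--(\ref{eq:ortQn3}) together with (\ref{eq:symsigma1}) and uniqueness of the monic solution) that $Q_n(\omega z)=\omega^{n}Q_n(z)$. Substituting $t\mapsto\omega t$ in the definition of $\Psi_n$ and invoking (\ref{eq:symsigma1}) then gives $\Psi_n(\omega z)=\omega^{n+2}\Psi_n(z)$, so the zero set of $\Psi_n$ in $\mathbb{C}\setminus S_0$ is invariant under multiplication by $\omega$ and its zeros occur in full orbits $\{z_0,\omega z_0,\omega^2 z_0\}$; it therefore suffices to count those on the ray through $(-b,-a)$. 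Splitting the defining integral over the three rays of $S_0$ and using $\overline{\omega}=\omega^2$ one finds that the resulting kernel is real for real negative argument, so $\Psi_n$ is real-valued and real-analytic on $(-b,-a)$, which is what makes a sign-change count meaningful.

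Next I would derive the orthogonality. Inserting $f(t)=t^{2}\int_{-b}^{-a}s_2(x)/(t^3-x^3)\,dx$ into (\ref{eq:ortQnn})/(\ref{eq:ortQn3}) and interchanging the order of integration reduces each condition to $\int_{-b}^{-a}s_2(x)\,I_k(x)\,dx=0$ with $I_k(x)=\int_{S_0}Q_n(t)t^{k+2}s_1(t)/(t^3-x^3)\,dt$. Expanding $1/(t^3-x^3)=\tfrac{1}{3x^2}\sum_{j=0}^{2}\omega^{j}/(t-\omega^{j}x)$, splitting $t^{k+2}/(t-\omega^j x)$ into a polynomial in $t$ plus $(\omega^j x)^{k+2}/(t-\omega^j x)$, and using $\Psi_n(\omega^j x)=\omega^{j(n+2)}\Psi_n(x)$, the sum over $j$ collapses through the identity $\sum_{j=0}^{2}\omega^{j\ell}$, which equals $3$ when $\ell\equiv0\pmod 3$ and $0$ otherwise: the polynomial remainders, expanded in the moments $\int_{S_0}Q_n t^{i}s_1$, are killed either by the vanishing low-order moments (from (\ref{eq:ortQn1})/(\ref{eq:ortQn2})) or because their surviving indices fall outside the summation range, while the main term survives precisely when $k\equiv 1-n\pmod 3$, where $I_k(x)=x^{k}\Psi_n(x)$. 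Hence $\int_{-b}^{-a}x^{k}\Psi_n(x)s_2(x)\,dx=0$ for every admissible $k\equiv 1-n\pmod 3$ in the range of (\ref{eq:ortQnn})/(\ref{eq:ortQn3}); counting these $k$ yields exactly $l$ of them for $n\equiv 0,1,2,3,5\pmod 6$ and $l+1$ for $n\equiv 4\pmod 6$. Since $s_2\ge 0$ and $\Psi_n$ is real on $(-b,-a)$, this forces at least that many sign changes, hence at least that many zeros, of $\Psi_n$ on $(-b,-a)$.

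Finally I would close the count via $w=z^3$. The symmetry permits writing $\Psi_n(z)=z^{(n+2)\bmod 3}g_n(z^3)$ and $Q_n(z)=z^{n\bmod 3}q_n(z^3)$, with $g_n$ holomorphic in $\overline{\mathbb{C}}\setminus[0,\alpha^3]$ and $q_n$ a polynomial, the images of $(-b,-a)$ and of $S_0$ being the disjoint intervals $[-b^3,-a^3]$ and $[0,\alpha^3]$. The orthogonality above becomes genuine orthogonality of $g_n$ to $1,w,\dots$ against a fixed-sign measure on $[-b^3,-a^3]$ (each $x^{k}$ with $k\equiv 1-n$ becomes a power $w^{\kappa}$ and the Jacobian keeps one sign), while (\ref{eq:BVPPsin1}) prescribes the vanishing order of $g_n$ at infinity. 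Thus $g_n$ is exactly a function of the second kind for a two-interval Nikishin system in the $w$-plane, and the sharp zero-counting for such functions shows it has no zeros in $\overline{\mathbb{C}}\setminus[0,\alpha^3]$ beyond the simple ones on $(-b^3,-a^3)$ already found; translating back through $w=z^3$ and the rotation by $\omega$ produces the totals $3l$ (resp. $3l+3$) and their stated distribution, together with simplicity. I expect the genuine obstacle to be precisely this last, sharp upper bound: the lower bound is soft, whereas excluding spurious zeros off $[0,\alpha^3]$ and proving simplicity requires the full strength of the (perfect) Nikishin/AT structure, i.e.\ the exact balance between the vanishing order of $g_n$ at infinity and the number of orthogonality conditions.
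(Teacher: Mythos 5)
Your first two steps are sound and essentially retrace the paper's own route: the rotational symmetry (your factor $\omega^{n+2}$ agrees with the paper's $e^{-\frac{2\pi i}{3}(1+2n)}$ in (\ref{sympropPsin1})), the Fubini-type computation converting (\ref{eq:ortQnn}) and (\ref{eq:ortQn3}) into orthogonality of $\Psi_n$ against powers of $t$ on $(-b,-a)$ (this is Propositions \ref{ortPsi1} and \ref{ortPsinorm} of the paper), and the resulting count of sign changes --- your congruence $k\equiv 1-n \pmod 3$ and your totals of $l$ conditions for $j\in\{0,1,2,3,5\}$ and $l+1$ for $j=4$ match Corollary \ref{numerocambiosdesignoPsi} exactly. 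This correctly yields the lower bound: at least $3l$ (resp.\ $3l+3$) zeros in $\mathbb{C}\setminus S_{0}$.

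The gap is your third step, which is where the whole content of the proposition lies --- and you concede as much when you call it ``the genuine obstacle.'' Asserting that $g_n$ ``is exactly a function of the second kind for a two-interval Nikishin system'' and that ``the sharp zero-counting for such functions'' finishes the job is not a proof: you neither state the theorem being invoked nor check that it covers the situation at hand, in which the measures in the $w$-plane change with the residue of $n$ modulo $6$ (weights $\tau^{-2/3}$, $1$, $\tau^{-1/3}$; cf.\ Proposition \ref{proportQdetailed}) and the multi-indices are the specific near-diagonal ones produced by the splitting (\ref{eq:ortQn1})--(\ref{eq:ortQn3}). Worse, such sharp upper bounds for Nikishin second-type functions are themselves established in the literature by precisely the argument you would need to write out, so the appeal is circular in spirit. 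The paper closes the count directly and self-containedly: if $\Psi_{6l+j}$ had a surplus zero in $\mathbb{C}\setminus S_0$ (counting multiplicity), one forms the real, rotationally symmetric polynomial $R_{6l+j}$ vanishing at all such zeros, so that $\Psi_{6l+j}/R_{6l+j}\in H(\overline{\mathbb{C}}\setminus S_0)$ with decay at infinity dictated by (\ref{eq:ordPsi6linf})--(\ref{eq:ordPsi6l1inf}) and $\deg R_{6l+j}\geq 3l+3$ (resp.\ $3l+6$); integrating $z^{\nu}\,\Psi_{6l+j}/R_{6l+j}$ over a contour surrounding $S_0$ and using Cauchy's theorem, Fubini, and Cauchy's formula converts that decay into the additional orthogonality relations $\int_{0}^{\alpha}t^{3k+\cdot}\,Q_{6l+j}(t)\,s_{1}(t)/R_{6l+j}(t)\,dt=0$, which force $Q_{6l+j}$ to have more sign changes on $(0,\alpha)$ than Proposition \ref{propmaximal} allows --- a contradiction. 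Simplicity and the localization on $S_1$ then come for free, since the total count with multiplicity equals the number of distinct sign-change points and their rotations. To complete your proposal you would have to either carry out this contradiction argument or quote and verify a precise varying-weight Nikishin statement; as written, the decisive assertion is assumed rather than proved.
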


\textbf{Notation:} Let $Q_{n,2}$ denote the \textit{monic}
polynomial whose zeros coincide with the zeros of $\Psi_{n}$ in
$\mathbb{C}\setminus S_{0}$.

The following result asserts that for consecutive values of $n$,
the zeros of $Q_{n}$ interlace, and the same is true for the zeros
of $Q_{n,2}$.

\begin{theo}\label{theointerlacing}
For every $n\geq 0$, the polynomials $Q_{n}$ and $Q_{n+1}$ do not
have any common zeros in $S_{0}\setminus\{0\}$. Moreover, there is
exactly one zero of $Q_{n+1}$ between two consecutive zeros of
$Q_{n}$ in $(0,\alpha)$. Conversely, there is exactly one zero of
$Q_{n}$ between two consecutive zeros of $Q_{n+1}$ in
$(0,\alpha)$.

Additionally, for every $n\geq 0$, the functions $\Psi_{n}$ and
$\Psi_{n+1}$ do not have any common zeros in $S_{1}$. There is
exactly one zero of $\Psi_{n+1}$ between two consecutive zeros of
$\Psi_{n}$ in $(-b,-a)$, and vice versa.
\end{theo}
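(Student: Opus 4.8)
The plan is to exploit the threefold symmetry to reduce both assertions to interlacing statements for real polynomial sequences on a single interval, and then to run a sign-change argument powered by the orthogonality rather than by the recurrence. First I would record that, by Proposition~\ref{propmaximal} together with \eqref{eq:symsigma1}, each $Q_n$ factors as $Q_n(t)=t^{\,r}\widehat{Q}_n(t^3)$ with $r=n\bmod 3$ and $\widehat{Q}_n$ a \emph{real} monic polynomial of degree $\lfloor n/3\rfloor$ whose zeros are exactly the cubes of the zeros of $Q_n$ on $(0,\alpha)$, all simple and lying in $(0,\alpha^3)$. Consequently a common zero of $Q_n$ and $Q_{n+1}$ in $S_0\setminus\{0\}$, or a zero of one strictly between two zeros of the other on a ray, corresponds under $x=t^3$ to the same phenomenon for $\widehat{Q}_n,\widehat{Q}_{n+1}$ on $(0,\alpha^3)$, while the rays $\exp(2\pi i/3)(0,\alpha)$ and $\exp(4\pi i/3)(0,\alpha)$ are then handled by rotation. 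The identical reduction applies to the $\Psi_n$: since $f(e^{2\pi i/3}z)=e^{4\pi i/3}f(z)$ (checked directly from the definition of $f$) and $s_1$ satisfies \eqref{eq:symsigma1}, the function $\Psi_n$ inherits a rotational symmetry, so by Proposition~\ref{propzerosPsi1} it suffices to treat its zeros on $(-b,-a)$.

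Next I would convert the orthogonality relations \eqref{eq:ortQn1}--\eqref{eq:ortQn3} into relations for $\widehat{Q}_n$ on $(0,\alpha^3)$. Splitting each integral over the three rays and averaging the phases via \eqref{eq:symsigma1} shows that, for a fixed measure, only the monomials $t^{\kappa}$ with $\kappa$ in one residue class $\bmod\,3$ contribute, and the surviving conditions take the form $\int_0^{\alpha^3}x^{\mu}\,\widehat{Q}_n(x)\,\widehat{s}_1(x)\,dx=0$ and $\int_0^{\alpha^3}x^{\nu}\,\widehat{Q}_n(x)\,\widehat{s}_2(x)\,dx=0$ for explicit ranges of $\mu,\nu$, where $\widehat{s}_1,\widehat{s}_2$ are the reduced weights (the two measures select two distinct residue classes). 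Thus $\widehat{Q}_n$ is a type II multiple orthogonal polynomial for a two-weight system on $(0,\alpha^3)$ that inherits the Nikishin/AT structure; the same computation turns the boundary conditions \eqref{eq:BVPPsin1} into orthogonality relations for the reduced $\Psi_n$ on $(-b,-a)$.

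With the problem placed on an interval, interlacing follows from three ingredients. (i) \emph{Mutual separation:} if $\widehat{Q}_{n+1}$ kept constant sign across two consecutive zeros $x_i<x_{i+1}$ of $\widehat{Q}_n$, I would use $\widehat{Q}_n(x)/[(x-x_i)(x-x_{i+1})]$, multiplied by an appropriate factor, as a test function of admissible degree whose integral against the relevant reduced weight is sign-definite, contradicting one of the orthogonality relations; the AT (Chebyshev) property of the reduced system guarantees no spurious sign changes, so $\widehat{Q}_{n+1}$ has a sign change in each gap of $\widehat{Q}_n$, and symmetrically $\widehat{Q}_n$ has one in each gap of $\widehat{Q}_{n+1}$. (ii) \emph{Exact count:} Proposition~\ref{propmaximal} (and Proposition~\ref{propzerosPsi1} for the $\Psi_n$) fixes the total number of zeros, so "at least one in every gap from both sides" combined with these totals forces exactly one, i.e.\ full interlacing. (iii) Simplicity of the zeros then makes the interlacing strict.

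The main obstacle is the one I have deliberately isolated: \emph{no common zeros}. Here the recurrence is genuinely weaker than in the classical tridiagonal case. Indeed \eqref{recurrence} with $a_n>0$ yields only that a common zero $x_0\neq0$ of $Q_n$ and $Q_{n+1}$ forces $Q_{n-2}(x_0)=0$; since the recurrence advances by two indices at a time, the resulting state $(Q_{n+1},Q_n,Q_{n-1})(x_0)=(0,0,Q_{n-1}(x_0))$ need not vanish, the coincidence is \emph{not} propagated down to $Q_0\equiv1$, and the recurrence alone gives no contradiction. I would therefore derive "no common zeros" from the orthogonality: a nontrivial common factor of $\widehat{Q}_n$ and $\widehat{Q}_{n+1}$ would, after division, produce a generalized polynomial $\sum_j P_j\,\widehat{s}_j$ with $\deg P_j$ too small to accommodate its forced sign changes, violating the Chebyshev bound of the AT system. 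Finally, since the $\Psi_n$ satisfy the \emph{same} third-order recurrence (integrating $t\,Q_n=Q_{n+1}+a_n Q_{n-2}$ against $s_1(t)/(t-z)\,dt$ gives $z\,\Psi_n=\Psi_{n+1}+a_n\Psi_{n-2}$, the constant term $\int_{S_0}Q_n\,s_1\,dt$ vanishing by orthogonality for $n\geq1$), the whole scheme---symmetry reduction, reduced orthogonality, mutual separation plus counting, and the AT argument excluding coincidences---transfers verbatim to the second assertion on $(-b,-a)$.
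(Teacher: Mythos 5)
Your symmetry reduction, the counting logic in steps (ii)--(iii), and especially your observation that the third-order recurrence \eqref{recurrence} cannot by itself exclude common zeros are all sound (the paper indeed never argues via the recurrence here). But your step (i) contains a genuine gap, and it is the heart of the theorem. With $R(x)=\widehat{Q}_n(x)/[(x-x_i)(x-x_{i+1})]$, the integrand $\widehat{Q}_{n+1}\,R\,\widehat{s}$ changes sign at \emph{every} zero of $\widehat{Q}_{n+1}$ and at every zero of $\widehat{Q}_n$ other than $x_i,x_{i+1}$, and nothing forces these sign changes to pair off --- unless you already know the zeros interlace, which is what you are trying to prove. Nor is the integral covered by the available orthogonality: $\deg R=\deg\widehat{Q}_n-2$, while the number of orthogonality conditions with respect to either single reduced weight is only about half the degree (e.g.\ $P_{6l}$ has degree $2l$ but only $l$ conditions per weight, cf.\ \eqref{eq:ortQ6lvar}--\eqref{eq:ortQ6l2var}); and if instead you build an AT test function $P_1+P_2\widehat{f}$ vanishing at prescribed sign changes, you merely reproduce the proof of Proposition~\ref{propmaximal}, which constrains the \emph{number} of zeros of each polynomial separately and says nothing about their relative position. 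The ``no common zeros'' sketch has the same defect: dividing out a common factor destroys the orthogonality relations, and you never exhibit the generalized polynomial whose degree count is violated. There is also a structural mismatch you gloss over: consecutive reduced polynomials are orthogonal with respect to \emph{different} weights (differing by factors $\tau^{\pm 1/3}$), so $\widehat{Q}_n$ and $\widehat{Q}_{n+1}$ are not two consecutive multiple orthogonal polynomials of one fixed AT system, and ``interlacing of consecutive MOPs'' cannot be invoked off the shelf.

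The idea your plan is missing is the pencil argument, which is exactly how the paper proceeds. Proposition~\ref{propsimplezeroscomb} shows that for every real $(A,B)\neq(0,0)$ the combinations $T_n=A\,zQ_n+B\,Q_{n+1}$ and $Y_n=A\,z\Psi_n+B\,\Psi_{n+1}$ have only simple zeros on $(0,\alpha)$, resp.\ $(-\infty,0)$; the extra factor $z$ is what reconciles the residue classes mod $3$, so that $Y_n$ has a coherent rotational symmetry and satisfies joint orthogonality conditions on $(-b,-a)$, and a multiple zero would force $Y_n/R_n$ (after dividing by the polynomial of its symmetric zeros) to decay too fast at infinity, yielding more orthogonality for $T_n$ than its degree permits. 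Granting this, both assertions are immediate: a common zero $x\in(0,\alpha)$ of $Q_n,Q_{n+1}$ gives, with $A=1$, $B=-xQ_n'(x)/Q_{n+1}'(x)$, a \emph{double} zero of $T_n$ at $x$, a contradiction; and for arbitrary $x$, taking $A=Q_{n+1}(x)/x$, $B=-Q_n(x)$ shows the Wronskian-type function
\[
L_n(x)=\frac{Q_{n+1}(x)Q_n(x)}{x}+Q_{n+1}(x)Q_n'(x)-Q_n(x)Q_{n+1}'(x)
\]
never vanishes, hence has constant sign on $(0,\alpha)$; evaluating $L_n$ at two consecutive zeros of $Q_n$ (where $L_n=Q_{n+1}Q_n'$, and $Q_n'$ alternates) forces a sign change of $Q_{n+1}$ in between, and symmetrically for zeros of $Q_{n+1}$, which together give \emph{exactly} one zero in each gap. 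The identical argument with $Y_n$ settles the statement for $\Psi_n,\Psi_{n+1}$ on $(-b,-a)$. To repair your proposal, replace step (i) and the common-zero step by this pencil/Wronskian mechanism (or an equivalent quasi-orthogonality argument); the direct sign-definite-integral route does not close.
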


Theorem \ref{theointerlacing} is proved in Section
\ref{sectioninterlacing}. We can determine exactly how the zeros
of $Q_{n}$ interlace, thanks to the fact that the recurrence
coefficients $a_{n}$ are all positive (see Proposition
\ref{interlacingzerosQn} in Section \ref{sectioninterlacing}).



We next describe the ratio asymptotics of the polynomials $Q_{n}$
and $Q_{n,2}$, and the limiting behavior of the recurrence
coefficients $a_{n}$. By Propositions \ref{propmaximal} and
\ref{propzerosPsi1}, for some polynomials $P_{n}$ and $P_{n,2}$ we
may write:
\begin{equation}\label{defn:P3k}
Q_{3k}(\tau)=P_{3k}(\tau^3),\qquad Q_{3k+1}(\tau)=\tau
P_{3k+1}(\tau^3),\qquad Q_{3k+2}(\tau)=\tau^2 P_{3k+2}(\tau^3),
\end{equation}
\begin{equation}\label{defn:Pn2}
Q_{n,2}(\tau)=P_{n,2}(\tau^3).
\end{equation}
Let
\begin{equation}\label{defnSrho}
S_{1}:=\bigcup_{k=0}^{2}\exp(2\pi i k/3)\,[-b,-a].
\end{equation}

\begin{theo}\label{introd:ratioasymptotics}
Assume that $s_{1}>0$ a.e. on $[0,\alpha]$ and $s_{2}>0$ a.e. on
$[-b,-a]$. Then, for each $i\in\{0,\ldots,5\}$, the following
limits hold:
\begin{equation}\label{eq:ratioasymp1}
\lim_{k\rightarrow\infty}\frac{P_{6k+i+1}(z)}{P_{6k+i}(z)}
=\widetilde{F}_{1}^{(i)}(z),\qquad
z\in\mathbb{C}\setminus[0,\alpha^{3}],
\end{equation}
\begin{equation}\label{eq:ratioasymp2}
\lim_{k\rightarrow\infty}\frac{P_{6k+i+1,2}(z)}{P_{6k+i,2}(z)}
=\widetilde{F}_{2}^{(i)}(z),\qquad
z\in\mathbb{C}\setminus[-a^{3},-b^{3}],
\end{equation}
where convergence is uniform on compact subsets of the indicated
regions. Moreover $($cf. $($$\ref{recurrence}$$)$$)$,
\begin{equation}\label{ratiocoeff}
\lim_{k\rightarrow\infty}a_{6k+i}=\left\{
\begin{array}{lll}
-C_{1}^{(i)}, &  \mbox{for} &i\in\{0,1,3,4\},\\
\\
-C_{0}^{(i)}, & \mbox{for} &i\in\{2,5\},\\
\end{array}
\right.
\end{equation}
where
\begin{equation}\label{Laurentexpansion}
\widetilde{F}_{1}^{(i)}(z)=\left\{
\begin{array}{lll}
1+C_{1}^{(i)}/z+O(1/z^2), &  \mbox{for} &i\in\{0,1,3,4\},\\
\\
z+C_{0}^{(i)}+O(1/z), &  \mbox{for} &i\in\{2,5\},\\
\end{array}
\right.
\end{equation}
is the Laurent expansion at $\infty$ of $\widetilde{F}_{1}^{(i)}$.
Consequently, the limits
\begin{equation}\label{eq:ratioQ1}
\lim_{k\rightarrow\infty}\frac{Q_{6k+i+1}(z)}{Q_{6k+i}(z)}=z\,\widetilde{F}_{1}^{(i)}(z^{3}),\qquad
z\in\mathbb{C}\setminus S_{0},\quad i\in\{0,1,3,4\},
\end{equation}
\begin{equation}\label{eq:ratioQ2}
\lim_{k\rightarrow\infty}\frac{Q_{6k+i+1}(z)}{Q_{6k+i}(z)}=\frac{\widetilde{F}_{1}^{(i)}(z^{3})}{z^2},\qquad
z\in\mathbb{C}\setminus S_{0},\quad i\in\{2,5\},
\end{equation}
\begin{equation}\label{eq:ratioQ3}
\lim_{k\rightarrow\infty}\frac{Q_{6k+i+1,2}(z)}{Q_{6k+i,2}(z)}
=\widetilde{F}_{2}^{(i)}(z^{3}),\qquad z\in\mathbb{C}\setminus
S_{1},\quad i\in\{0,\ldots,5\},
\end{equation}
hold uniformly on compact subsets of the indicated regions.
\end{theo}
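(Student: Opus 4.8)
The plan is to prove this as a Rakhmanov-type theorem for the Nikishin system, identifying all the limit functions with branches of the conformal map of the genus-zero, three-sheeted Riemann surface $\mathcal{R}$ over $\overline{\mathbb{C}}$ whose cuts are $[0,\alpha^{3}]$, joining sheets $0$ and $1$, and $[-b^{3},-a^{3}]$, joining sheets $1$ and $2$ (that $\mathcal{R}$ has genus zero follows from Riemann--Hurwitz, so a single-valued conformal map $\psi\colon\mathcal{R}\to\overline{\mathbb{C}}$ exists, and its branches will furnish the $\widetilde{F}_{j}^{(i)}$). By (\ref{defn:P3k})--(\ref{defn:Pn2}) together with Propositions \ref{propmaximal} and \ref{propzerosPsi1}, the zeros of $P_{n}$ lie in $[0,\alpha^{3}]$ and those of $P_{n,2}$ in $[-b^{3},-a^{3}]$, so the ratios in (\ref{eq:ratioasymp1}) and (\ref{eq:ratioasymp2}) are holomorphic and non-vanishing off the respective cuts, with behavior at $\infty$ dictated by the degree count. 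I would argue in three stages: (i) compactness; (ii) passage to a limiting boundary value problem along each residue class $\mathrm{mod}\ 6$; (iii) unique solvability of that problem, which upgrades subsequential convergence to full convergence.

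First I would establish that $\{a_{n}\}$ is bounded above and below away from zero. The upper bound follows from the integral representations (\ref{intrepa2n}) together with the confinement of the zeros to a fixed compact set, while the lower bound $\liminf a_{n}>0$ would be extracted from the interlacing of zeros (Theorem \ref{theointerlacing}) and the positivity in Proposition \ref{proprecurrence}, exploiting the hypothesis $s_{1},s_{2}>0$ a.e. Dividing the recurrence (\ref{recurrence}) by $Q_{n}$ gives $Q_{n+1}/Q_{n}=z-a_{n}\,(Q_{n-2}/Q_{n-1})(Q_{n-1}/Q_{n})$, so the boundedness of $\{a_{n}\}$ makes the families $\{P_{n+1}/P_{n}\}$ and $\{P_{n+1,2}/P_{n,2}\}$ uniformly bounded on compact subsets of the complements of the cuts; being non-vanishing there, they are normal. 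I then fix a residue class $\mathrm{mod}\ 6$ and pass to a subsequence along which all six ratio sequences and all six coefficient sequences $\{a_{6k+i}\}$ converge.

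Next I would derive the boundary value problem the subsequential limits satisfy. Passing to the limit in the chained recurrence relation yields, for each residue class, a closed algebraic system coupling the six first-kind limits, the six second-kind limits, and the numbers $\lim a_{6k+i}$; the coupling between the first- and second-kind functions is the Nikishin structure encoded through the functions $\Psi_{n}$. Across each cut this translates into multiplicative jump conditions relating the boundary values of the limit functions on the two sides, and here the Rakhmanov-type hypothesis $s_{1},s_{2}>0$ a.e. is decisive: it forces these jump ratios to be the unimodular ones associated with the equilibrium problem, so that the limit functions, lifted to $\mathcal{R}$, extend to a single meromorphic function with only a simple pole and a simple zero at the two points over $\infty$.

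Finally, since $\mathcal{R}$ has genus zero, such a meromorphic function is unique up to a multiplicative constant, which is pinned down by the normalization implicit in (\ref{Laurentexpansion}); this identifies each limit with a prescribed branch (or product of branches) of $\psi$ independently of the chosen subsequence. Therefore every subsequence produces the same limit, the full sequences converge along each residue class to $\widetilde{F}_{1}^{(i)}$ and $\widetilde{F}_{2}^{(i)}$, and reading off the Laurent coefficients gives (\ref{Laurentexpansion}) and the coefficient limits (\ref{ratiocoeff}); the statements (\ref{eq:ratioQ1})--(\ref{eq:ratioQ3}) for the $Q_{n}$ then follow immediately from (\ref{defn:P3k})--(\ref{defn:Pn2}). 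The main obstacle is precisely stage (iii) combined with the lower bound in stage (i): showing that every subsequential limit solves the \emph{same} uniquely solvable problem on $\mathcal{R}$ is the Rakhmanov-type heart of the argument, and it is what rules out oscillation and forces convergence of the recurrence coefficients themselves.
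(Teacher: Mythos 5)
Your skeleton matches the paper's (normality of the ratio families, Montel, a boundary value problem for subsequential limits, uniqueness upgrading to full convergence, Laurent coefficients giving (\ref{ratiocoeff}), and (\ref{eq:ratioQ1})--(\ref{eq:ratioQ3}) by substitution via (\ref{defn:P3k})--(\ref{defn:Pn2})), but your stage (ii) has a genuine gap. Passing to the limit in the recurrence only yields algebraic identities among the $\widetilde{F}_{1}^{(i)}$ and the numbers $a^{(i)}$, valid in the domain of holomorphy (these are exactly the paper's (\ref{eq:lima1})--(\ref{eq:lima2})); it cannot produce the coupled boundary conditions on the two cuts, because the recurrence never sees the second interval. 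The coupling in the paper comes from the varying-measure orthogonality of Propositions \ref{prop3} and \ref{prop4} --- $P_{n}$ is orthogonal on $\Delta_{1}$ with varying part $1/P_{n,2}$, and $P_{n,2}$ is orthogonal on $\Delta_{2}$ with varying part $|h_{n}|/|P_{n}|$ --- combined with Lemma \ref{lemmaweakstar} (convergence of $h_{n}$, via strongly admissible systems) and the ratio/relative asymptotics theorems for varying measures of \cite{BCL}. That is precisely where the hypothesis $s_{1},s_{2}>0$ a.e.\ does its work: it yields the Szeg\H{o}-function representations (\ref{tildeF10})--(\ref{tildeF24}) and hence the moduli boundary relations (\ref{eq:boundaryvaluetilde1})--(\ref{eq:boundaryvaluetilde6}). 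Your sentence that positivity a.e.\ ``forces these jump ratios to be the unimodular ones'' asserts the conclusion of this machinery without supplying any mechanism; also note these are conditions on $|F_{1}^{(i)}|$, $|F_{2}^{(i)}|$, not holomorphic jump relations.

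Your stage (iii) rests on a structural claim that is false in this setting. The limits do not lift to a single meromorphic function on $\mathcal{R}$ with one simple zero and one simple pole over infinity, whose branches furnish the $\widetilde{F}_{j}^{(i)}$: there are six limits per family on a three-sheeted surface, and Theorem \ref{theoRiemannsurfacerepresentation} shows that, e.g., $\widetilde{F}_{1}^{(0)}=(a^{(0)}-a^{(3)})/(a^{(0)}\widetilde{\psi}_{0}-a^{(3)})$ --- a M\"obius transform of one branch with coefficients depending on the unknown limit values $a^{(0)},a^{(3)}$, not a branch or product of branches of $\psi$. Only certain quotients such as $F_{1}^{(0)}/F_{1}^{(3)}$ and $F_{2}^{(0)}/F_{2}^{(3)}$ are branch products (Proposition \ref{proprelationGRiemann}). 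Consequently the genus-zero uniqueness of a degree-one meromorphic function cannot by itself identify the limits: the constants $a^{(i)}$ and $\omega_{1}^{(l)}$ would remain undetermined, and your normalization argument does not close the system. The paper instead proves uniqueness potential-theoretically (Proposition \ref{propuniqueness}): the differences $u_{1},u_{2}$ of logarithms of moduli of two subsequential limits are bounded harmonic functions solving the homogeneous system $2u_{1}-u_{2}=0$ on $\Delta_{1}$, $-u_{1}+2u_{2}=0$ on $\Delta_{2}$, and Lemma \ref{lemaaux} (injectivity of $2I-T$, from \cite{AptLopRocha} and \cite{Apt}) forces $u_{1}\equiv u_{2}\equiv 0$; the Riemann-surface identification is carried out only afterwards, in Section \ref{Riemannsurfacesection}. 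A minor further point: the lower bound $\liminf a_{n}>0$ you posit in stage (i) is neither needed for normality (Theorem \ref{theointerlacing} alone bounds the ratio families on compact sets) nor available a priori --- the paper obtains $a^{(i)}>0$ only after the ratio asymptotics, by contradiction from the boundary value equations in the proof of Proposition \ref{introd:relation}.
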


We also describe in Proposition \ref{propratioasymporthonPsi}
(Section \ref{sectionratio}) the ratio asymptotic behavior of the
functions of second type $\Psi_{n}$, as well as the ratio
asymptotic behavior of the polynomials $p_{n}, p_{n,2}$ defined in
$(\ref{eq:definitionpn})$ (these polynomials are ``orthonormal
versions" of the polynomials $P_{n}, P_{n,2}$ defined in
(\ref{defn:P3k})--(\ref{defn:Pn2}), see Proposition
\ref{orthonormalitypn}) and their leading coefficients.

Several relations can be established among the limiting functions
$\widetilde{F}_{1}^{(i)}, \widetilde{F}_{2}^{(i)}$, and the
limiting values of the recurrence coefficients (see also the
boundary value properties described in Proposition \ref{prop5}).

Let us define
\[
a^{(i)}:=\lim_{k\rightarrow\infty}a_{6k+i},\qquad 0\leq i\leq 5.
\]

\begin{prop}\label{introd:relation}
The following relations among the functions
$\widetilde{F}_{j}^{(i)}$ are valid:
\begin{equation}\label{eq:relfunc11}
\widetilde{F}_{1}^{(2)}(z)=z\widetilde{F}_{1}^{(0)}(z),\qquad
\widetilde{F}_{1}^{(5)}(z)=z\widetilde{F}_{1}^{(3)}(z),
\end{equation}
\begin{equation}\label{eq:relfunc12}
\widetilde{F}_{1}^{(0)}\widetilde{F}_{1}^{(1)}=\widetilde{F}_{1}^{(3)}\widetilde{F}_{1}^{(4)},
\qquad\widetilde{F}_{1}^{(1)}\widetilde{F}_{1}^{(2)}=\widetilde{F}_{1}^{(4)}\widetilde{F}_{1}^{(5)},
\qquad\widetilde{F}_{1}^{(2)}\widetilde{F}_{1}^{(3)}=\widetilde{F}_{1}^{(5)}\widetilde{F}_{1}^{(0)},
\end{equation}
\begin{equation}\label{eq:relfunc13}
\frac{1-\widetilde{F}_{1}^{(3)}}{1-\widetilde{F}_{1}^{(0)}}
=\frac{a^{(3)}}{a^{(0)}},\qquad
\frac{1-\widetilde{F}_{1}^{(4)}}{1-\widetilde{F}_{1}^{(1)}}
=\frac{a^{(4)}}{a^{(1)}},\qquad
\frac{z-\widetilde{F}_{1}^{(5)}(z)}{z-\widetilde{F}_{1}^{(2)}(z)}
=\frac{a^{(5)}}{a^{(2)}},
\end{equation}
\begin{equation}\label{eq:relfunc21}
\widetilde{F}_{2}^{(0)}=\widetilde{F}_{2}^{(2)},\qquad
\widetilde{F}_{2}^{(3)}=\widetilde{F}_{2}^{(5)},
\end{equation}
\begin{equation}\label{eq:relfunc22}
\widetilde{F}_{2}^{(0)}\widetilde{F}_{2}^{(1)}=\widetilde{F}_{2}^{(3)}\widetilde{F}_{2}^{(4)},
\qquad\widetilde{F}_{2}^{(1)}\widetilde{F}_{2}^{(2)}=\widetilde{F}_{2}^{(4)}\widetilde{F}_{2}^{(5)},
\qquad\widetilde{F}_{2}^{(2)}\widetilde{F}_{2}^{(3)}=\widetilde{F}_{2}^{(5)}\widetilde{F}_{2}^{(0)}.
\end{equation}
Furthermore, the functions $\widetilde{F}_{1}^{(i)}$,
$i\in\{0,\ldots,5\}$, are all distinct, and the functions
$\widetilde{F}_{2}^{(i)}$, $i\in\{0,1,3,4\}$, are also distinct.

For every $i\in\{0,\ldots,5\}$, $a^{(i)}>0$, and the following
relations hold:
\begin{equation}\label{eq:rellimitcoeff}
a^{(0)}=a^{(2)},\qquad a^{(3)}=a^{(5)},\qquad
a^{(0)}+a^{(1)}=a^{(3)}+a^{(4)}.
\end{equation}
The following inequalities also hold:
\[
a^{(0)}\neq a^{(3)},\quad a^{(0)}\neq a^{(4)},\quad a^{(1)}\neq
a^{(3)},\quad a^{(1)}\neq a^{(4)}.
\]
\end{prop}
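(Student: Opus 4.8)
The plan is to combine three ingredients already at my disposal: the recurrence (\ref{recurrence}), the ratio asymptotics of Theorem \ref{introd:ratioasymptotics}, and the conformal/boundary value description of the limiting functions recorded in Proposition \ref{prop5}. First I would convert (\ref{recurrence}) into recurrences for the reduced polynomials $P_{n}$. Substituting (\ref{defn:P3k}) into $zQ_{n}=Q_{n+1}+a_{n}Q_{n-2}$ and matching powers of $\tau$, one gets $P_{n}=P_{n+1}+a_{n}P_{n-2}$ when $n\equiv0,1\pmod3$ and $zP_{n}=P_{n+1}+a_{n}P_{n-2}$ when $n\equiv2\pmod3$. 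Dividing by $P_{6k+i}$, letting $k\to\infty$, and using (\ref{eq:ratioasymp1}) together with $\lim_{k}P_{6k+i-2}/P_{6k+i}=(\widetilde{F}_{1}^{(i-1)}\widetilde{F}_{1}^{(i-2)})^{-1}$ (indices mod $6$), I obtain for each $i$ the algebraic identity
\begin{equation*}
\lambda_{i}=\widetilde{F}_{1}^{(i)}+\frac{a^{(i)}}{\widetilde{F}_{1}^{(i-1)}\,\widetilde{F}_{1}^{(i-2)}},\qquad
\lambda_{i}=\begin{cases} z, & i\in\{2,5\},\\ 1, & i\in\{0,1,3,4\}.\end{cases}
\end{equation*}
The same scheme applied to the functions of second type (which satisfy the same recurrence $z\Psi_{n}=\Psi_{n+1}+a_{n}\Psi_{n-2}$, since $\int_{S_{0}}Q_{n}s_{1}=0$ for $n\geq2$) and to their zero polynomials $P_{n,2}$ via (\ref{defn:Pn2}), Proposition \ref{propzerosPsi1} and (\ref{eq:ratioasymp2}), yields the analogous system for the $\widetilde{F}_{2}^{(i)}$.

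The crux, and the step I expect to be the main obstacle, is establishing the ``seed'' identities (\ref{eq:relfunc11}) and (\ref{eq:relfunc21}), which do \emph{not} follow from the recurrence alone (the system above leaves them as genuine constraints). I would prove them from the conformal representation underlying Proposition \ref{prop5}: the six functions $\widetilde{F}_{1}^{(i)}$, and likewise the $\widetilde{F}_{2}^{(i)}$, are boundary values of branches of a single function on the three-sheeted Riemann surface, and (\ref{eq:relfunc11}), (\ref{eq:relfunc21}) express the identification of the branches indexed by $i$ and $i+2$, the factor $z$ in (\ref{eq:relfunc11}) reflecting the different behaviour of the two branches at $z=0$ (endpoint of the cut) and at $z=\infty$. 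Concretely, I would verify that $z\widetilde{F}_{1}^{(0)}$ and $\widetilde{F}_{1}^{(2)}$ solve one and the same boundary value problem of Proposition \ref{prop5} on $\mathbb{C}\setminus[0,\alpha^{3}]$ (same multiplicative jump across the cut, same normalization at $\infty$ and at $0$), whence they coincide by uniqueness; similarly for the pair $(5,3)$ and for $\widetilde{F}_{2}$, where no factor $z$ appears owing to the different bookkeeping of the powers of $\tau$ in $\Psi_{n}$.

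Once (\ref{eq:relfunc11}) is available, the remainder is algebra on the system above. Writing $G_{i}:=\widetilde{F}_{1}^{(i)}\widetilde{F}_{1}^{(i+1)}$, relation (\ref{eq:relfunc11}) gives $G_{1}=zG_{0}$, $G_{4}=zG_{3}$, and at once $G_{2}=G_{5}$. Using the identity for $i=0$ (with $G_{4}=zG_{3}$) and for $i=2$ (with $\widetilde{F}_{1}^{(2)}=z\widetilde{F}_{1}^{(0)}$) I get $a^{(0)}=z(1-\widetilde{F}_{1}^{(0)})G_{3}$ and $a^{(2)}=z(1-\widetilde{F}_{1}^{(0)})G_{0}$; dividing, the \emph{constant} $a^{(0)}/a^{(2)}$ equals the function $G_{3}/G_{0}$, which is therefore constant and, evaluated at $\infty$ where $\widetilde{F}_{1}^{(i)}\to1$ for $i\in\{0,1,3,4\}$, equals $1$. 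This produces $G_{0}=G_{3}$ and $a^{(0)}=a^{(2)}$ simultaneously; the symmetric computation with $i=3,5$ gives $a^{(3)}=a^{(5)}$, and then $G_{1}=zG_{0}=zG_{3}=G_{4}$, completing (\ref{eq:relfunc12}) and the first two equalities in (\ref{eq:rellimitcoeff}). Dividing the $i=0$ and $i=3$ identities (and their analogues for the other two pairs) now yields (\ref{eq:relfunc13}), while comparing the $1/z$-coefficients in $\widetilde{F}_{1}^{(0)}\widetilde{F}_{1}^{(1)}=\widetilde{F}_{1}^{(3)}\widetilde{F}_{1}^{(4)}$ through the expansions (\ref{Laurentexpansion}) gives $a^{(0)}+a^{(1)}=a^{(3)}+a^{(4)}$. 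The relations (\ref{eq:relfunc22}) follow from (\ref{eq:relfunc21}) in exactly the same way, and $a^{(i)}>0$ follows from $a_{n}>0$ together with the nondegeneracy of the limiting functions.

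Finally, for distinctness and the inequalities, the leading behaviour in (\ref{Laurentexpansion}) separates the indices $\{2,5\}$ (growth $z$) from $\{0,1,3,4\}$ (growth $1$), and within $\{2,5\}$ distinctness reduces via (\ref{eq:relfunc11}) to $\widetilde{F}_{1}^{(0)}\neq\widetilde{F}_{1}^{(3)}$; the degree pattern of the $P_{n,2}$ plays the analogous role for $\widetilde{F}_{2}$. The remaining separations encode that the limits are genuinely $6$-periodic rather than $3$-periodic: if one had $a^{(0)}=a^{(3)}$, then $a^{(0)}+a^{(1)}=a^{(3)}+a^{(4)}$ would force $a^{(1)}=a^{(4)}$ and, with $a^{(0)}=a^{(2)}$, $a^{(3)}=a^{(5)}$, collapse all coefficients to period three, hence the branches to $\widetilde{F}_{1}^{(i)}=\widetilde{F}_{1}^{(i+3)}$. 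I would rule this out from the conformal representation of Proposition \ref{prop5}, where the even- and odd-indexed branches are distinct functions on the Riemann surface precisely because of the geometry $0<a<b$ and $0<\alpha<\infty$ of the two intervals. This geometric nondegeneracy is the one place where more than formal manipulation of the recurrence and the Laurent expansions is required.
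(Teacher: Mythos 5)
Your algebraic core is sound and essentially reproduces the paper's own manipulations: the recurrence-derived identities $\lambda_i=\widetilde{F}_1^{(i)}+a^{(i)}/(\widetilde{F}_1^{(i-1)}\widetilde{F}_1^{(i-2)})$ are exactly the paper's (\ref{eq:lima1})--(\ref{eq:lima2}), and your derivation of (\ref{eq:relfunc12}), (\ref{eq:relfunc13}) and (\ref{eq:rellimitcoeff}) from the seed identities matches the published proof. But there are three genuine gaps. First, your claim that ``the same scheme \ldots yields the analogous system for the $\widetilde{F}_2^{(i)}$'' is false as stated: the polynomials $P_{n,2}$ (zero polynomials of $\Psi_n$) do \emph{not} satisfy the recurrence (\ref{recurrence}); only $Q_n$ and $\Psi_n$ do, and the ratio asymptotics of $\Psi_n$ (Proposition \ref{propratioasymporthonPsi}, (\ref{ratioPsi1})--(\ref{ratioPsi2})) mix $\widetilde{F}_1^{(i)}$, $\widetilde{F}_2^{(i)}$ and the constants $\omega_1^{(i)}$, producing the coupled relations (\ref{eq:rela0F1F2})--(\ref{eq:rela5F1F2}) rather than a pure system for $\widetilde{F}_2$. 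Consequently your route to (\ref{eq:relfunc22}) collapses; the paper instead obtains (\ref{eq:relfunc22}) from (\ref{eq:relfunc12}) together with the boundary value equations (\ref{eq:boundaryvalue1})--(\ref{eq:boundaryvalue3}), taking absolute values on the cut and using analytic continuation. Second, for the seed identities (\ref{eq:relfunc11}), (\ref{eq:relfunc21}) your primary appeal to ``the conformal representation underlying Proposition \ref{prop5}'' is circular: the Riemann surface representation is Theorem \ref{theoRiemannsurfacerepresentation}, proved \emph{after} and \emph{using} this proposition. Your fallback (same boundary value problem, then uniqueness) is indeed the paper's strategy, but the uniqueness is not free: one must treat the pair $u_1=\log|F_1^{(2)}|-\log|z\,F_1^{(0)}|$, $u_2=\log|F_2^{(2)}|-\log|F_2^{(0)}|$ \emph{jointly} as a bounded harmonic solution of the homogeneous coupled system and invoke Lemma \ref{lemaaux} (injectivity of $2I-T$, as in Proposition \ref{propuniqueness}); moreover the boundedness of $u_1$ near $0$ and $\infty$ is nontrivial because of the extra factor $z$, and the paper verifies it by an explicit Szeg\H{o}-function computation. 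Checking the two families separately, as your wording suggests, does not work, since each boundary value equation of Proposition \ref{prop5} couples $F_1^{(l)}$ with $F_2^{(l)}$.

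Third, your treatment of distinctness and of the inequalities is inadequate. The Laurent behaviour (\ref{LaurentexpansionFinfinity}) separates $\{2,5\}$ from $\{0,1,3,4\}$ but says nothing \emph{within} $\{0,1,3,4\}$, where all functions behave like $1+O(1/z)$; the paper proves, e.g., $\widetilde{F}_1^{(0)}\neq\widetilde{F}_1^{(1)}$ by showing equality would force $\widetilde{F}_2^{(1)}/\widetilde{F}_2^{(0)}=c/\tau$ on $(0,\alpha^3]$, contradicting holomorphy of that ratio at $0$, and $\widetilde{F}_1^{(0)}\neq\widetilde{F}_1^{(3)}$ via (\ref{eq:boundaryvaluetilde1}) and the differing expansions at infinity. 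Your period-collapse argument, even granting a proof of the non-collapse, covers only $a^{(0)}\neq a^{(3)}$ and $a^{(1)}\neq a^{(4)}$: if instead $a^{(1)}=a^{(3)}$, the sum rule gives $a^{(0)}=a^{(4)}$, which is a ``crossed'' coincidence, not a period-$3$ collapse, so your reasoning does not touch the remaining two inequalities. The paper needs a separate argument here: it derives the identity (\ref{eq:aux15}) and shows that $a^{(1)}=a^{(3)}$ would force $\widetilde{F}_1^{(4)}=\widetilde{F}_1^{(0)}$, contradicting the distinctness already established; then $a^{(0)}\neq a^{(4)}$ follows from (\ref{eq:rellimitcoeff}). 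Finally, the appeal to ``geometric nondegeneracy'' of the two intervals is not a proof and, again, presupposes the Riemann-surface picture that logically comes later.
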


In fact, we will show that $a^{(4)}>a^{(1)}$, and therefore
(\ref{eq:rellimitcoeff}) implies $a^{(0)}>a^{(3)}$ (see Remark
\ref{remarksobreaes}). Theorem \ref{introd:ratioasymptotics} and
Proposition \ref{introd:relation} are proved in Section
\ref{sectionratio}.

We next describe the limiting functions $\widetilde{F}_{j}^{(i)}$
in terms of a conformal representation of a compact Riemann
surface. Let $\Delta_{1}:=[0,\alpha^3]$, and
$\Delta_{2}:=[-b^3,-a^3]$. Consider the three-sheeted Riemann
surface
\[
\mathcal{R}=\overline{\mathcal{R}_{0}\cup \mathcal{R}_{1}\cup
\mathcal{R}_{2}},
\]
formed by the consecutively ``glued" sheets
\begin{equation}\label{definitionRiemannsurfaceR}
\mathcal{R}_{0}:=\overline{\mathbb{C}}\setminus\Delta_{1}, \qquad
\mathcal{R}_{1}:=\overline{\mathbb{C}}\setminus(\Delta_{1}\cup\Delta_{2}),
\qquad \mathcal{R}_{2}:=\overline{\mathbb{C}}\setminus\Delta_{2}.
\end{equation}
Since $\mathcal{R}$ has genus zero, there exists a unique
conformal representation $\psi$ of $\mathcal{R}$ onto
$\overline{\mathbb{C}}$ satisfying:
\begin{equation}\label{eq:divisorcond1}
\psi(z)=-2 z/a^3+O(1),\qquad
z\rightarrow\infty^{(1)}\in\mathcal{R}_{1},
\end{equation}
\begin{equation}\label{eq:divisorcond2}
\psi(z)=B/z+O(1/z^2),\qquad
z\rightarrow\infty^{(2)}\in\mathcal{R}_{2},\quad B\neq 0.
\end{equation}
Here $-a^3$ is the right endpoint of $\Delta_{2}$. Let
$\{\psi_{k}\}_{k=0}^2$ denote the branches of $\psi$.

Finally, given an arbitrary function $H(z)$ that has in a
neighborhood of infinity a Laurent expansion of the form $H(z)=C
z^{k}+O(z^{k-1})$, $C\neq 0$, $k\in\mathbb{Z}$, we denote by
$\widetilde{H}$ the function $H/C$.

\begin{theo}\label{theoRiemannsurfacerepresentation}
The following representations are valid:
\[
\widetilde{F}_{1}^{(0)}
=\frac{a^{(0)}-a^{(3)}}{a^{(0)}\widetilde{\psi}_{0}-a^{(3)}},\qquad
\widetilde{F}_{1}^{(1)}=\frac{(a^{(4)}-a^{(1)})\,\widetilde{\psi}_{0}}
{a^{(4)}\widetilde{\psi}_{0}-a^{(1)}},\qquad
\widetilde{F}_{1}^{(2)}(z)=\frac{z(a^{(0)}-a^{(3)})}{a^{(0)}\widetilde{\psi}_{0}(z)-a^{(3)}},
\]
\[
\widetilde{F}_{1}^{(3)}=\frac{(a^{(0)}-a^{(3)})
\,\widetilde{\psi}_{0}}{a^{(0)}\widetilde{\psi}_{0}-a^{(3)}},
\qquad \widetilde{F}_{1}^{(4)}=\frac{a^{(4)}-a^{(1)}}
{a^{(4)}\widetilde{\psi}_{0}-a^{(1)}},\qquad
\widetilde{F}_{1}^{(5)}(z)=\frac{z(a^{(0)}-a^{(3)})
\,\widetilde{\psi}_{0}(z)}{a^{(0)}\widetilde{\psi}_{0}(z)-a^{(3)}},
\]
\[
\widetilde{F}_{2}^{(0)}(z)=\widetilde{F}_{2}^{(2)}(z)
=\frac{a^{(0)}(a^{(0)}-a^{(3)})\,z\,\widetilde{\psi}_{0}(z)\,\widetilde{\psi}_{2}(z)}{(a^{(0)}-a^{(3)}\omega_{1}^{(3)}
\widetilde{\psi}_{0}(z)\,\widetilde{\psi}_{2}(z)/\omega_{1}^{(0)})(a^{(0)}\widetilde{\psi}_{0}(z)-a^{(3)})},
\]
\[
\widetilde{F}_{2}^{(3)}(z)=\widetilde{F}_{2}^{(5)}(z)
=\frac{a^{(0)}(a^{(0)}-a^{(3)})\,z\,\widetilde{\psi}_{0}(z)}{(a^{(0)}-a^{(3)}\omega_{1}^{(3)}
\widetilde{\psi}_{0}(z)\,\widetilde{\psi}_{2}(z)/\omega_{1}^{(0)})(a^{(0)}\widetilde{\psi}_{0}(z)-a^{(3)})},
\]
\[
\widetilde{F}_{2}^{(1)}
=\frac{a^{(4)}-a^{(1)}}{\widetilde{\psi}_{2}(a^{(4)}\widetilde{\psi}_{0}-a^{(1)})
(\widetilde{\psi}_{1}-(\omega_{1}^{(1)}-1)/\omega_{1}^{(4)})},\qquad\widetilde{F}_{2}^{(4)}
=\frac{a^{(4)}-a^{(1)}}{(a^{(4)}\widetilde{\psi}_{0}-a^{(1)})
(\widetilde{\psi}_{1}-(\omega_{1}^{(1)}-1)/\omega_{1}^{(4)})}.
\]
The constants $\omega_{1}^{(l)}$ are the reciprocals of the
right-hand sides in the boundary value equations
$(\ref{eq:boundaryvaluetilde1})$--$(\ref{eq:boundaryvaluetilde3})$.
They can be written in terms of the limiting values $a^{(i)}$ as
follows:
\[
\omega_{1}^{(0)}=\omega_{1}^{(2)}=\frac{a^{(4)}-a^{(1)}}{a^{(0)}a^{(4)}},
\qquad\omega_{1}^{(3)}=\omega_{1}^{(5)}=\frac{a^{(0)}}{a^{(0)}-a^{(3)}},
\qquad\omega_{1}^{(1)}=\frac{a^{(4)}}{a^{(4)}-a^{(1)}},\qquad
\omega_{1}^{(4)}=\frac{a^{(0)}-a^{(3)}}{(a^{(0)})^2}.
\]
\end{theo}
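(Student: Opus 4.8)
The plan is to identify each limiting function $\widetilde{F}_j^{(i)}$ as the unique solution of a scalar boundary value problem and then to verify that the stated rational expressions in the branches $\widetilde{\psi}_k$ solve exactly that problem. From Theorem~\ref{introd:ratioasymptotics} we read off the domain of analyticity and the behavior at infinity of each $\widetilde{F}_j^{(i)}$: by $(\ref{Laurentexpansion})$, $\widetilde{F}_1^{(i)}$ is analytic and nonvanishing in $\overline{\mathbb{C}}\setminus\Delta_1$ with $\widetilde{F}_1^{(i)}(\infty)=1$ for $i\in\{0,1,3,4\}$ and $\widetilde{F}_1^{(i)}(z)=z+O(1)$ for $i\in\{2,5\}$, while $\widetilde{F}_2^{(i)}$ is analytic in $\overline{\mathbb{C}}\setminus\Delta_2$. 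The multiplicative jump relations across $\Delta_1$ and $\Delta_2$ that couple these functions are supplied by the boundary value properties of Proposition~\ref{prop5}, and they mirror precisely the way the sheets $\mathcal{R}_0,\mathcal{R}_1,\mathcal{R}_2$ are glued. On the other side, since $\psi$ is single-valued on $\mathcal{R}$ its branches satisfy $\psi_{0,\pm}=\psi_{1,\mp}$ on $\Delta_1$ and $\psi_{1,\pm}=\psi_{2,\mp}$ on $\Delta_2$; normalizing by the leading coefficients in $(\ref{eq:divisorcond1})$--$(\ref{eq:divisorcond2})$ turns these into the multiplicative boundary relations $(\ref{eq:boundaryvaluetilde1})$--$(\ref{eq:boundaryvaluetilde3})$ for $\widetilde{\psi}_0,\widetilde{\psi}_1,\widetilde{\psi}_2$, whose right-hand sides define the constants $\omega_1^{(l)}$.

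I would treat the $\widetilde{F}_1^{(i)}$ first, since they involve only the single branch $\widetilde{\psi}_0$, which is analytic and nonvanishing in $\overline{\mathbb{C}}\setminus\Delta_1$ with $\widetilde{\psi}_0(\infty)=1$. The M\"obius expression $\frac{a^{(0)}-a^{(3)}}{a^{(0)}\widetilde{\psi}_0-a^{(3)}}$ is analytic and nonvanishing in $\overline{\mathbb{C}}\setminus\Delta_1$ (checking that the denominator does not vanish there, which follows from the univalence of $\psi$ and the inequality $a^{(0)}\neq a^{(3)}$ of Proposition~\ref{introd:relation}), equals $1$ at infinity, and---by the boundary relation of $\widetilde{\psi}_0$ on $\Delta_1$---has exactly the jump forced on $\widetilde{F}_1^{(0)}$; hence the two coincide. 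The remaining cases require almost no new work: $(\ref{eq:relfunc11})$ produces the factor $z$ relating $i\in\{2,5\}$ to $i\in\{0,3\}$, the analogous M\"obius form with $a^{(4)},a^{(1)}$ gives $\widetilde{F}_1^{(1)}$, and $(\ref{eq:relfunc13})$ then yields $\widetilde{F}_1^{(3)},\widetilde{F}_1^{(4)}$. Comparing the $1/z$-coefficients of the candidates with the $C_1^{(i)}$ of $(\ref{Laurentexpansion})$ and with $(\ref{ratiocoeff})$ confirms every normalization.

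The functions $\widetilde{F}_2^{(i)}$ are the substantial part, and here all three sheets are needed. Although $\widetilde{F}_2^{(i)}$ is analytic off $\Delta_2$ alone, it is coupled through the boundary relations on both $\Delta_1$ and $\Delta_2$, so its representation must be assembled from $\widetilde{\psi}_0,\widetilde{\psi}_1,\widetilde{\psi}_2$ in such a way that the unwanted jump across $\Delta_1$ cancels identically while the prescribed jump across $\Delta_2$ survives. This is exactly what dictates the appearance of the product $\widetilde{\psi}_0\widetilde{\psi}_2$ (arranged inside the factor $a^{(0)}-a^{(3)}\omega_1^{(3)}\widetilde{\psi}_0\widetilde{\psi}_2/\omega_1^{(0)}$ so that the $\Delta_1$-jump drops out) and of the factor $\widetilde{\psi}_1-(\omega_1^{(1)}-1)/\omega_1^{(4)}$ (which carries the $\Delta_2$-behavior). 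I would verify, case by case, that the resulting divisor---the location of the single admissible pole and the absence of any spurious zero or pole off the cuts---and the value at $\infty^{(2)}$ agree with what Theorem~\ref{introd:ratioasymptotics} imposes on $\widetilde{F}_2^{(i)}$, using $(\ref{eq:relfunc21})$--$(\ref{eq:relfunc22})$ to reduce to $i\in\{0,1,3,4\}$. Matching leading coefficients at infinity in these computations is what pins down the explicit values of $\omega_1^{(l)}$ in terms of the $a^{(i)}$.

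The uniqueness that legitimizes each of these identifications rests on $\mathcal{R}$ having genus zero: a function with prescribed multiplicative boundary values on $\Delta_1$ and $\Delta_2$ and a prescribed leading term at infinity is determined up to one multiplicative constant (the homogeneous boundary value problem has only constant solutions on a genus-zero surface), and that constant is fixed by the normalization convention $\widetilde{H}=H/C$. The step I expect to be the main obstacle is precisely the bookkeeping in the $\widetilde{F}_2$ construction: showing simultaneously that no unwanted zero or pole of the candidate survives off the cuts and that the jumps on $\Delta_1$ and $\Delta_2$ come out correctly. It is this double constraint that forces the particular rational combinations of $\widetilde{\psi}_0,\widetilde{\psi}_1,\widetilde{\psi}_2$ and the stated formulas for the $\omega_1^{(l)}$.
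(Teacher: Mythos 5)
Your overall strategy---characterize each limit function by a boundary value problem and verify that the stated rational combinations of the branches solve it, with uniqueness supplied by the genus-zero structure---is the same uniqueness machinery the paper uses (Lemma \ref{lemaaux} via Proposition \ref{propuniqueness}), but your proposal is missing the one input that actually produces the constants, and this gap is not cosmetic. You claim the explicit values of the $\omega_1^{(l)}$ are ``pinned down by matching leading coefficients at infinity,'' but that mechanism is demonstrably blind to them: in the candidate for $\widetilde{F}_2^{(3)}$ the ratio $\omega_1^{(3)}/\omega_1^{(0)}$ enters only through the term $a^{(3)}\omega_1^{(3)}\widetilde{\psi}_0\widetilde{\psi}_2/\omega_1^{(0)}$, and since $\widetilde{\psi}_2(z)=1/z+O(1/z^2)$ this term vanishes at infinity, so the candidate satisfies $\widetilde{F}_2^{(3)}(z)=z+O(1)$ for \emph{every} value of that ratio; likewise $(\omega_1^{(1)}-1)/\omega_1^{(4)}$ merely shifts $\widetilde{\psi}_1=z+O(1)$ and leaves the normalized leading term of $\widetilde{F}_2^{(4)}$ unchanged. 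The convention $\widetilde{H}=H/C$ fixes only the leading coefficient, not the $O(1)$ term, so nothing at infinity sees the $\omega$'s. The paper obtains them from an ingredient your plan never uses: the functions of second type satisfy the same recurrence $z\Psi_n=\Psi_{n+1}+a_n\Psi_{n-2}$, and passing to the limit via Proposition \ref{propratioasymporthonPsi} yields the functional equations (\ref{eq:rela0F1F2})--(\ref{eq:rela5F1F2}) linking $a^{(i)}$, $\widetilde{F}_1^{(i)}$, $\widetilde{F}_2^{(i)}$, and $\omega_1^{(i)}$; isolating $\widetilde{F}_2^{(3)}$ and evaluating at infinity gives (\ref{relomegasanda1}), and similarly (\ref{relomegasanda2}) and (\ref{eq:relationomega3aes}), which together with Corollary \ref{relationomegas} produce simultaneously the $\widetilde{F}_2^{(i)}$ representations and the $a$-expressions for the $\omega_1^{(l)}$---including the cross-block identity $\omega_1^{(0)}=(a^{(4)}-a^{(1)})/(a^{(0)}a^{(4)})$, which no boundary value verification carried out at a single index $i$ can generate.

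There is also a misreading of the boundary structure that undermines your treatment of $\widetilde{F}_1^{(0)}$. Equations (\ref{eq:boundaryvaluetilde1})--(\ref{eq:boundaryvaluetilde3}) are not jump relations for $\widetilde{\psi}_0,\widetilde{\psi}_1,\widetilde{\psi}_2$ obtained by normalizing the sheet gluing; they are modulus-type conditions of the form $|\widetilde{F}_1^{(l)}|^2\cdot(\mbox{weight})/\widetilde{F}_2^{(l)}=\mbox{const}$ coupling the \emph{pair} $(\widetilde{F}_1^{(l)},\widetilde{F}_2^{(l)})$, with the $\omega$'s defined through Szeg\H{o} functions in (\ref{defnomega1l})--(\ref{defnomega2lmasmas}). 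Consequently there is no standalone ``jump forced on $\widetilde{F}_1^{(0)}$'' against which your M\"obius candidate can be tested: the boundary datum for $\widetilde{F}_1^{(0)}$ involves the unknown $\widetilde{F}_2^{(0)}$. The paper circumvents exactly this by forming the ratios $G_j^{(0,3)}=F_j^{(0)}/F_j^{(3)}$, for which dividing the $l=0$ by the $l=3$ conditions eliminates the weights and unknowns and yields the closed system (\ref{eq:auxRiemann1})--(\ref{eq:auxRiemann2}); it identifies $G_1^{(0,3)}=\widetilde{\psi}_1\widetilde{\psi}_2=1/\widetilde{\psi}_0$ and $G_2^{(0,3)}=\widetilde{\psi}_2$ using the branch symmetries (\ref{eq:symmetrybranches}) and $\psi_0\psi_1\psi_2\equiv C$, and only \emph{then} splits the ratios into individual functions via the scalar relations (\ref{eq:relfunc13}) coming from the recurrence coefficients. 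Your invocation of (\ref{eq:relfunc13}) is in the right spirit, but it has to come before the identification of $\widetilde{F}_1^{(0)}$, not after: by itself, $\widetilde{F}_1^{(0)}$ is not characterized by any well-posed boundary value problem your proposal sets up.
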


Using Theorem 3.1 from \cite{LPRY}, we can easily describe the
cubic algebraic equation solved by $\psi$. The coefficients of
this equation can be computed exclusively in terms of the
endpoints of the intervals $\Delta_{1}$ and $\Delta_{2}$.

\begin{prop}\label{algebraicequation}
Let
\begin{equation}\label{defnlambdamu}
\lambda:=\frac{2b^3}{a^3}-1,\qquad \mu:=\frac{2\alpha^3}{a^3}+1,
\end{equation}
and let $\beta$ and $\gamma$ be the unique solutions of the
algebraic system
\[
\left\{
\begin{array}{l}
2(\beta+\gamma)(3-\beta\gamma-\beta-\gamma)(3-\beta\gamma+\beta+\gamma)
+(\lambda-\mu)(\beta-\gamma)^3=0,\\
(\lambda+\mu)^2(\beta-\gamma)^6
=4(3+\beta\gamma)^3(1-\beta\gamma)(2+\beta+\gamma)(2-\beta-\gamma),
\end{array}
\right.
\]
satisfying the conditions $-1<\gamma<\beta<1$. Then $w=\psi(z)$ is
the solution of the cubic equation
\begin{equation}\label{algebraicequationenunciado}
w^3+\Big[\frac{2z}{a^3}+1+\frac{3+h+\Theta_{2}-\Theta_{1}}{H(\beta)}\Big]w^2
+\Big[\frac{4z}{a^3H(\beta)}+\frac{2}{H(\beta)}+\frac{2+2h+\Theta_{2}-3\Theta_{1}}{H(\beta)^2}\Big]w
-\frac{2\Theta_{1}}{H(\beta)^3}=0,
\end{equation}
where
\[
H(z)=h+z+\frac{\Theta_{1}z}{1-z}+\frac{\Theta_{2}z}{1+z},\qquad
h=\frac{1}{4}(\beta+\gamma)\Big(2\beta\gamma-\frac{(\beta-\gamma)^2}{1-\beta\gamma}\Big),
\]
\[
\Theta_{1}=\frac{1}{4}(1-c)(1-d)(1-\beta)(1-\gamma),\qquad
\Theta_{2}=\frac{1}{4}(1+c)(1+d)(1+\beta)(1+\gamma),
\]
$c$ and $d$ are the solutions of the equation
\[
x^2+(\beta+\gamma)\,x+\frac{(\beta-\gamma)^2}{1-\beta\gamma}-3=0,
\]
satisfying $c<-1, d>1$.
\end{prop}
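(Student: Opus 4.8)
The plan is to reduce the determination of $\psi$ to the general result \cite[Theorem 3.1]{LPRY}, which gives the cubic algebraic equation satisfied by the conformal map of a three-sheeted genus-zero surface with two branch cuts, provided the cuts occupy a normalized position. Since $\mathcal{R}$ is exactly such a surface and its conformal type is unaffected by an affine change of the base variable $z$, and since the divisor conditions $(\ref{eq:divisorcond1})$--$(\ref{eq:divisorcond2})$ (a single simple pole over $\infty^{(1)}$ with prescribed leading coefficient, and a single simple zero over $\infty^{(2)}$) determine $\psi$ uniquely among conformal bijections of $\mathcal{R}$ onto $\overline{\mathbb{C}}$, the strategy is to normalize the cuts, apply \cite{LPRY}, and transport the resulting equation back to the $z$-variable.

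First I would apply the affine map $x=T(z):=2z/a^3+1$. It sends the right endpoint $-a^3$ of $\Delta_{2}$ to $-1$ and the left endpoint $0$ of $\Delta_{1}$ to $1$, while $T(-b^3)=-\lambda$ and $T(\alpha^3)=\mu$, with $\lambda,\mu$ as in $(\ref{defnlambdamu})$; hence the two cuts become $[-\lambda,-1]$ and $[1,\mu]$, the symmetric normalized configuration to which \cite[Theorem 3.1]{LPRY} applies, and $\lambda,\mu$ are precisely the data feeding that theorem. Under $T$ the prescribed leading term of $\psi$ at $\infty^{(1)}$ transforms as $-2z/a^3=1-x$, so condition $(\ref{eq:divisorcond1})$ becomes the requirement that the normalized map behave like $-x$ at the point over $x=\infty$ lying on the middle sheet $\mathcal{R}_{1}$, matching the normalization built into \cite{LPRY}.

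Next I would invoke \cite[Theorem 3.1]{LPRY} for the cuts $[-\lambda,-1]$ and $[1,\mu]$. That result supplies the auxiliary roots $c,d$ of the displayed quadratic (selected by $c<-1<1<d$) and then $\beta,\gamma$ (the solution of the two-equation algebraic system with $-1<\gamma<\beta<1$): these four numbers are the $w$-images of the four branch points of $\mathcal{R}$, i.e.\ the critical points of the inverse map, and the two equations for $\beta,\gamma$ are exactly the conditions placing the two free critical values at $-\lambda$ and $\mu$. Substituting $c,d,\beta,\gamma$ into $h,\Theta_{1},\Theta_{2}$ and $H$ produces the cubic for $w=\psi$ in the variable $x$. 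Observe that this cubic is \emph{linear} in $x$, so it encodes the inverse map $x=x(w)$ as a rational function of $w$ of degree three, consistent with $\psi$ being three-sheeted; solving for $x$ and replacing $x$ by $2z/a^3+1$ reproduces the coefficients in $(\ref{algebraicequationenunciado})$, since the $z$-dependent parts collapse to $2z/a^3+1=x$ and $4z/(a^3H(\beta))+2/H(\beta)=2x/H(\beta)$, while the constant term $-2\Theta_{1}/H(\beta)^3$ carries no $z$.

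The main obstacle I anticipate is the \emph{matching of normalizations}. A conformal map of a genus-zero surface is unique only up to post-composition with a M\"obius transformation of the $w$-sphere, so the canonical map of \cite{LPRY} and our $\psi$ a priori differ by such a transformation, and I must rule out any residual factor. Concretely, from the rational form $x=x(w)$ the three points over $x=\infty$ are the pole at $w=\infty$ and the two zeros $w=0$, $w=-2/H(\beta)$ of the denominator $w^2+2w/H(\beta)$. I would verify that $\infty^{(1)}\mapsto\infty$ with $x\sim -w$ (so that $\psi=-2z/a^3+O(1)$ holds with the correct leading coefficient $-2/a^3$), that $\infty^{(2)}\mapsto 0$, at which $\psi$ has a simple zero with nonzero coefficient, matching $(\ref{eq:divisorcond2})$ with $B\neq0$, and that $\infty^{(0)}\mapsto -2/H(\beta)$. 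Once this placement and the two leading coefficients are confirmed, the divisor conditions $(\ref{eq:divisorcond1})$--$(\ref{eq:divisorcond2})$ pin down $\psi$ with no remaining freedom, and $(\ref{algebraicequationenunciado})$ follows by direct substitution.
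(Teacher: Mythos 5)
Your overall strategy coincides with the paper's: normalize the two cuts by an affine change of the base variable, invoke \cite[Theorem 3.1]{LPRY}, transport the resulting cubic back to the $z$-variable, and pin $\psi$ down through the divisor conditions (\ref{eq:divisorcond1})--(\ref{eq:divisorcond2}). However, there is a concrete gap at the normalization step, and it is exactly the point where the paper has to do extra work that your proposal skips.

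You choose $x=T(z)=2z/a^3+1$, which sends $\Delta_{1}\to[1,\mu]$ and $\Delta_{2}\to[-\lambda,-1]$, and under which $\psi$ behaves like $-x+O(1)$ at $\infty^{(1)}$ and keeps its zero at $\infty^{(2)}$, the point at infinity of the sheet carrying the \emph{negative} cut. You assert that this ``matches the normalization built into \cite{LPRY}.'' The paper's proof shows the opposite convention is the operative one: it uses the reflected substitution $x=-2z/a^3-1$ (equivalently, it evaluates $\psi$ at $-\tfrac{a^3}{2}(1+x)$), so that $\Delta_{1}\to[-\mu,-1]$ (joining sheets $0$ and $1$) and $\Delta_{2}\to[1,\lambda]$ (joining sheets $1$ and $2$), the normalized map grows like $+x$ at $\infty^{(1)}$, and -- crucially -- the map to which \cite[Theorem 3.1]{LPRY} applies must have its simple zero at $\infty^{(0)}$, not at $\infty^{(2)}$. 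That is why the paper works with $\chi(x)=\psi\bigl(-\tfrac{a^3}{2}(1+x)\bigr)-\psi(\infty^{(0)})$ and needs the theorem itself to supply the value $\chi(\infty^{(2)})=2/H(\beta)$, hence $\psi(\infty^{(0)})=-2/H(\beta)$, before the shift can be undone; the additive constant is not a harmless Möbius residue to be ``checked,'' it is an unknown that only \cite{LPRY} determines. In your mirrored configuration the roles of $\lambda$ and $\mu$ are interchanged, so what \cite[Theorem 3.1]{LPRY} would deliver is the system with $(\mu-\lambda)(\beta-\gamma)^3$ in place of $(\lambda-\mu)(\beta-\gamma)^3$; its solution in $(-1,1)$ is the mirrored pair $(\beta',\gamma')=(-\gamma,-\beta)$, and every derived constant comes out mirrored as well: $h'=-h$, $\Theta_{1}'=\Theta_{2}$, $\Theta_{2}'=\Theta_{1}$, $H'(w)=-H(-w)$. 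Your route can in principle be repaired -- the mirrored data do encode (\ref{algebraicequationenunciado}) -- but only after establishing conversion identities your proposal never addresses, most notably $H'(\beta')=-H(\gamma)=H(\beta)$, i.e.\ that $\gamma$ is the critical point lying over the branch point $-1$. As written, feeding your cuts into \cite[Theorem 3.1]{LPRY} and ``substituting $c,d,\beta,\gamma$'' would not reproduce the stated coefficients, and your closing verification of pole/zero placement (which, to your credit, is consistent with the true $\psi$: pole at $w=\infty$ with $x\sim-w$, zero at $w=0$, and $\psi(\infty^{(0)})=-2/H(\beta)$) presupposes the very equation whose derivation is in question.
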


\begin{rmk}
Using $(\ref{algebraicequationenunciado})$ and Theorem
$\ref{theoRiemannsurfacerepresentation}$, it is easy to check that
\[
a^{(0)}-a^{(3)}=-\frac{a^{3}\Theta_{2}}{4\,
H(\beta)}=a^{(4)}-a^{(1)}.
\]
\end{rmk}

Theorem \ref{theoRiemannsurfacerepresentation} and Proposition
\ref{algebraicequation} are proved in Section
\ref{Riemannsurfacesection}. We now describe the results on $n$th
root asymptotics and zero asymptotic distribution for the
polynomials $Q_{n}$ and $Q_{n,2}$. First, we introduce certain
definitions and notations.

Given a compact set $E\subset\mathbb{C}$, let $\mathcal{M}_{1}(E)$
denote the space of all probability Borel measures supported on
$E$. If $P$ is a polynomial of degree $n$, we indicate by
$\mu_{P}$ the associated normalized zero counting measure, i.e.
\[
\mu_{P}:=\frac{1}{n}\sum_{P(x)=0}\delta_{x},
\]
where $\delta_{x}$ is the Dirac measure with unit mass at $x$ (in
the sum the zeros are repeated according to their multiplicity).
If $\mu\in\mathcal{M}_{1}(E)$, let
\[
V^{\mu}(z):=\int\log\frac{1}{|z-t|}\,d\mu(t),
\]
and for a sequence $\{\mu_{n}\}\subset\mathcal{M}_{1}(E)$,
$\mu_{n}\stackrel{*}{\longrightarrow}\mu$ refers to the
convergence of $\mu_{n}$ in the weak-star topology to $\mu$.

Let $E_{1}, E_{2}$ be compact subsets of $\mathbb{C}$, and let
$M=[c_{j,k}]$ be a real, positive definite, symmetric matrix of
order two. Given a vector measure
$\boldsymbol{\mu}=(\mu_{1},\mu_{2})
\in\mathcal{M}_{1}(E_{1})\times\mathcal{M}_{1}(E_{2})$, we define
the combined potential
\[
W^{\boldsymbol{\mu}}_{j}:=\sum_{k=1}^{2}c_{j,k}V^{\mu_{k}}, \qquad
j=1,2,
\]
and the constants
\[
\omega^{\boldsymbol{\mu}}_{j}:=\inf\{W_{j}^{\boldsymbol{\mu}}(x):
x\in E_{j}\}, \qquad j=1,2.
\]

It is well-known (see \cite[Chapter 5]{NikSor}) that if $E_{1},
E_{2}$ are regular with respect to the Dirichlet problem, and
$c_{j,k}\geq 0$ in case $E_{j}\cap E_{k}\neq \emptyset$, then
there exists a unique vector measure
$\overline{\boldsymbol{\mu}}=(\overline{\mu}_{1},\overline{\mu}_{2})
\in\mathcal{M}_{1}(E_{1}) \times\mathcal{M}_{1}(E_{2})$ satisfying
the properties
$W_{j}^{\overline{\boldsymbol{\mu}}}(x)=\omega_{j}^{\overline{\boldsymbol{\mu}}}$
for all $x\in\supp(\overline{\mu}_{j}), j=1,2$. The measure
$\overline{\boldsymbol{\mu}}$ is called the vector equilibrium
measure determined by the interaction matrix $M$ on the system of
compact sets $(E_{1}, E_{2})$, and
$\omega_{1}^{\overline{\boldsymbol{\mu}}},
\omega_{2}^{\overline{\boldsymbol{\mu}}}$ are called the
equilibrium constants.

Let $\lambda_{1}$ be the positive, rotationally invariant measure
on $S_{0}$ whose restriction to the interval $[0,\alpha]$
coincides with the measure $s_{1}(x)\,dx$, and let $\lambda_{2}$
be the positive, rotationally invariant measure on $S_{1}$ whose
restriction to the interval $[-b,-a]$ coincides with the measure
$s_{2}(x)\,dx$.

Let $\Reg$ denote the space of regular measures in the sense of
Stahl and Totik (see definition in \cite[pg. 61]{StahlTotik}). The
zero asymptotic distribution and $n$th root asymptotics for the
polynomials $P_{n}$ and $P_{n,2}$ can be described as follows:

\begin{theo}\label{theodist}
Assume that the measures $\lambda_{1}$ and $\lambda_{2}$ are in
the class $\Reg$, and suppose that $\supp(\lambda_{1})$ and
$\supp(\lambda_{2})$ are regular for the Dirichlet problem. Then
\begin{equation}\label{weakstar1}
\mu_{P_{n}}\stackrel{*}{\longrightarrow}\overline{\mu}_{1}
\in\mathcal{M}_{1}(\Delta_{1}),\qquad\qquad
\mu_{P_{n,2}}\stackrel{*}{\longrightarrow}
\overline{\mu}_{2}\in\mathcal{M}_{1}(\Delta_{2}),
\end{equation}
where
$\overline{\boldsymbol{\mu}}=(\overline{\mu}_{1},\overline{\mu}_{2})$
is the vector equilibrium measure determined by the interaction
matrix
\begin{equation}\label{defninteractionmatrix}
\left[\begin{array}{cc}
1 & -1/4 \\
\\
-1/4 & 1/4\\
\end{array} \right]
\end{equation}
on the system of intervals $(\Delta_{1}, \Delta_{2})$. Therefore,
the limits
\begin{equation}\label{eq:nthrootasympPn}
\lim_{n\rightarrow\infty}|P_{n}(z)|^{1/\lfloor n/3 \rfloor}
=e^{-V^{\overline{\mu}_{1}}(z)},\qquad
z\in\mathbb{C}\setminus\Delta_{1},
\end{equation}
\begin{equation}\label{eq:nthrootasympPn2}
\lim_{n\rightarrow\infty}|P_{n,2}(z)|^{1/\lfloor n/6 \rfloor}
=e^{-V^{\overline{\mu}_{2}}(z)},\qquad
z\in\mathbb{C}\setminus\Delta_{2},
\end{equation}
hold uniformly on compact subsets of the indicated regions.
Moreover,
\begin{equation}\label{asympK1}
\lim_{k\rightarrow\infty}\Big(\int_{0}^{\alpha^{3}}P_{6k+j}^{2}(\tau)\,d\nu_{6k+j}(\tau)\Big)^{1/4k}
=e^{-\omega_{1}^{\overline{\boldsymbol{\mu}}}}, \qquad\mbox{for
all}\quad j=0,\ldots,5,
\end{equation}
\begin{equation}\label{asympK2}
\lim_{k\rightarrow\infty}\Big(\int_{-b^{3}}^{-a^{3}}P_{6k+j,2}^{2}(\tau)\,d\nu_{6k+j,2}(\tau)\Big)^{1/2k}
=e^{-4\omega_{2}^{\overline{\boldsymbol{\mu}}}},\qquad\mbox{for
all}\quad j=0,\ldots,5,
\end{equation}
where
$(\omega_{1}^{\overline{\boldsymbol{\mu}}},\omega_{2}^{\overline{\boldsymbol{\mu}}})$
is the corresponding vector of equilibrium constants, and the
varying measures $d\nu_{6k+j}$ and $d\nu_{6k+j,2}$ are defined in
$(\ref{orthogvaryingmeasures})$ below.
\end{theo}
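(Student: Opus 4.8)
The plan is to reduce, via the substitution $x=\tau^{3}$, the theorem to the $n$th root asymptotics of two families of polynomials orthogonal with respect to \emph{varying} measures on $\Delta_{1}$ and $\Delta_{2}$, and then to identify the limiting zero distributions with the vector equilibrium measure by means of the regularity theory of Stahl and Totik \cite{StahlTotik} together with the vector potential theory of \cite[Ch.~5]{NikSor}. First I would record the relevant degrees: by Proposition \ref{propmaximal} one has $\deg P_{n}=\lfloor n/3\rfloor$, and by Proposition \ref{propzerosPsi1} one has $\deg P_{n,2}\sim n/6$, so along a ray $n=6k+j$ the degrees behave as $2k$ and $k$; in particular the two families of zeros accumulate with the fixed ratio $1:2$. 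Because Propositions \ref{propmaximal} and \ref{propzerosPsi1} confine the zeros of $P_{n}$ to $\Delta_{1}$ and those of $P_{n,2}$ to $\Delta_{2}$, the counting measures $\mu_{P_{n}}$, $\mu_{P_{n,2}}$ lie in the weak-$*$ compact sets $\mathcal{M}_{1}(\Delta_{1})$, $\mathcal{M}_{1}(\Delta_{2})$, and it suffices to prove that every weak-$*$ convergent subsequence of the pair has the asserted limit.

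The heart of the proof is to show that any subsequential limit $(\mu_{1},\mu_{2})$ satisfies the equilibrium conditions of the problem. Using the symmetry (\ref{eq:symsigma1}) to fold the starlike integrals in (\ref{eq:ortQn1})--(\ref{eq:ortQn3}) onto $[0,\alpha^{3}]$, and exploiting the Nikishin hierarchy, I would rewrite the coupled orthogonality of $P_{n}$ as a \emph{scalar} orthogonality against a varying measure $d\nu_{n}$ on $\Delta_{1}$ whose density carries a factor governed by the reciprocal of a polynomial with zeros on $\Delta_{2}$ (the $\Psi_{n}$, $Q_{n,2}$ data), and symmetrically express the defining property of $P_{n,2}$ as a varying orthogonality on $\Delta_{2}$ with a weight involving the reciprocal of $P_{n}$; these are exactly the measures $d\nu_{6k+j}$, $d\nu_{6k+j,2}$ of (\ref{orthogvaryingmeasures}). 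The hypotheses $\lambda_{1},\lambda_{2}\in\Reg$ and the regularity of $\supp(\lambda_{1})$, $\supp(\lambda_{2})$ for the Dirichlet problem then supply the matching lower bound (from the extremality of each orthogonal polynomial) and upper bound (from the regularity of the weight) for the $n$th root, forcing each limit measure to be the weighted equilibrium measure of its interval in the external field created by the other. Written for $\widetilde{\mu}_{1}:=\mu_{1}$ (mass $1$) and $\widetilde{\mu}_{2}$ of mass $1/2$ (the limiting fraction $\deg P_{n,2}/\deg P_{n}$), these two conditions combine into the classical Nikishin relations $2V^{\widetilde{\mu}_{1}}-V^{\widetilde{\mu}_{2}}=\mathrm{const}$ on $\Delta_{1}$ and $-V^{\widetilde{\mu}_{1}}+2V^{\widetilde{\mu}_{2}}=\mathrm{const}$ on $\Delta_{2}$.

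Substituting $\widetilde{\mu}_{2}=\tfrac12\mu_{2}$ with $\mu_{2}\in\mathcal{M}_{1}(\Delta_{2})$ and dividing the first relation by $2$ and the second by $4$ turns these into $V^{\mu_{1}}-\tfrac14 V^{\mu_{2}}=\mathrm{const}$ on $\Delta_{1}$ and $-\tfrac14 V^{\mu_{1}}+\tfrac14 V^{\mu_{2}}=\mathrm{const}$ on $\Delta_{2}$; that is, $W_{1}^{\boldsymbol{\mu}}$ and $W_{2}^{\boldsymbol{\mu}}$ are constant on the respective supports for the interaction matrix (\ref{defninteractionmatrix}). Thus the entries $-1/4,1/4$ are precisely the $2:1$ degree ratio fed into the Nikishin matrix $\bigl(\begin{smallmatrix}2&-1\\-1&2\end{smallmatrix}\bigr)$ and rescaled. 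As (\ref{defninteractionmatrix}) is real symmetric and positive definite, the vector equilibrium measure is unique \cite[Ch.~5]{NikSor}, so necessarily $(\mu_{1},\mu_{2})=\overline{\boldsymbol{\mu}}$ independently of the subsequence, which establishes (\ref{weakstar1}).

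Given (\ref{weakstar1}), the $n$th root limits (\ref{eq:nthrootasympPn})--(\ref{eq:nthrootasympPn2}) follow from the weak-$*$ convergence by the principle of descent and the lower envelope theorem: the regularity of $\lambda_{1},\lambda_{2}$ and of the supports upgrades the generic semicontinuity inequality to the equalities $-\tfrac{1}{\deg P_{n}}\log|P_{n}|\to V^{\overline{\mu}_{1}}$ and $-\tfrac{1}{\deg P_{n,2}}\log|P_{n,2}|\to V^{\overline{\mu}_{2}}$, locally uniformly off the respective supports. Finally, the quantities in (\ref{asympK1}) and (\ref{asympK2}) are the squared norms of $P_{6k+j}$, $P_{6k+j,2}$ in the varying measures $\nu_{6k+j}$, $\nu_{6k+j,2}$, and the same regular root theory evaluates their limits as $e^{-\omega_{1}^{\overline{\boldsymbol{\mu}}}}$ and $e^{-4\omega_{2}^{\overline{\boldsymbol{\mu}}}}$, the powers $1/4k$, $1/2k$ matching the degrees of the squared polynomials and the constants $1$, $4$ recording the normalizations of the two equilibrium constants attached to (\ref{defninteractionmatrix}). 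I expect the only genuine difficulty to be the step converting the coupled Nikishin orthogonality into two scalar varying-orthogonality problems with uniformly controlled weights and extracting the exact coupling; once the correct external fields are identified, the matching to (\ref{defninteractionmatrix}), the uniqueness, and the root asymptotics are routine.
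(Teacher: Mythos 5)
Your proposal is correct and follows essentially the same route as the paper: subsequential weak-$*$ limits, the scalar varying-measure orthogonality already recorded in Propositions \ref{prop3} and \ref{prop4}, regularity of the folded measures (the paper's Lemma \ref{lemmaauxregular}) together with uniform upper and lower bounds on $|h_{6k+j}|$, the two variational conditions recombining into the equilibrium problem for the matrix (\ref{defninteractionmatrix}), and uniqueness of the vector equilibrium measure. The mechanism you describe informally --- extremality giving the lower bound and regularity of the weight the upper bound, forcing each limit measure to be the weighted equilibrium measure in the field of the other --- is exactly what the paper obtains by citing Lemma 4.2 of \cite{FLLS}, which simultaneously delivers the norm asymptotics (\ref{asympK1})--(\ref{asympK2}) that you attribute to ``the same regular root theory''.
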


\begin{co}\label{cortheodist}
Under the same assumptions of Theorem $\ref{theodist}$, let
$\overline{\boldsymbol{\mu}}=(\overline{\mu}_{1},\overline{\mu}_{2})$
be the vector equilibrium measure determined by the interaction
matrix $(\ref{defninteractionmatrix})$ on the system of intervals
$[0,\alpha^3], [-b^3,-a^3]$, and let
$(\omega_{1}^{\overline{\boldsymbol{\mu}}},
\omega_{2}^{\overline{\boldsymbol{\mu}}})$ be the corresponding
vector of equilibrium constants. Consider the probability measures
$\vartheta_{1}\in\mathcal{M}_{1}([0,\alpha])$ and
$\vartheta_{2}\in\mathcal{M}_{1}([-b,-a])$, defined as follows:
\[
\vartheta_{1}(E):=\overline{\mu}_{1}(E^{3}), \quad
E\subset[0,\alpha],\qquad\qquad\qquad
\vartheta_{2}(E):=\overline{\mu}_{2}(E^{3}), \quad
E\subset[-b,-a],
\]
where $E^{3}=\{x^{3}:x\in E\}$. If we denote by $Z_{Q_{n}}$ the
set of all roots of $Q_{n}$ on $(0,\alpha)$, and by $Z_{Q_{n,2}}$
the set of all roots of $Q_{n,2}$ on $(-b,-a)$, then
\[
\frac{1}{n}\sum_{x\in
Z_{Q_{n}}}\delta_{x}\stackrel{*}{\longrightarrow}\frac{1}{3}\,\vartheta_{1},
\qquad\qquad\qquad\frac{1}{n}\sum_{x\in
Z_{Q_{n,2}}}\delta_{x}\stackrel{*}{\longrightarrow}\frac{1}{6}
\,\vartheta_{2}.
\]
The limits
\[
\lim_{n\rightarrow\infty}|Q_{n}(z)|^{1/n}
=e^{-\frac{1}{3}V^{\overline{\mu}_{1}}(z^3)},\qquad
z\in\mathbb{C}\setminus S_{0},
\]
\[
\lim_{n\rightarrow\infty}|Q_{n,2}(z)|^{1/n}
=e^{-\frac{1}{6}V^{\overline{\mu}_{2}}(z^3)},\qquad
z\in\mathbb{C}\setminus S_{1},
\]
hold uniformly on compact subsets of the indicated regions.
Finally, we have
\[
\lim_{k\rightarrow\infty}
\Big(\int_{0}^{\alpha}Q_{3k}^{2}(t)\,\frac{s_{1}(t)}
{Q_{3k,2}(t)}\,dt\Big)^{1/k}=e^{-2\omega_{1}^{\overline{\boldsymbol{\mu}}}}
\]
\[
\lim_{k\rightarrow\infty}
\Big(\int_{0}^{\alpha}Q_{3k+1}^{2}(t)\,\frac{t\,s_{1}(t)}
{Q_{3k+1,2}(t)}\,dt\Big)^{1/k}=e^{-2\omega_{1}^{\overline{\boldsymbol{\mu}}}}
\]
\[
\lim_{k\rightarrow\infty}
\Big(\int_{0}^{\alpha}Q_{3k+2}^{2}(t)\,\frac{s_{1}(t)}
{t\,Q_{3k+2,2}(t)}\,dt\Big)^{1/k}=e^{-2\omega_{1}^{\overline{\boldsymbol{\mu}}}}
\]
\[
\lim_{k\rightarrow\infty}
\Big(\int_{-b}^{-a}Q_{3k,2}^{2}(t)\,\frac{|t\, h_{3k}(t)|}
{|Q_{3k}(t)|}\,s_{2}(t)\,dt\Big)^{1/k}
=e^{-4\omega_{2}^{\overline{\boldsymbol{\mu}}}},
\]
\[
\lim_{k\rightarrow\infty}
\Big(\int_{-b}^{-a}Q_{3k+1,2}^{2}(t)\,\frac{|h_{3k+1}(t)|}
{|Q_{3k+1}(t)|}\,s_{2}(t)\,dt\Big)^{1/k}
=e^{-4\omega_{2}^{\overline{\boldsymbol{\mu}}}},
\]
\[
\lim_{k\rightarrow\infty}
\Big(\int_{-b}^{-a}Q_{3k+2,2}^{2}(t)\,\frac{t^2|h_{3k+2}(t)|}
{|Q_{3k+2}(t)|}\,s_{2}(t)\,dt\Big)^{1/k}
=e^{-4\omega_{2}^{\overline{\boldsymbol{\mu}}}},
\]
where the functions $h_{n}$ are defined in
$(\ref{eq:definitionhn})$ $($see also $(\ref{intreph3k})$$)$.
\end{co}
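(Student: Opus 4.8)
The plan is to deduce every assertion from the corresponding statement of Theorem~\ref{theodist} by means of the cube substitution encoded in (\ref{defn:P3k})--(\ref{defn:Pn2}). I begin with the zero counting measures. By Proposition~\ref{propmaximal}, the zeros of $Q_n$ on $(0,\alpha)$ are precisely the positive real cube roots of the zeros of $P_n$ on $\Delta_1=[0,\alpha^3]$, and $\deg P_n=\lfloor n/3\rfloor=\#Z_{Q_n}$. Hence the homeomorphism $\phi\colon w\mapsto w^{1/3}$ of $[0,\alpha^3]$ onto $[0,\alpha]$ carries $\mu_{P_n}$ to $\frac{1}{\#Z_{Q_n}}\sum_{x\in Z_{Q_n}}\delta_x$, and $\vartheta_1=\phi_{*}\overline{\mu}_1$ by the very definition $\vartheta_1(E)=\overline{\mu}_1(E^3)$. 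Since pushforward by a homeomorphism preserves weak-$*$ convergence and $\mu_{P_n}\stackrel{*}{\longrightarrow}\overline{\mu}_1$, we get $\frac{1}{\#Z_{Q_n}}\sum_{x\in Z_{Q_n}}\delta_x\stackrel{*}{\longrightarrow}\vartheta_1$; multiplying by $\#Z_{Q_n}/n\to 1/3$ yields the first limit. The argument for $Q_{n,2}$ is identical, using Proposition~\ref{propzerosPsi1} (so that $\#Z_{Q_{n,2}}=\deg P_{n,2}=\lfloor n/6\rfloor$, whence $\#Z_{Q_{n,2}}/n\to 1/6$) together with $\mu_{P_{n,2}}\stackrel{*}{\longrightarrow}\overline{\mu}_2$.

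For the $n$th root asymptotics I substitute $z^3$ into (\ref{eq:nthrootasympPn})--(\ref{eq:nthrootasympPn2}). Writing $Q_n(z)=z^{r}P_n(z^3)$ with $r=n\bmod 3$, I have $|Q_n(z)|^{1/n}=|z|^{r/n}\bigl(|P_n(z^3)|^{1/\lfloor n/3\rfloor}\bigr)^{\lfloor n/3\rfloor/n}$. For $z$ in a compact subset of $\mathbb{C}\setminus S_0$ the point $z^3$ ranges over a compact subset of $\mathbb{C}\setminus\Delta_1$; since $|z|^{r/n}\to 1$, $\lfloor n/3\rfloor/n\to 1/3$, and $|P_n(z^3)|^{1/\lfloor n/3\rfloor}\to e^{-V^{\overline{\mu}_1}(z^3)}$ uniformly on compacta, I obtain $|Q_n(z)|^{1/n}\to e^{-\frac13 V^{\overline{\mu}_1}(z^3)}$ uniformly on compacta of $\mathbb{C}\setminus S_0$. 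The statement for $Q_{n,2}$ follows in the same way from $Q_{n,2}(z)=P_{n,2}(z^3)$ and $\lfloor n/6\rfloor/n\to 1/6$.

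The integral identities are obtained by the change of variables $u=t^3$, $dt=du/(3u^{2/3})$. For the three integrals over $(0,\alpha)$, inserting $Q_n(t)=t^{r}P_n(t^3)$ and $Q_{n,2}(t)=P_{n,2}(t^3)$ shows that the monomial prefactors $1,\,t,\,1/t$ are chosen so that the powers of $t$ produced by $Q_n^2$ cancel exactly against those in the weight and in the Jacobian, turning each integral into $\int_0^{\alpha^3}P_n^2(u)\,d\nu_n(u)$ with the varying measure $d\nu_n$ of (\ref{orthogvaryingmeasures}). Likewise, for the three integrals over $(-b,-a)$ the factors $|t|,\,1,\,t^2$ together with $h_n$ of (\ref{eq:definitionhn}) and the Jacobian reproduce $\int_{-b^3}^{-a^3}P_{n,2}^2(u)\,d\nu_{n,2}(u)$. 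It then remains to match exponents: writing the degree index $3k+i$ ($i\in\{0,1,2\}$) as $6\ell+j$, one has $4\ell/k\to 2$ and $2\ell/k\to 1$ as $k\to\infty$, so raising (\ref{asympK1}) to the power $4\ell/k$ gives the limit $e^{-2\omega_1^{\overline{\boldsymbol{\mu}}}}$ for the first three integrals, while raising (\ref{asympK2}) to the power $2\ell/k$ gives $e^{-4\omega_2^{\overline{\boldsymbol{\mu}}}}$ for the last three.

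The main obstacle is the identification in the previous paragraph: verifying that after $u=t^3$ the substituted integrands coincide exactly with the varying-measure norms $\int P_n^2\,d\nu_n$ and $\int P_{n,2}^2\,d\nu_{n,2}$ appearing in (\ref{asympK1})--(\ref{asympK2}). This rests on the explicit forms of $d\nu_n,\,d\nu_{n,2}$ in (\ref{orthogvaryingmeasures}) and of $h_n$ in (\ref{eq:definitionhn}); once the monomial prefactors $t^{r}$ and the Jacobian factor $(3u^{2/3})^{-1}$ are accounted for, the matching is a direct but bookkeeping-heavy computation. By contrast, the remaining passages—pushforward of weak-$*$ limits and substitution in the $n$th root limits—are routine.
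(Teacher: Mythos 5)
Your proof is correct and is exactly the deduction the paper intends: the paper omits the proof of Corollary~\ref{cortheodist} as an immediate consequence of Theorem~\ref{theodist}, and your cube-substitution bookkeeping (pushforward of the zero-counting measures under the real cube root, $z\mapsto z^{3}$ in the $n$th-root limits, and the change of variables $u=t^{3}$ identifying the six integrals with the varying-measure norms in (\ref{asympK1})--(\ref{asympK2}), up to a factor $1/3$ that vanishes in the $k$th root) supplies precisely those routine details. The only slip is harmless: for $n\equiv 4\pmod 6$ one has $\deg P_{n,2}=\lfloor n/6\rfloor+1$ by Proposition~\ref{propzerosPsi1}, not $\lfloor n/6\rfloor$, but $\#Z_{Q_{n,2}}/n\to 1/6$ still holds and none of the stated limits is affected.
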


The following proposition provides a link between the results on
ratio and $n$th root asymptotics.

\begin{prop}\label{relratioandnthrootasymptotics}
Under the same assumptions of Theorem
$\ref{introd:ratioasymptotics}$, the following relations hold:
\begin{equation}\label{relrationthroot1}
V^{\overline{\mu}_{1}}(z)=-\frac{1}{2}\sum_{i=0}^{5}\log
|\widetilde{F}_{1}^{(i)}(z)|,\qquad
z\in\mathbb{C}\setminus[0,\alpha^3],
\end{equation}
\begin{equation}\label{relrationthroot2}
V^{\overline{\mu}_{2}}(z)
=-\sum_{i=0}^{5}\log|\widetilde{F}_{2}^{(i)}(z)|,\qquad
z\in\mathbb{C}\setminus[-b^3,-a^3],
\end{equation}
where $(\overline{\mu}_{1},\overline{\mu}_{2})$ is the vector
equilibrium measure determined by the interaction matrix
$(\ref{defninteractionmatrix})$ on the system of intervals
$[0,\alpha^3], [-b^3,-a^3]$.
\end{prop}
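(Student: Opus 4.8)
The plan is to combine the ratio asymptotics of Theorem \ref{introd:ratioasymptotics} with the $n$th root asymptotics of Theorem \ref{theodist} by means of a telescoping--Ces\`{a}ro argument; indeed, $(\ref{relrationthroot1})$ and $(\ref{relrationthroot2})$ are precisely the bridge between these two sets of results. Fix a compact set $K\subset\mathbb{C}\setminus[0,\alpha^{3}]$. Multiplying the six limits $(\ref{eq:ratioasymp1})$ over a complete period and telescoping gives
\[
\frac{P_{6(k+1)}(z)}{P_{6k}(z)}=\prod_{i=0}^{5}\frac{P_{6k+i+1}(z)}{P_{6k+i}(z)}\longrightarrow\prod_{i=0}^{5}\widetilde{F}_{1}^{(i)}(z)=:G_{1}(z),
\]
uniformly on $K$. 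By Proposition \ref{propmaximal} every zero of $P_{n}$ lies in $[0,\alpha^{3}]$, so each $P_{6k+i}$ is zero-free on $\mathbb{C}\setminus[0,\alpha^{3}]$; by Hurwitz's theorem each limit $\widetilde{F}_{1}^{(i)}$, and hence $G_{1}$, is either identically zero or nowhere vanishing there, and the Laurent normalizations $(\ref{Laurentexpansion})$ exclude the former alternative. Consequently $\log|\widetilde{F}_{1}^{(i)}|$ and $\log|G_{1}|$ are finite on $K$, and the displayed convergence yields $\log|P_{6(k+1)}(z)|-\log|P_{6k}(z)|\to\log|G_{1}(z)|$ at each $z\in\mathbb{C}\setminus[0,\alpha^{3}]$.

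Next I would recover the $n$th root behavior of $P_{6k}$ from these increments. Since $P_{0}\equiv 1$, a telescoping sum and the Stolz--Ces\`{a}ro lemma give
\[
\frac{1}{k}\log|P_{6k}(z)|=\frac{1}{k}\sum_{j=0}^{k-1}\big(\log|P_{6(j+1)}(z)|-\log|P_{6j}(z)|\big)\longrightarrow\log|G_{1}(z)|=\sum_{i=0}^{5}\log|\widetilde{F}_{1}^{(i)}(z)|.
\]
On the other hand $\deg P_{6k}=2k=\lfloor 6k/3\rfloor$, so $(\ref{eq:nthrootasympPn})$ yields $\frac{1}{2k}\log|P_{6k}(z)|\to -V^{\overline{\mu}_{1}}(z)$, that is, $\frac{1}{k}\log|P_{6k}(z)|\to -2V^{\overline{\mu}_{1}}(z)$. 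Equating the two limits of the same sequence produces $(\ref{relrationthroot1})$.

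The second identity $(\ref{relrationthroot2})$ follows verbatim with $P_{n,2}$ in place of $P_{n}$. Telescoping $(\ref{eq:ratioasymp2})$ over a period gives $P_{6(k+1),2}/P_{6k,2}\to\prod_{i=0}^{5}\widetilde{F}_{2}^{(i)}=:G_{2}$ uniformly on compact subsets of $\mathbb{C}\setminus[-b^{3},-a^{3}]$; by Proposition \ref{propzerosPsi1} the zeros of $P_{6k,2}$ lie in $[-b^{3},-a^{3}]$, so the same Hurwitz argument makes $G_{2}$ nowhere vanishing there, and Stolz--Ces\`{a}ro gives $\frac{1}{k}\log|P_{6k,2}(z)|\to\sum_{i=0}^{5}\log|\widetilde{F}_{2}^{(i)}(z)|$. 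Here $\deg P_{6k,2}=k=\lfloor 6k/6\rfloor$, so $(\ref{eq:nthrootasympPn2})$ gives $\frac{1}{k}\log|P_{6k,2}(z)|\to -V^{\overline{\mu}_{2}}(z)$, and comparison yields $(\ref{relrationthroot2})$. The difference between the normalizing exponents $\lfloor n/3\rfloor$ and $\lfloor n/6\rfloor$ is exactly what accounts for the factor $\tfrac12$ in the first identity and its absence in the second.

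The only genuinely delicate point is the non-vanishing of the limiting functions $\widetilde{F}_{j}^{(i)}$ on the relevant slit planes, which is what legitimizes passing to $\log|\cdot|$; I expect this to be the main obstacle, and I would settle it through the Hurwitz argument above combined with the Laurent normalizations $(\ref{Laurentexpansion})$. The remaining ingredients, namely uniform convergence on compacta and the Stolz--Ces\`{a}ro lemma applied at each fixed $z$, are routine.
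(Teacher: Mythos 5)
Your proof is correct and follows essentially the same route as the paper: the paper likewise telescopes the ratio limits of Theorem \ref{introd:ratioasymptotics} over one period of six to get $\lim_{k}|Q_{6k}(z)|^{1/k}=\prod_{i=0}^{5}|\widetilde{F}_{1}^{(i)}(z^{3})|$ and equates this with the $n$th root asymptotics, the only (immaterial) differences being that it phrases the argument in terms of $Q_{6k}$ and Corollary \ref{cortheodist} rather than $P_{6k}$ and Theorem \ref{theodist} directly. Your explicit Hurwitz/Stolz--Ces\`aro justifications fill in steps the paper leaves implicit (indeed the non-vanishing of $\widetilde{F}_{j}^{(i)}$, including the analyticity of $1/\widetilde{F}_{j}^{(i)}$, is already recorded in the paper right after $(\ref{defntildeF2i})$), so nothing further is needed.
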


Theorem \ref{theodist} and Proposition
\ref{relratioandnthrootasymptotics} are proved in Section
\ref{sectionnthroot}. Corollary \ref{cortheodist} follows
immediately from Theorem \ref{theodist}, so we omit its proof.

\section{The polynomials $Q_{n}$}\label{sectionQnandzeros}

Let
\begin{equation*}\label{defnS1}
\Sigma_{1}:= \bigcup_{k=0}^{2}\exp(2\pi i k/3)\,(-\infty,0].
\end{equation*}
We may assume that $s_{2}\equiv 0$ on
$(-\infty,0]\setminus[-b,-a]$, and we extend $s_{2}$ to
$\Sigma_{1}$ through the symmetry property
\begin{equation}\label{symrho1}
s_{2}(e^{\frac{2\pi i}{3}}z)=e^{\frac{4\pi i}{3}}s_{2}(z),\quad
z\in \Sigma_{1}.
\end{equation}
Then
\begin{equation}\label{eq:stateprop2}
f(z)=\frac{1}{3}\int_{S_{1}}\frac{s_{2}(t)}{t-z}\,dt=\frac{z^{2}}{3}
\int_{-b^{3}}^{-a^{3}}\frac{s_{2}(\sqrt[3]{\tau})}{(z^{3}-\tau)\,\tau^{2/3}}\,\,d\tau,\qquad
z\in\mathbb{C}\setminus S_{1}.
\end{equation}

\begin{prop}\label{ortPsi1}
The functions $\Psi_{n}$ satisfy the following orthogonality
conditions:
\begin{equation}\label{eq:ortPsi1}
0=\int_{S_{1}}t^{\nu}\,\Psi_{2n}(t)\,s_{2}(t)\,dt,\qquad
\nu=0,\ldots,n-1,
\end{equation}
\begin{equation}\label{eq:ortPsi2}
0=\int_{S_{1}}t^{\nu}\,\Psi_{2n+1}(t)\,s_{2}(t)\,dt,\qquad
\nu=0,\ldots,n-1.
\end{equation}
\end{prop}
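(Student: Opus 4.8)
The plan is to derive both orthogonality relations (\ref{eq:ortPsi1}) and (\ref{eq:ortPsi2}) simultaneously by a duality (Fubini) argument, reducing the orthogonality of $\Psi_{n}$ against $s_{2}$ on $S_{1}$ to the orthogonality conditions already imposed on $Q_{n}$ against $s_{1}$ and $f\,s_{1}$ on $S_{0}$. First I would insert the definition of $\Psi_{n}$ into the left-hand side, obtaining the double integral
\[
\int_{S_{1}} t^{\nu}\,\Psi_{n}(t)\,s_{2}(t)\,dt
=\int_{S_{1}}\!\int_{S_{0}}\frac{t^{\nu}\,Q_{n}(u)}{u-t}\,s_{1}(u)\,s_{2}(t)\,du\,dt .
\]
Since $S_{0}\subset\{|z|\le\alpha\}$ approaches the real axis only near the origin while $S_{1}$ stays at distance $\ge a>0$ from the origin, the two sets are separated and the kernel $1/(u-t)$ is bounded on $S_{0}\times S_{1}$; together with $s_{1}\in L^{1}$ and $s_{2}$ integrable this justifies Fubini, so I may interchange the order of integration and integrate first in $t$.

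Next I would evaluate the inner integral. Writing $t^{\nu}=(t^{\nu}-u^{\nu})+u^{\nu}$ and using the identity $\frac{t^{\nu}-u^{\nu}}{u-t}=-\sum_{j=0}^{\nu-1}u^{\nu-1-j}t^{j}$ yields
\[
\int_{S_{1}}\frac{t^{\nu}\,s_{2}(t)}{u-t}\,dt
=-\sum_{j=0}^{\nu-1}u^{\nu-1-j}\Big(\int_{S_{1}}t^{j}s_{2}(t)\,dt\Big)-3\,u^{\nu}f(u),
\]
where the last term comes from $\int_{S_{1}}s_{2}(t)/(u-t)\,dt=-3f(u)$, a consequence of (\ref{eq:stateprop2}). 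The first summand is a polynomial $p_{\nu-1}(u)$ of degree at most $\nu-1$ (vanishing identically when $\nu=0$). Substituting back, the double integral becomes
\[
-\int_{S_{0}}Q_{n}(u)\,p_{\nu-1}(u)\,s_{1}(u)\,du
\;-\;3\int_{S_{0}}Q_{n}(u)\,u^{\nu}f(u)\,s_{1}(u)\,du .
\]

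Finally I would invoke the orthogonality of $Q_{n}$. For $\Psi_{2n}$ with $0\le\nu\le n-1$, the polynomial $p_{\nu-1}$ has degree $\le\nu-1\le n-2$, so the first integral vanishes by (\ref{eq:ortQn1}), while the second vanishes by (\ref{eq:ortQnn}) since $\nu\le n-1$; this gives (\ref{eq:ortPsi1}). For $\Psi_{2n+1}$ with $0\le\nu\le n-1$, the first integral vanishes by (\ref{eq:ortQn2}) and the second by (\ref{eq:ortQn3}), giving (\ref{eq:ortPsi2}). There is no deep obstacle here: the only points requiring care are the justification of Fubini from the separation of $S_{0}$ and $S_{1}$, and the bookkeeping ensuring that in each case the degree of $p_{\nu-1}$ and the power $u^{\nu}$ both lie within the available ranges of the two families of orthogonality conditions. (The rotational symmetry (\ref{symrho1}) forces $\int_{S_{1}}t^{j}s_{2}\,dt=0$ unless $3\mid j$, which simplifies $p_{\nu-1}$ but is not needed for the vanishing.)
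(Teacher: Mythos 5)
Your proposal is correct and follows essentially the same route as the paper's proof: insert the definition of $\Psi_{n}$, interchange the integrals by Fubini, split $t^{\nu}=(t^{\nu}-u^{\nu})+u^{\nu}$ so that the inner integral produces a polynomial of degree at most $\nu-1$ plus the term $-3u^{\nu}f(u)$ via (\ref{eq:stateprop2}), and then apply the orthogonality conditions (\ref{eq:ortQn1})--(\ref{eq:ortQn3}). The only difference is cosmetic: you spell out the Fubini justification (boundedness of the kernel, which really rests on $S_{0}$ and $S_{1}$ being disjoint compact sets, hence at positive distance) and the degree bookkeeping, which the paper leaves implicit.
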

\begin{proof}
If $0\leq \nu\leq n-1$, applying the definition of $\Psi_{2n}$ and
Fubini's theorem, we obtain
\[
\int_{S_{1}}t^{\nu}\,\Psi_{2n}(t)\,s_{2}(t)\,dt
=\int_{S_{0}}Q_{2n}(x)\,s_{1}(x)\int_{S_{1}}\frac{t^{\nu}-x^{\nu}+x^{\nu}}{x-t}\,s_{2}(t)\,dt\,dx
=\int_{S_{0}}Q_{2n}(x)\,(p_{\nu}(x)-3 x^{\nu}f(x))\,s_{1}(x)\,dx,
\]
where $p_{\nu}$ is a polynomial of degree at most $n-2$. Using
(\ref{eq:ortQn1})--(\ref{eq:ortQnn}), (\ref{eq:ortPsi1}) follows.
The proof of $(\ref{eq:ortPsi2})$ is identical. \end{proof}

\begin{prop}\label{ortint}
Let $Q_{n}$ be the monic polynomial of smallest degree satisfying
the conditions $(\ref{eq:ortQn1})$--$(\ref{eq:ortQn3})$. If
$d_{n}:=\deg{Q_{n}}$, then
\begin{equation}\label{sympropQn}
Q_{n}(e^{\frac{2\pi i}{3}}z)=e^{\frac{2\pi i
d_{n}}{3}}Q_{n}(z),\qquad\qquad
Q_{n}(z)=\overline{Q_n(\overline{z})}.
\end{equation}
In particular, all the coefficients of $Q_{n}$ are real.
Furthermore, for each $0\leq k\leq n-1$,
\begin{equation}\label{eq:ortintQn1}
0=\int_{0}^{\alpha}t^{k}Q_{2n}(t)(1+e^{2\pi i(k+d_{2n})/3}+e^{4\pi
i(k+d_{2n})/3})\,s_{1}(t)\,dt,
\end{equation}
\begin{equation}\label{eq:ortintQn2}
0=\int_{0}^{\alpha}t^{k}Q_{2n}(t)(1+e^{2\pi
i(k+2+d_{2n})/3}+e^{4\pi i(k+2+d_{2n})/3})\,s_{1}(t) f(t)\,dt,
\end{equation}
\begin{equation}\label{eq:ortintQn4}
0=\int_{0}^{\alpha}t^{k}Q_{2n+1}(t)(1+e^{2\pi
i(k+2+d_{2n+1})/3}+e^{4\pi i(k+2+d_{2n+1})/3})\,s_{1}(t) f(t)\,dt,
\end{equation}
and for each $0\leq k\leq n$,
\begin{equation}\label{eq:ortintQn3}
0=\int_{0}^{\alpha}t^{k}Q_{2n+1}(t)(1+e^{2\pi
i(k+d_{2n+1})/3}+e^{4\pi i(k+d_{2n+1})/3})\,s_{1}(t)\,dt.
\end{equation}
\end{prop}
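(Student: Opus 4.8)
The plan is to establish both identities in (\ref{sympropQn}) by a single uniqueness principle applied to two auxiliary polynomials, and then to obtain the reduced relations (\ref{eq:ortintQn1})--(\ref{eq:ortintQn3}) by splitting the contour $S_0$ into its three rays. First I would record the transformation rules for the weights under the two symmetries of the star. From the defining formula $f(z)=z^2\int_{-b}^{-a}s_2(t)(z^3-t^3)^{-1}\,dt$ one reads off $f(e^{2\pi i/3}z)=e^{4\pi i/3}f(z)$ directly, since $(e^{2\pi i/3})^3=1$; iterating (\ref{eq:symsigma1}) gives $s_1(e^{2\pi ij/3}z)=e^{4\pi ij/3}s_1(z)$ for $j=0,1,2$. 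For the conjugation I would verify, by a short case check over the three rays and using that $s_1$ and $s_2$ are real on $(0,\alpha)$ and $[-b,-a]$ respectively, that $\overline{s_1(\bar z)}=s_1(z)$ and $\overline{f(\bar z)}=f(z)$ on $S_0$.

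The engine for both symmetries is the elementary remark that the monic polynomial of smallest degree satisfying a fixed finite family of homogeneous linear conditions is unique: the difference of two monic solutions of the same minimal degree would be a nonzero polynomial of strictly smaller degree satisfying the same conditions, contradicting minimality.

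For the rotational symmetry I would set $\widetilde{Q}_n(z):=e^{-2\pi i d_n/3}Q_n(e^{2\pi i/3}z)$, which is again monic of degree $d_n$. A change of variable $t\mapsto e^{-2\pi i/3}t$ in the defining integrals, using $e^{2\pi i/3}S_0=S_0$ together with the transformation rules for $s_1$ and $f$, shows that $\widetilde{Q}_n$ satisfies exactly the orthogonality conditions defining $Q_n$ (each exponential prefactor produced by the substitution is a nonzero constant that divides out). Uniqueness then forces $\widetilde{Q}_n=Q_n$, the first identity in (\ref{sympropQn}). The conjugation symmetry is entirely parallel: $Q_n^{*}(z):=\overline{Q_n(\bar z)}$ is monic of degree $d_n$, and conjugating each orthogonality relation and invoking $\overline{S_0}=S_0$ with $\overline{s_1(\bar z)}=s_1(z)$, $\overline{f(\bar z)}=f(z)$ shows $Q_n^{*}$ satisfies the same conditions; uniqueness gives $Q_n^{*}=Q_n$, which is the second identity and yields that all coefficients of $Q_n$ are real.

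Finally, granted the rotational symmetry, I would split each integral over $S_0$ into its three rays, parametrizing the $j$-th ray by $t=e^{2\pi ij/3}x$ with $x\in(0,\alpha)$. On that ray $Q_n(t)=e^{2\pi ijd_n/3}Q_n(x)$, $t^k=e^{2\pi ijk/3}x^k$, $s_1(t)=e^{4\pi ij/3}s_1(x)$, $dt=e^{2\pi ij/3}dx$, and (where present) $f(t)=e^{4\pi ij/3}f(x)$. Collecting the exponential prefactors and summing over $j=0,1,2$ turns, for instance, (\ref{eq:ortQn1}) into $\big(\sum_{j=0}^{2}e^{2\pi ij(k+d_{2n})/3}\big)\int_{0}^{\alpha}x^{k}Q_{2n}(x)s_1(x)\,dx=0$, which is exactly (\ref{eq:ortintQn1}); the relations (\ref{eq:ortintQn2})--(\ref{eq:ortintQn3}) follow identically, the extra factor $e^{4\pi ij/3}$ coming from $f$ merely shifting the exponent by $2$. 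The genuinely nonroutine point is the case verification that $\overline{s_1(\bar z)}=s_1(z)$ on the rotated rays; everything else reduces to careful bookkeeping of the exponential factors (and to checking that the substitution preserves the outward orientation of each ray, so that no spurious signs appear), while the uniqueness step is standard.
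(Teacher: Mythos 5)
Your proposal is correct and follows essentially the same route as the paper's (much terser) proof: both verify that $e^{-2\pi i d_n/3}Q_n(e^{2\pi i/3}z)$ and $\overline{Q_n(\overline{z})}$ satisfy the defining orthogonality conditions, invoke uniqueness of the minimal-degree monic solution, and then obtain (\ref{eq:ortintQn1})--(\ref{eq:ortintQn3}) by rewriting the integrals over $S_0$ as integrals over $[0,\alpha]$ ray by ray. The details you supply---the transformation rules $s_1(e^{2\pi i j/3}z)=e^{4\pi i j/3}s_1(z)$, $f(e^{2\pi i/3}z)=e^{4\pi i/3}f(z)$, the conjugation identities, and the bookkeeping of exponential factors---are exactly what the paper leaves as ``easy to check,'' and they are all accurate.
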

\begin{proof}
It is easy to check that $Q_{n}(z)$, $Q_{n}(e^{\frac{2\pi
i}{3}}z)/e^{\frac{2\pi i d_{n}}{3}}$ and
$\overline{Q_{n}(\overline{z})}$ satisfy the same orthogonality
conditions. By uniqueness, these polynomials must be equal, so
(\ref{sympropQn}) holds. If we write
(\ref{eq:ortQn1})--(\ref{eq:ortQn3}) in terms of $[0,\alpha]$, we
obtain (\ref{eq:ortintQn1})--(\ref{eq:ortintQn3}).
\end{proof}

\begin{lem}\label{AT}
Let $n_{1}, n_{2}$ be non-negative integers, and assume that
$P_{1}, P_{2}$ are polynomials, not both identically equal to
zero, such that $\deg P_{1}\leq n_{1}-1$ and $\deg P_{2}\leq
n_{2}-1$. Then the functions
\begin{equation}\label{eq:ATsyst1}
H_{1}(t):=P_{1}(t)+P_{2}(t)\sqrt[3]{t}\, f(\sqrt[3]{t}\,),\qquad
t>0,
\end{equation}
\begin{equation}\label{eq:ATsyst2}
H_{2}(t):=P_{1}(t)\,t+P_{2}(t)\sqrt[3]{t}\,
f(\sqrt[3]{t}\,),\qquad t>0,
\end{equation}
have at most $n_{1}+n_{2}-1$ zeros on $(0,\infty)$, counting
multiplicities.
\end{lem}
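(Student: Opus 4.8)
The plan is to prove that the functions $H_1$ and $H_2$ defined in $(\ref{eq:ATsyst1})$--$(\ref{eq:ATsyst2})$ have at most $n_1+n_2-1$ zeros on $(0,\infty)$ by recognizing them as linear combinations drawn from a system of functions with a Chebyshev-type (AT-system) property. The key structural observation is that $f$ is a Cauchy/Markov-type transform of the measure $s_2$ supported on $[-b,-a]$, so that $\sqrt[3]{t}\,f(\sqrt[3]{t}\,)$, after the substitution $\tau = t$ (i.e.\ working on the $\tau$-plane via $z=\sqrt[3]{t}$), becomes, up to the explicit algebraic factor, a genuine Markov function $\widehat{f}(t)=\int d\sigma(x)/(t-x)$ of a positive measure $\sigma$ on the interval $[-b^3,-a^3]$. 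This is exactly what $(\ref{eq:stateprop2})$ provides: it rewrites $f(z)$ as a Cauchy transform over $\Delta_2=[-b^3,-a^3]$, separated from the polynomial-in-$z$ prefactor. Thus both $H_1$ and $H_2$ are of the form $P_1(t)+R(t)P_2(t)$, where $R(t)$ carries the Markov function, and we must count sign changes of such combinations.

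First I would make the change of variables explicit and absorb the algebraic prefactor: using $(\ref{eq:stateprop2})$, write $\sqrt[3]{t}\,f(\sqrt[3]{t}\,)=\tfrac13\int_{-b^3}^{-a^3} d\sigma(\tau)/(t-\tau)$ for a suitable positive measure $d\sigma$, so that $H_1(t)=P_1(t)+\tfrac13 P_2(t)\widehat{\sigma}(t)$ and $H_2(t)=tP_1(t)+\tfrac13 P_2(t)\widehat{\sigma}(t)$, where $\widehat{\sigma}$ is a Markov function with poles only on $[-b^3,-a^3]\subset(-\infty,0)$, hence analytic on $(0,\infty)$. Second, I would suppose for contradiction that $H_j$ has at least $n_1+n_2$ zeros on $(0,\infty)$, counting multiplicities, say at points $x_1,\dots,x_{n_1+n_2}$. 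The strategy is to build an auxiliary rational or analytic function that both vanishes at all these points and, by a residue/argument-principle count on a contour enclosing $[-b^3,-a^3]$, cannot have that many zeros unless $P_1\equiv P_2\equiv 0$.

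The cleanest route is the standard AT-system argument. Form the function $G(t):=H_j(t)\prod(t-x_i)^{-1}$ and integrate $G$ times an appropriate test polynomial around a large contour, or equivalently invoke the orthogonality/interpolation duality: if $H_1$ has $N\ge n_1+n_2$ zeros, consider the polynomial $w(t)=\prod_{i=1}^{N}(t-x_i)$ and examine $\oint H_1(t)\,t^{k}/w(t)\,dt$. Deforming the contour to wrap $[-b^3,-a^3]$ (where the only singularity of $H_1$ lives), the residue at infinity forces vanishing relations on the coefficients of $P_1,P_2$, ultimately yielding a homogeneous linear system with only the trivial solution, contradicting that $(P_1,P_2)$ is not identically zero. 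The case of $H_2$ is handled identically, the extra factor of $t$ merely shifting the polynomial part $P_1(t)\mapsto tP_1(t)$ of degree $\le n_1$ against $P_2$ of degree $\le n_2-1$, which does not change the total count $n_1+n_2-1$.

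The main obstacle I expect is bookkeeping the degrees correctly so that the final linear system is square and the count comes out to exactly $n_1+n_2-1$ rather than $n_1+n_2$. The subtle point is that $\widehat{\sigma}(t)$ behaves like $O(1/t)$ at infinity, so $H_1(t)\sim P_1(t)$ has degree $\le n_1-1$ while the ``hidden'' second component contributes through the Markov function; one must verify that the combined system $(P_1,P_2)$ with $\deg P_1\le n_1-1$, $\deg P_2\le n_2-1$ genuinely forms an AT-system on $(0,\infty)$, which rests precisely on $s_2\ge 0$ and the fact that the support $[-b^3,-a^3]$ is disjoint from $(0,\infty)$. I would handle this by carefully tracking the asymptotic expansions at infinity and at the origin, using that $\widehat{\sigma}$ has no singularities on $(0,\infty)$, so all zeros there are interior and the sign-change count can be controlled by the argument principle without boundary complications.
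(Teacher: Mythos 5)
Your proposal has a genuine gap, and it starts with the algebra. From $(\ref{eq:stateprop2})$, setting $z=\sqrt[3]{t}$ gives $f(\sqrt[3]{t}\,)=\frac{t^{2/3}}{3}\int_{-b^3}^{-a^3}\frac{s_2(\sqrt[3]{\tau})}{(t-\tau)\,\tau^{2/3}}\,d\tau$, so that
\[
\sqrt[3]{t}\,f(\sqrt[3]{t}\,)=t\,\widehat{\sigma}(t),\qquad d\sigma(\tau)=\frac{s_{2}(\sqrt[3]{\tau})}{3\,\tau^{2/3}}\,d\tau,
\]
not $\widehat{\sigma}(t)/3$ as you wrote: you dropped a factor of $t$. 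This factor is the entire point of the lemma. With the correct identity, $H_{2}(t)=t\,(P_{1}(t)+P_{2}(t)\,\widehat{\sigma}(t))$ is the \emph{easy} case — it is exactly a combination covered by the known AT-system result (Lemma 5 of the Fidalgo--L\'opez reference, which the paper simply cites) — whereas $H_{1}(t)=P_{1}(t)+t\,P_{2}(t)\,\widehat{\sigma}(t)$ is the hard one, since $t\,P_{2}(t)$ has degree up to $n_{2}$ and a naive application of the AT property gives $n_{1}+n_{2}$ zeros, one too many. Your decomposition inverts this: it makes $H_{1}$ look trivially an AT combination and $H_{2}$ look like the problem case, and your assertion that the extra $t$ ``does not change the total count'' is precisely the unjustified step; by the same token your closing worry about ``bookkeeping the degrees correctly'' is the real difficulty, acknowledged but never resolved.

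For the hard case the paper's contradiction is concrete, not a vague ``homogeneous linear system with only the trivial solution.'' Assuming $H_{1}$ has $n_{1}+n_{2}$ zeros on $(0,\infty)$, one takes $T$ vanishing at them; since all these zeros are positive, $T$ has \emph{constant sign} on $[-b^{3},-a^{3}]$ — an observation absent from your sketch and essential to the conclusion. When $n_{1}\geq n_{2}$ one has $H_{1}(z)/T(z)=O(z^{-n_{2}-1})$, and the contour argument yields
\[
0=\int_{-b^{3}}^{-a^{3}}\frac{\tau^{\nu+1}\,P_{2}(\tau)\,s_{2}(\sqrt[3]{\tau})}{T(\tau)\,\tau^{2/3}}\,d\tau,\qquad 0\leq\nu\leq n_{2}-1,
\]
which forces $P_{2}$ to change sign $n_{2}$ times, contradicting $\deg P_{2}\leq n_{2}-1$. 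Crucially, this decay estimate fails when $n_{1}<n_{2}$ (one gets only $n_{1}$ relations), and the paper must divide by $T\,\widehat{\sigma}$ instead of $T$, invoking the Krein--Nudelman fact that $1/\widehat{\sigma}=l+\widehat{\mu}$ with $l$ a linear polynomial and $\mu$ of constant sign on $[-b^{3},-a^{3}]$. Your proposal contains no case split on $n_{1}$ versus $n_{2}$ and no substitute for this ingredient, so even after repairing the factor of $t$ the argument as sketched would not close.
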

\begin{proof}
Let $\sigma$ be a finite positive measure with compact support in
$\mathbb{R}$, and let
\[
\widehat{\sigma}(z):=\int\frac{d\sigma(x)}{z-x}.
\]
Lemma 5 in \cite{FidLop} asserts that $\{1, \widehat{\sigma}\}$
forms an AT system on any closed interval
$\Delta\subset\mathbb{R}$ disjoint from $\Co(\supp(\sigma))$, the
convex hull of $\supp(\sigma)$. This means that for any
multi-index $(n_{1},n_{2})\in\mathbb{Z}_{+}^{2}$, and any pair of
polynomials $\pi_{1}, \pi_{2}$ with $\deg \pi_{1}\leq n_{1}-1$,
$\deg \pi_{2}\leq n_{2}-1$, not both identically equal to zero,
the function $\pi_{1}+\pi_{2}\,\widehat{\sigma}$ has at most
$n_{1}+n_{2}-1$ zeros on $\Delta$, counting multiplicities. By
(\ref{eq:stateprop2}) we know that $H_{2}(t)=t\,(P_{1}(t)
+P_{2}(t)\,\widehat{\sigma}(t))$, where $\sigma$ denotes now the
measure $(s_{2}(\sqrt[3]{\tau})/3 \tau^{2/3})\,d\tau$ supported on
$[-b^3,-a^3]$, so the assertion concerning $H_{2}$ is valid.

Let $n_{1}\geq n_{2}$, and suppose that there exist polynomials
$P_{1}, P_{2}$, not both identically equal to zero, such that
$H_{1}$ has at least $n_{1}+n_{2}$ zeros on $(0,\infty)$, counting
multiplicities. We may assume that $P_{2}\not\equiv 0$. Let $T$ be
a polynomial of degree $n_{1}+n_{2}$ that vanishes at
$n_{1}+n_{2}$ zeros of $H_{1}$ on $(0,\infty)$. $H_{1}$ can be
analytically extended onto $\mathbb{C}\setminus[-b^3,-a^3]$,
\[
\frac{H_{1}(z)}{T(z)}=\frac{P_{1}(z)}{T(z)} +\frac{z P_{2}(z)}{3
T(z)}\int_{-b^3}^{-a^3}
\frac{s_{2}(\sqrt[3]{\tau})}{z-\tau}\frac{d\tau}{\tau^{2/3}}
=O\Big(\frac{1}{z^{n_{2}+1}}\Big),\qquad z\rightarrow\infty.
\]
By a standard argument this implies that
\[
0=\int_{-b^3}^{-a^3}\frac{\tau^{\nu+1}
P_{2}(\tau)\,s_{2}(\sqrt[3]{\tau})}{T(\tau)\,\tau^{2/3}}\,d\tau,\qquad
0\leq \nu\leq n_{2}-1,
\]
contradicting the fact that $\deg P_{2}\leq n_{2}-1$. If
$n_{1}<n_{2}$, we use again this argument by contradiction, but
now we divide $H_{1}(z)$ by $T(z)\,\widehat{\sigma}(z)$ instead of
$T(z)$, and use the fact that
$1/\widehat{\sigma}(z)=l(z)+\widehat{\mu}(z)$, where $l(z)$ is a
polynomial of degree one and $\mu$ is a measure of constant sign
supported on $[-b^3,-a^3]$ (see the appendix of \cite{KreinNud}).
\end{proof}

\noindent{\bf Proof of Proposition \ref{propmaximal}.} Assume
first that $n=3l$, $d_{2n}=3j$. Then
$(\ref{eq:ortintQn1})$--$(\ref{eq:ortintQn2})$ reduce to
\[
0=\int_{0}^{\alpha}t^{3k}Q_{2n}(t)\,s_{1}(t)\,dt=\int_{0}^{\alpha}t^{3k}Q_{2n}(t)\,t\,f(t)\,s_{1}(t)\,dt,\qquad
0\leq k\leq l-1.
\]
From (\ref{sympropQn}) and the assumption $d_{2n}=3j$, we deduce
that $Q_{2n}(t)=\widetilde{Q}_{2n}(t^{3})$, for a polynomial
$\widetilde{Q}_{2n}$ of degree $j$. Therefore,
\begin{equation}\label{eq:orttilde2n1}
0=\int_{0}^{\alpha^{3}}\tau^{k}\,\widetilde{Q}_{2n}(\tau)\,
s_{1}(\sqrt[3]{\tau})\,\frac{d\tau}{\tau^{2/3}}
=\int_{0}^{\alpha^{3}}\tau^{k}\,\widetilde{Q}_{2n}(\tau)\,
\sqrt[3]{\tau}\,f(\sqrt[3]{\tau})\,s_{1}(\sqrt[3]{\tau})\,\frac{d\tau}{\tau^{2/3}},\qquad
0\leq k\leq l-1.
\end{equation}

Suppose that $\widetilde{Q}_{2n}$ has $N<2 l$ sign change knots on
$(0,\alpha^{3})$. Let $P_{1}$, $P_{2}$ be polynomials of degree at
most $l-1$, $(P_{1},P_{2})\neq (0,0)$, such that
$H_{1}(t)=P_{1}(t)+P_{2}(t)\sqrt[3]{t}\,f(\sqrt[3]{t})$ has a zero
at each point where $\widetilde{Q}_{2n}$ changes sign on
$(0,\alpha^{3})$, and a zero of order $2 l-1-N$ at $\alpha^{3}$.
By Lemma \ref{AT}, $H_{1}$ has no zeros on $(0,\alpha^{3}]$ other
than the $2 l-1$ prescribed. Combining the two orthogonality
conditions in $(\ref{eq:orttilde2n1})$ we obtain
\[
\int_{0}^{\alpha^{3}}H_{1}(\tau)\,\widetilde{Q}_{2n}(\tau)\,
s_{1}(\sqrt[3]{\tau})\,\frac{d\tau}{\tau^{2/3}}\,d\tau=0.
\]
This contradicts the fact that $H_{1}\,\widetilde{Q}_{2n}$ is
real-valued and has constant sign on $[0,\alpha^{3}]$. Applying
$(\ref{sympropQn})$ we conclude that $Q_{2n}$ has exactly $2 n$
simple zeros on $S_{0}$, $2 n/3$ of them are located on
$(0,\alpha)$, and the remaining zeros are rotations of the zeros
on $(0,\alpha)$ by angles of $2\pi/3$ and $4\pi/3$.

Suppose now that $n=3l$ and $d_{2n}=3j+1$. We will reach a
contradiction. In this case
$Q_{2n}(t)=t\,\widetilde{Q}_{2n}(t^{3})$, for some polynomial
$\widetilde{Q}_{2n}$ of degree $j$. From $(\ref{eq:ortintQn1})$
and $(\ref{eq:ortintQn2})$ we deduce that
\begin{equation}\label{eq:orttilde2n3}
0=\int_{0}^{\alpha^{3}}\tau^{k}\widetilde{Q}_{2n}(\tau)
\,\tau\,s_{1}(\sqrt[3]{\tau})\,\frac{d\tau}{\tau^{2/3}}
=\int_{0}^{\alpha^{3}}\tau^{k}\widetilde{Q}_{2n}(\tau)\,\sqrt[3]{\tau}\,
f(\sqrt[3]{\tau})\,s_{1}(\sqrt[3]{\tau})\,\frac{d\tau}{\tau^{2/3}},\qquad
0\leq k\leq l-1.
\end{equation}
The polynomial $\widetilde{Q}_{2n}$ has $N\leq j$ sign change
knots on $(0,\alpha^3)$. Since $d_{2n}\leq 2n$, we have $j\leq 2
l-1$. Let $P_{1}, P_{2}$ be polynomials of degree at most $l-1$,
not both simultaneously zero, such that
$H_{2}(t)=P_{1}(t)\,t+P_{2}(t)\sqrt[3]{t}\, f(\sqrt[3]{t})$ has a
zero at each point where $\widetilde{Q}_{2n}$ changes sign on
$(0,\alpha^{3})$ and has a zero of order $2 l-1-N$ at
$\alpha^{3}$. The same argument used before but now applied to
$H_{2}$ shows that Lemma \ref{AT} and (\ref{eq:orttilde2n3}) yield
a contradiction. Therefore $d_{2n}=3j+1$ is impossible if $n$ is a
multiple of $3$. Similarly one proves that the assumptions $n=3l$
and $d_{2n}=3j+2$ are not compatible.

The cases $n=3l+1$ and $n=3l+2$ are handled in identical manner,
showing in the first case that $d_{2n}$ is of the form $3j+2$ and
$Q_{2n}$ has $2l$ sign change knots on $(0,\alpha)$, and in the
second case by showing that $d_{2n}$ is of the form $3j+1$ and
$Q_{2n}$ has $2l+1$ sign change knots on $(0,\alpha)$.

The analysis for the polynomials $Q_{2n+1}$ is similar. The
details are left to the reader.\hfill $\Box$

\begin{co}
The polynomials $Q_{n}$ and the functions $\Psi_{n}$ satisfy
\begin{equation}\label{sympropQnrev}
Q_{n}(e^{\frac{2\pi i}{3}}z)=e^{\frac{2\pi i n}{3}}Q_{n}(z),
\end{equation}
\begin{equation}\label{sympropPsin1}
\Psi_{n}(e^{\frac{2\pi i}{3}}z)=e^{-\frac{2\pi
i}{3}(1+2n)}\Psi_{n}(z),
\end{equation}
for all $n\geq 0$.
\end{co}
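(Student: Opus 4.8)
The plan is to obtain both identities directly from facts already established, leaving only elementary manipulations. For $(\ref{sympropQnrev})$ I would combine Proposition $\ref{propmaximal}$ with the symmetry relation recorded in $(\ref{sympropQn})$: Proposition $\ref{propmaximal}$ asserts $\deg Q_n = n$, i.e. $d_n = n$, and substituting this into $Q_n(e^{2\pi i/3}z) = e^{2\pi i d_n/3}Q_n(z)$ from $(\ref{sympropQn})$ yields $(\ref{sympropQnrev})$ at once. This step is pure bookkeeping.

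For $(\ref{sympropPsin1})$ I would work from the integral defining $\Psi_n$. Writing $\omega := e^{2\pi i/3}$ and evaluating at $\omega z$, I would make the change of variables $t = \omega u$. Because $S_0$ is invariant under multiplication by $\omega$ (the substitution cyclically permutes its three rays while preserving their outward orientation), the domain of integration is unchanged; moreover the factor $\omega$ produced by $dt = \omega\, du$ cancels against the $\omega$ in $\omega u - \omega z = \omega(u-z)$. This reduces $\Psi_n(\omega z)$ to $\int_{S_0} Q_n(\omega u)(u-z)^{-1}s_1(\omega u)\, du$. I would then insert the two rotation rules $Q_n(\omega u) = e^{2\pi i n/3}Q_n(u)$ (just proved as $(\ref{sympropQnrev})$) and $s_1(\omega u) = e^{4\pi i/3}s_1(u)$ (the standing hypothesis $(\ref{eq:symsigma1})$), pull the two constants outside the integral, and recognize what remains as $\Psi_n(z)$, obtaining $\Psi_n(\omega z) = e^{2\pi i n/3}e^{4\pi i/3}\Psi_n(z)$.

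The final point is to confirm that the scalar $e^{2\pi i n/3}e^{4\pi i/3}$ equals $e^{-2\pi i(1+2n)/3}$, which is immediate since the two exponents differ by $2\pi i(n+1)$, an integer multiple of $2\pi i$. There is no genuine obstacle here; the only thing requiring a moment's care is the consistency of the change of variables on the rotated contour $S_0$, but this is controlled precisely by the rotational symmetries of $Q_n$ and $s_1$, so it presents no difficulty.
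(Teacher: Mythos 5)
Your proposal is correct and follows the same route as the paper: identity (\ref{sympropQnrev}) is obtained exactly as in the paper by inserting $d_n=n$ (Proposition \ref{propmaximal}) into (\ref{sympropQn}), and identity (\ref{sympropPsin1}) is the change-of-variables computation in the defining integral of $\Psi_n$ that the paper compresses into ``an immediate consequence of (\ref{sympropQnrev}) and the definition of $\Psi_n$'', with the symmetry (\ref{eq:symsigma1}) of $s_1$ used implicitly there and made explicit by you. Your verification that $e^{2\pi i n/3}e^{4\pi i/3}=e^{-2\pi i(1+2n)/3}$ (exponents differing by $2\pi i(n+1)$) is also correct.
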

\begin{proof}
$(\ref{sympropQnrev})$ follows from (\ref{sympropQn}) and
$d_{n}=n$. (\ref{sympropPsin1}) is an immediate consequence of
$(\ref{sympropQnrev})$ and the definition of $\Psi_{n}$.
\end{proof}

\noindent{\bf Proof of Proposition \ref{proprecurrence}.} The
initial conditions (\ref{initialcond}) are immediate to check. For
$n\geq 1$,
\begin{equation}\label{eq:rec1}
zQ_{2n}=Q_{2n+1}+b_{2n}Q_{2n}+b_{2n-1}Q_{2n-1}+b_{2n-2}Q_{2n-2}+\cdots+b_{1}Q_{1}+b_{0}Q_{0},
\end{equation}
and let us show that
\begin{equation}\label{eq:rec2}
b_{2n-3}=b_{2n-4}=\cdots=b_{1}=b_{0}=0,
\end{equation}
and
\begin{equation}\label{eq:rec3}
b_{2n}=b_{2n-1}=0.
\end{equation}

We prove $(\ref{eq:rec2})$ by induction. Let $n\geq 2$. If we
integrate (\ref{eq:rec1}) term by term with respect to
$s_{1}(t)\,dt$, (\ref{eq:ortQn1}) and (\ref{eq:ortQn2}) imply that
$b_{0}=0$. The fact that $b_{1}=0$ follows now by integrating
(\ref{eq:rec1}) term by term with respect to $f(t)\,s_{1}(t)\,dt$.
Assume now that $0=b_{0}=b_{1}=\cdots=b_{2k}=b_{2k+1}=0$ for some
$k\leq n-3$. After multiplying (\ref{eq:rec1}) by $z^{k+1}$ and
integrating the resulting equation first with respect to
$s_{1}(t)\,dt$, and then with respect to $f(t)\,s_{1}(t)\,dt$, we
get $b_{2k+2}=b_{2k+3}=0$ (observe that
$\int_{S_{0}}t^{k+1}Q_{2k+2}(t)\,s_{1}(t)\,dt\neq 0$ and
$\int_{S_{0}}t^{k+1}Q_{2k+3}(t)\,f(t)\,s_{1}(t)\,dt\neq 0$), so
(\ref{eq:rec2}) follows. (\ref{eq:rec3}) is immediate from
(\ref{sympropQnrev}).

Analogously one shows that for $n\geq 1$, $z
Q_{2n+1}=Q_{2n+2}+a_{2n+1}\,Q_{2n-1}$, $a_{2n+1}\in\mathbb{R}$, so
(\ref{recurrence}) is justified. The formulas in (\ref{intrepa2n})
follow directly from (\ref{recurrence}). The positivity of the
recurrence coefficients is proved later in Proposition
\ref{proppositrec}. \hfill $\Box$

\section{The functions of second type $\Psi_{n}$
and associated polynomials $Q_{n,2}$}\label{sectionsecondtypeQn2}

\begin{prop}
The following formula holds:
\begin{equation}\label{eq:Psigeneral}
\Psi_{n}(z)=\int_{0}^{\alpha}\Big(\frac{1}{t-z}+\frac{e^{\frac{2\pi
i n}{3}}}{e^{\frac{2\pi i}{3}}t-z}+\frac{e^{\frac{4\pi i
n}{3}}}{e^{\frac{4\pi i}{3}}t-z}\Big)Q_{n}(t)\,s_{1}(t)\,dt,\qquad
z\notin S_{0}.
\end{equation}
In particular, for any integer $k\geq 0$,
\begin{equation}\label{eq:Psi3k}
\Psi_{3k}(z)=3
z^{2}\int_{0}^{\alpha}\frac{Q_{3k}(t)\,s_{1}(t)}{t^{3}-z^{3}}\,dt
=z^{2}\int_{0}^{\alpha^{3}}\frac{Q_{3k}(\sqrt[3]{\tau})\,s_{1}(\sqrt[3]{\tau})}
{\tau-z^{3}}\,\frac{d\tau}{\tau^{2/3}},
\end{equation}
\begin{equation}\label{eq:Psi3k1}
\Psi_{3k+1}(z)=3\int_{0}^{\alpha}\frac{t^{2}\,Q_{3k+1}(t)\,s_{1}(t)}{t^{3}-z^{3}}\,dt
=\int_{0}^{\alpha^{3}}\frac{Q_{3k+1}(\sqrt[3]{\tau})\,s_{1}(\sqrt[3]{\tau})}
{\tau-z^{3}}\,d\tau,
\end{equation}
\begin{equation}\label{eq:Psi3k2}
\Psi_{3k+2}(z)=3
z\int_{0}^{\alpha}\frac{t\,Q_{3k+2}(t)\,s_{1}(t)}{t^{3}-z^{3}}\,dt
=z\int_{0}^{\alpha^{3}}\frac{Q_{3k+2}(\sqrt[3]{\tau})\,s_{1}(\sqrt[3]{\tau})}
{\tau-z^{3}}\,\frac{d\tau}{\tau^{1/3}}.
\end{equation}
\end{prop}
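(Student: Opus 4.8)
The plan is to reduce the contour integral defining $\Psi_{n}$ over the three-pronged star $S_{0}$ to a single integral over $[0,\alpha]$, using the rotational symmetries of $Q_{n}$ and $s_{1}$, and then to carry out an elementary partial-fraction collapse. Write $\omega:=e^{2\pi i/3}$, so that $\omega^{n}=e^{2\pi i n/3}$. Starting from $\Psi_{n}(z)=\int_{S_{0}}\frac{Q_{n}(t)}{t-z}\,s_{1}(t)\,dt$, I would split the star into its three rays and parametrize the $k$-th ray ($k=0,1,2$) by $t\mapsto\omega^{k}t$ with $t\in[0,\alpha]$, so that the differential picks up the factor $d(\omega^{k}t)=\omega^{k}\,dt$.

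On the ray $k=1$ the relations $(\ref{sympropQnrev})$ and $(\ref{eq:symsigma1})$ give $Q_{n}(\omega t)=\omega^{n}Q_{n}(t)$ and $s_{1}(\omega t)=\omega^{2}s_{1}(t)$, while the differential contributes an extra factor $\omega$; since $\omega^{2}\cdot\omega=\omega^{3}=1$, only the factor $\omega^{n}=e^{2\pi i n/3}$ survives and this ray produces $e^{2\pi i n/3}\int_{0}^{\alpha}\frac{Q_{n}(t)}{\omega t-z}\,s_{1}(t)\,dt$. Iterating $(\ref{eq:symsigma1})$ yields $s_{1}(\omega^{2}t)=\omega s_{1}(t)$, and together with $Q_{n}(\omega^{2}t)=\omega^{2n}Q_{n}(t)$ and the differential factor $\omega^{2}$ (whose product with $\omega$ is again $1$) the ray $k=2$ produces the analogous term carrying $e^{4\pi i n/3}$. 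Summing the three contributions gives exactly the general representation $(\ref{eq:Psigeneral})$.

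For the three specializations I would substitute $n=3k,\,3k+1,\,3k+2$ into $(\ref{eq:Psigeneral})$, so that the pair of phases $(e^{2\pi i n/3},e^{4\pi i n/3})$ becomes $(1,1)$, $(\omega,\omega^{2})$, $(\omega^{2},\omega)$ respectively. In each case I combine the three simple fractions over the common denominator $(t-z)(\omega t-z)(\omega^{2}t-z)=t^{3}-z^{3}$, the last equality being immediate from $\omega^{3}=1$. Expanding the numerators and repeatedly using $1+\omega+\omega^{2}=0$ collapses them to $3z^{2}$, $3t^{2}$, and $3tz$ for $n\equiv 0,1,2\pmod 3$, which gives the first equalities in $(\ref{eq:Psi3k})$--$(\ref{eq:Psi3k2})$. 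The second equalities then follow from the change of variable $\tau=t^{3}$, for which $t=\tau^{1/3}$ and $dt=d\tau/(3\tau^{2/3})$, producing the factors $\tau^{-2/3}$, $1$, and $\tau^{-1/3}$ together with the prefactors $z^{2}$, $1$, and $z$.

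There is no genuine obstacle here; the entire argument is bookkeeping with cube roots of unity. The only points requiring care are the exact cancellation of the phase factors coming from $s_{1}$ and from the differential when folding each ray onto $[0,\alpha]$ (which depends on the precise exponents appearing in $(\ref{eq:symsigma1})$), and the three partial-fraction identities, whose numerators reduce to the stated monomials only because $1+\omega+\omega^{2}=0$.
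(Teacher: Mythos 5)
Your proposal is correct and follows essentially the same route as the paper: the paper's own proof is the one-line observation that the definition of $\Psi_{n}$ together with the symmetry properties (\ref{eq:symsigma1}) and (\ref{sympropQnrev}) gives (\ref{eq:Psigeneral}) directly, which is exactly your ray-by-ray folding argument written out in full (including the correct cancellation $\omega^{2}\cdot\omega=1$ of the weight and differential phases). Your partial-fraction reductions of the numerators to $3z^{2}$, $3t^{2}$, $3tz$ and the substitution $\tau=t^{3}$ are the intended, routine completion of the "in particular" statements.
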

\begin{proof}
The definition of $\Psi_{n}$ and the symmetry properties
(\ref{eq:symsigma1}) and (\ref{sympropQnrev}) give directly
(\ref{eq:Psigeneral}).
\end{proof}
\begin{prop}\label{proportQdetailed}
For any integer $l\geq 0$, the following orthogonality conditions
hold:
\begin{equation}\label{eq:ortQ6l}
0=\int_{0}^{\alpha^{3}}\tau^{k}\,Q_{6l}(\sqrt[3]{\tau})
\,s_{1}(\sqrt[3]{\tau})\,\frac{d\tau}{\tau^{2/3}},\qquad 0\leq
k\leq l-1,
\end{equation}
\begin{equation}\label{eq:ortQ6l1}
0=\int_{0}^{\alpha^{3}}\tau^{k}\,Q_{6l+1}(\sqrt[3]{\tau})
\,s_{1}(\sqrt[3]{\tau})\,d\tau,\qquad 0\leq k\leq l-1,
\end{equation}
\begin{equation}\label{eq:ortQ6l2}
0=\int_{0}^{\alpha^{3}}\tau^{k}\,Q_{6l+2}(\sqrt[3]{\tau})
\,s_{1}(\sqrt[3]{\tau})\,\frac{d\tau}{\tau^{1/3}},\qquad 0\leq
k\leq l-1,
\end{equation}
\begin{equation}\label{eq:ortQ6l3}
0=\int_{0}^{\alpha^{3}}\tau^{k}\,Q_{6l+3}(\sqrt[3]{\tau})
\,s_{1}(\sqrt[3]{\tau})\,\frac{d\tau}{\tau^{2/3}},\qquad 0\leq
k\leq l,
\end{equation}
\begin{equation}\label{eq:ortQ6l4}
0=\int_{0}^{\alpha^{3}}\tau^{k}\,Q_{6l+4}(\sqrt[3]{\tau})
\,s_{1}(\sqrt[3]{\tau})\,d\tau,\qquad 0\leq k\leq l-1,
\end{equation}
\begin{equation}\label{eq:ortQ6l5}
0=\int_{0}^{\alpha^{3}}\tau^{k}\,Q_{6l+5}(\sqrt[3]{\tau})
\,s_{1}(\sqrt[3]{\tau})\,\frac{d\tau}{\tau^{1/3}},\qquad 0\leq
k\leq l.
\end{equation}
\end{prop}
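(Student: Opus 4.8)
The plan is to obtain all six identities as immediate consequences of the symmetrized orthogonality relations \eqref{eq:ortintQn1} and \eqref{eq:ortintQn3} established in Proposition \ref{ortint}, combined with the degree equality $d_n = n$ of Proposition \ref{propmaximal} and the elementary fact that $1 + e^{2\pi i m/3} + e^{4\pi i m/3}$ equals $3$ when $m \equiv 0 \pmod 3$ and vanishes otherwise. Only the pure-$s_1$ conditions enter; the relations \eqref{eq:ortintQn2} and \eqref{eq:ortintQn4} involving $f$ play no role here.

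I would first separate the six residues into the even indices $6l, 6l+2, 6l+4$ and the odd indices $6l+1, 6l+3, 6l+5$. Writing an even index as $2n'$ (so that $n' = 3l, 3l+1, 3l+2$ respectively) and inserting $d_{2n'} = 2n'$ into \eqref{eq:ortintQn1}, the trigonometric factor is nonzero precisely when $k \equiv -2n' \pmod 3$, and the condition holds for $0 \le k \le n'-1$. Likewise, writing an odd index as $2n'+1$ and using \eqref{eq:ortintQn3} with $d_{2n'+1} = 2n'+1$, the factor survives exactly when $k \equiv -(2n'+1) \pmod 3$, now with range $0 \le k \le n'$.

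For each surviving arithmetic progression of $k$ I would then change variables $\tau = t^3$, so that $t^{k}\,dt = \tfrac13\,\tau^{(k-2)/3}\,d\tau$. The residue of $k$ modulo $3$ fixes the leftover weight: $k \equiv 0$ yields $\tau^{-2/3}$, $k \equiv 1$ yields $\tau^{-1/3}$, and $k \equiv 2$ yields the weight $1$. Matching the six residue selections against these weights reproduces exactly the weights appearing in \eqref{eq:ortQ6l}--\eqref{eq:ortQ6l5}, and after dividing out the harmless constant $3$ each retained relation becomes the asserted integral over $[0,\alpha^3]$ in the relabeled index $k' = \lfloor k/3\rfloor$.

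The only delicate point, and the nearest thing to an obstacle, is the bookkeeping of the index ranges. Since the even conditions stop at $k = n'-1$ while the odd ones reach $k = n'$, translating these bounds through the residue selection and the substitution $k = 3k' + r$ yields $0 \le k' \le l-1$ for $j \in \{0,1,2,4\}$ and $0 \le k' \le l$ for $j \in \{3,5\}$. Checking these six elementary inequalities (for instance $3k'+2 \le 3l+1 \Leftrightarrow k' \le l-1$ in the case $j = 4$) is the real substance of the argument; everything else is a routine change of variables.
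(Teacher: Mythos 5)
Your proposal is correct and follows exactly the route the paper takes: its proof of Proposition \ref{proportQdetailed} is the one-line observation that the conditions ``follow immediately from Proposition \ref{ortint} and the fact that $d_{n}=n$ for all $n\geq 0$,'' and your residue selection modulo $3$, the substitution $\tau=t^{3}$, and the index bookkeeping (in particular $0\le k'\le l-1$ for $j\in\{0,1,2,4\}$ versus $0\le k'\le l$ for $j\in\{3,5\}$) are precisely the details the paper leaves implicit, all of which check out.
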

\begin{proof}
Follows immediately from Proposition \ref{ortint} and the fact
that $d_{n}=n$ for all $n\geq 0$.
\end{proof}
\begin{co}
The following estimates are valid as $z\rightarrow\infty$:
\begin{equation}\label{eq:ordPsi6linf}
\Psi_{6l}(z)=O\Big(\frac{1}{z^{3l+1}}\Big),\qquad
\Psi_{6l+2}(z)=O\Big(\frac{1}{z^{3l+2}}\Big),\qquad
\Psi_{6l+4}(z)=O\Big(\frac{1}{z^{3l+3}}\Big),
\end{equation}
\begin{equation}\label{eq:ordPsi6l1inf}
\Psi_{6l+1}(z)=O\Big(\frac{1}{z^{3l+3}}\Big),\qquad
\Psi_{6l+3}(z)=O\Big(\frac{1}{z^{3l+4}}\Big),\qquad
\Psi_{6l+5}(z)=O\Big(\frac{1}{z^{3l+5}}\Big).
\end{equation}
\end{co}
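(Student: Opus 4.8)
The plan is to combine the explicit integral representations $(\ref{eq:Psi3k})$--$(\ref{eq:Psi3k2})$ with the expansion of the Cauchy kernel at infinity and the moment-orthogonality conditions of Proposition $\ref{proportQdetailed}$. For $|z^{3}|>\alpha^{3}$ we have the uniformly convergent expansion
\[
\frac{1}{\tau-z^{3}}=-\sum_{j=0}^{\infty}\frac{\tau^{j}}{z^{3(j+1)}},\qquad \tau\in[0,\alpha^{3}],
\]
so each $\Psi_{n}$ turns into a power series in $1/z$ whose coefficients are the moments $\int_{0}^{\alpha^{3}}\tau^{j}\,Q_{n}(\sqrt[3]{\tau})\,s_{1}(\sqrt[3]{\tau})\,w(\tau)\,d\tau$, where the weight $w$ is $\tau^{-2/3}$, $1$, or $\tau^{-1/3}$ according to whether $n\equiv 0,1,2 \pmod 3$ (cf. the second equalities in $(\ref{eq:Psi3k})$--$(\ref{eq:Psi3k2})$). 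The orthogonality relations then annihilate the lowest-order moments, and the leading surviving term, multiplied by the prefactor $z^{2}$, $1$, or $z$, yields the asserted order.

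To illustrate, take $n=6l$, so $3k=6l$ in $(\ref{eq:Psi3k})$. With the prefactor $z^{2}$ we obtain
\[
\Psi_{6l}(z)=-z^{2}\sum_{j=0}^{\infty}\frac{1}{z^{3j+3}}\int_{0}^{\alpha^{3}}\tau^{j}\,Q_{6l}(\sqrt[3]{\tau})\,s_{1}(\sqrt[3]{\tau})\,\frac{d\tau}{\tau^{2/3}}.
\]
By $(\ref{eq:ortQ6l})$ the moments with $0\leq j\leq l-1$ vanish, so the first surviving term corresponds to $j=l$, giving an order $z^{2}/z^{3l+3}=1/z^{3l+1}$, as claimed. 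The cases $n=6l+1$ (via $(\ref{eq:Psi3k1})$, prefactor $1$, $(\ref{eq:ortQ6l1})$) and $n=6l+2$ (via $(\ref{eq:Psi3k2})$, prefactor $z$, $(\ref{eq:ortQ6l2})$) are handled identically, and in both the lowest $l$ moments vanish, yielding orders $1/z^{3l+3}$ and $1/z^{3l+2}$.

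The three remaining residues differ only in that one extra moment vanishes: for $n=6l+3$ one uses $(\ref{eq:Psi3k})$ with $3k=6l+3$ together with $(\ref{eq:ortQ6l3})$, whose orthogonality range $0\leq k\leq l$ forces the first surviving term to be $j=l+1$, so the order is $z^{2}/z^{3(l+1)+3}=1/z^{3l+4}$; for $n=6l+4$ one uses $(\ref{eq:Psi3k1})$ and $(\ref{eq:ortQ6l4})$ (first term $j=l$, order $1/z^{3l+3}$); and for $n=6l+5$ one uses $(\ref{eq:Psi3k2})$ and $(\ref{eq:ortQ6l5})$ (first term $j=l+1$, order $z/z^{3(l+1)+3}=1/z^{3l+5}$). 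This exhausts $(\ref{eq:ordPsi6linf})$--$(\ref{eq:ordPsi6l1inf})$.

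I do not anticipate a genuine obstacle here: the estimate is a direct consequence of the orthogonality already established. The only point requiring care is the bookkeeping that pairs each residue class $n\bmod 6$ with the correct representation among $(\ref{eq:Psi3k})$--$(\ref{eq:Psi3k2})$, the correct prefactor and weight, and the correct count of vanishing moments ($l$ for $n\equiv 0,1,2,4\pmod 6$ and $l+1$ for $n\equiv 3,5\pmod 6$, in accordance with the orthogonality ranges in Proposition $\ref{proportQdetailed}$). Tracking how the prefactors $z^{2}$ and $z$ lower the exponent by $2$ and $1$, respectively, is what produces the slightly irregular pattern of orders in the statement.
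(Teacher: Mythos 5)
Your proof is correct and follows essentially the same route as the paper: expand the Cauchy kernel in the representations (\ref{eq:Psi3k})--(\ref{eq:Psi3k2}) and use the vanishing moments from Proposition \ref{proportQdetailed} to kill the low-order terms, with the prefactors $z^{2},1,z$ accounting for the final exponents. The only cosmetic difference is that the paper dispatches the even residues $6l,6l+2,6l+4$ by simply citing the estimate $\Psi_{2n}(z)=O(1/z^{n+1})$ from (\ref{eq:BVPPsin1}) and reserves the moment-expansion argument for the odd residues, whereas you run the same argument uniformly for all six cases.
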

\begin{proof}
By (\ref{eq:BVPPsin1}) we know that $\Psi_{2n}(z)=O(1/z^{n+1})$,
which implies (\ref{eq:ordPsi6linf}). We can improve the estimate
$\Psi_{2n+1}(z)=O(1/z^{n+2})$ given in (\ref{eq:BVPPsin1}). From
Proposition \ref{proportQdetailed} we deduce:
\[
\int\frac{Q_{6l+1}(\sqrt[3]{\tau})
\,s_{1}(\sqrt[3]{\tau})}{\tau-z}\,d\tau=O\Big(\frac{1}{z^{l+1}}\Big),\quad\,\,\,
\int\frac{Q_{6l+3}(\sqrt[3]{\tau})
\,s_{1}(\sqrt[3]{\tau})}{\tau-z}\,\frac{d\tau}{\tau^{2/3}}=O\Big(\frac{1}{z^{l+2}}\Big),\quad\,\,\,
\int\frac{Q_{6l+5}(\sqrt[3]{\tau})
\,s_{1}(\sqrt[3]{\tau})}{\tau-z}\,\frac{d\tau}{\tau^{1/3}}=O\Big(\frac{1}{z^{l+2}}\Big).
\]
We conclude the proof now by taking into account the
representations (\ref{eq:Psi3k})--(\ref{eq:Psi3k2}).
\end{proof}

It is convenient to rewrite the orthogonality conditions in
Proposition \ref{ortPsi1} in terms of the interval $(-b^3,-a^3)$.
Applying the symmetry properties (\ref{sympropPsin1}) and
(\ref{symrho1}), we obtain:

\begin{prop}\label{ortPsinorm}
The functions $\Psi_{n}$ satisfy:
\[
0=\int_{-b}^{-a}t^{\nu}\,\Psi_{2n}(t)\,(1+e^{\frac{2\pi
i}{3}\,(\nu-4n-1)}+e^{\frac{4\pi
i}{3}\,(\nu-4n-1)})\,s_{2}(t)\,dt,\qquad \nu=0,\ldots,n-1,
\]
\[
0=\int_{-b}^{-a}t^{\nu}\,\Psi_{2n+1}(t)\,(1+e^{\frac{2\pi
i}{3}\,(\nu-n)}+e^{\frac{4\pi
i}{3}\,(\nu-n)})\,s_{2}(t)\,dt,\qquad \nu=0,\ldots,n-1.
\]
In particular, for any integer $l\geq 0$,
\begin{equation}\label{eq:ortintPsi6l}
0=\int_{-b^3}^{-a^3}\tau^{k}\,\Psi_{6l+j}(\sqrt[3]{\tau})
\,s_{2}(\sqrt[3]{\tau})\,\frac{d\tau}{\tau^{1/3}},\qquad 0\leq
k\leq l-1,\quad j=0,3,
\end{equation}
\begin{equation}\label{eq:ortintPsi6l2}
0=\int_{-b^3}^{-a^3}\tau^{k}\,\Psi_{6l+2+j}(\sqrt[3]{\tau})
\,s_{2}(\sqrt[3]{\tau})\,d\tau,\qquad 0\leq k\leq l-1,\quad j=0,3,
\end{equation}
\begin{equation}\label{eq:ortintPsi6l1}
0=\int_{-b^3}^{-a^3}\tau^{k}\,\Psi_{6l+1}(\sqrt[3]{\tau})
\,s_{2}(\sqrt[3]{\tau})\,\frac{d\tau}{\tau^{2/3}},\qquad 0\leq
k\leq l-1,
\end{equation}
\begin{equation}\label{eq:ortintPsi6l4}
0=\int_{-b^3}^{-a^3}\tau^{k}\,\Psi_{6l+4}(\sqrt[3]{\tau})
\,s_{2}(\sqrt[3]{\tau})\,\frac{d\tau}{\tau^{2/3}},\qquad 0\leq
k\leq l,
\end{equation}
\end{prop}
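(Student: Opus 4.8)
The plan is to start from the two orthogonality relations in Proposition \ref{ortPsi1}, where the integration runs over the full star $S_{1}=\bigcup_{k=0}^{2}e^{2\pi i k/3}[-b,-a]$, and to collapse each integral onto the single segment $[-b,-a]$ using the rotational symmetries of $\Psi_{n}$ and $s_{2}$. Concretely, I would write $\int_{S_{1}}=\sum_{k=0}^{2}\int_{e^{2\pi i k/3}[-b,-a]}$ and, on the $k$-th piece, substitute $t=e^{2\pi i k/3}u$ with $u\in[-b,-a]$. This substitution produces four exponential factors: $(e^{2\pi i k/3})^{\nu}$ from $t^{\nu}$; the factor $e^{-\frac{2\pi i}{3}k(1+2n)}$ obtained by applying (\ref{sympropPsin1}) $k$ times to $\Psi_{n}(e^{2\pi i k/3}u)$; the factor $e^{\frac{4\pi i}{3}k}$ from iterating (\ref{symrho1}) on $s_{2}(e^{2\pi i k/3}u)$; and the Jacobian $e^{2\pi i k/3}$ from $dt$.

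Collecting these, the $k$-th piece equals $e^{\frac{2\pi i}{3}k\,m}$ times the base integral $\int_{-b}^{-a}u^{\nu}\Psi_{n}(u)\,s_{2}(u)\,du$, where the exponent $m$ reduces, modulo $3$, to $\nu-4n-1$ for $\Psi_{2n}$ and to $\nu-n$ for $\Psi_{2n+1}$; indeed the raw exponents are $\nu-4n+2$ and $\nu-4n$, which reduce to these using $2\equiv-1$ and $-4n\equiv-n\pmod 3$. Summing over $k=0,1,2$ factors the base integral out of the geometric sum $1+e^{\frac{2\pi i}{3}m}+e^{\frac{4\pi i}{3}m}$, and by Proposition \ref{ortPsi1} the whole expression vanishes; this yields the first pair of displayed identities.

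For the six specialized formulas I would exploit the elementary fact that $1+e^{\frac{2\pi i}{3}m}+e^{\frac{4\pi i}{3}m}$ equals $3$ when $m\equiv 0\pmod 3$ and $0$ otherwise. Thus, for each residue of $n$ modulo $3$ (equivalently each index $j$), exactly one congruence class of $\nu$ survives: writing $\nu=3k+r$ with the appropriate $r\in\{0,1,2\}$ annihilates the trigonometric factor for all other $\nu$ and leaves genuine orthogonality conditions. It then remains to change variables $\tau=t^{3}$, so that $dt=d\tau/(3\tau^{2/3})$ and $t^{3k+r}=\tau^{k+r/3}$, which turns $\int_{-b}^{-a}$ into $\tfrac13\int_{-b^{3}}^{-a^{3}}$ and merges the powers into the single weight $\tau^{k+(r-2)/3}$. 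The three values $r=0,1,2$ then produce precisely the weights $\tau^{-2/3}$, $\tau^{-1/3}$, and $1$ appearing in (\ref{eq:ortintPsi6l})--(\ref{eq:ortintPsi6l4}).

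The main obstacle is bookkeeping of two kinds. First, one must correctly reduce the several stacked exponential factors modulo $3$ to pin down the surviving residue $r$ for each $j$; for instance $\Psi_{6l}$ and $\Psi_{6l+3}$ select $r=1$ (weight $\tau^{-1/3}$), $\Psi_{6l+1}$ and $\Psi_{6l+4}$ select $r=0$ (weight $\tau^{-2/3}$), and $\Psi_{6l+2}$, $\Psi_{6l+5}$ select $r=2$ (weight $1$). Second, the original range $0\le\nu\le n-1$ must be translated under $\nu=3k+r$ into the stated index bounds; this gives $0\le k\le l-1$ in most cases, while for $\Psi_{6l+3}$ and $\Psi_{6l+5}$ the surviving $\nu$ reaches the top of its range and one obtains the extra value $k=l$, accounting for the bound $0\le k\le l$ in (\ref{eq:ortintPsi6l}) and (\ref{eq:ortintPsi6l4}). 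Careful tracking of these congruences and endpoints is the only delicate point; the analytic content is entirely contained in Proposition \ref{ortPsi1} and the two symmetry relations.
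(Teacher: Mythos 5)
Your reduction to the segment $[-b,-a]$, the collection of the four exponential factors, and the modular arithmetic identifying the surviving residues are exactly the argument the paper intends (the paper simply says ``applying the symmetry properties (\ref{sympropPsin1}) and (\ref{symrho1})'' and omits the computation). The first two displayed identities are derived correctly, and your residue assignments are right: $r=1$ (weight $\tau^{-1/3}$) for $\Psi_{6l},\Psi_{6l+3}$; $r=0$ (weight $\tau^{-2/3}$) for $\Psi_{6l+1},\Psi_{6l+4}$; $r=2$ (weight $d\tau$) for $\Psi_{6l+2},\Psi_{6l+5}$. The genuine error is in your final endpoint bookkeeping. For $\Psi_{6l+3}$ one has $2n+1=6l+3$, so $n=3l+1$ and $\nu$ runs up to $3l$; the surviving class is $\nu\equiv 1\pmod 3$, whose largest member not exceeding $3l$ is $3l-2=3(l-1)+1$, giving $0\le k\le l-1$, not $k\le l$. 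Likewise for $\Psi_{6l+5}$ ($n=3l+2$, $\nu\le 3l+1$, surviving $\nu\equiv 2$): the largest surviving $\nu$ is $3l-1=3(l-1)+2$, again $k\le l-1$. In neither case does the surviving $\nu$ reach the top of its range, contrary to what you assert.

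The extra value $k=l$ occurs only for $\Psi_{6l+4}$: there $2n=6l+4$, so $n=3l+2$, $\nu$ runs to $3l+1$, the surviving class is $\nu\equiv 0$, and $\nu=3l$ is admissible, which yields the bound $0\le k\le l$ in (\ref{eq:ortintPsi6l4}). Note that your attribution contradicts the very statement you are proving: (\ref{eq:ortintPsi6l}), which covers $\Psi_{6l+3}$, is stated with $0\le k\le l-1$, and (\ref{eq:ortintPsi6l4}) concerns $\Psi_{6l+4}$, not $\Psi_{6l+5}$. This is not a cosmetic slip, because the single extra condition enjoyed by $\Psi_{6l+4}$ is precisely what forces it to have $3l+3$ zeros rather than $3l$ in Proposition \ref{propzerosPsi1}, so the surplus must be placed on the correct index. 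With that correction, your proof is complete and coincides with the paper's.
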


As a consequence of
(\ref{eq:ortintPsi6l})--(\ref{eq:ortintPsi6l4}) we have:
\begin{co}\label{numerocambiosdesignoPsi}
For each $j\in\{0,1,2,3,5\}$, the function $\Psi_{6l+j}$ has at
least $l$ sign change knots in the interval $(-b,-a)$, and the
function $\Psi_{6l+4}$ has at least $l+1$ sign change knots in the
interval $(-b,-a)$. Therefore the functions $\Psi_{6l+j},
j\in\{0,1,2,3,5\}$ have at least $3l$ zeros, counting
multiplicities, in $\mathbb{C}\setminus S_{0}$, and $\Psi_{6l+4}$
has at least $3l+3$ zeros, counting multiplicities, in
$\mathbb{C}\setminus S_{0}$.
\end{co}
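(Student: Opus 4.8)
The plan is to run the classical ``orthogonality forces sign changes'' argument in the variable $\tau$ on $(-b^3,-a^3)$, and then transport the resulting zeros to $\mathbb{C}\setminus S_0$ via the rotational symmetry $(\ref{sympropPsin1})$. Two preliminaries set this up. First, for real $z\notin S_0$ the value $\Psi_n(z)$ is real, since each integrand in $(\ref{eq:Psi3k})$--$(\ref{eq:Psi3k2})$ is real when $z$ is real; thus $\tau\mapsto\Psi_{6l+j}(\sqrt[3]{\tau})$ is a real, real-analytic function on a neighborhood of $[-b^3,-a^3]$ (note $[-b,-a]\cap S_0=\emptyset$, as none of the three rays forming $S_0$ meets the negative real axis), and so it has only finitely many sign changes there. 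Second, the weights in $(\ref{eq:ortintPsi6l})$--$(\ref{eq:ortintPsi6l4})$ all have constant sign on $(-b^3,-a^3)$: indeed $s_2(\sqrt[3]{\tau})\ge 0$, while $\tau^{-1/3}<0$ and $\tau^{-2/3}>0$ throughout that interval. Hence each relation expresses orthogonality of $\Psi_{6l+j}(\sqrt[3]{\cdot})$ to all polynomials of degree $\le l-1$ (and of degree $\le l$ when $j=4$) against a measure $d\mu$ of constant sign whose total variation is a nontrivial positive measure.

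For the core step I would argue by contradiction. Suppose $\Psi_{6l+j}(\sqrt[3]{\cdot})$ had fewer than $l$ sign changes (fewer than $l+1$ when $j=4$) on $(-b^3,-a^3)$, say at $\tau_1,\dots,\tau_N$ with $N\le l-1$ (resp. $N\le l$). Setting $p(\tau):=\prod_{i=1}^N(\tau-\tau_i)$, a real polynomial of admissible degree, the product $p(\tau)\,\Psi_{6l+j}(\sqrt[3]{\tau})$ does not change sign on $(-b^3,-a^3)$. Integrating against $d\mu$ and invoking the orthogonality relation gives $\int p(\tau)\,\Psi_{6l+j}(\sqrt[3]{\tau})\,d\mu(\tau)=0$, which forces the continuous, constant-sign integrand to vanish on the set of positive Lebesgue measure where $s_2>0$; being real-analytic, $p\cdot\Psi_{6l+j}(\sqrt[3]{\cdot})$ then vanishes identically, so $\Psi_{6l+j}\equiv 0$ on $(-b,-a)$ and, by analytic continuation, on $\mathbb{C}\setminus S_0$. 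This is impossible: by $(\ref{eq:Psi3k})$--$(\ref{eq:Psi3k2})$, $\Psi_{6l+j}$ is a power of $z$ times the Cauchy transform (in $z^3$) of a measure of the form $Q_{6l+j}(\sqrt[3]{\tau})\,s_1(\sqrt[3]{\tau})\,\tau^{-s}\,d\tau$ on $[0,\alpha^3]$, with $s\in\{0,\tfrac{1}{3},\tfrac{2}{3}\}$, and this transform cannot vanish identically since $Q_{6l+j}$ is a nonzero polynomial and $s_1>0$ on a set of positive measure. Hence $\Psi_{6l+j}(\sqrt[3]{\cdot})$ has at least $l$ sign changes (at least $l+1$ when $j=4$); as $\tau\mapsto\sqrt[3]{\tau}$ is an increasing bijection onto $(-b,-a)$, the same count of sign change knots holds on $(-b,-a)$.

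It remains to upgrade this to a zero count in $\mathbb{C}\setminus S_0$. Each sign change is a zero of $\Psi_{6l+j}$ of odd, hence positive, multiplicity, and by $(\ref{sympropPsin1})$ the zero set of $\Psi_{6l+j}$ is invariant under multiplication by $e^{2\pi i/3}$, multiplicities included. Since the three rays $e^{2\pi i k/3}(-b,-a)$, $k=0,1,2$, are pairwise disjoint, the $l$ (resp. $l+1$) zeros on $(-b,-a)$ together with their two rotated copies yield at least $3l$ (resp. $3l+3$) zeros of $\Psi_{6l+j}$ in $\mathbb{C}\setminus S_0$, counted with multiplicity, which is the claim.

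The only genuinely delicate point is the non-degeneracy that makes the constant-sign integral strictly nonzero: one must rule out that the constant-sign product is annihilated by $d\mu$, and this is exactly where the real-analyticity of $\Psi_n$ together with the nontriviality of $s_1$ and $s_2$ enter. Everything else is bookkeeping with the symmetry relations.
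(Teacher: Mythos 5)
Your proof is correct and is precisely the argument the paper intends: Corollary \ref{numerocambiosdesignoPsi} is presented there as an immediate consequence of the orthogonality relations $(\ref{eq:ortintPsi6l})$--$(\ref{eq:ortintPsi6l4})$, namely the classical sign-change count against a constant-sign weight on $(-b^3,-a^3)$, tripled by the rotation symmetry $(\ref{sympropPsin1})$. Your additional care in excluding the degenerate case $\Psi_{6l+j}\equiv 0$ (via real-analyticity and the nonvanishing of the Cauchy transform of a nontrivial measure) merely makes explicit a detail the paper leaves implicit.
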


Observe that the function $\Psi_{n}$ satisfies the property
$\Psi_{n}(\overline{z})=-\overline{\Psi_{n}(z)}$,
$z\in\mathbb{C}\setminus S_{0}$. Considering this fact, Corollary
\ref{numerocambiosdesignoPsi}, and the symmetry property
(\ref{sympropPsin1}), it is easy to see that if $\Psi_{6l+j},
j\in\{0,1,2,3,5\},$ has more than $3l$ zeros in
$\mathbb{C}\setminus S_{0}$, counting multiplicities, then we can
find a polynomial $R_{6l+j}$ with real coefficients and degree at
least $3l+3$ satisfying:
\begin{equation}\label{eq:symR6lj}
R_{6l+j}(z)=R_{6l+j}(e^{\frac{2\pi i}{3}}z),\quad
z\in\mathbb{C},\qquad\mbox{and}\,\,\qquad\frac{\Psi_{6l+j}}{R_{6l+j}}\in
H(\overline{\mathbb{C}}\setminus S_{0}).
\end{equation}
Similarly, if we assume that $\Psi_{6l+4}$ has more than $3l+3$
zeros in $\mathbb{C}\setminus S_{0}$, counting multiplicities,
then there exists a polynomial $R_{6l+4}$ with real coefficients
and degree at least $3l+6$ such that (\ref{eq:symR6lj}) hold for
$j=4$.

\vspace{0.2cm}

\noindent{\bf Proof of Proposition \ref{propzerosPsi1}.} Suppose
that $\Psi_{6l}$ has more than $3l$ zeros in $\mathbb{C}\setminus
S_{0}$, counting multiplicities. Let $R_{6l}$ be a polynomial with
real coefficients and degree at least $3l+3$ satisfying
(\ref{eq:symR6lj}). By (\ref{eq:ordPsi6linf}),
$\Psi_{6l}(z)/R_{6l}(z)=O(1/z^{6l+4})$ as $z\rightarrow\infty$.

Let $\Gamma$ be a Jordan curve surrounding $S_{0}$ such that the
zeros of $R_{6l}$ lie outside $\Gamma$. By Cauchy's theorem,
Fubini's theorem, and Cauchy's integral formula, for
$\nu=0,\ldots,6l+2,$
\[
0=\int_{\Gamma}z^{\nu}\,\frac{\Psi_{6l}(z)}{R_{6l}(z)}\,dz
=\int_{\Gamma}\frac{z^{\nu}}{R_{6l}(z)}\frac{1}{2\pi
i}\int_{0}^{\alpha}\Big(\frac{1}{t-z}+\frac{1}{e^{\frac{2\pi
i}{3}}t-z}+\frac{1}{e^{\frac{4\pi
i}{3}}t-z}\Big)Q_{6l}(t)\,s_{1}(t)\,dt\,dz
\]
\[
=\int_{0}^{\alpha}t^{\nu}\Big[\frac{1}{R_{6l}(t)}+\frac{e^{2\pi
i\,\nu/3}}{R_{6l}(e^{\frac{2\pi i}{3}}t)}+\frac{e^{4\pi
i\,\nu/3}}{R_{6l}(e^{\frac{4\pi
i}{3}}t)}\Big]\,Q_{6l}(t)\,s_{1}(t)\,dt,
\]
and applying (\ref{eq:symR6lj}), we obtain
\[
0=\int_{0}^{\alpha}t^{3k}\,Q_{6l}(t)\,\frac{s_{1}(t)}{R_{6l}(t)}\,dt,\qquad
0\leq k\leq 2l.
\]
Consequently, $Q_{6l}$ has at least $2l+1$ sign change knots in
$(0,\alpha)$, contradicting Proposition \ref{propmaximal}. This
and Corollary \ref{numerocambiosdesignoPsi} prove the claim for
$n=6l$. In the remaining cases we use the same argument. Indeed,
if $\Psi_{6l+j}, j\in\{1,2,3,5\}$, has more than $3l$ zeros in
$\mathbb{C}\setminus S_{0}$ and $\Psi_{6l+4}$ has more than $3l+3$
zeros in $\mathbb{C}\setminus S_{0}$, counting multiplicities,
then we know (see discussion after Corollary
\ref{numerocambiosdesignoPsi}) that we can select polynomials
$R_{6l+j}, 1\leq j\leq 5$ satisfying (\ref{eq:symR6lj}) such that,
as $z\rightarrow\infty$:
\[
\frac{\Psi_{6l+1}(z)}{R_{6l+1}(z)}=O\Big(\frac{1}{z^{6l+6}}\Big),\quad
\frac{\Psi_{6l+2}(z)}{R_{6l+2}(z)}=O\Big(\frac{1}{z^{6l+5}}\Big),\quad
\frac{\Psi_{6l+3}(z)}{R_{6l+3}(z)}=O\Big(\frac{1}{z^{6l+7}}\Big),\quad
\frac{\Psi_{6l+4}(z)}{R_{6l+4}(z)}=O\Big(\frac{1}{z^{6l+9}}\Big),\quad
\frac{\Psi_{6l+5}(z)}{R_{6l+5}(z)}=O\Big(\frac{1}{z^{6l+8}}\Big).
\]
These estimates lead to the orthogonality conditions
\[
0=\int_{0}^{\alpha}t^{3k+2}\,Q_{6l+1}(t)\,\frac{s_{1}(t)}{R_{6l+1}(t)}\,dt
=\int_{0}^{\alpha}t^{3k+1}\,Q_{6l+2}(t)\,\frac{s_{1}(t)}{R_{6l+2}(t)}\,dt,\qquad\,\,
0\leq k\leq 2l,
\]
\[
0=\int_{0}^{\alpha}t^{3k}\,Q_{6l+3}(t)\,\frac{s_{1}(t)}{R_{6l+3}(t)}\,dt
=\int_{0}^{\alpha}t^{3k+2}\,Q_{6l+4}(t)\,\frac{s_{1}(t)}{R_{6l+4}(t)}\,dt
=\int_{0}^{\alpha}t^{3k+1}\,Q_{6l+5}(t)\,\frac{s_{1}(t)}{R_{6l+5}(t)}\,dt,\qquad\,\,
0\leq k\leq 2l+1,
\]
which contradict the number of zeros that the polynomials
$Q_{6l+j}, 1\leq j\leq 5,$ have on $(0,\alpha)$ (see Proposition
\ref{propmaximal}). \hfill $\Box$

\vspace{0.2cm}

Recall that $Q_{n,2}$ is defined as the monic polynomial whose
zeros coincide with the finite zeros of $\Psi_{n}$ outside
$S_{0}$. The argument shown above proves the following:

\begin{prop}\label{prop2}
For each $j\in\{0,1,2,3,5\}$, $\deg(Q_{6l+j,2})=3l$, and
$\deg(Q_{6l+4,2})=3l+3$. Furthermore,
\begin{equation}\label{eq:ortQ6lvar}
0=\int_{0}^{\alpha}t^{3k}\,Q_{3l}(t)\,\frac{s_{1}(t)}{Q_{3l,2}(t)}\,dt,\qquad
0\leq k\leq l-1,
\end{equation}
\begin{equation}\label{eq:ortQ6l1var}
0=\int_{0}^{\alpha}t^{3k+2}\,Q_{3l+1}(t)\,\frac{s_{1}(t)}{Q_{3l+1,2}(t)}\,dt,\qquad
0\leq k\leq l-1,
\end{equation}
\begin{equation}\label{eq:ortQ6l2var}
0=\int_{0}^{\alpha}t^{3k+1}\,Q_{3l+2}(t)\,\frac{s_{1}(t)}{Q_{3l+2,2}(t)}\,dt,\qquad
0\leq k\leq l-1,
\end{equation}
\end{prop}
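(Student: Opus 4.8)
The plan is to rerun, essentially verbatim, the contour-integration argument from the proof of Proposition \ref{propzerosPsi1}, but now with the genuine polynomials $Q_{n,2}$ in the role played there by the auxiliary polynomials $R_{6l+j}$. The degree assertions require no work: by definition $Q_{n,2}$ is the monic polynomial whose zeros are exactly the finite zeros of $\Psi_n$ in $\mathbb{C}\setminus S_0$, and Proposition \ref{propzerosPsi1} counts these zeros (all simple) as $3l$ when $n=6l+j$ with $j\in\{0,1,2,3,5\}$ and as $3l+3$ when $n=6l+4$. Hence $\deg(Q_{6l+j,2})=3l$ and $\deg(Q_{6l+4,2})=3l+3$.

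For the orthogonality relations I would first record two structural facts. Since the zeros of $\Psi_n$ outside $S_0$ occur in rotationally symmetric triples (Proposition \ref{propzerosPsi1}), we may write $Q_{n,2}(z)=P_{n,2}(z^3)$ as in (\ref{defn:Pn2}); in particular $Q_{n,2}(e^{2\pi i/3}z)=Q_{n,2}(z)$, which is precisely the symmetry (\ref{eq:symR6lj}) that the argument exploits. Moreover, because $Q_{n,2}$ cancels exactly the finite zeros of $\Psi_n$ off $S_0$, the ratio $\Psi_n/Q_{n,2}$ is analytic in $\overline{\mathbb{C}}\setminus S_0$, and combining the decay orders of $\Psi_n$ from (\ref{eq:ordPsi6linf})--(\ref{eq:ordPsi6l1inf}) with $\deg Q_{n,2}$ yields its order of vanishing at infinity. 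Choosing a Jordan curve $\Gamma$ that surrounds $S_0$ while leaving the zeros of $Q_{n,2}$ (which lie on the disjoint set $S_1$) outside, the integral $\int_\Gamma z^\nu\,\Psi_n(z)/Q_{n,2}(z)\,dz$ vanishes whenever $z^\nu\Psi_n/Q_{n,2}=O(1/z^2)$ at infinity, i.e. for every $\nu$ at most (order of vanishing of $\Psi_n/Q_{n,2}$) $-\,2$.

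On the other hand, substituting the representation (\ref{eq:Psigeneral}) for $\Psi_n$, applying Fubini's theorem, and evaluating the inner contour integral by Cauchy's integral formula (the only pole inside $\Gamma$ being at the relevant point of $S_0$, since the zeros of $Q_{n,2}$ lie outside $\Gamma$), this same integral equals a nonzero constant multiple of
\[
\int_0^\alpha t^\nu\,\frac{1+e^{2\pi i(n+\nu)/3}+e^{4\pi i(n+\nu)/3}}{Q_{n,2}(t)}\,Q_n(t)\,s_1(t)\,dt,
\]
where I have used $Q_{n,2}(e^{2\pi i k/3}t)=Q_{n,2}(t)$. The geometric sum equals $3$ when $n+\nu\equiv 0\pmod 3$ and $0$ otherwise, so the surviving conditions single out $\nu=3k$ for $n\equiv 0$, $\nu=3k+2$ for $n\equiv 1$, and $\nu=3k+1$ for $n\equiv 2\pmod 3$, which are exactly the forms appearing in (\ref{eq:ortQ6lvar})--(\ref{eq:ortQ6l2var}).

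The final step, and the only genuine obstacle, is purely arithmetic bookkeeping: one must check that in each case the admissible range $\nu\le(\text{order of vanishing of }\Psi_n/Q_{n,2})-2$ translates into precisely $0\le k\le l-1$. This forces one to split each residue class $n\equiv r\pmod 3$ further according to $n\bmod 6$, so that the correct decay order of $\Psi_n$ from (\ref{eq:ordPsi6linf})--(\ref{eq:ordPsi6l1inf}) and the correct value of $\deg Q_{n,2}$ are used in each of the six subcases, and then to verify the resulting inequalities. In every subcase the count comes out to exactly $l$ conditions, yielding the stated ranges.
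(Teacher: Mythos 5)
Your proposal is correct and is essentially the paper's own proof: the paper establishes Proposition \ref{prop2} precisely by observing that the contour-integration argument in the proof of Proposition \ref{propzerosPsi1} goes through verbatim with the actual polynomials $Q_{n,2}$ (whose degrees come from Proposition \ref{propzerosPsi1}) in place of the hypothetical $R_{n}$, the rotational symmetry $Q_{n,2}(e^{2\pi i/3}z)=Q_{n,2}(z)$ playing the role of (\ref{eq:symR6lj}). The arithmetic you defer does check out in all six subcases --- e.g.\ for $n=6l$ one has $\Psi_{6l}/Q_{6l,2}=O(1/z^{6l+1})$, so $\nu=3k\leq 6l-1$ gives exactly the $2l$ conditions of (\ref{eq:ortQ6lvar}) with $l'=2l$, and likewise for $n\equiv 1,2,3,4,5 \pmod 6$.
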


\begin{prop}
The following formulas are valid for $z\in\mathbb{C}\setminus
S_{0}$. If $q$ is a polynomial of degree at most $3k$, then
\begin{equation}\label{eq:rep1}
\frac{q(z)\Psi_{3k}(z)}{Q_{3k,2}(z)}=\int_{0}^{\alpha}\frac{Q_{3k}(x)\,s_{1}(x)}
{Q_{3k,2}(x)}\Big(\frac{q(x)}{x-z}+\frac{q(e^{\frac{2\pi
i}{3}}x)}{e^{\frac{2\pi i}{3}}x-z}+\frac{q(e^{\frac{4\pi
i}{3}}x)}{e^{\frac{4\pi i}{3}}x-z}\Big)dx.
\end{equation}
If $\deg(q)\leq 3k+2$, then
\begin{equation}\label{eq:rep2}
\frac{q(z)\Psi_{3k+1}(z)}{Q_{3k+1,2}(z)}
=\int_{0}^{\alpha}\frac{Q_{3k+1}(x)\,s_{1}(x)}
{Q_{3k+1,2}(x)}\Big(\frac{q(x)}{x-z}+\frac{e^{\frac{2\pi
i}{3}}q(e^{\frac{2\pi i}{3}}x)}{e^{\frac{2\pi
i}{3}}x-z}+\frac{e^{\frac{4\pi i}{3}}q(e^{\frac{4\pi
i}{3}}x)}{e^{\frac{4\pi i}{3}}x-z}\Big)dx.
\end{equation}
If $\deg(q)\leq 3k+1$, then
\begin{equation}\label{eq:rep3}
\frac{q(z)\Psi_{3k+2}(z)}{Q_{3k+2,2}(z)}
=\int_{0}^{\alpha}\frac{Q_{3k+2}(x)\,s_{1}(x)}{Q_{3k+2,2}(x)}
\Big(\frac{q(x)}{x-z}+\frac{e^{\frac{4\pi i}{3}}q(e^{\frac{2\pi
i}{3}}x)}{e^{\frac{2\pi i}{3}}x-z}+\frac{e^{\frac{2\pi
i}{3}}q(e^{\frac{4\pi i}{3}}x)}{e^{\frac{4\pi i}{3}}x-z}\Big)dx.
\end{equation}
In particular, we have
\begin{equation}\label{eq:rep4}
\frac{Q_{3k}(z)\Psi_{3k}(z)}{Q_{3k,2}(z)}=3
z^{2}\int_{0}^{\alpha}\frac{Q_{3k}^{2}(x)}{Q_{3k,2}(x)}\,\frac{s_{1}(x)}{x^{3}-z^{3}}\,dx,
\end{equation}
\begin{equation}\label{eq:rep5}
\frac{Q_{3k+1}(z)\Psi_{3k+1}(z)}{Q_{3k+1,2}(z)}=3
z\int_{0}^{\alpha}\frac{Q_{3k+1}^{2}(x)}{Q_{3k+1,2}(x)}\,\frac{x\,s_{1}(x)}{x^{3}-z^{3}}\,dx,
\end{equation}
\begin{equation}\label{eq:rep6}
\frac{Q_{3k+2}(z)\Psi_{3k+2}(z)}{Q_{3k+2,2}(z)}=3
z^{3}\int_{0}^{\alpha}\frac{Q_{3k+2}^{2}(x)}{Q_{3k+2,2}(x)}\,\frac{s_{1}(x)}{x(x^{3}-z^{3})}\,dx.
\end{equation}
\end{prop}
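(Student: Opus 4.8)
The plan is to prove the three general identities $(\ref{eq:rep1})$--$(\ref{eq:rep3})$ by a single mechanism and then to read off $(\ref{eq:rep4})$--$(\ref{eq:rep6})$ by choosing $q$ appropriately. Write $\omega=e^{2\pi i/3}$. I will use throughout the rotation law $Q_{n}(\omega x)=\omega^{n}Q_{n}(x)$ from $(\ref{sympropQnrev})$ and the invariance $Q_{n,2}(\omega x)=Q_{n,2}(x)$, which holds because $Q_{n,2}(\tau)=P_{n,2}(\tau^{3})$ by $(\ref{defn:Pn2})$.

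For $(\ref{eq:rep1})$ I start from the representation $(\ref{eq:Psigeneral})$ of $\Psi_{3k}$ (all three phases equal to $1$), multiply by $q(z)/Q_{3k,2}(z)$, and subtract the asserted right-hand side. Since $Q_{3k,2}(\omega^{j}t)=Q_{3k,2}(t)$, the integrand of the difference equals $Q_{3k}(t)\,s_{1}(t)$ times
\[
\sum_{j=0}^{2}\frac{1}{\omega^{j}t-z}\Big[\frac{q(z)}{Q_{3k,2}(z)}-\frac{q(\omega^{j}t)}{Q_{3k,2}(\omega^{j}t)}\Big].
\]
Writing $q(z)\,Q_{3k,2}(w)-q(w)\,Q_{3k,2}(z)=(z-w)\,P(z,w)$ with $P\in\mathbb{C}[z,w]$ (the left side vanishes on the diagonal $z=w$), each bracket collapses, using $\tfrac{z-w}{w-z}=-1$, to $-P(z,\omega^{j}t)/(Q_{3k,2}(z)\,Q_{3k,2}(t))$. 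The decisive step is the cube-root symmetrization: since $\sum_{j=0}^{2}\omega^{jm}$ equals $3$ when $3\mid m$ and $0$ otherwise, $\sum_{j}P(z,\omega^{j}t)$ retains only the powers $t^{m}$ of $P(z,\cdot)$ with $m\equiv 0\pmod 3$. As $\deg_{w}P\le\max(\deg q,\deg Q_{3k,2})-1\le 3k-1$, the surviving monomials are exactly $t^{3m'}$ with $0\le m'\le k-1$, so $\sum_{j}P(z,\omega^{j}t)=3\,\widetilde P(z,t^{3})$ with $\widetilde P(z,\cdot)$ of degree at most $k-1$. The difference therefore equals
\[
-\frac{3}{Q_{3k,2}(z)}\int_{0}^{\alpha}\widetilde P(z,t^{3})\,Q_{3k}(t)\,\frac{s_{1}(t)}{Q_{3k,2}(t)}\,dt,
\]
which vanishes term by term by the orthogonality conditions $(\ref{eq:ortQ6lvar})$ with $l=k$. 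The identities $(\ref{eq:rep2})$ and $(\ref{eq:rep3})$ are proved verbatim; the only change is that the $j$-th term in $(\ref{eq:Psigeneral})$ and in the statement carries the extra phase $\omega^{j}$ (resp. $\omega^{2j}$), which shifts the surviving residue class of $m$ to $2$ (resp. $1$) modulo $3$. The degree bound adjusts so that $m'$ again ranges only over $0\le m'\le k-1$, and one invokes $(\ref{eq:ortQ6l1var})$ (resp. $(\ref{eq:ortQ6l2var})$) in place of $(\ref{eq:ortQ6lvar})$.

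It remains to specialize $q$. Putting $q=Q_{3k}$ (degree $3k$, admissible in $(\ref{eq:rep1})$) and $q=Q_{3k+1}$ (degree $3k+1\le 3k+2$, admissible in $(\ref{eq:rep2})$), the rotation law makes $q(\omega^{j}x)$ reproduce $q(x)$ up to a power of $\omega^{j}$, so the three summands combine through the elementary partial-fraction identities
\[
\sum_{j=0}^{2}\frac{1}{\omega^{j}x-z}=\frac{3z^{2}}{x^{3}-z^{3}},\quad \sum_{j=0}^{2}\frac{\omega^{j}}{\omega^{j}x-z}=\frac{3x^{2}}{x^{3}-z^{3}},\quad \sum_{j=0}^{2}\frac{\omega^{2j}}{\omega^{j}x-z}=\frac{3xz}{x^{3}-z^{3}},
\]
all consequences of differentiating $\log(x^{3}-z^{3})=\log\prod_{j}(\omega^{j}x-z)$; this yields $(\ref{eq:rep4})$ and $(\ref{eq:rep5})$. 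The one genuinely non-mechanical point is $(\ref{eq:rep6})$: here $\deg Q_{3k+2}=3k+2$ exceeds the bound $3k+1$ required in $(\ref{eq:rep3})$, so $q=Q_{3k+2}$ is not admissible. I instead exploit the double zero of $Q_{3k+2}$ at the origin (Proposition $\ref{propmaximal}$) and apply $(\ref{eq:rep3})$ to $q(t)=Q_{3k+2}(t)/t$, a genuine polynomial of degree $3k+1$; since $q(\omega^{j}t)=\omega^{j}q(t)$, the three phases cancel to give the kernel $3z^{2}/(x^{3}-z^{3})$, and multiplying the resulting relation by $z$ produces exactly $(\ref{eq:rep6})$. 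The main obstacle throughout is the degree bookkeeping that caps $\widetilde P$ at degree $k-1$, so that the finitely many orthogonality relations available at index $3k$ suffice to annihilate the difference; the admissibility fix for $(\ref{eq:rep6})$ is the subtle technical wrinkle.
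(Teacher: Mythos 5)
Your proof is correct, but it reaches \eqref{eq:rep1}--\eqref{eq:rep3} by a genuinely different mechanism than the paper. The paper's proof is complex-analytic: it first notes that $q(z)\Psi_{3k}(z)/Q_{3k,2}(z)=O(1/z)$ at infinity (combining the decay estimates \eqref{eq:ordPsi6linf}--\eqref{eq:ordPsi6l1inf} with the degree counts of Proposition \ref{prop2}), then recovers the left-hand side by Cauchy's integral formula on a Jordan curve $\Gamma$ surrounding $S_{0}$ with $z$ and the zeros of $Q_{3k,2}$ outside, and evaluates the inner contour integral by residues using $Q_{3k,2}(t)=Q_{3k,2}(e^{2\pi i/3}t)=Q_{3k,2}(e^{4\pi i/3}t)$; the orthogonality conditions never appear explicitly, being encoded in the decay of $\Psi_{n}$. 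You instead stay entirely on the real integrals: the divided-difference factorization $q(z)Q_{n,2}(w)-q(w)Q_{n,2}(z)=(z-w)P(z,w)$, the cube-root-of-unity symmetrization isolating one residue class of powers of $t$, and then the explicit varying-measure orthogonality \eqref{eq:ortQ6lvar}--\eqref{eq:ortQ6l2var} to annihilate the difference. Your route avoids contour integration and, notably, does not need the sharpened decay estimates of $\Psi_{n}$ at all, at the cost of more delicate degree bookkeeping (which you carry out correctly: $\deg Q_{3k,2}=3\lfloor k/2\rfloor$, $\deg Q_{3k+1,2}\leq 3k+2$, etc., so the surviving monomials $t^{3m'}$, $t^{3m'+2}$, $t^{3m'+1}$ always have $m'\leq k-1$, exactly matching the available relations in Proposition \ref{prop2}); the paper's route is shorter and treats all six residue computations uniformly. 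The specializations coincide with the paper's, including the trick $q(t)=Q_{3k+2}(t)/t$ for \eqref{eq:rep6}, legitimate by the double zero of $Q_{3k+2}$ at the origin from Proposition \ref{propmaximal}. One cosmetic remark: your derivation establishes the identities a priori only for $z$ off the (finitely many, simple) zeros of $Q_{n,2}$; since both sides are analytic on $\mathbb{C}\setminus S_{0}$ (the zeros of $Q_{n,2}$ being zeros of $\Psi_{n}$), the full statement follows by continuation, which is worth a half-sentence.
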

\begin{proof}
By (\ref{eq:ordPsi6linf})--(\ref{eq:ordPsi6l1inf}) and Proposition
\ref{prop2}, we know that if $q$ is a polynomial of degree at most
$3k$, then
\begin{equation}\label{eq:aux11}
\frac{q(z)\Psi_{3k}(z)}{Q_{3k,2}(z)}=O\Big(\frac{1}{z}\Big),\qquad
z\rightarrow\infty.
\end{equation}
For $z\in\mathbb{C}\setminus S_{0}$, let $\Gamma$ be a Jordan
curve surrounding $S_{0}$ and oriented clockwise, so that $z$ and
the zeros of $Q_{3k,2}$ lie outside $\Gamma$. From
(\ref{eq:aux11}) and (\ref{eq:Psigeneral}) it follows that
\[
\frac{q(z)\Psi_{3k}(z)}{Q_{3k,2}(z)}=\frac{1}{2\pi
i}\int_{\Gamma}\frac{q(t)\Psi_{3k}(t)}{Q_{3k,2}(t)}\frac{dt}{t-z}
=\int_{0}^{\alpha}Q_{3k}(x)\,s_{1}(x)\frac{1}{2\pi
i}\int_{\Gamma}\frac{q(t)}{Q_{3k,2}(t)(t-z)}\Big[\frac{1}{x-t}+\frac{1}{e^{\frac{2\pi
i}{3}}x-t}+\frac{1}{e^{\frac{4\pi i}{3}}x-t}\Big]dt\,dx
\]
\[
=\int_{0}^{\alpha}\frac{Q_{3k}(x)\,s_{1}(x)}{Q_{3k,2}(x)}\Big(\frac{q(x)}{x-z}+\frac{q(e^{\frac{2\pi
i}{3}}x)}{e^{\frac{2\pi i}{3}}x-z}+\frac{q(e^{\frac{4\pi
i}{3}}x)}{e^{\frac{4\pi i}{3}}x-z}\Big)dx,
\]
where in the last equality we used that
$Q_{3k,2}(t)=Q_{3k,2}(e^{\frac{2\pi
i}{3}}t)=Q_{3k,2}(e^{\frac{4\pi i}{3}}t)$. This proves
(\ref{eq:rep1}). The proofs of (\ref{eq:rep2})--(\ref{eq:rep3})
are identical. To obtain (\ref{eq:rep4}) and (\ref{eq:rep5}), we
replace $q$ in formulas (\ref{eq:rep1}) and (\ref{eq:rep2}) by
$Q_{3k}$ and $Q_{3k+1}$, respectively. Formula (\ref{eq:rep6})
follows from (\ref{eq:rep3}) by taking $q(z)=Q_{3k+2}(z)/z$.
\end{proof}

\begin{prop}\label{proppositrec}
The recurrence coefficients $\{a_{n}\}_{n\geq 2}^{\infty}$ that
appear in $(\ref{recurrence})$ are all positive.
\end{prop}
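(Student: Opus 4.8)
The plan is to read off the signs of the integrals occurring in \eqref{intrepa2n}. Writing $I_{2n}:=\int_0^\alpha t^n Q_{2n}(t)s_1(t)\,dt$ and $J_{2n+1}:=\int_0^\alpha t^n Q_{2n+1}(t)f(t)s_1(t)\,dt$, formula \eqref{intrepa2n} says $a_{2n}=I_{2n}/I_{2n-2}$ and $a_{2n+1}=J_{2n+1}/J_{2n-1}$. Hence it is enough to show that the numbers $I_{2n}$ all have one and the same sign, and likewise the numbers $J_{2n+1}$; the ratios are then automatically positive.

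For the even coefficients I would let $z\to\infty$ in whichever of \eqref{eq:rep4}--\eqref{eq:rep6} matches the residue of $2n$ modulo $3$. By \eqref{eq:Psigeneral} and the rotational symmetry (which contributes a factor $3$), the first non-vanishing coefficient in the expansion \eqref{eq:BVPPsin1} of $\Psi_{2n}$ equals $-3I_{2n}$; since $Q_{2n}$ and $Q_{2n,2}$ are monic, the leading coefficient of the left-hand side of \eqref{eq:rep4}--\eqref{eq:rep6} is again $-3I_{2n}$. Expanding $1/(x^3-z^3)$ on the right-hand side shows that this same coefficient equals $-3\int_0^\alpha x^{m}\,\frac{Q_{2n}^2(x)}{Q_{2n,2}(x)}\,s_1(x)\,dx$, with $m\in\{-1,0,1\}$ according to the case. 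By \eqref{defn:Pn2} one has $Q_{2n,2}(x)=P_{2n,2}(x^3)$, and the zeros of $P_{2n,2}$ lie in $(-b^3,-a^3)$, so $Q_{2n,2}(x)>0$ on $(0,\alpha)$; as $x^{m}>0$ and $s_1\ge0$, $s_1\not\equiv0$ there, we get $I_{2n}>0$ for every $n$, whence $a_{2n}>0$.

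For the odd coefficients I would first use $f=\tfrac13\int_{S_1}s_2/(t-\cdot)$ from \eqref{eq:stateprop2}, Fubini, and the identity $t^n/(u-t)=u^n/(u-t)-\sum_{i=0}^{n-1}u^i t^{n-1-i}$ together with the vanishing moments \eqref{eq:ortQn2}, to reduce $J_{2n+1}$ to a fixed-sign multiple of $S_{2n+1}:=\int_{-b}^{-a}v^n\,\Psi_{2n+1}(v)\,s_2(v)\,dv$ (for instance $J_1=\tfrac13 S_1$, both being $\int_0^\alpha\!\int_{-b}^{-a} t^3 s_1 s_2/(t^3-w^3)$). On $(-b,-a)\subset\mathbb{C}\setminus S_0$ I would then solve \eqref{eq:rep4}--\eqref{eq:rep6} for $\Psi_{2n+1}$, writing $\Psi_{2n+1}(v)=\frac{Q_{2n+1,2}(v)}{Q_{2n+1}(v)}\,G(v)$, where $G$ is a power of $v$ times the Cauchy integral of the non-negative density $Q_{2n+1}^2 s_1/Q_{2n+1,2}$, hence of constant sign on $(-b,-a)$. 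Substituting this into the orthogonality relations \eqref{eq:ortintPsi6l}--\eqref{eq:ortintPsi6l4} and passing to $\tau=v^3$, I would recognize $P_{2n+1,2}$ (with $Q_{2n+1,2}=P_{2n+1,2}(\cdot^3)$) as the orthogonal polynomial of degree $\deg Q_{2n+1,2}/3$ for the varying measure $d\rho:=\tfrac{G\,s_2}{Q_{2n+1}}\,\tau^{-\beta}\,d\tau$ on $(-b^3,-a^3)$, $\beta\in\{0,\tfrac13,\tfrac23\}$ according to the case. The crucial point is that $d\rho$ is of constant sign: $Q_{2n+1}$ has all its zeros on $S_0$, so it keeps a constant sign on $(-b,-a)$, while $G$ and $s_2$ are sign-definite there. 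Consequently $S_{2n+1}$ equals the first non-vanishing moment of $P_{2n+1,2}$, a fixed-sign multiple of $\int P_{2n+1,2}^2\,d\rho$; thus all $J_{2n+1}$ share a single sign and $a_{2n+1}>0$.

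The even case is immediate once \eqref{eq:rep4}--\eqref{eq:rep6} are available. The delicate part, and the main obstacle, is the sign bookkeeping in the odd case: one must carry the cube-root branches and the orientation of the rays through all the residue classes of $2n+1$ modulo $6$ and verify in each that the resulting varying measure $d\rho$ is genuinely of constant sign, so that the first non-vanishing moment of $P_{2n+1,2}$ is nonzero and its sign is independent of $n$.
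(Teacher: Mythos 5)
Your proposal is correct, and it diverges from the paper's proof in one genuine respect. For the even coefficients you and the paper do essentially the same thing: both reduce $I_{2n}=\int_0^\alpha t^nQ_{2n}(t)s_1(t)\,dt$ to the manifestly positive integral $\int_0^\alpha x^mQ_{2n}^2(x)s_1(x)/Q_{2n,2}(x)\,dx$, $m\in\{-1,0,1\}$; the paper gets this by inserting the monic polynomial $t^{n}$ times $Q_{2n,2}$ through the varying-measure orthogonality (\ref{eq:ortQ6lvar})--(\ref{eq:ortQ6l2var}), while you match Laurent coefficients at infinity in (\ref{eq:rep4})--(\ref{eq:rep6}); since those representations already encode that orthogonality, this is the same identity obtained two ways. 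For the odd coefficients your route is genuinely different: the paper never touches $J_{2n+1}$ at all; instead it notes that the functions $\Psi_n$ satisfy the recurrence (\ref{recurrence}), multiplies (\ref{eq:posita6l1}) by a suitable power of $t$, integrates against $s_2$ on $(-b,-a)$, and kills the $\Psi_{2n+2}$ term with Proposition \ref{ortPsinorm}, which yields $S_{2n+1}=a_{2n+1}S_{2n-1}$ directly, where $S_{2n+1}=\int_{-b}^{-a}v^{n}\Psi_{2n+1}(v)s_2(v)\,dv$. You instead transform $J_{2n+1}$ itself into $S_{2n+1}$ by Fubini, the division identity and the moments (\ref{eq:ortQn2}); this does work (the symmetrization factor is exactly $3$ for every $n$, so the multiple is uniform in $n$), and any overall sign ambiguity in that constant (the sign one extracts from (\ref{eq:stateprop2}) is sensitive to the orientation conventions on the rays of $S_1$) is harmless precisely because it cancels in the ratio $a_{2n+1}=J_{2n+1}/J_{2n-1}$ -- your ``fixed-sign multiple'' framing is the right level of care. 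From that point on your argument and the paper's coincide: write $\Psi_{2n+1}=(Q_{2n+1,2}/Q_{2n+1})\,G$ with $G$ of constant sign on $(-b,-a)$ (this is exactly the content of (\ref{eq:sign1})), use the orthogonality of $\Psi_{2n+1}$ to replace $v^n$ by a monic polynomial in $v^3$, and land on a fixed-sign multiple of $\int P_{2n+1,2}^2\,d\rho$. The sign bookkeeping you defer does close: carrying the case analysis through the residues $1,3,5$ modulo $6$, the measure $d\rho$ is positive and $S_{2n+1}>0$ in every case, which is precisely what the paper's three displayed moment computations verify, so no new idea is needed to finish. As for what each approach buys: the paper's recurrence trick avoids the Fubini/symmetrization computation and all constants, at the price of first asserting that the $\Psi_n$ obey the same recurrence; yours is self-contained starting from the formulas (\ref{intrepa2n}) and needs nothing about the recurrence for the functions of second type.
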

\begin{proof}
To prove that $a_{2n}$ is positive it suffices to show that
$\int_{0}^{\alpha}t^{n}\,Q_{2n}(t)\,s_{1}(t)\,dt>0$ for all $n\geq
0$. Let $n=3l$. Since $\deg(t^{3l}\,Q_{6l,2})=6l$, by
(\ref{eq:ortQ6lvar}) we obtain
\[
\int_{0}^{\alpha}t^{3l}\,Q_{6l}(t)\,s_{1}(t)\,dt
=\int_{0}^{\alpha}t^{3l}\,Q_{6l}(t)\,Q_{6l,2}(t)\,\frac{s_{1}(t)}{Q_{6l,2}(t)}\,dt
=\int_{0}^{\alpha}Q_{6l}^{2}(t)\,\frac{s_{1}(t)}{Q_{6l,2}(t)}\,dt>0.
\]
For $n=3l+1$, using (\ref{eq:ortQ6l2var}) and
$\deg{(t^{3l+2}Q_{6l+2,2})}=6l+2$, we get
\[
\int_{0}^{\alpha}t^{3l+1}\,Q_{6l+2}(t)\,s_{1}(t)\,dt
=\int_{0}^{\alpha}t^{3l+2}\,Q_{6l+2,2}(t)\,Q_{6l+2}(t)\,
\frac{s_{1}(t)}{t\,Q_{6l+2,2}(t)}\,dt=\int_{0}^{\alpha}Q_{6l+2}^{2}(t)\,
\frac{s_{1}(t)}{t\,Q_{6l+2,2}(t)}\,dt>0.
\]
Finally, for $n=3l+2$, applying (\ref{eq:ortQ6l1var}) and
$\deg{(t^{3l+1}Q_{6l+4,2})}=6l+4$, we obtain
\[
\int_{0}^{\alpha}t^{3l+2}\,Q_{6l+4}(t)\,s_{1}(t)\,dt=\int_{0}^{\alpha}t^{3l+1}\,Q_{6l+4,2}(t)\,Q_{6l+4}(t)
\frac{t\,s_{1}(t)}{Q_{6l+4,2}(t)}\,dt=\int_{0}^{\alpha}Q_{6l+4}^{2}(t)
\frac{t\,s_{1}(t)}{Q_{6l+4,2}(t)}\,dt>0.
\]

It is easy to see that the functions $\Psi_{n}$ satisfy the same
recurrence relation (\ref{recurrence}). In particular,
\begin{equation}\label{eq:posita6l1}
t\Psi_{2n+1}(t)=\Psi_{2n+2}(t)+a_{2n+1}\Psi_{2n-1}(t).
\end{equation}
Using Proposition \ref{ortPsinorm}, if we multiply
(\ref{eq:posita6l1}) by an appropriate power of $t$ and integrate,
we obtain
\[
\int_{-b}^{-a}t^{3l}\,\Psi_{6l+1}(t)\,s_{2}(t)\,dt=a_{6l+1}
\,\int_{-b}^{-a}t^{3l-1}\,\Psi_{6l-1}(t)\,s_{2}(t)\,dt,\quad
\int_{-b}^{-a}t^{3l+1}\,\Psi_{6l+3}(t)\,s_{2}(t)\,dt=a_{6l+3}
\,\int_{-b}^{-a}t^{3l}\,\Psi_{6l+1}(t)\,s_{2}(t)\,dt,
\]
\[
\int_{-b}^{-a}t^{3l+2}\,\Psi_{6l+5}(t)\,s_{2}(t)\,dt=a_{6l+5}
\,\int_{-b}^{-a}t^{3l+1}\,\Psi_{6l+3}(t)\,s_{2}(t)\,dt.
\]
On the other hand, it is easy to deduce from
(\ref{eq:rep4})--(\ref{eq:rep6}) that if $t<0$, then
\begin{equation}\label{eq:sign1}
\sign\Big(\frac{\Psi_{3k}(t)}{Q_{3k,2}(t)}\Big)=(-1)^{3k},
\qquad\sign\Big(\frac{\Psi_{3k+1}(t)}{Q_{3k+1,2}(t)}\Big)=(-1)^{3k},
\qquad\sign\Big(\frac{\Psi_{3k+2}(t)}{Q_{3k+2,2}(t)}\Big)=(-1)^{3k+1}.
\end{equation}
Observe that since $\deg{Q_{6l-1,2}}=3l-3$ and
$\deg{Q_{6l+1,2}}=\deg{Q_{6l+3,2}}=3l$, by the orthogonality
conditions satisfied by the functions $\Psi_{2n+1}$ and
(\ref{eq:sign1}), we obtain:
\[
\int_{-b}^{-a}t^{3l-1}\,\Psi_{6l-1}(t)\,s_{2}(t)\,dt=\int_{-b}^{-a}Q_{6l-1,2}(t)\Psi_{6l-1}(t)
\,t^{2}\,s_{2}(t)\,dt
=\int_{-b}^{-a}Q_{6l-1,2}^{2}(t)\frac{\Psi_{6l-1}(t)}{Q_{6l-1,2}(t)}
\,t^{2}\,s_{2}(t)\,dt>0,
\]
\[
\int_{-b}^{-a}t^{3l}\,\Psi_{6l+1}(t)\,s_{2}(t)\,dt
=\int_{-b}^{-a}Q_{6l+1,2}(t)\Psi_{6l+1}(t)\,s_{2}(t)\,dt
=\int_{-b}^{-a}Q_{6l+1,2}^{2}(t)\frac{\Psi_{6l+1}(t)}{Q_{6l+1,2}(t)}\,s_{2}(t)\,dt>0,
\]
\[
\int_{-b}^{-a}t^{3l+1}\Psi_{6l+3}(t)\,s_{2}(t)\,dt=
\int_{-b}^{-a}Q_{6l+3,2}(t)\Psi_{6l+3}(t)\,t\,s_{2}(t)\,dt
=\int_{-b}^{-a}Q_{6l+3,2}^{2}(t)\frac{\Psi_{6l+3}(t)}{Q_{6l+3,2}(t)}\,t\,s_{2}(t)\,dt>0.
\]
This shows that $a_{2n+1}>0$ for all $n\geq 1$.\end{proof}

\section{Interlacing properties of the zeros of $Q_{n}$
and $\Psi_{n}$}\label{sectioninterlacing}

\begin{prop}\label{propsimplezeroscomb}
Let $A,B\in \mathbb{R}$ be two constants such that $|A|+|B|>0$,
and let
\begin{equation}\label{defnHn}
Y_{n}(z):=A\, z \Psi_{n}(z)+B\, \Psi_{n+1}(z),
\end{equation}
\begin{equation}\label{defnTn}
T_{n}(z):=A\, z Q_{n}(z)+B\, Q_{n+1}(z).
\end{equation}
Then, for every $n\geq 0$, the function $Y_{n}$ has only simple
zeros on $(-\infty,0)$. Similarly, for every $n\geq 0$, the
polynomial $T_{n}$ has only simple zeros on $(0,\alpha)$.
\end{prop}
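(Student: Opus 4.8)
The plan is to prove both assertions by the same mechanism: reduce to the cubic variable $w=z^{3}$, rewrite each combination via the recurrence as a linear combination of two members of the family lying three indices apart, and then invoke quasi-orthogonality together with the AT-system bound of Lemma \ref{AT}.

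First I would rewrite $T_{n}$ and $Y_{n}$. Since both $\{Q_n\}$ and $\{\Psi_n\}$ obey $(\ref{recurrence})$, we have $zQ_{n}=Q_{n+1}+a_{n}Q_{n-2}$ and $z\Psi_{n}=\Psi_{n+1}+a_{n}\Psi_{n-2}$, so that
\[
T_{n}=(A+B)\,Q_{n+1}+A\,a_{n}\,Q_{n-2},\qquad Y_{n}=(A+B)\,\Psi_{n+1}+A\,a_{n}\,\Psi_{n-2}.
\]
By $(\ref{sympropQnrev})$ and $(\ref{sympropPsin1})$ the two summands in each expression carry the same rotational symmetry, whence $T_{n}(e^{2\pi i/3}z)=e^{2\pi i(n+1)/3}T_{n}(z)$ and an analogous identity for $Y_{n}$. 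As in $(\ref{defn:P3k})$--$(\ref{defn:Pn2})$, each therefore factors, after removal of a suitable power of $z$, as a function of $w=z^{3}$; I denote these reduced objects by $\widehat{T}_{n}$ and $\widehat{Y}_{n}$, and note that zeros on $(0,\alpha)$ (resp. $(-\infty,0)$) correspond bijectively, with multiplicity, to zeros of $\widehat{T}_{n}$ on $(0,\alpha^{3})$ (resp. of $\widehat{Y}_{n}$ on $(-\infty,0)$), since $z\mapsto z^{3}$ is a monotone bijection on each of these real sets. The degenerate case $A+B=0$ is immediate: then $T_{n}$ is a scalar multiple of $Q_{n-2}$ and $Y_{n}$ of $\Psi_{n-2}$, whose zeros are simple by Propositions \ref{propmaximal} and \ref{propzerosPsi1}. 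So I assume $A+B\neq 0$, in which case $\widehat{T}_{n}$ has some degree $d$ in $w$.

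The core step is the polynomial case. The indices $n+1$ and $n-2$ are congruent modulo $3$, so by Proposition \ref{proportQdetailed} the reductions of $Q_{n+1}$ and $Q_{n-2}$ are orthogonal with respect to the same pair of weights on $(0,\alpha^{3})$; since $Q_{n-2}$ has degree one less in $w$, its $d-1$ orthogonality conditions are inherited by $\widehat{T}_{n}$. I would now argue exactly as in the proof of Proposition \ref{propmaximal}: if $\widehat{T}_{n}$ had a zero of multiplicity $\geq 2$ in $(0,\alpha^{3})$, and $N$ denoted its number of sign changes there, then $N\leq d-2$; using Lemma \ref{AT} I can build a function $H$ of the form $(\ref{eq:ATsyst1})$--$(\ref{eq:ATsyst2})$ vanishing precisely at those $N$ points (and to the complementary order at $\alpha^{3}$) and nowhere else, so that $H\,\widehat{T}_{n}$ keeps a constant sign on $(0,\alpha^{3})$. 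Integrating $H\,\widehat{T}_{n}$ against the weight and invoking the $d-1$ inherited conditions forces the integral to vanish, contradicting the constant sign. Hence every zero of $\widehat{T}_{n}$, and so of $T_{n}$, on the real interval is simple.

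For $Y_{n}$ the argument on the subinterval $(-b^{3},-a^{3})$ is the same but easier, since by Proposition \ref{ortPsinorm} the functions $\Psi_{m}$ satisfy ordinary orthogonality with respect to a single positive weight there; the conditions common to $\Psi_{n+1}$ and $\Psi_{n-2}$ then force simplicity of the zeros of $\widehat{Y}_{n}$ in $(-b^{3},-a^{3})$. The genuine obstacle is to rule out multiple zeros of $\widehat{Y}_{n}$ on the complementary set $(-\infty,-b^{3})\cup(-a^{3},0)$, where no orthogonality is available. A double zero of $Y_{n}$ at $x_{0}$ is equivalent to the vanishing of the determinant $x_{0}\Psi_{n}(x_{0})\Psi_{n+1}'(x_{0})-(x_{0}\Psi_{n})'(x_{0})\Psi_{n+1}(x_{0})$, which by the recurrence equals $a_{n}\bigl(\Psi_{n-2}\Psi_{n+1}'-\Psi_{n-2}'\Psi_{n+1}\bigr)(x_{0})$, i.e. to a critical point of the ratio $z\Psi_{n}/\Psi_{n+1}$. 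To show this cannot happen off $[-b^{3},-a^{3}]$ I would exploit the integral representations $(\ref{eq:rep4})$--$(\ref{eq:rep6})$, which exhibit the normalized products $Q_{m}\Psi_{m}/(z^{r}Q_{m,2})$ as Cauchy transforms of positive measures supported on $(0,\alpha^{3})$, hence strictly monotone in $w$ on each component of $\mathbb{R}\setminus[0,\alpha^{3}]$, together with the sign relations $(\ref{eq:sign1})$. Converting this monotonicity into the strict monotonicity of $z\Psi_{n}/\Psi_{n+1}$ --- which requires reconciling the two distinct denominators $Q_{n+1,2}$ and $Q_{n-2,2}$ and tracking the signs in $(\ref{eq:sign1})$ across the residue classes modulo $6$ --- is the step I expect to be the most delicate part of the proof.
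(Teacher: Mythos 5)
Your treatment of the polynomial half ($T_n$ on $(0,\alpha)$) is correct and genuinely different from the paper's. You reduce via the recurrence to $T_n=(A+B)Q_{n+1}+A\,a_n Q_{n-2}$ (valid for $n\geq 2$; $n=0,1$ are trivial for $T_n$), pass to $w=z^3$, and run the AT-system argument of Lemma \ref{AT} on the orthogonality conditions common to $Q_{n+1}$ and $Q_{n-2}$, exactly in the spirit of the proof of Proposition \ref{propmaximal}. Two remarks: the common conditions you need are the full pair of families in Proposition \ref{ortint} (both the $s_1$- and the $f\,s_1$-conditions), not Proposition \ref{proportQdetailed}, which records only the $s_1$-family; and one must verify, residue class by residue class mod $6$, that after reduction these common conditions number $\deg\widehat{T}_n-1$ and split between the two weights so that Lemma \ref{AT} bounds the auxiliary function $H$ by $\deg\widehat{T}_n-2$ zeros --- this does check out. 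The paper instead obtains the $T_n$ statement as a byproduct of its analysis of $Y_n$: it collects the zeros of $Y_n$ on $\Sigma_1\setminus\{0\}$ into a polynomial $R_n$, notes that $Y_n/R_n$ is holomorphic off $S_0$ with strong decay at $\infty$, and converts this via Cauchy's theorem on a contour around $S_0$ into orthogonality conditions for $T_n$ against the varying weight $s_1/R_n$; simplicity then follows from sign-change counting against $\deg T_n\leq n+1$ and rotational symmetry. Your route buys a self-contained proof for $T_n$; the paper's buys a mechanism that also controls $Y_n$.

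That is where the genuine gap lies: the second half of the statement ($Y_n$ has only simple zeros on all of $(-\infty,0)$) is not proven in your proposal, and the gap is larger than the one you flag. Even on $(-b^3,-a^3)$, the inherited orthogonality conditions do \emph{not} ``force simplicity'': $\widehat{Y}_n$ is not a polynomial, so finitely many orthogonality conditions only bound its number of sign changes from below; without an upper bound on its total number of zeros counted with multiplicity, a double zero is perfectly compatible with having all required sign changes, and no contradiction arises. (For orthogonal polynomials that upper bound is the degree; here it must be manufactured.) The paper manufactures it precisely through the duality above: a zero of $Y_n$ of multiplicity $\geq 2$ anywhere on $(-\infty,0)$, together with its two rotations, inflates $\deg R_n$, improves the decay of $Y_n/R_n$ at $\infty$, and the contour argument then yields so many conditions on $T_n$ that it would need more zeros than its degree permits (e.g.\ $\deg T_{6l}\geq 6l+3$ versus $\deg T_{6l}\leq 6l+1$); this handles the whole ray $(-\infty,0)$ at once, with no case split. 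Your proposed substitute --- deducing non-vanishing of the Wronskian of $z\Psi_n$ and $\Psi_{n+1}$ from monotonicity of Cauchy transforms --- faces a real obstruction, not merely a delicate computation: what $(\ref{eq:rep4})$--$(\ref{eq:rep6})$ make monotone (in $w=z^3$) are the functions $H_m=Q_m\Psi_m/Q_{m,2}$, whereas $z\Psi_n/\Psi_{n+1}=\bigl(zH_n/H_{n+1}\bigr)\cdot\bigl(Q_{n,2}Q_{n+1}\bigr)/\bigl(Q_{n+1,2}Q_n\bigr)$ carries the rational factor $Q_{n,2}/Q_{n+1,2}$, whose zeros and poles lie exactly in the interval $(-b,-a)$ you must control; a ratio of monotone functions multiplied by such a factor has no sign-definite derivative in general, and the sign data $(\ref{eq:sign1})$ cannot repair this. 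So the $Y_n$ half of the proposition remains unproven in your approach, and the missing ingredient is the paper's contour-integral duality between $Y_n$ and $T_n$.
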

\begin{proof}
From $(\ref{eq:ortintPsi6l})$--$(\ref{eq:ortintPsi6l4})$ it
follows that
\[
0=\int_{-b^{3}}^{-a^{3}}\tau^{k}\,Y_{6l+1}(\sqrt[3]{\tau})
\,s_{2}(\sqrt[3]{\tau})\,d\tau,\quad 0\leq k\leq l-2,\qquad\,\,\,
0=\int_{-b^{3}}^{-a^{3}}\tau^{k}\,Y_{6l+4}(\sqrt[3]{\tau})
\,s_{2}(\sqrt[3]{\tau})\,d\tau,\quad 0\leq k\leq l-1,
\]
\[
0=\int_{-b^{3}}^{-a^{3}}\tau^{k}\,Y_{6l+j}(\sqrt[3]{\tau})
\,s_{2}(\sqrt[3]{\tau})\,\frac{d\tau}{\tau^{2/3}},\quad 0\leq
k\leq l-1,\quad j=0,3,
\]
\[
0=\int_{-b^{3}}^{-a^{3}}\tau^{k}\,Y_{6l+2+j}(\sqrt[3]{\tau})
\,s_{2}(\sqrt[3]{\tau})\,\frac{d\tau}{\tau^{1/3}},\quad 0\leq
k\leq l-1,\quad j=0,3.
\]

Consequently, for each $j\in\{0,2,3,4,5\}$, the function
$Y_{6l+j}$ has at least $l$ sign change knots in $(-b,-a)$, and
$Y_{6l+1}$ has at least $l-1$ sign change knots in $(-b,-a)$. From
(\ref{sympropPsin1}) it follows that for every $n$,
$Y_{n}(e^{\frac{2\pi i}{3}}z)=C_{n}\,Y_{n}(z)$, where $C_{n}$
denotes a constant. Therefore, the functions $Y_{6l+j},
j\in\{0,2,3,4,5\}$ have at least $3l$ zeros on $S_{1}$, and
$Y_{6l+1}$ has at least $3l-3$ zeros on $S_{1}$. For each $0\leq
j\leq 5$, let $R_{6l+j}$ denote the monic polynomial whose zeros
coincide with the zeros of $Y_{6l+j}$ on
$\Sigma_{1}\setminus\{0\}$. Then $R_{6l+j}$ satisfies
(\ref{eq:symR6lj}), $Y_{6l+j}/R_{6l+j}\in
H(\overline{\mathbb{C}}\setminus S_{0})$, and using
(\ref{eq:ordPsi6linf})--(\ref{eq:ordPsi6l1inf}) we deduce that as
$z\rightarrow\infty$:
\[
\frac{Y_{6l}(z)}{R_{6l}(z)}=O\Big(\frac{1}{z^{6l}}\Big),\qquad
\frac{Y_{6l+1}(z)}{R_{6l+1}(z)}=O\Big(\frac{1}{z^{6l-1}}\Big),\qquad
\frac{Y_{6l+2}(z)}{R_{6l+2}(z)}=O\Big(\frac{1}{z^{6l+1}}\Big),
\]
\[
\frac{Y_{6l+3}(z)}{R_{6l+3}(z)}=O\Big(\frac{1}{z^{6l+3}}\Big),\qquad
\frac{Y_{6l+4}(z)}{R_{6l+4}(z)}=O\Big(\frac{1}{z^{6l+2}}\Big),\qquad
\frac{Y_{6l+5}(z)}{R_{6l+5}(z)}=O\Big(\frac{1}{z^{6l+4}}\Big).
\]
Let $\Gamma$ again denote a Jordan curve surrounding $S_{0}$, such
that the zeros of the polynomials $R_{6l+j}$ lie outside $\Gamma$.
By (\ref{eq:Psigeneral}),
\[
0=\int_{\Gamma} z^{\nu}\,\frac{Y_{6l}(z)}{R_{6l}(z)}\,dz=
\int_{0}^{\alpha}x^{\nu}\,T_{6l}(x)\,(1+e^{2\pi
i(\nu+1)/3}+e^{4\pi
i(\nu+1)/3})\frac{s_{1}(x)}{R_{6l}(x)}\,dx,\quad\nu=0,\ldots,6l-2,
\]
which is equivalent to
\begin{equation}\label{ortT6l}
0=\int_{0}^{\alpha}x^{3k+2}\,T_{6l}(x)\frac{s_{1}(x)}{R_{6l}(x)}\,dx,\quad
0\leq k\leq 2l-2.
\end{equation}
Similarly we obtain:
\begin{equation}\label{ortT6l1}
0=\int_{0}^{\alpha}x^{3k+1}\,T_{6l+1}(x)\frac{s_{1}(x)}{R_{6l+1}(x)}\,dx,\quad
0\leq k\leq 2l-2,\qquad
0=\int_{0}^{\alpha}x^{3k}\,T_{6l+5}(x)\frac{s_{1}(x)}{R_{6l+5}(x)}\,dx,\quad
0\leq k\leq 2l.
\end{equation}
\begin{equation}\label{ortT6l2}
0=\int_{0}^{\alpha}x^{3k}\,T_{6l+2}(x)\frac{s_{1}(x)}{R_{6l+2}(x)}\,dx
=\int_{0}^{\alpha}x^{3k+2}\,T_{6l+3}(x)\frac{s_{1}(x)}{R_{6l+3}(x)}\,dx
=\int_{0}^{\alpha}x^{3k+1}\,T_{6l+4}(x)\frac{s_{1}(x)}{R_{6l+4}(x)}\,dx,\qquad
0\leq k\leq 2l-1,
\end{equation}
From (\ref{ortT6l}) it follows that $T_{6l}$ has at least $2l-1$
sign change knots in $(0,\alpha)$. Since $T_{6l}(ze^{\frac{2\pi
i}{3}})=e^{\frac{2\pi i}{3}}\,T_{6l}(z)$, we see that any zero of
$T_{6l}$ in $(0,\infty)$ must be simple, otherwise $T_{6l}$ would
have at least $6l+3$ zeros, contradicting $\deg(T_{6l})\leq 6l+1$.
Similarly, using (\ref{ortT6l1})--(\ref{ortT6l2}) we show that the
polynomials $T_{6l+j}, 1\leq j\leq 5$, have only simple zeros in
$(0,\infty)$.

Now we prove that the functions $Y_{n}$ have only simple zeros in
$(-\infty,0)$. We know that $Y_{6l}$ has at least $l$ sign change
knots in $(-\infty,0)$. If we assume that $Y_{6l}$ has a zero of
multiplicity $\geq 2$, then $\deg{R_{6l}}\geq 3l+6$, and so we
would have
\[
Y_{6l}(z)/R_{6l}(z)=O(1/z^{6l+6}),\qquad z\rightarrow\infty.
\]
Reasoning as above, we arrive to the fact that $\deg{T_{6l}}\geq
6l+3$, which is impossible. Similarly we see that the zeros of
$Y_{6l+j}, 1\leq j\leq 5,$ contained in $(-\infty,0)$, must be
simple.
\end{proof}

\noindent{\bf Proof of Theorem \ref{theointerlacing}.} Let
$x\in(0,\alpha)$ and assume that $Q_{n}(x)=Q_{n+1}(x)=0$. Take
$A=1$, $B=-xQ_{n}'(x)/Q_{n+1}'(x)$. For this choice of $A$ and
$B$, the polynomial $T_{n}$ defined by (\ref{defnTn}) satisfies
$T_{n}(x)=T_{n}'(x)=0$, contradicting Proposition
\ref{propsimplezeroscomb}.

Let $x\in(0,\alpha)$ be arbitrary but fixed. Take now
$A=Q_{n+1}(x)/x$ and $B=-Q_{n}(x)$. For this choice of $A$ and
$B$, we have $T_{n}(x)=0$, therefore $T_{n}'(x)\neq 0$, or
equivalently
\[
L_{n}(x):=\frac{Q_{n+1}(x)\,Q_{n}(x)}{x}
+Q_{n+1}(x)\,Q_{n}'(x)-Q_{n}(x)\,Q_{n+1}'(x)\neq 0.
\]
In particular, the sign of $L_{n}$ is constant on $(0,\alpha)$.
Evaluating $L_{n}$ at two consecutive zeros of $Q_{n}$ ($Q_{n+1}$)
on $(0,\alpha)$, we see immediately that there must be an
intermediate zero of $Q_{n+1}$ ($Q_{n}$).

The same argument proves the interlacing property of the zeros of
$\Psi_{n}$ and $\Psi_{n+1}$. \hfill $\Box$

\begin{prop}\label{interlacingzerosQn}
Let the roots of the polynomials $Q_{3k+i}$, $0\leq i\leq 2$, in
the interval $(0,\alpha)$, be defined as follows:
\[
x_{1}^{(3k+i)}<x_{2}^{(3k+i)}<x_{3}^{(3k+i)}<\cdots<x_{k-1}^{(3k+i)}<x_{k}^{(3k+i)}.
\]
Then
\begin{equation}\label{eq:interl1}
x_{1}^{(3k)}<x_{1}^{(3k+1)}<x_{2}^{(3k)}<x_{2}^{(3k+1)}<\cdots<x_{k}^{(3k)}<x_{k}^{(3k+1)},
\end{equation}
\begin{equation}\label{eq:interl2}
x_{1}^{(3k+1)}<x_{1}^{(3k+2)}<x_{2}^{(3k+1)}<x_{2}^{(3k+2)}<\cdots<x_{k}^{(3k+1)}<x_{k}^{(3k+2)},
\end{equation}
\begin{equation}\label{eq:interl3}
x_{1}^{(3k+3)}<x_{1}^{(3k+2)}<x_{2}^{(3k+3)}<x_{2}^{(3k+2)}<\cdots<x_{k}^{(3k+2)}<x_{k+1}^{(3k+3)}.
\end{equation}
\end{prop}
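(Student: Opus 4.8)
The plan is to pass to the polynomials $P_n$ of (\ref{defn:P3k}) and run an induction on $k$ in which the positivity of the coefficients $a_n$ fixes the orientation of the (already strict) interlacing supplied by Theorem \ref{theointerlacing}. First I would record, using Proposition \ref{propmaximal}, that each of $P_{3k},P_{3k+1},P_{3k+2}$ is a monic polynomial of degree $k$ whose zeros are simple and lie in $(0,\alpha^3)$ (they are the cubes of the zeros of $Q_n$ in $(0,\alpha)$), while $P_{3k+3}$ has degree $k+1$ with $k+1$ such zeros. Since $t\mapsto t^3$ is increasing, the interlacings (\ref{eq:interl1})--(\ref{eq:interl3}) are equivalent to the corresponding statements for the $P_n$ on $(0,\alpha^3)$. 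Substituting (\ref{defn:P3k}) into (\ref{recurrence}) and cancelling the common powers of $z$ turns the recurrence into
\begin{align*}
P_{3k}&=P_{3k+1}+a_{3k}\,P_{3k-2},\\
P_{3k+1}&=P_{3k+2}+a_{3k+1}\,P_{3k-1},\\
w\,P_{3k+2}&=P_{3k+3}+a_{3k+2}\,P_{3k},
\end{align*}
all with $a_n>0$. Theorem \ref{theointerlacing}, transported to the $P_n$, gives strict interlacing of consecutive polynomials; in particular (\ref{eq:interl3}), where the two polynomials have $k$ and $k+1$ zeros, is then forced purely by counting, since simple strictly interlacing zeros force the degree-$(k+1)$ polynomial to bracket the degree-$k$ one. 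Thus only the orientations in (\ref{eq:interl1}) and (\ref{eq:interl2}) remain.

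The sign tool is that a monic $P_n$ of degree $k$ with all zeros in $(0,\infty)$ has sign $(-1)^{k-j}$ at a point lying to the right of exactly $j$ of its zeros; in particular it has the leftmost sign $(-1)^k$ precisely to the left of its smallest zero $m_n$. Combined with strict interlacing, for two such degree-$k$ polynomials $P,R$ one has $m_P<m_R$ if and only if $R(m_P)$ carries the sign $(-1)^k$. I would then prove by induction on $k$ (the case $k=1$ being immediate from $P_3=w-a_2$, $P_4=w-(a_2+a_3)$, $P_5=w-(a_2+a_3+a_4)$) the two orientation facts $m_{3k}<m_{3k+1}$ and $m_{3k+1}<m_{3k+2}$.

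For (\ref{eq:interl1}) I evaluate the third relation at level $k-1$, namely $P_{3k}=w\,P_{3k-1}-a_{3k-1}P_{3k-3}$, at the point $m_{3k-2}$. The induction hypothesis (interl2 at $k-1$) makes $P_{3k-1}(m_{3k-2})$ carry the sign $(-1)^{k-1}$, while interl1 at $k-1$ places exactly one zero of $P_{3k-3}$ below $m_{3k-2}$, so $P_{3k-3}(m_{3k-2})$ carries the sign $(-1)^k$; both terms then have sign $(-1)^{k-1}$, so $P_{3k}(m_{3k-2})$ does too, which is not the leftmost sign and therefore forces $m_{3k}<m_{3k-2}$. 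Feeding this into the first relation evaluated at $m_{3k}$, $P_{3k+1}(m_{3k})=-a_{3k}P_{3k-2}(m_{3k})$, yields the leftmost sign $(-1)^k$ for $P_{3k+1}(m_{3k})$ and hence $m_{3k}<m_{3k+1}$. The orientation (\ref{eq:interl2}) is handled symmetrically: evaluating $P_{3k+1}=P_{3k}-a_{3k}P_{3k-2}$ at $m_{3k-1}$ and using the automatic bracketing (\ref{eq:interl3}) at $k-1$ together with interl2 at $k-1$ shows that $P_{3k}(m_{3k-1})$ and $-a_{3k}P_{3k-2}(m_{3k-1})$ again share the sign $(-1)^{k-1}$, giving $m_{3k+1}<m_{3k-1}$; then $P_{3k+2}(m_{3k+1})=-a_{3k+1}P_{3k-1}(m_{3k+1})$ produces the leftmost sign and hence $m_{3k+1}<m_{3k+2}$.

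I expect the main obstacle to be exactly this sign bookkeeping: choosing the correct evaluation points and the minimal inductive hypothesis so that in each relation the two surviving terms have the \emph{same} sign, which is precisely what $a_n>0$ guarantees and what rules out the reversed orientation. Once the induction closes, translating back through $z\mapsto z^3$ gives (\ref{eq:interl1})--(\ref{eq:interl3}).
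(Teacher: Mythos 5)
Your proof is correct, but it follows a genuinely different route from the paper's. The paper's argument is a single, non-inductive step: writing $Q_{3k}(z)=b_{0}^{(3k)}+\cdots+z^{3k}$, $Q_{3k+1}(z)=b_{1}^{(3k+1)}z+\cdots+z^{3k+1}$, $Q_{3k-2}(z)=b_{1}^{(3k-2)}z+\cdots+z^{3k-2}$, the recurrence (\ref{recurrence}) gives the coefficient identity $b_{0}^{(3k)}-b_{1}^{(3k+1)}=a_{3k}\,b_{1}^{(3k-2)}$; Vieta's formulas express these low-order coefficients as signed products of the zeros, and $a_{3k}>0$ then yields the global inequality $(x_{1}^{(3k)}\cdots x_{k}^{(3k)})^{3}<(x_{1}^{(3k+1)}\cdots x_{k}^{(3k+1)})^{3}$, which combined with the unsigned interlacing of Theorem \ref{theointerlacing} immediately fixes the orientation (\ref{eq:interl1}); (\ref{eq:interl2}) is analogous and (\ref{eq:interl3}) is the same counting observation you make. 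You instead fix the orientation locally: an induction on $k$ comparing the \emph{smallest} zeros, with the positivity of $a_{n}$ entering through the fact that in each transported recurrence the two surviving terms at the chosen evaluation point share a sign. I checked your sign bookkeeping (the transported recurrences $P_{3k}=P_{3k+1}+a_{3k}P_{3k-2}$, $P_{3k+1}=P_{3k+2}+a_{3k+1}P_{3k-1}$, $wP_{3k+2}=P_{3k+3}+a_{3k+2}P_{3k}$ are right, as are the parity counts), and the induction closes. What each approach buys: the paper's is shorter and avoids induction entirely, since comparing the product of \emph{all} zeros detects the orientation in one shot; yours is longer but elementary and produces extra intermediate information along the way (the inequalities $m_{3k}<m_{3k-2}$ and $m_{3k+1}<m_{3k-1}$ relating smallest zeros of non-consecutive polynomials), which the paper's global argument does not give.
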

\begin{proof}
If we write
\[
Q_{3k-2}(z)=b_{1}^{(3k-2)}z+\cdots+z^{3k-2},\qquad
Q_{3k}(z)=b_{0}^{(3k)}+\cdots+z^{3k},\qquad
Q_{3k+1}(z)=b_{1}^{(3k+1)}z+\cdots+z^{3k+1},
\]
from (\ref{recurrence}) we obtain the relation
$b_{0}^{(3k)}-b_{1}^{(3k+1)}=a_{3k}\,b_{1}^{(3k-2)}$. Vieta
formulas show that
\[
b_{0}^{(3k)}=(-1)^{3k}(x_{1}^{(3k)}\cdots x_{k}^{(3k)})^{3},\qquad
b_{1}^{(3k+1)}=(-1)^{3k}(x_{1}^{(3k+1)}\cdots x_{k}^{(3k+1)})^{3},
\]
and similarly $b_{1}^{(3k-2)}$ equals $(-1)^{3k-1}$ times the
product of all non-zero roots of $Q_{3k-2}$. Since $a_{3k}>0$ and
the product of all non-zero roots of $Q_{3k-2}$ is also positive,
we deduce that $(x_{1}^{(3k)}\cdots
x_{k}^{(3k)})^{3}<(x_{1}^{(3k+1)}\cdots x_{k}^{(3k+1)})^{3}$. This
inequality and Theorem \ref{theointerlacing} imply
(\ref{eq:interl1}). Similarly we show that $(x_{1}^{(3k+1)}\cdots
x_{k}^{(3k+1)})^{3}<(x_{1}^{(3k+2)}\cdots x_{k}^{(3k+2)})^{3}$,
which implies (\ref{eq:interl2}). Finally, (\ref{eq:interl3})
follows directly from Theorem \ref{theointerlacing}. \end{proof}


\section{Ratio asymptotics of the polynomials $Q_{n}$ and $Q_{n,2}$}
\label{sectionratio}

Let
\begin{equation}\label{defnHn1}
H_{n}:=\frac{Q_{n}\Psi_{n}}{Q_{n,2}}.
\end{equation}
Notice that $H_{n}$ is real-valued on $(-\infty,0)$ and has
constant sign on this interval. Having in mind the definitions
$(\ref{defn:P3k})$--$(\ref{defn:Pn2})$, we have:

\begin{prop}\label{prop3}
Let $l\geq 0$ be an arbitrary integer. Then the following
orthogonality conditions hold:
\begin{equation}\label{eqortQn20}
0=\int_{-b^{3}}^{-a^{3}}\tau^{k}\,P_{6l+j,2}(\tau)
\frac{|H_{6l+j}(\sqrt[3]{\tau})|\,s_{2}(\sqrt[3]{\tau})}
{|\sqrt[3]{\tau}\,\,P_{6l+j}(\tau)|}\,d\tau,\qquad 0\leq k\leq
l-1,\quad j=0,3,
\end{equation}
\begin{equation}\label{eqortQn22}
0=\int_{-b^{3}}^{-a^{3}}\tau^{k}\,P_{6l+2+j,2}(\tau)
\frac{|H_{6l+2+j}(\sqrt[3]{\tau})|\,s_{2}(\sqrt[3]{\tau})}{|\tau^{2/3}
P_{6l+2+j}(\sqrt[3]{\tau})|}\,d\tau,\qquad 0\leq k\leq l-1,\quad
j=0,3,
\end{equation}
\begin{equation}\label{eqortQn21}
0=\int_{-b^{3}}^{-a^{3}}\tau^{k}\,P_{6l+1,2}(\tau)
\frac{|H_{6l+1}(\sqrt[3]{\tau})|\,s_{2}(\sqrt[3]{\tau})}{|\tau
P_{6l+1}(\tau)|}\,d\tau,\qquad 0\leq k\leq l-1,
\end{equation}
\begin{equation}\label{eqortQn24}
0=\int_{-b^{3}}^{-a^{3}}\tau^{k}\,P_{6l+4,2}(\tau)
\frac{|H_{6l+4}(\sqrt[3]{\tau})|\,s_{2}(\sqrt[3]{\tau})} {|\tau
P_{6l+4}(\tau)|}\,d\tau,\qquad 0\leq k\leq l.
\end{equation}
\end{prop}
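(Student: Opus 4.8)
The plan is to start from the orthogonality relations for the functions of second type already recorded in Proposition \ref{ortPsinorm}, namely $(\ref{eq:ortintPsi6l})$--$(\ref{eq:ortintPsi6l4})$, and to rewrite each of them by inserting the defining relation $\Psi_{n}=H_{n}\,Q_{n,2}/Q_{n}$, which is immediate from $(\ref{defnHn1})$. The ranges of $k$ will be inherited verbatim from Proposition \ref{ortPsinorm}: in particular the extra value $k=l$ in $(\ref{eqortQn24})$ comes from the fact that $\Psi_{6l+4}$ satisfies one more orthogonality condition than the other residue classes.

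First I would pass to the variable $\tau=z^{3}$ and evaluate at $z=\sqrt[3]{\tau}$. Using the factorizations $(\ref{defn:P3k})$--$(\ref{defn:Pn2})$ one gets $Q_{6l}(\sqrt[3]{\tau})=P_{6l}(\tau)$, $Q_{6l+1}(\sqrt[3]{\tau})=\tau^{1/3}P_{6l+1}(\tau)$, $Q_{6l+2}(\sqrt[3]{\tau})=\tau^{2/3}P_{6l+2}(\tau)$, and analogously for the indices $6l+3,6l+4,6l+5$, while in every case $Q_{n,2}(\sqrt[3]{\tau})=P_{n,2}(\tau)$ because $Q_{n,2}$ is a polynomial in $z^{3}$. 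Substituting these into $\Psi_{n}(\sqrt[3]{\tau})=H_{n}(\sqrt[3]{\tau})\,Q_{n,2}(\sqrt[3]{\tau})/Q_{n}(\sqrt[3]{\tau})$ and plugging the outcome into the appropriate relation among $(\ref{eq:ortintPsi6l})$--$(\ref{eq:ortintPsi6l4})$, the fractional power of $\tau$ coming from the factorization of $Q_{n}$ combines with the fractional power sitting in the measure. A short exponent count gives, in each of the six classes, an identity $0=\int_{-b^{3}}^{-a^{3}}\tau^{k}P_{n,2}(\tau)\,G_{n}(\tau)\,d\tau$, where $G_{n}(\tau)=H_{n}(\sqrt[3]{\tau})\,s_{2}(\sqrt[3]{\tau})/(\tau^{m/3}P_{n}(\tau))$ with $m=1$ for $n\equiv 0,3$, $m=2$ for $n\equiv 2,5$, and $m=3$ for $n\equiv 1,4\pmod 6$. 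This bookkeeping is routine; it reproduces the denominators $\sqrt[3]{\tau}\,P_{n}(\tau)$, $\tau^{2/3}P_{n}(\tau)$ and $\tau P_{n}(\tau)$ appearing in $(\ref{eqortQn20})$--$(\ref{eqortQn24})$.

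The only substantive point is the passage to absolute values, and I would handle it by showing that $G_{n}$ has constant sign on $(-b^{3},-a^{3})$. The factor $H_{n}(\sqrt[3]{\tau})$ is of constant sign since $H_{n}$ is real and of constant sign on $(-\infty,0)$ (as noted right after $(\ref{defnHn1})$) and $\sqrt[3]{\tau}\in(-b,-a)$; the weight $s_{2}(\sqrt[3]{\tau})$ is nonnegative; the power $\tau^{m/3}$ is of constant sign on the negative interval; and $P_{n}(\tau)$ does not vanish there, because by Proposition \ref{propmaximal} and $(\ref{defn:P3k})$ every zero of $P_{n}$ is the cube of a zero of $Q_{n}$ lying in $(0,\alpha]$ and hence belongs to $(0,\alpha^{3}]$. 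Multiplying the identity by the global sign $\varepsilon=\pm 1$ of $G_{n}$ turns $G_{n}$ into $|H_{n}(\sqrt[3]{\tau})|\,s_{2}(\sqrt[3]{\tau})/|\tau^{m/3}P_{n}(\tau)|$, leaves the factor $\tau^{k}P_{n,2}(\tau)$ untouched, and preserves the vanishing of the integral, which is exactly $(\ref{eqortQn20})$--$(\ref{eqortQn24})$. I expect no real obstacle beyond keeping this constant-sign verification and the exponent bookkeeping straight across the six cases.
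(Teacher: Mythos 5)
Your proposal is correct and takes essentially the same route as the paper: the paper's own proof consists of the single remark that the conditions follow immediately from $(\ref{eq:ortintPsi6l})$--$(\ref{eq:ortintPsi6l4})$, and your argument simply makes the implicit details explicit. In particular, your substitution $\Psi_{n}=H_{n}\,Q_{n,2}/Q_{n}$ from $(\ref{defnHn1})$, the exponent bookkeeping ($m=1$ for $n\equiv 0,3$, $m=2$ for $n\equiv 2,5$, $m=3$ for $n\equiv 1,4 \pmod 6$, including the extra condition $k=l$ for $\Psi_{6l+4}$), and the constant-sign verification (using that $H_{n}$ is real of constant sign on $(-\infty,0)$, that $s_{2}\geq 0$, and that $P_{n}$ is zero-free on $[-b^{3},-a^{3}]$ by Proposition \ref{propmaximal}) are exactly the routine checks the paper leaves to the reader.
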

\begin{proof}
This orthogonality conditions follow immediately from
$(\ref{eq:ortintPsi6l})$--$(\ref{eq:ortintPsi6l4})$.
\end{proof}

\begin{prop}\label{prop4}
Let $k\geq 0$ be an arbitrary integer. Then the following
orthogonality conditions hold:
\begin{equation}\label{eq:ortQ3kvar}
0=\int_{0}^{\alpha^{3}}\tau^{j}\,P_{3k}(\tau)
\frac{s_{1}(\sqrt[3]{\tau})}{P_{3k,2}(\tau)}\frac{d\tau}{\tau^{2/3}},\qquad
0\leq j\leq k-1.
\end{equation}
\begin{equation}\label{eq:ortQ3k1var}
0=\int_{0}^{\alpha^{3}}\tau^{j}\,P_{3k+1}(\tau)
\frac{s_{1}(\sqrt[3]{\tau})}{P_{3k+1,2}(\tau)}\sqrt[3]{\tau}\,d\tau,\qquad
0\leq j\leq k-1.
\end{equation}
\begin{equation}\label{eq:ortQ3k2var}
0=\int_{0}^{\alpha^{3}}\tau^{j}\,P_{3k+2}(\tau)
\frac{s_{1}(\sqrt[3]{\tau})}{P_{3k+2,2}(\tau)}\sqrt[3]{\tau}\,d\tau,\qquad
0\leq j\leq k-1.
\end{equation}
\end{prop}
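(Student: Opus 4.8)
The plan is to obtain these three relations as the $P$-polynomial reformulation of the varying-measure orthogonality conditions already established for the $Q_n$ in Proposition \ref{prop2}, namely $(\ref{eq:ortQ6lvar})$--$(\ref{eq:ortQ6l2var})$, by passing from the variable $t$ on $[0,\alpha]$ to the variable $\tau=t^3$ on $[0,\alpha^3]$. The substitutions $(\ref{defn:P3k})$--$(\ref{defn:Pn2})$ write $Q_{3k},Q_{3k+1},Q_{3k+2}$ and each $Q_{n,2}$ as polynomials in $t^3$ (up to the prefactors $1$, $t$, $t^2$), so after inserting them every integrand becomes a function of $t^3$ multiplied by a pure power of $t$; this is exactly the form needed for the cubic change of variable to return a polynomial in $\tau$ integrated against an explicit weight on $[0,\alpha^3]$.

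Concretely, I would first relabel the index $l$ in $(\ref{eq:ortQ6lvar})$--$(\ref{eq:ortQ6l2var})$ as $k$ and the summation exponent as $j$, so that the admissible range $0\le k\le l-1$ becomes the asserted $0\le j\le k-1$. For the $Q_{3k}$ condition I would substitute $Q_{3k}(t)=P_{3k}(t^3)$, $Q_{3k,2}(t)=P_{3k,2}(t^3)$ and apply $\tau=t^3$, $dt=\tfrac{1}{3}\tau^{-2/3}\,d\tau$: the power $t^{3j}$ becomes $\tau^{j}$, the Jacobian supplies $\tau^{-2/3}$, and dropping the harmless constant $\tfrac{1}{3}$ yields exactly $(\ref{eq:ortQ3kvar})$. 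For $Q_{3k+1}$, the prefactor $t$ in $Q_{3k+1}(t)=t\,P_{3k+1}(t^3)$ combines with the power $t^{3j+2}$ to give $t^{3j+3}$, which under the substitution becomes $\tau^{j+1}\tau^{-2/3}=\tau^{j}\sqrt[3]{\tau}$, producing $(\ref{eq:ortQ3k1var})$. The case of $Q_{3k+2}$ is identical: its prefactor $t^2$ and the power $t^{3j+1}$ again sum to $t^{3j+3}$ and hence to the weight $\tau^{j}\sqrt[3]{\tau}$, giving $(\ref{eq:ortQ3k2var})$.

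The only point needing attention --- and the nearest thing to an obstacle --- is the exponent bookkeeping: in each of the three cases one must verify that the prefactor exponent ($0$, $1$, or $2$) together with the monomial exponent in the corresponding $Q$-condition ($3j$, $3j+2$, $3j+1$) sums to a multiple of $3$, namely $3j$ in the first case and $3j+3$ in the other two. This is what keeps $t$ raised to that power a genuine integer power of $\tau$ after substitution, so that multiplication by the Jacobian $\tau^{-2/3}$ leaves the weight $\tau^{-2/3}$ in the first case and $\sqrt[3]{\tau}$ in the remaining two. Since no positivity, regularity, or uniqueness argument enters, there is no genuine analytic difficulty beyond this routine verification.
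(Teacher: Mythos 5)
Your proposal is correct and is exactly the paper's own argument: the paper proves Proposition \ref{prop4} by simply citing $(\ref{eq:ortQ6lvar})$--$(\ref{eq:ortQ6l2var})$, and your change of variables $\tau=t^3$ together with the substitutions $(\ref{defn:P3k})$--$(\ref{defn:Pn2})$ is precisely the routine verification that citation leaves implicit. Your exponent bookkeeping ($3j$, $3j+3$, $3j+3$ in the three cases) checks out, so nothing is missing.
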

\begin{proof}
This orthogonality conditions follow immediately from
$(\ref{eq:ortQ6lvar})$--$(\ref{eq:ortQ6l2var})$.
\end{proof}

Observe that by Proposition \ref{propzerosPsi1}, for each
$j\in\{0,1,2,3,5\}$, $P_{6l+j,2}$ is a polynomial of degree $l$,
and $P_{6l+4,2}$ has degree $l+1$. By Proposition
\ref{propmaximal}, for each $k\geq 0$ and $j\in\{0,1,2\}$,
$P_{3k+j}$ has degree $k$.

For each integer $j\geq 0$ we let
\[
K_{3j}:=\Big(\int_{0}^{\alpha^{3}}P_{3j}^{2}(\tau)\frac{s_{1}(\sqrt[3]{\tau})}{P_{3j,2}(\tau)}
\frac{d\tau}{\tau^{2/3}}\Big)^{-1/2},
\]
\[
K_{3j+1}:=\Big(\int_{0}^{\alpha^{3}}P_{3j+1}^{2}(\tau)\frac{s_{1}(\sqrt[3]{\tau})\sqrt[3]{\tau}}{P_{3j+1,2}(\tau)}
\,d\tau\Big)^{-1/2},
\]
\[
K_{3j+2}:=\Big(\int_{0}^{\alpha^{3}}P_{3j+2}^{2}(\tau)\frac{s_{1}(\sqrt[3]{\tau})\sqrt[3]{\tau}}{P_{3j+2,2}(\tau)}
\,d\tau\Big)^{-1/2}.
\]
Similarly, we define for each integer $j\geq 0$ the following
constants:
\[
K_{3j,2}:=\Big(\int_{-b^{3}}^{-a^{3}}P_{3j,2}^{2}(\tau)\,\frac{|H_{3j}(\sqrt[3]{\tau})|}
{|\sqrt[3]{\tau}\,P_{3j}(\tau)|}\,s_{2}(\sqrt[3]{\tau})\,d\tau\Big)^{-1/2},
\]
\[
K_{3j+1,2}:=\Big(\int_{-b^{3}}^{-a^{3}}P_{3j+1,2}^{2}(\tau)\,\frac{|H_{3j+1}(\sqrt[3]{\tau})|}
{|\tau P_{3j+1}(\tau)|}\,s_{2}(\sqrt[3]{\tau})\,d\tau\Big)^{-1/2},
\]
\[
K_{3j+2,2}:=\Big(\int_{-b^{3}}^{-a^{3}}P_{3j+2,2}^{2}(\tau)\,\frac{|H_{3j+2}(\sqrt[3]{\tau})|}
{|\tau^{2/3}
P_{3j+2}(\tau)|}\,s_{2}(\sqrt[3]{\tau})\,d\tau\Big)^{-1/2}.
\]
We need to introduce more notations. Let
\begin{equation}\label{defnkappankappan2}
\kappa_{n}:=K_{n},\qquad \kappa_{n,2}:=\frac{K_{n,2}}{K_{n}},
\end{equation}
consider the polynomials
\begin{equation}\label{eq:definitionpn}
p_{n}:=\kappa_{n}\,P_{n},\qquad p_{n,2}:=\kappa_{n,2}\,P_{n,2},
\end{equation}
and the functions
\begin{equation}\label{eq:definitionhn}
h_{n}:=K_{n}^{2}\,H_{n}.
\end{equation}
Finally, we introduce the following positive varying measures:
\begin{equation}\label{orthogvaryingmeasures}
d\nu_{3j}(\tau):=\frac{s_{1}(\sqrt[3]{\tau})}{P_{3j,2}(\tau)}\frac{d\tau}{\tau^{2/3}},
\quad\,\,
d\nu_{3j+1}(\tau):=\frac{s_{1}(\sqrt[3]{\tau})\sqrt[3]{\tau}}{P_{3j+1,2}(\tau)}\,d\tau,
\quad\,\,
d\nu_{3j+2}(\tau):=\frac{s_{1}(\sqrt[3]{\tau})\sqrt[3]{\tau}}{P_{3j+2,2}(\tau)}\,d\tau,
\end{equation}
\[
d\nu_{3j,2}(\tau):=\frac{|h_{3j}(\sqrt[3]{\tau})|}
{|\sqrt[3]{\tau}\,P_{3j}(\tau)|}\,s_{2}(\sqrt[3]{\tau})\,d\tau,
\quad\,\, d\nu_{3j+1,2}(\tau):=\frac{|h_{3j+1}(\sqrt[3]{\tau})|}
{|\tau P_{3j+1}(\tau)|}\,s_{2}(\sqrt[3]{\tau})\,d\tau,\quad\,\,
d\nu_{3j+2,2}(\tau):=\frac{|h_{3j+2}(\sqrt[3]{\tau})|}
{|\tau^{2/3} P_{3j+2}(\tau)|}\,s_{2}(\sqrt[3]{\tau})\,d\tau.
\]

\begin{prop}\label{orthonormalitypn}
The polynomials $p_{n}$ and $p_{n,2}$ are orthonormal polynomials
with respect to the measures $d\nu_{n}$ and $d\nu_{n,2}$,
respectively. This is, for every $n\geq 0$,
\[
1=\int_{0}^{\alpha^{3}}p_{n}^{2}(\tau)\,d\nu_{n}(\tau),\qquad
1=\int_{-b^{3}}^{-a^{3}}p_{n,2}^{2}(\tau)\,d\nu_{n,2}(\tau),
\]
\[
0=\int_{0}^{\alpha^{3}}\tau^{j}p_{n}(\tau)\,d\nu_{n}(\tau),\quad
\mbox{for all}\quad j<\deg{p_{n}},\qquad\qquad
0=\int_{-b^{3}}^{-a^{3}}\tau^{j}p_{n,2}(\tau)\,d\nu_{n,2}(\tau),\quad
\mbox{for all}\quad j<\deg{p_{n,2}}.
\]
\end{prop}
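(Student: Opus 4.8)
The plan is to obtain both orthonormality statements as formal consequences of the orthogonality relations already proved in Propositions \ref{prop3} and \ref{prop4}, the degree counts in Propositions \ref{propmaximal} and \ref{prop2}, and the definitions of the normalizing constants; no new analytic input is required. The central observation I would isolate at the outset is that each measure $d\nu_n$ (resp.\ $d\nu_{n,2}$) is, up to a positive constant factor equal to $1$ (resp.\ $K_n^2$), precisely the measure against which $P_n$ (resp.\ $P_{n,2}$) has already been shown to be orthogonal.

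First I would treat the polynomials $p_n$. Writing $n=3k+i$ with $i\in\{0,1,2\}$, one recalls from Proposition \ref{propmaximal} (via the factorizations (\ref{defn:P3k})) that $P_n$ is monic of degree exactly $k$. Comparing the definition (\ref{orthogvaryingmeasures}) of $d\nu_n$ with the relations (\ref{eq:ortQ3kvar})--(\ref{eq:ortQ3k2var}) shows that these say exactly that $\int_0^{\alpha^3}\tau^j P_n(\tau)\,d\nu_n(\tau)=0$ for $0\le j\le k-1$, i.e.\ $P_n$ is orthogonal with respect to $d\nu_n$ to every polynomial of degree below $\deg P_n$. Since $p_n=\kappa_n P_n=K_n P_n$ with $K_n\neq 0$, the same holds for $p_n$, so $p_n$ is a scalar multiple of the monic orthogonal polynomial of degree $k$. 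The normalization is then a one-line computation, reducing by the definition of $K_n$ to
\[
\int_0^{\alpha^3}p_n^2\,d\nu_n=K_n^2\int_0^{\alpha^3}P_n^2\,d\nu_n=K_n^2\cdot K_n^{-2}=1,
\]
the middle integral being $K_n^{-2}$ for each residue $i\in\{0,1,2\}$.

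For the polynomials $p_{n,2}$ the argument is identical, with one extra bookkeeping step coming from the factor $h_n=K_n^2 H_n$ in (\ref{eq:definitionhn}). Substituting $|h_n|=K_n^2|H_n|$ into (\ref{orthogvaryingmeasures}) exhibits $d\nu_{n,2}=K_n^2\,d\mu_n$, where I abbreviate by $d\mu_n$ the positive measure appearing in (\ref{eqortQn20})--(\ref{eqortQn24}). By Proposition \ref{prop2} the degree of $P_{n,2}$ matches the number of conditions in Proposition \ref{prop3} in every residue class: degree $l$ with conditions $0\le k\le l-1$ when $n\equiv 0,1,2,3,5\pmod 6$, and degree $l+1$ with conditions $0\le k\le l$ when $n\equiv 4\pmod 6$. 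Hence $P_{n,2}$, and therefore $p_{n,2}=\kappa_{n,2}P_{n,2}$, is orthogonal with respect to $d\nu_{n,2}$ to all lower-degree polynomials, and using $\kappa_{n,2}=K_{n,2}/K_n$ together with $d\nu_{n,2}=K_n^2\,d\mu_n$ the normalization collapses to
\[
\int p_{n,2}^2\,d\nu_{n,2}=\frac{K_{n,2}^2}{K_n^2}\,K_n^2\int P_{n,2}^2\,d\mu_n=K_{n,2}^2\cdot K_{n,2}^{-2}=1,
\]
the last step being the definition of $K_{n,2}$.

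The only genuine work is the verification just indicated: checking case by case over the six residues modulo $6$ that the number of vanishing moments supplied by Propositions \ref{prop3}--\ref{prop4} coincides with the degree of the corresponding polynomial, so that those relations truly characterize $P_n$ and $P_{n,2}$ as orthogonal polynomials rather than imposing too few constraints. I expect this residue bookkeeping to be the main, though entirely routine, obstacle; once the matching is confirmed, the two normalization identities follow immediately from the definitions of $K_n$ and $K_{n,2}$, and the positivity of the measures $d\nu_n,\,d\nu_{n,2}$ is already recorded in (\ref{orthogvaryingmeasures}).
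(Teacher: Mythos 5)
Your proposal is correct and takes essentially the same route as the paper, whose entire proof of Proposition~\ref{orthonormalitypn} is the remark that it follows immediately from Propositions~\ref{prop3} and~\ref{prop4}; you have merely made explicit the routine bookkeeping the paper leaves implicit. In particular, your residue-by-residue matching of moment conditions against the degrees from Proposition~\ref{propmaximal} and Proposition~\ref{prop2}, the observation that $d\nu_{n,2}$ differs from the measure in Proposition~\ref{prop3} by the positive constant $K_{n}^{2}$ (via $h_{n}=K_{n}^{2}H_{n}$), and the cancellations $\kappa_{n}^{2}\,K_{n}^{-2}=1$ and $\kappa_{n,2}^{2}\,K_{n}^{2}\,K_{n,2}^{-2}=1$ are exactly the verifications the paper's one-line proof relies on.
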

\begin{proof}
It follows immediately from Propositions \ref{prop3} and
\ref{prop4}.
\end{proof}

Using $(\ref{eq:rep4})$--$(\ref{eq:rep6})$, it is easy to check
that the functions $h_{n}$ have the following representations:
\begin{equation}\label{intreph3k}
h_{3k}(z)=z^{2}\int_{0}^{\alpha^{3}}\frac{p_{3k}^{2}(\tau)}{\tau-z^{3}}\,d\nu_{3k}(\tau),\qquad\,
h_{3k+1}(z)=z\int_{0}^{\alpha^{3}}\frac{p_{3k+1}^{2}(\tau)}{\tau-z^{3}}\,d\nu_{3k+1}(\tau),\qquad\,
h_{3k+2}(z)=z^{3}\int_{0}^{\alpha^{3}}\frac{p_{3k+2}^{2}(\tau)}{\tau-z^{3}}\,d\nu_{3k+2}(\tau).
\end{equation}

\begin{lem}\label{lemmaweakstar}
Assume that $s_{1}>0$ a.e. on $[0,\alpha]$, and $s_{2}>0$ a.e. on
$[-b,-a]$. Then
\begin{equation}\label{weakstarasymp1}
p_{n}^{2}(\tau)\,d\nu_{n}(\tau)\stackrel{*}{\longrightarrow}
\frac{1}{\pi}\frac{d\tau}{\sqrt{(\alpha^{3}-\tau)\tau}},\quad
\tau\in[0,\alpha^3],
\end{equation}
\begin{equation}\label{weakstarasymp2}
p_{n,2}^{2}(\tau)\,d\nu_{n,2}(\tau)\stackrel{*}{\longrightarrow}
\frac{1}{\pi}\frac{d\tau}{\sqrt{(-a^{3}-\tau)(\tau+b^{3})}},\quad
\tau\in[-b^3,-a^3].
\end{equation}
Consequently, the following limits hold uniformly on closed
subsets of $\overline{\mathbb{C}}\setminus S_{0}:$
\begin{equation}\label{asymph3k}
\lim_{k\rightarrow\infty}h_{3k}(z)=-\frac{z^2}{\sqrt{(z^{3}-\alpha^{3})z^{3}}},
\end{equation}
\begin{equation}\label{asymph3k1}
\lim_{k\rightarrow\infty}h_{3k+1}(z)=-\frac{z}{\sqrt{(z^{3}-\alpha^{3})z^{3}}},
\end{equation}
\begin{equation}\label{asymph3k2}
\lim_{k\rightarrow\infty}h_{3k+2}(z)=-\frac{z^{3}}{\sqrt{(z^{3}-\alpha^{3})z^{3}}},
\end{equation}
where the branch of the square root is taken so that $\sqrt{x}>0$
for $x>0$.
\end{lem}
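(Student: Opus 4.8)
The plan is to recognize the measures in (\ref{weakstarasymp1})--(\ref{weakstarasymp2}) as the canonical objects attached to orthonormal polynomials with respect to \emph{varying} measures, and to identify their weak-star limits with the equilibrium (arcsine) measures of the fixed intervals $[0,\alpha^3]$ and $[-b^3,-a^3]$. By Proposition \ref{orthonormalitypn}, $p_n$ and $p_{n,2}$ are orthonormal with respect to the positive varying measures $d\nu_n$ and $d\nu_{n,2}$ of (\ref{orthogvaryingmeasures}), so $p_n^2\,d\nu_n$ and $p_{n,2}^2\,d\nu_{n,2}$ are \emph{probability} measures on the respective intervals. Since the equilibrium measure of an interval is unique and the family of probability measures on a fixed compact set is weak-star compact, it suffices to show that every weak-star limit point of these measures is the corresponding arcsine measure. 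The hypotheses $s_1>0$ a.e.\ and $s_2>0$ a.e.\ are exactly the Rakhmanov-type a.e.-positivity conditions on the \emph{fixed} part of the weights that make this identification possible; accordingly I would invoke the Rakhmanov/M\'at\'e--Nevai--Totik theory for varying orthonormal polynomials within the regular-measures framework of \cite{StahlTotik}.

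The decisive point, which also dissolves the apparent circularity between the two coupled systems, is that the varying factors are harmless. In $d\nu_n$ the varying factor is $1/P_{n,2}$, whose zeros lie in $[-b^3,-a^3]$; in $d\nu_{n,2}$ the varying factors are $1/|P_n|$, with zeros of $P_n$ in $[0,\alpha^3]$, together with $|h_n|$. By Propositions \ref{propmaximal} and \ref{propzerosPsi1} all these zeros are confined to the \emph{complementary} interval, hence bounded away from the interval of orthogonality, so $\frac{1}{\deg p_n}\log|P_{n,2}|$ and $\frac{1}{\deg p_{n,2}}\log|P_n|$ are uniformly bounded. For the factor $|h_n|$ I would use the representations (\ref{intreph3k}): when the orthogonality variable lies in $[-b^3,-a^3]$ its cube root lies in $[-b,-a]$, so the Cauchy kernel between $[0,\alpha^3]$ and $[-b^3,-a^3]$ stays in the fixed compact interval $[a^3,\alpha^3+b^3]\subset(0,\infty)$; since $p_n^2\,d\nu_n$ is a probability measure, $|h_n|$ is bounded above and below by positive constants uniformly in $n$. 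Thus all varying factors are positive, real-analytic, and slowly varying on the $n$th-root scale, placing them within the scope of the varying-weight theory; and the arcsine conclusion is \emph{insensitive} to them, exactly as Rakhmanov's theorem is insensitive to the density of a fixed measure. Hence the limits require only confinement of the zeros (already proved) and boundedness of $h_n$, not any prior knowledge of the limiting zero distributions.

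Granting (\ref{weakstarasymp1}), the limits (\ref{asymph3k})--(\ref{asymph3k2}) follow by inserting the weak-star convergence into (\ref{intreph3k}). For $z$ in a closed subset of $\overline{\mathbb{C}}\setminus S_0$ one has $z^3\notin[0,\alpha^3]$, so $\tau\mapsto 1/(\tau-z^3)$ is continuous on $[0,\alpha^3]$ and
\[
h_{3k}(z)=z^2\int_0^{\alpha^3}\frac{p_{3k}^2(\tau)}{\tau-z^3}\,d\nu_{3k}(\tau)\longrightarrow z^2\int_0^{\alpha^3}\frac{1}{\tau-z^3}\,\frac{d\tau}{\pi\sqrt{(\alpha^3-\tau)\tau}}.
\]
The last integral is the Cauchy transform of the arcsine measure of $[0,\alpha^3]$, equal to $-1/\sqrt{z^3(z^3-\alpha^3)}$ with the branch fixed by the stated sign convention, which gives (\ref{asymph3k}); the prefactors $z$ and $z^3$ in the representations of $h_{3k+1}$ and $h_{3k+2}$ yield (\ref{asymph3k1})--(\ref{asymph3k2}). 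Uniformity on compact subsets follows from Vitali's theorem, since the $h_n$ are locally uniformly bounded on $\overline{\mathbb{C}}\setminus S_0$ (again because $p_n^2\,d\nu_n$ are probability measures and the kernel is locally bounded), so pointwise convergence upgrades to local uniform convergence.

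The main obstacle is the first step: rigorously establishing (\ref{weakstarasymp1})--(\ref{weakstarasymp2}) for genuinely varying measures. This requires invoking or adapting a Rakhmanov/M\'at\'e--Nevai--Totik theorem valid for varying weights of the present type and, crucially, verifying uniform membership of $\{d\nu_n\}$ and $\{d\nu_{n,2}\}$ in the class $\Reg$ along the varying family. The coupling between the two systems is what makes this delicate, but it is broken by the observations of the second paragraph: because the arcsine limit is insensitive to the bounded, slowly varying cross-factors, the two equilibrium problems decouple, and each reduces to the a.e.-positivity hypothesis on $s_1$, respectively $s_2$.
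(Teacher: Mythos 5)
Your proposal is correct and follows essentially the same route as the paper: the paper proves $(\ref{weakstarasymp1})$--$(\ref{weakstarasymp2})$ by noting that the systems $(\{d\mu_{3l+i}\},\{P_{3l+i,2}\},k)$ and $(\{d\lambda_{3l+i}\},\{|P_{3l+i}|\},k)$ are strongly admissible in the sense of \cite{BCL} -- precisely the varying-weight Rakhmanov/M\'at\'e--Nevai--Totik-type theorem you say you would invoke, with admissibility resting on the same facts you verify (zeros of the varying polynomial factors confined to the complementary interval, $|h_n|$ uniformly bounded above and below via $(\ref{intreph3k})$, and a.e.\ positivity of $s_1$, $s_2$) -- and then applies Corollary 3 of \cite{BCL}. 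Your deduction of $(\ref{asymph3k})$--$(\ref{asymph3k2})$ by inserting the weak-star limit into $(\ref{intreph3k})$ and evaluating the Cauchy transform of the arcsine measure is exactly how the paper concludes.
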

\begin{proof}
Let us define the measures
\[
d\mu_{3k}(\tau)=\frac{s_{1}(\sqrt[3]{\tau})}{\tau^{2/3}}\,d\tau,\qquad\qquad
d\mu_{3k+1}(\tau)=d\mu_{3k+2}(\tau)=s_{1}(\sqrt[3]{\tau})\sqrt[3]{\tau}\,d\tau.
\]
According to \cite[Definition 2]{BCL}, for each $i\in\{0,1,2\}$
and $k\in\mathbb{Z}$, the system $(\{d\mu_{3l+i}\},
\{P_{3l+i,2}\}, k)_{l\geq 1}$ is strongly admissible on
$[0,\alpha^3]$. So by \cite[Corollary 3]{BCL},
\[
\lim_{l\rightarrow\infty}\int_{0}^{\alpha^{3}}f(\tau)\,p_{3l+i}^{2}(\tau)\,
\frac{d\mu_{3l+i}(\tau)}{P_{3l+i,2}(\tau)}=\frac{1}{\pi}\int_{0}^{\alpha^{3}}f(\tau)
\frac{d\tau}{\sqrt{(\alpha^{3}-\tau)\tau}},
\]
for every $f$ continuous on $[0,\alpha^{3}]$. Since
$d\nu_{3l+i}(\tau)=d\mu_{3l+i}(\tau)/P_{3l+i,2}(\tau)$,
(\ref{weakstarasymp1}) follows. The formulas
(\ref{asymph3k})--(\ref{asymph3k2}) are a consequence of
(\ref{weakstarasymp1}) and (\ref{intreph3k}).

Similarly, if we define the measures
\[
d\lambda_{3k}(\tau)=\frac{|h_{3k}(\sqrt[3]{\tau})|}
{|\sqrt[3]{\tau}|}\,s_{2}(\sqrt[3]{\tau})\,d\tau,\qquad
d\lambda_{3k+1}(\tau)=\frac{|h_{3k+1}(\sqrt[3]{\tau})|}
{|\tau|}\,s_{2}(\sqrt[3]{\tau})\,d\tau, \qquad
d\lambda_{3k+2}(\tau)=\frac{|h_{3k+2}(\sqrt[3]{\tau})|}
{|\tau^{2/3}|}\,s_{2}(\sqrt[3]{\tau})\,d\tau,
\]
then for each $i\in\{0,1,2\}$ and each $k\in\mathbb{Z}$, the
system $(\{d\lambda_{3l+i}\},\{|P_{3l+i}|\},k)$ is strongly
admissible on $[-b^{3},-a^{3}]$, and (\ref{weakstarasymp2})
follows as before.
\end{proof}

For each $i\in\{0,\ldots,5\}$, we consider the families of
rational functions
\begin{equation}\label{familiesrational}
\Big\{\frac{P_{6k+i+1}(z)}{P_{6k+i}(z)}\Big\}_{k},\qquad
\Big\{\frac{P_{6k+i+1,2}(z)}{P_{6k+i,2}(z)}\Big\}_{k}.
\end{equation}
By Theorem \ref{theointerlacing}, these families are uniformly
bounded on compact subsets of $\mathbb{C}\setminus[0,\alpha^{3}]$
and $\mathbb{C}\setminus[-b^3,-a^3]$, respectively. Therefore, by
Montel's theorem there exists a sequence of integers
$\Lambda\subset\mathbb{N}$ so that for each $i\in\{0,\ldots,5\}$,
\begin{equation}\label{defntildeF1i}
\lim_{k\in\Lambda}\frac{P_{6k+i+1}(z)}{P_{6k+i}(z)}=\widetilde{F}_{1}^{(i)}(z),\qquad
z\in\mathbb{C}\setminus[0,\alpha^{3}],
\end{equation}
\begin{equation}\label{defntildeF2i}
\lim_{k\in\Lambda}\frac{P_{6k+i+1,2}(z)}{P_{6k+i,2}(z)}=\widetilde{F}_{2}^{(i)}(z),\qquad
z\in\mathbb{C}\setminus[-a^{3},-b^{3}],
\end{equation}
where the limits hold uniformly on compact subsets of the
indicated regions. Our goal is to show that we obtain the same
limiting functions $\widetilde{F}_{j}^{(i)}$, no matter which
convergent subsequences we take.

Taking into account the degree of $P_{n}$ and $P_{n,2}$, from
(\ref{defntildeF1i})--(\ref{defntildeF2i}) we deduce:
$\widetilde{F}_{1}^{(i)}$ and $1/\widetilde{F}_{1}^{(i)}$ are
analytic in $\mathbb{C}\setminus[0,\alpha^{3}]$,
$\widetilde{F}_{2}^{(i)}$ and $1/\widetilde{F}_{2}^{(i)}$ are
analytic in $\mathbb{C}\setminus[-b^3,-a^{3}]$, and as
$z\rightarrow\infty$,
\begin{equation}\label{LaurentexpansionFinfinity}
\left\{
\begin{array}{ll}

\widetilde{F}_{1}^{(i)}(z)=1+O(1/z), & i\in\{0,1,3,4\},\\

\widetilde{F}_{1}^{(i)}(z)=z+O(1), & i\in\{2,5\},\\

\widetilde{F}_{2}^{(i)}(z)=1+O(1/z), & i\in\{0,1,2\},\\

\widetilde{F}_{2}^{(i)}(z)=z+O(1), & i\in\{3,5\},\\

\widetilde{F}_{2}^{(4)}(z)=1/z+O(1/z^2).
\end{array}\right.
\end{equation}

Given a Borel measurable function $w\geq 0$ defined on the
interval $[c,d]$ that satisfies the Szeg\H{o} condition
\[
\frac{\log w(t)}{\sqrt{(d-t)(t-c)}}\in L^{1}(dt),
\]
let
\[
S(w;z):=\exp\Big\{
\frac{d-c}{4\pi}\sqrt{\Big(\frac{2z-c-d}{d-c}\Big)^2-1}\int_{c}^{d}\frac{\log
w(t)}{t-z}\frac{dt}{\sqrt{(d-t)(t-c)}} \Big\}
\]
denote the Szeg\H{o} function on
$\overline{\mathbb{C}}\setminus[c,d]$ associated with $w$ (see
\cite{Szego}). In particular, if $w$ is continuous  at $x\in[c,d]$
and $w(x)>0$, then the limit
\begin{equation}\label{boundaryvalueSzegofunction}
\lim_{z\rightarrow x}|S(w;z)|^{2}=\frac{1}{w(x)}
\end{equation}
holds. We will indicate this below by writing
$|S(w;x)|^{2}\,w(x)=1$.

Throughout this section we are always assuming that $s_{1}>0$ a.e.
on $[0,\alpha]$, and $s_{2}>0$ a.e. on $[-b,-a]$. If $f_{n}\in
H(\Omega)$, $\Omega\subset\overline{\mathbb{C}}$, the notation
\[
\lim_{n\in\widetilde{\Lambda}} f_{n}(z)=F(z),\qquad
z\in\Omega,\quad \widetilde{\Lambda}\subset\mathbb{N},
\]
stands for the uniform convergence of $f_{n}$ to $F$ on each
compact subset of $\Omega$.

By $(\ref{eq:ortQ3kvar})$--$(\ref{eq:ortQ3k1var})$ we have:
\[
0=\int_{0}^{\alpha^{3}}\tau^{j}P_{6k}(\tau)\,d\nu_{6k}(\tau),\qquad
0\leq j\leq 2k-1,
\]
\[
0=\int_{0}^{\alpha^{3}}\tau^{j}P_{6k+1}(\tau)\,g_{6k}(\tau)\,d\nu_{6k}(\tau),\qquad
0\leq j\leq 2k-1,
\]
where $g_{6k}(\tau):=\tau P_{6k,2}(\tau)/P_{6k+1,2}(\tau)$. Using
(\ref{defntildeF2i}),
\[
\lim_{k\in\Lambda}g_{6k}(\tau)=\frac{\tau}{\widetilde{F}_{2}^{(0)}(\tau)},\qquad\mbox{uniformly
on}\quad[0,\alpha^3].
\]
Since $\deg(P_{6k})=\deg(P_{6k+1})$, applying \cite[Theorem
2]{BCL} (result on relative asymptotics of polynomials orthogonal
with respect to varying measures), we obtain
\begin{equation}\label{tildeF10}
\lim_{k\in\Lambda}\frac{P_{6k+1}(z)}{P_{6k}(z)}
=\frac{S_{1}^{(0)}(z)}{S_{1}^{(0)}(\infty)}=\widetilde{F}_{1}^{(0)}(z),\qquad
z\in\overline{\mathbb{C}}\setminus[0,\alpha^{3}],
\end{equation}
where $S_{1}^{(0)}$ is the Szeg\H{o} function on
$\overline{\mathbb{C}}\setminus[0,\alpha^{3}]$ associated with the
weight $\tau/\widetilde{F}_{2}^{(0)}(\tau)$,
$\tau\in[0,\alpha^{3}]$.

By Proposition \ref{prop4} we have:
\[
0=\int_{0}^{\alpha^{3}}\tau^{j}P_{6k+2}(\tau)\,d\nu_{6k+2}(\tau),\qquad
0\leq j\leq 2k-1,
\]
\[
0=\int_{0}^{\alpha^{3}}\tau^{j}P_{6k+3}(\tau)\,g_{6k+2}(\tau)\,d\nu_{6k+2}(\tau),\qquad
0\leq j\leq 2k,
\]
where $g_{6k+2}(\tau):=P_{6k+2,2}(\tau)/(\tau P_{6k+3,2}(\tau))$.
Let $P_{6k+2}^{*}$ be the monic polynomial of degree $2k$
orthogonal with respect to the measure
$d\nu_{6k+3}(\tau)=g_{6k+2}(\tau)\,d\nu_{6k+2}(\tau)$. Since
$\deg(P_{6k+2}^{*})=\deg(P_{6k+2})$, again by \cite[Theorem
2]{BCL} we obtain
\[
\lim_{k\in\Lambda}\frac{P_{6k+2}^{*}(z)}{P_{6k+2}(z)}
=\frac{S_{1}^{(2)}(z)}{S_{1}^{(2)}(\infty)},\qquad
z\in\overline{\mathbb{C}}\setminus[0,\alpha^{3}],
\]
where $S_{1}^{(2)}$ is the Szeg\H{o} function on
$\overline{\mathbb{C}}\setminus[0,\alpha^{3}]$ with respect to the
weight $1/(\tau \widetilde{F}_{2}^{(2)}(\tau))$.

Let $\phi_{1}$ denote the conformal mapping that maps
$\overline{\mathbb{C}}\setminus[0,\alpha^{3}]$ onto the exterior
of the unit circle and satisfies $\phi_{1}(\infty)=\infty$ and
$\phi_{1}'(\infty)>0$. Then, by \cite[Theorem 1]{BCL} (result on
ratio asymptotics of polynomials orthogonal with respect to
varying measures) we have
\[
\lim_{k\in\Lambda}\frac{P_{6k+3}(z)}{P_{6k+2}^{*}(z)}
=\frac{\phi_{1}(z)}{\phi_{1}'(\infty)},\qquad
z\in\mathbb{C}\setminus[0,\alpha^{3}].
\]
Therefore, we conclude that
\begin{equation}\label{tildeF12}
\lim_{k\in\Lambda}\frac{P_{3k+3}(z)}{P_{6k+2}(z)}
=\frac{S_{1}^{(2)}(z)}{S_{1}^{(2)}(\infty)}
\,\frac{\phi_{1}(z)}{\phi_{1}'(\infty)}=\widetilde{F}_{1}^{(2)}(z),\qquad
z\in\mathbb{C}\setminus[0,\alpha^{3}].
\end{equation}

The same arguments used before show that
\begin{equation}\label{tildeF11}
\lim_{k\in\Lambda}\frac{P_{6k+i+1}(z)}{P_{6k+i}(z)}=\frac{S_{1}^{(i)}(z)}{S_{1}^{(i)}(\infty)}
=\widetilde{F}_{1}^{(i)}(z),\qquad
z\in\overline{\mathbb{C}}\setminus[0,\alpha^{3}],\quad
i\in\{1,3,4\},
\end{equation}
\begin{equation}\label{tildeF15}
\lim_{k\in\Lambda}\frac{P_{6k+6}(z)}{P_{6k+5}(z)}
=\frac{S_{1}^{(5)}(z)}{S_{1}^{(5)}(\infty)}\,\frac{\phi_{1}(z)}{\phi_{1}'(\infty)}=\widetilde{F}_{1}^{(5)}(z),
\qquad z\in\mathbb{C}\setminus[0,\alpha^{3}],
\end{equation}
where $S_{1}^{(1)}, S_{1}^{(3)}, S_{1}^{(4)}$, and $S_{1}^{(5)}$
are the Szeg\H{o} functions on
$\overline{\mathbb{C}}\setminus[0,\alpha^{3}]$ with respect to the
weights $1/\widetilde{F}_{2}^{(1)}(\tau)$,
$\tau/\widetilde{F}_{2}^{(3)}(\tau),\linebreak
1/\widetilde{F}_{2}^{(4)}(\tau)$, and $1/(\tau
\widetilde{F}_{2}^{(5)}(\tau))$, respectively.

Applying now the orthogonality conditions from Proposition
\ref{prop3} and (\ref{asymph3k})--(\ref{asymph3k2}), we deduce:
\begin{equation}\label{tildeF20}
\lim_{k\in\Lambda}\frac{P_{6k+i+1,2}(z)}{P_{6k+i,2}(z)}
=\frac{S_{2}^{(i)}(z)}{S_{2}^{(i)}(\infty)}
=\widetilde{F}_{2}^{(i)}(z),\qquad
z\in\overline{\mathbb{C}}\setminus[-b^{3},-a^{3}],\quad
i\in\{0,1,2\},
\end{equation}
\begin{equation}\label{tildeF23}
\lim_{k\in\Lambda}\frac{P_{6k+i+1,2}(z)}{P_{6k+i,2}(z)}
=\frac{S_{2}^{(i)}(z)}{S_{2}^{(i)}(\infty)}\,\frac{\phi_{2}(z)}{\phi_{2}'(\infty)}
=\widetilde{F}_{2}^{(i)}(z),\qquad
z\in\mathbb{C}\setminus[-b^{3},-a^{3}],\quad i\in\{3,5\},
\end{equation}
\begin{equation}\label{tildeF24}
\lim_{k\in\Lambda}\frac{P_{6k+5,2}(z)}{P_{6k+4,2}(z)}=\frac{S_{2}^{(4)}(\infty)}{S_{2}^{(4)}(z)}
\,\frac{\phi_{2}'(\infty)}{\phi_{2}(z)}
=\widetilde{F}_{2}^{(4)}(z),\qquad
z\in\mathbb{C}\setminus[-b^{3},-a^{3}],
\end{equation}
where $S_{2}^{(0)},\ldots,S_{2}^{(5)},$ are the Szeg\H{o}
functions on $\overline{\mathbb{C}}\setminus[-b^{3},-a^{3}]$
associated with the weights
$$\frac{1}{|\tau\widetilde{F}_{1}^{(0)}(\tau)|},\quad
\frac{|\tau|}{|\widetilde{F}_{1}^{(1)}(\tau)|},\quad
\frac{1}{|\widetilde{F}_{1}^{(2)}(\tau)|}, \quad
\frac{1}{|\tau\widetilde{F}_{1}^{(3)}(\tau)|},\quad
\frac{|\widetilde{F}_{1}^{(4)}(\tau)|}{|\tau|},\quad
\frac{1}{|\widetilde{F}_{1}^{(5)}(\tau)|},
$$
respectively, and $\phi_{2}$ is the conformal mapping that maps
$\overline{\mathbb{C}}\setminus[-b^{3},-a^{3}]$ onto the exterior
of the unit circle that satisfies the conditions
$\phi_{2}(\infty)=\infty$ and $\phi_{2}'(\infty)>0$.

\begin{prop}\label{prop5}
There exist positive constants $c_{k}^{(l)}$ so that the functions
$F_{k}^{(l)}:=c_{k}^{(l)}\widetilde{F}_{k}^{(l)}$ satisfy the
following boundary value conditions:
\begin{equation}\label{eq:boundaryvalue1}
|F_{1}^{(l)}(\tau)|^{2}\,\frac{\tau}{F_{2}^{(l)}(\tau)}=1,\quad
\tau\in(0,\alpha^{3}],\quad l=0,3,
\end{equation}
\begin{equation}\label{eq:boundaryvalue2}
|F_{1}^{(l)}(\tau)|^{2}\,\frac{1}{F_{2}^{(l)}(\tau)}=1,\quad
\tau\in[0,\alpha^{3}],\quad l=1,4,
\end{equation}
\begin{equation}\label{eq:boundaryvalue3}
|F_{1}^{(l)}(\tau)|^{2}\,\frac{1}{\tau\,F_{2}^{(l)}(\tau)}=1,\quad
\tau\in(0,\alpha^{3}],\quad l=2,5,
\end{equation}
\begin{equation}\label{eq:boundaryvalue4}
|F_{2}^{(l)}(\tau)|^{2}\,\frac{1}{|\tau\,F_{1}^{(l)}(\tau)|}=1,\quad
\tau\in[-b^{3},-a^{3}],\quad l=0,3,
\end{equation}
\begin{equation}\label{eq:boundaryvalue5}
|F_{2}^{(l)}(\tau)|^{2}\,\frac{|\tau|}{|F_{1}^{(l)}(\tau)|}=1,\quad
\tau\in[-b^{3},-a^{3}],\quad l=1,4,
\end{equation}
\begin{equation}\label{eq:boundaryvalue6}
|F_{2}^{(l)}(\tau)|^{2}\,\frac{1}{|F_{1}^{(l)}(\tau)|}=1,\quad
\tau\in[-b^{3},-a^{3}],\quad l=2,5.
\end{equation}
\end{prop}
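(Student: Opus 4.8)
The plan is to obtain each identity (\ref{eq:boundaryvalue1})--(\ref{eq:boundaryvalue6}) by transcribing the Szeg\H{o}-function representations (\ref{tildeF10}), (\ref{tildeF12}), (\ref{tildeF11}), (\ref{tildeF15}) of the $\widetilde{F}_{1}^{(l)}$ and (\ref{tildeF20}), (\ref{tildeF23}), (\ref{tildeF24}) of the $\widetilde{F}_{2}^{(l)}$ through the boundary relation (\ref{boundaryvalueSzegofunction}), and then to choose the constants $c_{k}^{(l)}$ so as to cancel the multiplicative constant each representation carries. First I would record the boundary moduli of all Szeg\H{o} functions involved: if $S_{1}^{(l)}$ is the Szeg\H{o} function on $\overline{\mathbb{C}}\setminus[0,\alpha^{3}]$ for the weight $w_{1}^{(l)}$ listed right after these representations, then (\ref{boundaryvalueSzegofunction}) gives $|S_{1}^{(l)}(\tau)|^{2}\,w_{1}^{(l)}(\tau)=1$ a.e.\ on $[0,\alpha^{3}]$, and analogously $|S_{2}^{(l)}(\tau)|^{2}\,w_{2}^{(l)}(\tau)=1$ a.e.\ on $[-b^{3},-a^{3}]$. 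For the indices carrying an extra factor $\phi_{1}(z)/\phi_{1}'(\infty)$ or $\phi_{2}(z)/\phi_{2}'(\infty)$ I would use that $|\phi_{1}(\tau)|=1$ on $[0,\alpha^{3}]$ and $|\phi_{2}(\tau)|=1$ on $[-b^{3},-a^{3}]$ (both map onto the exterior of the unit circle), so these factors contribute only the positive constants $\phi_{1}'(\infty)^{2}$, $\phi_{2}'(\infty)^{2}$ to the boundary modulus.

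Before applying (\ref{boundaryvalueSzegofunction}) I must check its hypotheses, namely that each weight is continuous and strictly positive on the relevant open interval. This holds because the only cut of $\widetilde{F}_{2}^{(l)}$ (resp.\ $\widetilde{F}_{1}^{(l)}$) lies on the opposite interval, and, since $\widetilde{F}_{2}^{(l)}$ is analytic, nonvanishing and real on the connected set $(-a^{3},\infty)\supset(0,\alpha^{3}]$, it keeps constant sign there; its leading term at $\infty$ (cf.\ (\ref{LaurentexpansionFinfinity})) shows this sign is positive. This positivity is exactly what allows (\ref{eq:boundaryvalue1})--(\ref{eq:boundaryvalue3}) to be written without absolute values around $F_{2}^{(l)}$, whereas on $[-b^{3},-a^{3}]$ the possible negativity of $\widetilde{F}_{1}^{(2)},\widetilde{F}_{1}^{(5)}$ forces the absolute values in (\ref{eq:boundaryvalue4})--(\ref{eq:boundaryvalue6}).

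Combining these facts, each $\widetilde{F}_{k}^{(l)}$ satisfies its target identity up to a positive constant; for instance (\ref{tildeF10}) gives $|\widetilde{F}_{1}^{(0)}(\tau)|^{2}\,\tau/\widetilde{F}_{2}^{(0)}(\tau)=S_{1}^{(0)}(\infty)^{-2}$ on $(0,\alpha^{3}]$ and (\ref{tildeF20}) gives $|\widetilde{F}_{2}^{(0)}(\tau)|^{2}/|\tau\,\widetilde{F}_{1}^{(0)}(\tau)|=S_{2}^{(0)}(\infty)^{-2}$ on $[-b^{3},-a^{3}]$, the other indices being identical after inserting the matching weight (with the reciprocal representation (\ref{tildeF24}) handled the same way). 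Writing $F_{k}^{(l)}=c_{k}^{(l)}\widetilde{F}_{k}^{(l)}$, the pair of conditions attached to a fixed $l$ becomes a multiplicative system of the form $(c_{1}^{(l)})^{2}/c_{2}^{(l)}=A_{l}$ and $(c_{2}^{(l)})^{2}/c_{1}^{(l)}=B_{l}$, with $A_{l},B_{l}>0$ explicit in the $S_{j}^{(l)}(\infty)^{2}$ and $\phi_{j}'(\infty)^{2}$; this has the unique positive solution $c_{1}^{(l)}=(A_{l}^{2}B_{l})^{1/3}$, $c_{2}^{(l)}=(A_{l}B_{l}^{2})^{1/3}$, giving the required constants. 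I expect the main obstacle to be purely the bookkeeping of matching each limiting function to its correct weight and of tracking whether a representation has the form $S/S(\infty)$ or $S(\infty)/S$, rather than any analytic difficulty.
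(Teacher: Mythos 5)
Your proposal is correct and follows essentially the same route as the paper's proof: apply the boundary relation (\ref{boundaryvalueSzegofunction}) to the Szeg\H{o}-function representations (\ref{tildeF10})--(\ref{tildeF24}) (with $|\phi_{j}|=1$ on the cuts) to get each identity up to positive constants $1/\omega_{1}^{(l)},1/\omega_{2}^{(l)}$, and then normalize, your $c_{1}^{(l)}=(A_{l}^{2}B_{l})^{1/3}$, $c_{2}^{(l)}=(A_{l}B_{l}^{2})^{1/3}$ being exactly the paper's $c_{1}^{(l)}=[(\omega_{1}^{(l)})^{2}\omega_{2}^{(l)}]^{1/3}$, $c_{2}^{(l)}=[\omega_{1}^{(l)}(\omega_{2}^{(l)})^{2}]^{1/3}$. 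Your explicit verification that $\widetilde{F}_{2}^{(l)}$ is positive on $(0,\alpha^{3}]$ (justifying the absence of absolute values in (\ref{eq:boundaryvalue1})--(\ref{eq:boundaryvalue3})) is a worthwhile detail the paper leaves implicit.
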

\begin{proof}
It follows from the relations (\ref{tildeF10})--(\ref{tildeF24}),
the definition of the Szeg\H{o} functions $S_{j}^{(i)}$ and
(\ref{boundaryvalueSzegofunction}), that there exist positive
constants $\omega_{1}^{(l)}, \omega_{2}^{(l)}$, such that
\begin{equation}\label{eq:boundaryvaluetilde1}
|\widetilde{F}_{1}^{(l)}(\tau)|^{2}\,\frac{\tau}{\widetilde{F}_{2}^{(l)}(\tau)}
=\frac{1}{\omega_{1}^{(l)}},\quad \tau\in(0,\alpha^{3}],\quad
l=0,3,
\end{equation}
\begin{equation}\label{eq:boundaryvaluetilde2}
|\widetilde{F}_{1}^{(l)}(\tau)|^{2}\,\frac{1}{\widetilde{F}_{2}^{(l)}(\tau)}=\frac{1}{\omega_{1}^{(l)}},\quad
\tau\in[0,\alpha^{3}],\quad l=1,4,
\end{equation}
\begin{equation}\label{eq:boundaryvaluetilde3}
|\widetilde{F}_{1}^{(l)}(\tau)|^{2}\,\frac{1}{\tau\,\widetilde{F}_{2}^{(l)}(\tau)}=\frac{1}{\omega_{1}^{(l)}},\quad
\tau\in(0,\alpha^{3}],\quad l=2,5,
\end{equation}
\begin{equation}\label{eq:boundaryvaluetilde4}
|\widetilde{F}_{2}^{(l)}(\tau)|^{2}\,\frac{1}{|\tau\,\widetilde{F}_{1}^{(l)}(\tau)|}
=\frac{1}{\omega_{2}^{(l)}},\quad \tau\in[-b^{3},-a^{3}],\quad
l=0,3,
\end{equation}
\begin{equation}\label{eq:boundaryvaluetilde5}
|\widetilde{F}_{2}^{(l)}(\tau)|^{2}\,\frac{|\tau|}{|\widetilde{F}_{1}^{(l)}(\tau)|}
=\frac{1}{\omega_{2}^{(l)}},\quad \tau\in[-b^{3},-a^{3}],\quad
l=1,4,
\end{equation}
\begin{equation}\label{eq:boundaryvaluetilde6}
|\widetilde{F}_{2}^{(l)}(\tau)|^{2}\,\frac{1}{|\widetilde{F}_{1}^{(l)}(\tau)|}
=\frac{1}{\omega_{2}^{(l)}},\quad \tau\in[-b^{3},-a^{3}],\quad
l=2,5,
\end{equation}
where
\begin{equation}\label{defnomega1l}
\omega_{1}^{(l)}=(S_{1}^{(l)}(\infty))^{2},\quad \mbox{for}\quad
l=0,1,3,4,
\end{equation}
\begin{equation}\label{defnomega1lmas}
\omega_{1}^{(l)}=(S_{1}^{(l)}(\infty)\,\phi_{1}'(\infty))^{2},\quad
\mbox{for}\quad l=2,5,
\end{equation}
\begin{equation}\label{defnomega2l}
\omega_{2}^{(l)}=(S_{2}^{(l)}(\infty))^{2},\quad\mbox{for}\quad
l=0,1,2,
\end{equation}
\begin{equation}\label{defnomega2lmas}
\omega_{2}^{(l)}=(S_{2}^{(l)}(\infty)\,\phi_{2}'(\infty))^{2},\quad\mbox{for}\quad
l=3,5,
\end{equation}
\begin{equation}\label{defnomega2lmasmas}
\omega_{2}^{(4)}=1/(S_{2}^{(4)}(\infty)\,\phi_{2}'(\infty))^{2}.
\end{equation}
The positive constants $c_{k}^{(l)}$ that satisfy the requirements
are $c_{1}^{(l)}=[(\omega_{1}^{(l)})^2\omega_{2}^{(l)}]^{1/3}$,
$c_{2}^{(l)}=[\omega_{1}^{(l)}(\omega_{2}^{(l)})^2]^{1/3}$,
$l=0,\ldots,5$.
\end{proof}

In order to prove the uniqueness of the limiting functions
$\widetilde{F}_{j}^{(i)}$, we need to use Lemma \ref{lemaaux}
below. More general versions of this result can be found in
\cite{AptLopRocha} (see Lemma 4.1) and \cite{Apt} (see Proposition
1.1), so we omit the proof.

Let us first introduce some notations. Assume that $\Delta_{1},
\Delta_{2}$ are disjoint compact intervals in $\mathbb{R}$, and
let $C(\Delta_{i})$ denote the space of real-valued continuous
functions on $\Delta_{i}$. We write
$\mathbf{u}=(u_{1},u_{2})^{t}\in C$ if $u_{1}\in C(\Delta_{2})$,
$u_{2}\in C(\Delta_{1})$. Given $u_{1}\in C(\Delta_{2})$, let
$T_{2,1}(u_{1})$ be the harmonic function in
$\overline{\mathbb{C}}\setminus\Delta_{2}$ that solves the
Dirichlet problem with boundary condition
\[
T_{2,1}(u_{1})(x)=u_{1}(x),\quad x\in\Delta_{2},
\]
and given $u_{2}\in C(\Delta_{1})$, let $T_{1,2}(u_{2})$ denote
the harmonic function in $\overline{\mathbb{C}}\setminus
\Delta_{1}$ that solves the Dirichlet problem with boundary
condition
\[
T_{1,2}(u_{2})(x)=u_{2}(x),\quad x\in\Delta_{1}.
\]
Consider the linear operator $T:C\longrightarrow C$ defined as
follows:
\[
T=\left[\begin{array}{cc}
0 & T_{1,2} \\

T_{2,1} & 0\\
\end{array} \right],
\]
and $I: C\longrightarrow C$ the identity operator. The auxiliary
result is the following
\begin{lem}\label{lemaaux}
If $\mathbf{u}\in C$ and $(2I-T)(\mathbf{u})=\mathbf{0}$, then
$\mathbf{u}=\mathbf{0}$.
\end{lem}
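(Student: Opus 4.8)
The plan is to reduce the equation $(2I-T)(\mathbf{u})=\mathbf{0}$ to a pair of sup-norm inequalities, via the maximum principle for bounded harmonic functions on domains of the Riemann sphere, which together are self-contradictory unless $\mathbf{u}=\mathbf{0}$. Writing $\mathbf{u}=(u_1,u_2)^t$ with $u_1\in C(\Delta_2)$ and $u_2\in C(\Delta_1)$, the hypothesis reads componentwise as
\[
2u_1=T_{1,2}(u_2)\big|_{\Delta_2},\qquad 2u_2=T_{2,1}(u_1)\big|_{\Delta_1}.
\]
Accordingly I would set $v_1:=T_{2,1}(u_1)$, the bounded harmonic function on $\overline{\mathbb{C}}\setminus\Delta_2$ with boundary values $u_1$ on $\Delta_2$, and $v_2:=T_{1,2}(u_2)$, the bounded harmonic function on $\overline{\mathbb{C}}\setminus\Delta_1$ with boundary values $u_2$ on $\Delta_1$.

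First I would record the sup-norm identities supplied by the maximum principle. Since $\Delta_1$ and $\Delta_2$ are compact real intervals, they are regular for the Dirichlet problem, and $\infty$ is an interior point of each complementary domain, so $v_1$ and $v_2$ are finite and harmonic there and attain their extrema on the boundaries $\Delta_2$ and $\Delta_1$ respectively. This gives
\[
\sup_{\overline{\mathbb{C}}\setminus\Delta_2}|v_1|=\max_{\Delta_2}|u_1|,\qquad
\sup_{\overline{\mathbb{C}}\setminus\Delta_1}|v_2|=\max_{\Delta_1}|u_2|.
\]

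Next I would combine these with the defining equations. As $\Delta_1$ and $\Delta_2$ are disjoint, $\Delta_2\subset\overline{\mathbb{C}}\setminus\Delta_1$, so restricting $v_2$ to $\Delta_2$ and using $2u_1=v_2|_{\Delta_2}$ yields
\[
2\max_{\Delta_2}|u_1|=\max_{\Delta_2}|v_2|\le\sup_{\overline{\mathbb{C}}\setminus\Delta_1}|v_2|=\max_{\Delta_1}|u_2|,
\]
and symmetrically $2\max_{\Delta_1}|u_2|\le\max_{\Delta_2}|u_1|$. Chaining the two inequalities forces $4\max_{\Delta_2}|u_1|\le\max_{\Delta_2}|u_1|$, whence $\max_{\Delta_2}|u_1|=0$, and likewise $\max_{\Delta_1}|u_2|=0$. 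Therefore $u_1\equiv 0$ and $u_2\equiv 0$, that is $\mathbf{u}=\mathbf{0}$.

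The one point requiring care is the invocation of the maximum principle on the spherical domains $\overline{\mathbb{C}}\setminus\Delta_i$: one must observe that $\infty$ lies in the interior of each such domain, so the relevant harmonic extension is defined and finite at $\infty$, and that the intervals, being regular for the Dirichlet problem, are boundaries on which the continuous data are genuinely attained. Granting the standard maximum principle on the compact sphere, the extrema are taken on $\Delta_i$ and the short contraction estimate closes the argument; this is also why the coefficient $2$ in $2I-T$---more generally, any factor exceeding $1$---is essential, since the factor $1$ would yield only $\max_{\Delta_2}|u_1|=\max_{\Delta_1}|u_2|$ rather than vanishing.
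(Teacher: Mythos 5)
Your proof is correct. Note that the paper itself gives no proof of this lemma: it is explicitly omitted, with the reader referred to more general versions (Lemma 4.1 of \cite{AptLopRocha} and Proposition 1.1 of \cite{Apt}), so your argument serves as a clean self-contained substitute. It is in fact the mechanism underlying those general results: by the maximum principle on the sphere, the Dirichlet solution operators $T_{1,2}$ and $T_{2,1}$ are contractions in the sup norms on $C(\Delta_1)$ and $C(\Delta_2)$, so $T$ has operator norm at most $1$ and $2I-T$ is injective (indeed invertible via a Neumann series); your chained estimates $2\max_{\Delta_2}|u_1|\le\max_{\Delta_1}|u_2|$ and $2\max_{\Delta_1}|u_2|\le\max_{\Delta_2}|u_1|$ express exactly this without operator-theoretic language. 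The analytic points you flag are handled correctly: $\Delta_1$ and $\Delta_2$ are nondegenerate disjoint compact intervals, hence regular for the Dirichlet problem (solvability is in any case built into the paper's definition of $T_{1,2},T_{2,1}$); each $\overline{\mathbb{C}}\setminus\Delta_i$ is a domain on the sphere containing $\infty$ whose closure is compact, and the harmonic extension is continuous up to the boundary $\Delta_i$, so $|v_i|$ attains its maximum there --- only the inequality $\sup\le\max$ is needed, and your stronger equality claim is harmless. Your closing remark that the coefficient $2$ is essential is also right: the constant vector $(c,c)^t$ lies in the kernel of $I-T$, since the harmonic extension of a constant is that constant, so a factor strictly exceeding the contraction bound $1$ is precisely what forces $\mathbf{u}=\mathbf{0}$.
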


Now we prove that the limiting functions do not depend on the
sequence $\Lambda\subset\mathbb{N}$ for which
(\ref{defntildeF1i})--(\ref{defntildeF2i}) hold.

\begin{prop}\label{propuniqueness}
The limiting functions $\widetilde{F}_{j}^{(i)}$ are unique for
every $j\in\{1,2\}$ and $i\in\{0,\ldots,5\}$.
\end{prop}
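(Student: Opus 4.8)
The plan is to show that any two subsequential limits of the families (\ref{familiesrational}) coincide; since these families are already known to be normal on compact subsets of $\mathbb{C}\setminus\Delta_1$ and $\mathbb{C}\setminus\Delta_2$, this yields the full convergence and hence uniqueness. So suppose that besides $\Lambda$ there is a second sequence $\Lambda'$ along which the limits (\ref{defntildeF1i})--(\ref{defntildeF2i}) exist, with limiting functions $\widehat{\widetilde{F}}_j^{(i)}$, and let me prove $\widehat{\widetilde{F}}_j^{(i)}=\widetilde{F}_j^{(i)}$ for all $i,j$. The decisive choice is to compare the \emph{boundary-normalized} functions $F_j^{(l)}=c_j^{(l)}\widetilde{F}_j^{(l)}$ of Proposition \ref{prop5} rather than the $\widetilde{F}_j^{(l)}$ themselves: repeating the derivation of (\ref{tildeF10})--(\ref{tildeF24}) and of Proposition \ref{prop5} along $\Lambda'$ yields functions $\widehat{F}_j^{(l)}=\widehat{c}_j^{(l)}\,\widehat{\widetilde{F}}_j^{(l)}$ satisfying the \emph{same} relations (\ref{eq:boundaryvalue1})--(\ref{eq:boundaryvalue6}), whose right-hand sides are all equal to $1$.

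I then introduce the log-modulus differences
\[
V_1^{(l)}:=\log\Big|\frac{F_1^{(l)}}{\widehat{F}_1^{(l)}}\Big|,\qquad
V_2^{(l)}:=\log\Big|\frac{F_2^{(l)}}{\widehat{F}_2^{(l)}}\Big|.
\]
Since $\widetilde{F}_1^{(l)}$ and $1/\widetilde{F}_1^{(l)}$ are analytic in $\mathbb{C}\setminus\Delta_1$ (and likewise for $j=2$), and since in each case numerator and denominator share the same leading Laurent term at $\infty$, the common pole or zero at $\infty$ cancels in the ratio; hence $V_1^{(l)}$ and $V_2^{(l)}$ are bounded and harmonic on $\overline{\mathbb{C}}\setminus\Delta_1$ and $\overline{\mathbb{C}}\setminus\Delta_2$ respectively, including at $\infty$. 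They extend continuously to the cuts, with boundary values that agree from both sides, because the modulus of a Szeg\H{o}-type function (a Szeg\H{o} function times integer powers of $\phi_1$, resp.\ $\phi_2$) is continuous across its cut. Taking logarithms in (\ref{eq:boundaryvalue1})--(\ref{eq:boundaryvalue3}) and subtracting the hatted identities, the deterministic factors ($\tau$, $1$, $1/\tau$) together with $\log F_2^{(l)}$ (real and positive on $\Delta_1$) cancel, leaving the \emph{homogeneous} relation $2V_1^{(l)}=V_2^{(l)}$ on $\Delta_1$; similarly (\ref{eq:boundaryvalue4})--(\ref{eq:boundaryvalue6}) give $2V_2^{(l)}=V_1^{(l)}$ on $\Delta_2$.

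Because each $V_j^{(l)}$ is a bounded harmonic function on the complement of an interval, it equals the harmonic extension of its own boundary data, so $V_1^{(l)}=T_{1,2}(V_1^{(l)}|_{\Delta_1})$ and $V_2^{(l)}=T_{2,1}(V_2^{(l)}|_{\Delta_2})$, the value at $\infty$ being automatically consistent rather than a free parameter. Setting $u_1:=V_1^{(l)}|_{\Delta_2}$ and $u_2:=V_2^{(l)}|_{\Delta_1}$ and feeding the two boundary relations into these extension formulas gives $2u_1=T_{1,2}(u_2)$ on $\Delta_2$ and $2u_2=T_{2,1}(u_1)$ on $\Delta_1$, that is $(2I-T)\mathbf{u}=\mathbf{0}$ with $\mathbf{u}=(u_1,u_2)^t\in C$. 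Lemma \ref{lemaaux} forces $\mathbf{u}=\mathbf{0}$, whence $V_1^{(l)}|_{\Delta_1}=\tfrac12 V_2^{(l)}|_{\Delta_1}=0$ and $V_2^{(l)}|_{\Delta_2}=0$, and the maximum principle gives $V_1^{(l)}\equiv 0$, $V_2^{(l)}\equiv 0$. Consequently $F_j^{(l)}/\widehat{F}_j^{(l)}$ is analytic, non-vanishing and of modulus one on a connected domain, hence a unimodular constant; comparing leading Laurent coefficients at $\infty$ (which coincide for $\widetilde{F}_j^{(l)}$ and $\widehat{\widetilde{F}}_j^{(l)}$, both normalized) shows this constant is $1$ and $c_j^{(l)}=\widehat{c}_j^{(l)}$, so $\widetilde{F}_j^{(l)}=\widehat{\widetilde{F}}_j^{(l)}$ for every $j\in\{1,2\}$ and $l\in\{0,\dots,5\}$.

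The step to get right, and the real source of difficulty, is the normalization. If one worked directly with the infinity-normalized $\widetilde{F}_j^{(l)}$, the boundary identities would inherit unknown additive constants $\log(\omega_1^{(l)}/\widehat{\omega}_1^{(l)})$, $\log(\omega_2^{(l)}/\widehat{\omega}_2^{(l)})$, and the reduction would yield an \emph{inhomogeneous} system $(2I-T)\mathbf{u}=\mathbf{c}$, to which Lemma \ref{lemaaux} does not apply. Switching to the $F_j^{(l)}$ of Proposition \ref{prop5} eliminates these constants because their defining boundary conditions have right-hand side exactly $1$; the only price is that $V_j^{(l)}$ need no longer vanish at $\infty$, which is harmless precisely because the value at $\infty$ of a bounded harmonic function is determined by its boundary data and therefore need not be imposed separately.
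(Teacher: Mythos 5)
Your proof is correct and follows essentially the same route as the paper's: you normalize via the functions $F_j^{(l)}=c_j^{(l)}\widetilde{F}_j^{(l)}$ of Proposition \ref{prop5} so that the boundary relations (\ref{eq:boundaryvalue1})--(\ref{eq:boundaryvalue6}) become homogeneous, pass to log-modulus differences, reduce to $(2I-T)\mathbf{u}=\mathbf{0}$, and invoke Lemma \ref{lemaaux} together with the maximum principle. The only differences are notational (the paper works with the global harmonic differences $u_1,u_2$ directly, while you restrict them to the opposite intervals before applying the lemma), and your closing remark on why the boundary normalization is the essential step is exactly the mechanism the paper exploits.
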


\begin{proof}
For each fixed $i\in\{0,\ldots,5\}$, by Proposition \ref{prop5}
the functions $\log|F_{1}^{(i)}|, \log|F_{2}^{(i)}|$ satisfy
\begin{equation}\label{eq:system}
\left\{
\begin{array}{lll}
2\log|F_{1}^{(i)}(\tau)|-\log|F_{2}^{(i)}(\tau)|=\log|f_{i}(\tau)|,\quad \tau\in(0,\alpha^{3}],\\
\\
-\log|F_{1}^{(i)}(\tau)|+2\log|F_{2}^{(i)}(\tau)|=\log|g_{i}(\tau)|,\quad \tau\in[-b^{3},-a^{3}],\\
\end{array}
\right.
\end{equation}
where $f_{i}(\tau), g_{i}(\tau)$ equal $1/\tau, 1,$ or $\tau$,
depending on the value of $i$. Assume that the functions
$\widetilde{G}_{1}^{(i)}, \widetilde{G}_{2}^{(i)}$ satisfy
\[
\lim_{k\in\Lambda'}\frac{P_{6k+i+1}(z)}{P_{6k+i}(z)}=\widetilde{G}_{1}^{(i)}(z),\qquad
z\in\mathbb{C}\setminus[0,\alpha^{3}],
\]
\[
\lim_{k\in\Lambda'}\frac{P_{6k+i+1,2}(z)}{P_{6k+i,2}(z)}=\widetilde{G}_{2}^{(i)}(z),\qquad
z\in\mathbb{C}\setminus[-a^{3},-b^{3}],
\]
for some other subsequence $\Lambda'\subset\mathbb{N}$. As before,
we can find positive constants $d_{1}^{(i)}, d_{2}^{(i)}$ so that
the functions $G_{j}^{(i)}:=d_{j}^{(i)}\widetilde{G}_{j}^{(i)}$
satisfy the same system (\ref{eq:system}). If we define the
functions
\[
u_{1}:=\log|F_{1}^{(i)}|-\log|G_{1}^{(i)}|,\qquad
u_{2}:=\log|F_{2}^{(i)}|-\log|G_{2}^{(i)}|,\qquad
\mathbf{u}=(u_{1},u_{2})^{t},
\]
observe that $u_{1}$ is harmonic in
$\overline{\mathbb{C}}\setminus[0,\alpha^{3}]$, $u_{2}$ is
harmonic in $\overline{\mathbb{C}}\setminus[-b^{3},-a^{3}]$, and
they are also bounded in the corresponding regions. Moreover,
\begin{equation}\label{eq:systemu1u2}
\left\{
\begin{array}{lll}
2 u_{1}(\tau)-u_{2}(\tau)=0,\quad \tau\in(0,\alpha^{3}],\\
\\
-u_{1}(\tau)+2 u_{2}(\tau)=0,\quad \tau\in[-b^{3},-a^{3}].\\
\end{array}
\right.
\end{equation}

Let $\Delta_{1}:=[0,\alpha^{3}]$, $\Delta_{2}:=[-b^{3},-a^{3}]$.
From (\ref{eq:systemu1u2}) and the (generalized) maximum-minimum
principle for harmonic functions, we obtain that $2
u_{1}-T_{1,2}(u_{2})\equiv 0$ on
$\overline{\mathbb{C}}\setminus\Delta_{1}$ and $2
u_{2}-T_{2,1}(u_{1})\equiv 0$ on
$\overline{\mathbb{C}}\setminus\Delta_{2}$. In particular,
$(2I-T)(\mathbf{u})=\mathbf{0}$, so by Lemma \ref{lemaaux} we get
$u_{1}=0$ on $\Delta_{2}$ and $u_{2}=0$ on $\Delta_{1}$. Therefore
$T_{1,2}(u_{2})\equiv 0$ on $\overline{\mathbb{C}}\setminus
\Delta_{1}$ and $T_{2,1}(u_{1})\equiv 0$ on
$\overline{\mathbb{C}}\setminus\Delta_{2}$, implying that
$u_{1}\equiv 0$ and $u_{2}\equiv 0$. From
$|F_{j}^{(i)}|=|G_{j}^{(i)}|$ it easily follows that
$c_{j}^{i}=d_{j}^{i}$ and $\widetilde{F}_{j}^{(i)}=
\widetilde{G}_{j}^{(i)}$.
\end{proof}

\noindent{\bf Proof of Theorem \ref{introd:ratioasymptotics}.} The
existence of the limits
(\ref{eq:ratioasymp1})--(\ref{eq:ratioasymp2}) follows from the
normality of the families (\ref{familiesrational}) and Proposition
\ref{propuniqueness}. The polynomials $P_{n}$ satisfy:
\[
P_{3k}(z)=P_{3k+1}(z)+a_{3k}P_{3k-2}(z),\qquad\,\,
P_{3k+1}(z)=P_{3k+2}(z)+a_{3k+1}P_{3k-1}(z),\qquad\,\,
zP_{3k+2}(z)=P_{3k+3}(z)+a_{3k+2}P_{3k}(z),
\]
and so (\ref{eq:ratioasymp1}) implies that the following limits
hold:
\begin{equation}\label{eq:lima1}
\lim_{k\rightarrow\infty}a_{6k+i}=\widetilde{F}_{1}^{(i-2)}(z)\widetilde{F}_{1}^{(i-1)}(z)
(1-\widetilde{F}_{1}^{(i)}(z)),\qquad i\in\{0,1,3,4\},
\end{equation}
\begin{equation}\label{eq:lima2}
\lim_{k\rightarrow\infty}a_{6k+i}=\widetilde{F}_{1}^{(i-2)}(z)\widetilde{F}_{1}^{(i-1)}(z)
(z-\widetilde{F}_{1}^{(i)}(z)),\qquad i\in\{2,5\},
\end{equation}
where these relations are valid for every
$z\in\mathbb{C}\setminus[0,\alpha^{3}]$
($\widetilde{F}_{1}^{(-2)}=\widetilde{F}_{1}^{(4)}$,
$\widetilde{F}_{1}^{(-1)}=\widetilde{F}_{1}^{(5)}$).

We have:
\[
\widetilde{F}_{1}^{(i-2)}(z)\widetilde{F}_{1}^{(i-1)}(z)
(1-\widetilde{F}_{1}^{(i)}(z))=-C_{1}^{(i)}+O(1/z),\qquad
z\rightarrow\infty,\quad i\in\{0,1,3,4\},
\]
\[
\widetilde{F}_{1}^{(i-2)}(z)\widetilde{F}_{1}^{(i-1)}(z)
(z-\widetilde{F}_{1}^{(i)}(z))=-C_{0}^{(i)}+O(1/z),\qquad
z\rightarrow\infty,\quad i\in\{2,5\},
\]
and so (\ref{ratiocoeff}) follows from
(\ref{eq:lima1})--(\ref{eq:lima2}).
(\ref{eq:ratioQ1})--(\ref{eq:ratioQ3}) is a direct consequence of
(\ref{eq:ratioasymp1})--(\ref{eq:ratioasymp2}). \hfill $\Box$

\begin{prop}\label{propratioasymporthonPsi}
Assume that the hypotheses of Theorem
$\ref{introd:ratioasymptotics}$ hold. Then the polynomials $p_{n},
p_{n,2}$ defined in $(\ref{eq:definitionpn})$ satisfy for each
$i\in\{0,\ldots,5\}$$:$
\begin{equation}\label{eq:ratioasymporthon1}
\lim_{k\rightarrow\infty}\frac{p_{6k+i+1}(z)}{p_{6k+i}(z)}
=\kappa_{1}^{(i)}\widetilde{F}_{1}^{(i)}(z),\qquad
z\in\mathbb{C}\setminus[0,\alpha^{3}],
\end{equation}
\begin{equation}\label{eq:ratioasymporthon2}
\lim_{k\rightarrow\infty}\frac{p_{6k+i+1,2}(z)}{p_{6k+i,2}(z)}
=\kappa_{2}^{(i)}\widetilde{F}_{2}^{(i)}(z),\qquad
z\in\mathbb{C}\setminus[-b^3,-a^{3}],
\end{equation}
uniformly on compact subsets of the indicated regions, where
\[
\kappa_{j}^{(i)}=\sqrt{\omega_{j}^{(i)}},\qquad j=1,2,
\]
and the constants $\omega_{j}^{(i)}$ are defined in
$(\ref{defnomega1l})$--$(\ref{defnomega2lmasmas})$. Consequently,
for the leading coefficients $\kappa_{n}, \kappa_{n,2}$ defined in
$(\ref{defnkappankappan2})$ we have:
\begin{equation}\label{eq:ratiokappa1}
\lim_{k\rightarrow\infty}\frac{\kappa_{6k+i+1}}{\kappa_{6k+i}}=\kappa_{1}^{(i)},
\end{equation}
\begin{equation}\label{eq:ratiokappa2}
\lim_{k\rightarrow\infty}\frac{\kappa_{6k+i+1,2}}{\kappa_{6k+i,2}}=\kappa_{2}^{(i)}.
\end{equation}
In addition, the following limits hold uniformly on compact
subsets of $\mathbb{C}\setminus(S_{0}\cup S_{1})$$:$
\begin{equation}\label{ratioPsi1}
\lim_{k\rightarrow\infty}\frac{\Psi_{6k+i+1}(z)}{\Psi_{6k+i}(z)}=\frac{1}{\omega_{1}^{(i)}}
\frac{\widetilde{F}_{2}^{(i)}(z^3)}{z^2\,\widetilde{F}_{1}^{(i)}(z^3)},\qquad
i=0,3,
\end{equation}
\begin{equation}\label{ratioPsi2}
\lim_{k\rightarrow\infty}\frac{\Psi_{6k+i+1}(z)}{\Psi_{6k+i}(z)}=\frac{1}{\omega_{1}^{(i)}}
\frac{z\widetilde{F}_{2}^{(i)}(z^3)}{\widetilde{F}_{1}^{(i)}(z^3)},\qquad
i=1,2,4,5.
\end{equation}
\end{prop}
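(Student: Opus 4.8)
The plan is to reduce the whole statement to two ingredients already available: the monic ratio asymptotics of Theorem \ref{introd:ratioasymptotics}, and the asymptotics of the normalizing constants $\kappa_n,\kappa_{n,2}$. Since $p_n=\kappa_n P_n$ and $p_{n,2}=\kappa_{n,2}P_{n,2}$ are orthonormal (Proposition \ref{orthonormalitypn}), one has $\kappa_n=\|P_n\|_{L^2(d\nu_n)}^{-1}$ and $\kappa_{n,2}=\|P_{n,2}\|_{L^2(d\nu_{n,2})}^{-1}$, so that
\[
\frac{\kappa_{6k+i+1}}{\kappa_{6k+i}}=\frac{\|P_{6k+i}\|_{\nu_{6k+i}}}{\|P_{6k+i+1}\|_{\nu_{6k+i+1}}},\qquad
\frac{\kappa_{6k+i+1,2}}{\kappa_{6k+i,2}}=\frac{\|P_{6k+i,2}\|_{\nu_{6k+i,2}}}{\|P_{6k+i+1,2}\|_{\nu_{6k+i+1,2}}}.
\]
Thus the first step is to compute these norm ratios; once (\ref{eq:ratiokappa1})--(\ref{eq:ratiokappa2}) are established, (\ref{eq:ratioasymporthon1})--(\ref{eq:ratioasymporthon2}) follow immediately from $p_{6k+i+1}/p_{6k+i}=(\kappa_{6k+i+1}/\kappa_{6k+i})(P_{6k+i+1}/P_{6k+i})$ and Theorem \ref{introd:ratioasymptotics}, and the $\Psi_n$ limits will be obtained from the identity $H_n=Q_n\Psi_n/Q_{n,2}$.

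For the first family I would organize the indices by the degree behavior recorded in (\ref{LaurentexpansionFinfinity}). When $i\in\{0,1,3,4\}$ the degrees of $P_{6k+i}$ and $P_{6k+i+1}$ coincide, and $P_{6k+i+1}$ is orthogonal with respect to $g_{6k+i}\,d\nu_{6k+i}$ with $g_{6k+i}$ converging uniformly to the weight attached to $S_1^{(i)}$ (exactly as in the derivation of (\ref{tildeF10})--(\ref{tildeF11})). The leading-coefficient companion of the relative-asymptotics result \cite[Theorem 2]{BCL} then gives $\|P_{6k+i}\|_{\nu_{6k+i}}/\|P_{6k+i+1}\|_{\nu_{6k+i+1}}\to S_1^{(i)}(\infty)=\sqrt{\omega_1^{(i)}}$, by (\ref{defnomega1l}). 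When $i\in\{2,5\}$ the degree increases by one; here I would factor the norm ratio through the monic polynomial $P^{*}$ of the lower degree orthogonal with respect to $\nu_{6k+i+1}$, so that the relative-asymptotics factor tends to $S_1^{(i)}(\infty)$ and the consecutive-degree factor tends to $\phi_1'(\infty)$ (the leading-coefficient companion of the ratio-asymptotics result \cite[Theorem 1]{BCL}); their product is $S_1^{(i)}(\infty)\phi_1'(\infty)=\sqrt{\omega_1^{(i)}}$ by (\ref{defnomega1lmas}).

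The second family is handled in the same spirit, now using Proposition \ref{prop3} and the limits (\ref{asymph3k})--(\ref{asymph3k2}) of $h_n$: for $i\in\{0,1,2\}$ the degrees of $P_{6k+i,2}$ and $P_{6k+i+1,2}$ agree and the ratio tends to $S_2^{(i)}(\infty)=\sqrt{\omega_2^{(i)}}$, while for $i\in\{3,5\}$ a degree increase contributes the extra factor $\phi_2'(\infty)$. The delicate case is $i=4$, where the degree drops by one: factoring through the monic polynomial of the higher degree orthogonal with respect to $\nu_{6k+5,2}$, the relative-asymptotics factor converges to $S(\widetilde g;\infty)$ with $\widetilde g=\lim d\nu_{6k+5,2}/d\nu_{6k+4,2}=|\tau|/|\widetilde F_1^{(4)}(\tau)|$, which is exactly the reciprocal of the weight of $S_2^{(4)}$, whence $S(\widetilde g;\infty)=1/S_2^{(4)}(\infty)$, while the consecutive-degree factor converges to $1/\phi_2'(\infty)$; the product is $1/(S_2^{(4)}(\infty)\phi_2'(\infty))=\sqrt{\omega_2^{(4)}}$, in agreement with (\ref{defnomega2lmasmas}). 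This completes (\ref{eq:ratiokappa2}), and hence (\ref{eq:ratioasymporthon2}).

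Finally, for (\ref{ratioPsi1})--(\ref{ratioPsi2}) I would use $\Psi_n=H_nQ_{n,2}/Q_n$ together with $H_n=h_n/\kappa_n^2$ to write
\[
\frac{\Psi_{6k+i+1}}{\Psi_{6k+i}}
=\frac{h_{6k+i+1}}{h_{6k+i}}\cdot\frac{\kappa_{6k+i}^2}{\kappa_{6k+i+1}^2}\cdot\frac{Q_{6k+i+1,2}}{Q_{6k+i,2}}\cdot\frac{Q_{6k+i}}{Q_{6k+i+1}}.
\]
Reading off the residues modulo $3$ of $6k+i$ and $6k+i+1$, Lemma \ref{lemmaweakstar} gives $h_{6k+i+1}/h_{6k+i}\to 1/z$ for $i\in\{0,2,3,5\}$ and $\to z^2$ for $i\in\{1,4\}$; the factor $\kappa_{6k+i}^2/\kappa_{6k+i+1}^2$ tends to $1/\omega_1^{(i)}$ by (\ref{eq:ratiokappa1}); and the last two factors are governed by (\ref{eq:ratioQ3}) and by (\ref{eq:ratioQ1})--(\ref{eq:ratioQ2}). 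Multiplying these limits and collecting the powers of $z$ yields precisely (\ref{ratioPsi1})--(\ref{ratioPsi2}), with uniform convergence on compact subsets of $\mathbb{C}\setminus(S_{0}\cup S_{1})$ inherited from the constituent limits. The main obstacle is the leading-coefficient step: one must diagnose, index by index, whether the relevant degree increases, stays fixed, or drops, and hence whether a factor $\phi_j'(\infty)$, none, or its reciprocal appears. The degree-drop case $i=4$ of the second family, with its reciprocal Szeg\H{o} function, is where this requires the most care; by contrast, the final assembly of (\ref{ratioPsi1})--(\ref{ratioPsi2}) is a purely mechanical tracking of powers of $z$.
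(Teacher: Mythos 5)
Your proposal is correct and follows essentially the same route as the paper: both rest on Theorems 1 and 2 of \cite{BCL} with the identical index-by-index degree bookkeeping (including the degree-drop case $i=4$, where the reciprocal Szeg\H{o} function and the factor $1/\phi_{2}'(\infty)$ appear, in agreement with (\ref{defnomega2lmasmas})), and on the identity $\Psi_{n+1}/\Psi_{n}=(\kappa_{n}^{2}/\kappa_{n+1}^{2})\,(h_{n+1}/h_{n})\,(Q_{n}/Q_{n+1})\,(Q_{n+1,2}/Q_{n,2})$ combined with Lemma \ref{lemmaweakstar} and Theorem \ref{introd:ratioasymptotics} for the final assembly. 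The only difference is the order of deduction: the paper applies the BCL theorems directly to the orthonormal polynomials $p_{n},p_{n,2}$, obtaining $p_{6k+i+1}/p_{6k+i}\to S_{1}^{(i)}$ (resp. $S_{1}^{(i)}\phi_{1}$, and $(S_{2}^{(4)}\phi_{2})^{-1}$ for $i=4$) and then reading off (\ref{eq:ratiokappa1})--(\ref{eq:ratiokappa2}) as corollaries, whereas you extract the leading-coefficient limits first from the norm ratios and multiply back with the monic asymptotics of Theorem \ref{introd:ratioasymptotics} --- an equivalent reshuffling, since the orthonormal ratio evaluated at infinity is precisely the leading-coefficient ratio.
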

\begin{proof}
Using the same argument employed before and Theorems $1$ and $2$
from \cite{BCL}, we obtain
\[
\lim_{k\rightarrow\infty}\frac{p_{6k+i+1}(z)}{p_{6k+i}(z)}=S_{1}^{(i)}(z),\qquad
z\in\mathbb{C}\setminus[0,\alpha^3],\quad i=0,1,3,4,
\]
\[
\lim_{k\rightarrow\infty}\frac{p_{6k+i+1}(z)}{p_{6k+i}(z)}=S_{1}^{(i)}(z)\,\phi_{1}(z),\qquad
z\in\mathbb{C}\setminus[0,\alpha^3],\quad i=2,5,
\]
\[
\lim_{k\rightarrow\infty}\frac{p_{6k+i+1,2}(z)}{p_{6k+i,2}(z)}=S_{2}^{(i)}(z),\qquad
z\in\mathbb{C}\setminus[-b^3,-a^3],\quad i=0,1,2,
\]
\[
\lim_{k\rightarrow\infty}\frac{p_{6k+i+1,2}(z)}{p_{6k+i,2}(z)}=S_{2}^{(i)}(z)\,\phi_{2}(z),\qquad
z\in\mathbb{C}\setminus[-b^3,-a^3],\quad i=3,5,
\]
\[
\lim_{k\rightarrow\infty}\frac{p_{6k+5,2}(z)}{p_{6k+4,2}(z)}=(S_{2}^{(4)}(z)\,\phi_{2}(z))^{-1},\qquad
z\in\mathbb{C}\setminus[-b^3,-a^3],
\]
so (\ref{eq:ratioasymporthon1}) and (\ref{eq:ratioasymporthon2})
follow. (\ref{eq:ratiokappa1})--(\ref{eq:ratiokappa2}) are
immediate consequences of
(\ref{eq:ratioasymporthon1})--(\ref{eq:ratioasymporthon2}).

Observe that by (\ref{defnHn1}) we can write
\[
\frac{\Psi_{n+1}}{\Psi_{n}}=\frac{\kappa_{n}^2}{\kappa_{n+1}^2}
\frac{h_{n+1}}{h_{n}}\frac{Q_{n}}{Q_{n+1}}\frac{Q_{n+1,2}}{Q_{n,2}},
\]
so (\ref{eq:ratiokappa1})--(\ref{eq:ratiokappa2}) together with
Lemma \ref{lemmaweakstar} and Theorem
\ref{introd:ratioasymptotics} imply
(\ref{ratioPsi1})--(\ref{ratioPsi2}).
\end{proof}

\noindent{\bf Proof of Proposition \ref{introd:relation}.} We
first show that $a^{(i)}>0$ for all $i$. If $a^{(0)}=0$, then
(\ref{eq:lima1}) implies $\widetilde{F}_{1}^{(0)}\equiv 1$, and
using (\ref{eq:boundaryvalue1}) we obtain that
$\widetilde{F}_{2}^{(0)}(z)=z$ on
$\mathbb{C}\setminus[-b^{3},-a^{3}]$, contradicting
(\ref{LaurentexpansionFinfinity}). If $a^{(1)}=0$, then again by
(\ref{eq:lima1}) we get $\widetilde{F}_{1}^{(1)}\equiv 1$, and so
by (\ref{eq:boundaryvalue2}) we have
$\widetilde{F}_{2}^{(1)}\equiv 1$, contradicting
(\ref{eq:boundaryvalue5}). If $a^{(2)}=0$, then from
(\ref{eq:lima2}) it follows that $\widetilde{F}_{1}^{(2)}(z)=z$ on
$\mathbb{C}\setminus[0,\alpha^{3}]$, and so
(\ref{eq:boundaryvalue3}) implies that
$\widetilde{F}_{2}^{(1)}(z)=z$, which is impossible. Similar
arguments show that $a^{(i)}>0$ for $i\in\{3,4,5\}$.

Now we prove simultaneously that
$\widetilde{F}_{1}^{(2)}(z)=z\,\widetilde{F}_{1}^{(0)}(z)$ and
$\widetilde{F}_{2}^{(0)}=\widetilde{F}_{2}^{(2)}$. Let
\[
u_{1}(z):=\log|F_{1}^{(2)}(z)|-\log|z\,F_{1}^{(0)}(z)|,\qquad
u_{2}(z):=\log|F_{2}^{(2)}(z)|-\log|F_{2}^{(0)}(z)|.
\]
Then $u_{1}$ is harmonic in
$\overline{\mathbb{C}}\setminus[0,\alpha^{3}]$ and $u_{2}$ is
harmonic in $\overline{\mathbb{C}}\setminus[-b^{3},-a^{3}]$. By
(\ref{eq:boundaryvalue4}) and (\ref{eq:boundaryvalue6}) we see
that $u_{2}$ is bounded on
$\overline{\mathbb{C}}\setminus[-b^{3},-a^{3}]$. Taking into
account the definitions of the functions $S_{1}^{(0)}$ and
$S_{1}^{(2)}$, the boundedness of $u_{1}$ is equivalent to the
boundedness of the expression
\[
\frac{1}{2\pi}\int_{0}^{2\pi}
\Re\Big[\frac{e^{i\theta}+1/\phi_{1}(z)}{e^{i\theta}-1/\phi_{1}(z)}\Big]
\log(1+\cos\theta)\,d\theta-\log|z|,\qquad z\notin[0,\alpha^{3}],
\]
which follows trivially from the identity
\[
\frac{1}{2\pi}\int_{0}^{2\pi}
\Re\Big[\frac{e^{i\theta}+w}{e^{i\theta}-w}\Big]
\log|1+e^{i\theta}|\,d\theta=\log|1+w|,\qquad |w|<1.
\]

Now Proposition \ref{prop5} implies that $2
u_{1}(\tau)-u_{2}(\tau)=0$ for $\tau\in(0,\alpha^{3}]$, and
$-u_{1}(\tau)+2 u_{2}(\tau)=0$ for $\tau\in[-b^{3},-a^{3}]$. As in
the proof of Proposition \ref{propuniqueness}, this yields
$u_{1}\equiv 0$, $u_{2}\equiv 0$. Similarly one proves that
$\widetilde{F}_{1}^{(5)}(z)=z\widetilde{F}_{1}^{(3)}(z)$ and
$\widetilde{F}_{2}^{(5)}=\widetilde{F}_{2}^{(3)}$.

From (\ref{eq:relfunc11}), (\ref{ratiocoeff}), and
(\ref{Laurentexpansion}), it follows that $a^{(0)}=a^{(2)}$ and
$a^{(3)}=a^{(5)}$. We have by (\ref{eq:lima1})--(\ref{eq:lima2})
that
\[
\widetilde{F}_{1}^{(0)}(z)\widetilde{F}_{1}^{(1)}(z)
(z-\widetilde{F}_{1}^{(2)})=a^{(2)},\qquad\qquad
\widetilde{F}_{1}^{(4)}(z)\widetilde{F}_{1}^{(5)}(z)
(1-\widetilde{F}_{1}^{(0)})=a^{(0)}.
\]
Since $a^{(0)}=a^{(2)}$ and
$\widetilde{F}_{1}^{(2)}(z)=z\widetilde{F}_{1}^{(0)}(z)$, we
deduce that $z\widetilde{F}_{1}^{(0)}\widetilde{F}_{1}^{(1)}
=\widetilde{F}_{1}^{(4)}\widetilde{F}_{1}^{(5)}$, or equivalently
$\widetilde{F}_{1}^{(1)}\widetilde{F}_{1}^{(2)}
=\widetilde{F}_{1}^{(4)}\widetilde{F}_{1}^{(5)}$. The other two
relations in (\ref{eq:relfunc12}) follow immediately using this
equality and (\ref{eq:relfunc11}).

The relations in (\ref{eq:relfunc22}) are an easy consequence of
(\ref{eq:relfunc12}) and
(\ref{eq:boundaryvalue1})--(\ref{eq:boundaryvalue3}). Now,
(\ref{eq:relfunc13}) is obtained by dividing appropriate relations
from (\ref{eq:lima1})--(\ref{eq:lima2}), one by another, and
taking into account (\ref{eq:relfunc12}). The equality
$a^{(0)}+a^{(1)}=a^{(3)}+a^{(4)}$ follows immediately from
$\widetilde{F}_{1}^{(0)}\widetilde{F}_{1}^{(1)}=
\widetilde{F}_{1}^{(3)}\widetilde{F}_{1}^{(4)}$.

We next show that the functions $\widetilde{F}_{1}^{(i)}$,
$i\in\{0,\ldots,5\}$, are all distinct. If $i\in\{0,1,3,4\}$, then
evidently $\widetilde{F}_{1}^{(i)}\neq \widetilde{F}_{1}^{(2)}$
and $\widetilde{F}_{1}^{(i)}\neq \widetilde{F}_{1}^{(5)}$. If
$\widetilde{F}_{1}^{(0)}=\widetilde{F}_{1}^{(1)}$, then
(\ref{eq:boundaryvaluetilde1}) and (\ref{eq:boundaryvaluetilde2})
imply that
\[
\frac{\widetilde{F}_{2}^{(1)}(\tau)}{\widetilde{F}_{2}^{(0)}(\tau)}=\frac{\omega_{1}^{(1)}}
{\omega_{1}^{(0)}}\frac{1}{\tau},\qquad \tau\in(0,\alpha^{3}],
\]
which is contradictory since
$\widetilde{F}_{2}^{(1)}/\widetilde{F}_{2}^{(0)}$ is holomorphic
outside $[-b^{3},-a^{3}]$. The same argument proves that
$\widetilde{F}_{1}^{(0)}\neq\widetilde{F}_{1}^{(4)},
\widetilde{F}_{1}^{(1)}\neq\widetilde{F}_{1}^{(3)}$, and
$\widetilde{F}_{1}^{(3)}\neq\widetilde{F}_{1}^{(4)}$. If
$\widetilde{F}_{1}^{(0)}=\widetilde{F}_{1}^{(3)}$, then
(\ref{eq:boundaryvaluetilde1}) implies that
$\widetilde{F}_{2}^{(0)}=\widetilde{F}_{2}^{(3)}$, which is
impossible (cf. (\ref{LaurentexpansionFinfinity})). Similarly
(using now (\ref{eq:boundaryvaluetilde2}) and
(\ref{eq:boundaryvaluetilde3})) we see that
$\widetilde{F}_{1}^{(1)}\neq \widetilde{F}_{1}^{(4)}$ and
$\widetilde{F}_{1}^{(2)}\neq \widetilde{F}_{1}^{(5)}$.

Now we show that the functions $\widetilde{F}_{2}^{(i)}$,
$i\in\{0,1,3,4\}$, are all different. If we assume that
$\widetilde{F}_{2}^{(0)}=\widetilde{F}_{2}^{(1)}$, then
(\ref{eq:boundaryvaluetilde4})--(\ref{eq:boundaryvaluetilde5})
imply that
\[
\frac{|\widetilde{F}_{1}^{(1)}(\tau)|}{|\widetilde{F}_{1}^{(0)}(\tau)|}
=\frac{\omega^{(1)}_{2}}{\omega_{2}^{(0)}}\,\tau^{2},\qquad
\tau\in[-b^{3},-a^{3}].
\]
It follows that
$\widetilde{F}_{1}^{(1)}(z)=z^2\widetilde{F}_{1}^{(0)}(z)$, which
is impossible. The other cases are justified just by looking at
the Laurent expansion at infinity.

By (\ref{eq:relfunc13}) we see that $a^{(0)}\neq a^{(3)}$ and
$a^{(1)}\neq a^{(4)}$. Now we show that $a^{(1)}\neq a^{(3)}$.
Applying (\ref{eq:lima1}) for $i=0$ and the relation
$\widetilde{F}_{1}^{(1)}\widetilde{F}_{1}^{(2)}
=\widetilde{F}_{1}^{(4)}\widetilde{F}_{1}^{(5)}$, we get
\[
\widetilde{F}_{1}^{(1)}\widetilde{F}_{1}^{(2)}(1-\widetilde{F}_{1}^{(0)})=a^{(0)}.
\]
From this relation and (\ref{eq:lima1}) (for $i=4$), we obtain
\[
\widetilde{F}_{1}^{(1)}(1-\widetilde{F}_{1}^{(0)})
=\frac{a^{(0)}}{a^{(4)}}\,\widetilde{F}_{1}^{(3)}(1-\widetilde{F}_{1}^{(4)}).
\]
Applying the first two equations from (\ref{eq:relfunc13}), we
derive that
\begin{equation}\label{eq:aux15}
\widetilde{F}_{1}^{(1)}(1-\widetilde{F}_{1}^{(0)})
=\frac{a^{(3)}}{a^{(1)}}\,(1-\widetilde{F}_{1}^{(1)})
(\widetilde{F}_{1}^{(0)}-1)+\frac{a^{(0)}}{a^{(1)}}\,(1-\widetilde{F}_{1}^{(1)}).
\end{equation}
If we assume now that $a^{(1)}=a^{(3)}$, then (\ref{eq:aux15})
yields
$(1-\widetilde{F}_{1}^{(0)})/(1-\widetilde{F}_{1}^{(1)})=a^{(0)}/a^{(1)}$.
But from (\ref{eq:lima1}) we know that
\[
\frac{(1-\widetilde{F}_{1}^{(0)})\widetilde{F}_{1}^{(4)}}
{(1-\widetilde{F}_{1}^{(1)})\widetilde{F}_{1}^{(0)}}=\frac{a^{(0)}}{a^{(1)}},
\]
hence $\widetilde{F}_{1}^{(4)}=\widetilde{F}_{1}^{(0)}$, which is
contradictory. Therefore $a^{(1)}\neq a^{(3)}$, and so by
(\ref{eq:rellimitcoeff}) we also obtain that $a^{(0)}\neq
a^{(4)}$. \hfill $\Box$

\begin{co}\label{relationomegas}
The following relations hold:
\[
\omega_{1}^{(0)}\omega_{1}^{(1)}
=\omega_{1}^{(3)}\omega_{1}^{(4)},\qquad\omega_{1}^{(0)}=\omega_{1}^{(2)},\qquad
\omega_{1}^{(3)}=\omega_{1}^{(5)},
\]
\[
\omega_{2}^{(0)}\omega_{2}^{(1)}
=\omega_{2}^{(3)}\omega_{2}^{(4)},\qquad
\omega_{2}^{(0)}=\omega_{2}^{(2)},\qquad
\omega_{2}^{(3)}=\omega_{2}^{(5)}.
\]
\end{co}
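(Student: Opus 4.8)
The plan is to derive all six identities purely algebraically, with no new asymptotic input, from the boundary value relations $(\ref{eq:boundaryvaluetilde1})$--$(\ref{eq:boundaryvaluetilde6})$ of Proposition \ref{prop5} combined with the functional relations $(\ref{eq:relfunc11})$, $(\ref{eq:relfunc12})$, $(\ref{eq:relfunc21})$, $(\ref{eq:relfunc22})$ already established in Proposition \ref{introd:relation}. The whole corollary is a bookkeeping exercise in which one inserts the relations between the $\widetilde{F}_{j}^{(i)}$ into the pointwise identities on the appropriate interval, being careful only about which interval is a branch cut for which family.

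For the two equalities $\omega_{1}^{(0)}=\omega_{1}^{(2)}$ and $\omega_{1}^{(3)}=\omega_{1}^{(5)}$, I would fix $\tau\in(0,\alpha^{3}]$ and substitute $\widetilde{F}_{1}^{(2)}=z\widetilde{F}_{1}^{(0)}$ and $\widetilde{F}_{2}^{(2)}=\widetilde{F}_{2}^{(0)}$ (from $(\ref{eq:relfunc11})$ and $(\ref{eq:relfunc21})$) into $(\ref{eq:boundaryvaluetilde3})$ with $l=2$. Since $\tau>0$ gives $|\widetilde{F}_{1}^{(2)}(\tau)|^{2}=\tau^{2}|\widetilde{F}_{1}^{(0)}(\tau)|^{2}$, the factor $1/\tau$ in $(\ref{eq:boundaryvaluetilde3})$ leaves exactly $\tau|\widetilde{F}_{1}^{(0)}(\tau)|^{2}/\widetilde{F}_{2}^{(0)}(\tau)$, which is the left side of $(\ref{eq:boundaryvaluetilde1})$ with $l=0$; hence $\omega_{1}^{(2)}=\omega_{1}^{(0)}$, and the case $(l=5$ versus $l=3)$ is identical. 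The companion equalities $\omega_{2}^{(0)}=\omega_{2}^{(2)}$ and $\omega_{2}^{(3)}=\omega_{2}^{(5)}$ are obtained the same way on $[-b^{3},-a^{3}]$, feeding $(\ref{eq:relfunc11})$ and $(\ref{eq:relfunc21})$ into $(\ref{eq:boundaryvaluetilde6})$ and matching against $(\ref{eq:boundaryvaluetilde4})$; there the factor $|\tau|$ coming from $|\widetilde{F}_{1}^{(2)}|=|\tau|\,|\widetilde{F}_{1}^{(0)}|$ reconstitutes the weight $1/|\tau\widetilde{F}_{1}^{(0)}|$.

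For the product relations I would multiply paired boundary value equations. On $(0,\alpha^{3}]$ the product of $(\ref{eq:boundaryvaluetilde1})$ with $l=0$ and $(\ref{eq:boundaryvaluetilde2})$ with $l=1$ gives
\[
\frac{1}{\omega_{1}^{(0)}\omega_{1}^{(1)}}=|\widetilde{F}_{1}^{(0)}(\tau)\widetilde{F}_{1}^{(1)}(\tau)|^{2}\,\frac{\tau}{\widetilde{F}_{2}^{(0)}(\tau)\widetilde{F}_{2}^{(1)}(\tau)},
\]
and the product of $(\ref{eq:boundaryvaluetilde1})$ with $l=3$ and $(\ref{eq:boundaryvaluetilde2})$ with $l=4$ gives the same expression with $(0,1)$ replaced by $(3,4)$. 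The first relation in $(\ref{eq:relfunc12})$ equates $|\widetilde{F}_{1}^{(0)}\widetilde{F}_{1}^{(1)}|$ with $|\widetilde{F}_{1}^{(3)}\widetilde{F}_{1}^{(4)}|$, and the first relation in $(\ref{eq:relfunc22})$ equates $\widetilde{F}_{2}^{(0)}\widetilde{F}_{2}^{(1)}$ with $\widetilde{F}_{2}^{(3)}\widetilde{F}_{2}^{(4)}$, so the two products coincide and $\omega_{1}^{(0)}\omega_{1}^{(1)}=\omega_{1}^{(3)}\omega_{1}^{(4)}$. For $\omega_{2}^{(0)}\omega_{2}^{(1)}=\omega_{2}^{(3)}\omega_{2}^{(4)}$ I would instead multiply $(\ref{eq:boundaryvaluetilde4})$ ($l=0$) by $(\ref{eq:boundaryvaluetilde5})$ ($l=1$) on $[-b^{3},-a^{3}]$, observe that the factors $1/|\tau|$ and $|\tau|$ cancel, and again invoke $(\ref{eq:relfunc12})$ and $(\ref{eq:relfunc22})$.

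The only point needing care---the main, though minor, obstacle---is the legitimacy of using a functional identity at points lying on one of its branch cuts. When I evaluate the $\omega_{1}$-relations on $(0,\alpha^{3}]$ I rely on $(\ref{eq:relfunc12})$, whose functions $\widetilde{F}_{1}^{(i)}$ are cut precisely along $[0,\alpha^{3}]$; but $(\ref{eq:relfunc12})$ is an identity of functions analytic on $\mathbb{C}\setminus[0,\alpha^{3}]$, so its nontangential boundary values from either side satisfy the same identity, and taking moduli yields $|\widetilde{F}_{1}^{(0)}\widetilde{F}_{1}^{(1)}|=|\widetilde{F}_{1}^{(3)}\widetilde{F}_{1}^{(4)}|$ a.e. on $(0,\alpha^{3}]$. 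The relation $(\ref{eq:relfunc22})$ poses no difficulty there, since each $\widetilde{F}_{2}^{(i)}$ is analytic in a neighborhood of $(0,\alpha^{3}]$; on $[-b^{3},-a^{3}]$ the roles of the two families are simply interchanged. With this justification in place, all six identities follow.
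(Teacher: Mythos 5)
Your proof is correct and takes essentially the same route as the paper: the paper's own proof of Corollary \ref{relationomegas} consists precisely of combining the boundary value equations (\ref{eq:boundaryvaluetilde1})--(\ref{eq:boundaryvaluetilde6}) with the relations of Proposition \ref{introd:relation}, multiplying or dividing these equations appropriately one by another, exactly as you do. Your extra care in justifying the use of (\ref{eq:relfunc12}) and (\ref{eq:relfunc22}) on their branch cuts via boundary values merely makes explicit what the paper leaves implicit.
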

\begin{proof}
All these relations follow immediately from the relations
established in Proposition \ref{introd:relation} and the boundary
value equations
(\ref{eq:boundaryvaluetilde1})--(\ref{eq:boundaryvaluetilde3}) and
(\ref{eq:boundaryvaluetilde4})--(\ref{eq:boundaryvaluetilde6})
(multiply or divide appropriately these equations, one by
another).
\end{proof}

\section{The Riemann surface representation of the
limiting functions
$\widetilde{F}_{j}^{(i)}$}\label{Riemannsurfacesection}

We will give now the proof of Theorem
\ref{theoRiemannsurfacerepresentation}. Before doing so, we need
some definitions and comments. Let
\[
G_{1}^{(i,j)}:=\frac{F_{1}^{(i)}}{F_{1}^{(j)}},\qquad
G_{2}^{(i,j)}:=\frac{F_{2}^{(i)}}{F_{2}^{(j)}},\qquad 0\leq
i,j\leq 5.
\]

Recall that the conformal representation $\psi$ of $\mathcal{R}$
onto $\overline{\mathbb{C}}$ satisfies
(\ref{eq:divisorcond1})--(\ref{eq:divisorcond2}). As a
consequence, we have $\psi(z)=\overline{\psi(\overline{z})}$. This
property implies in particular that
\[
\psi_{k}:\overline{\mathbb{R}}\setminus(\Delta_{k}\cup\Delta_{k+1})
\longrightarrow\overline{\mathbb{R}},\qquad
k=0,1,2,\qquad\Delta_{0}=\Delta_{3}=\emptyset,
\]
and
\begin{equation}\label{eq:symmetrybranches}
\psi_{k}(x_{\pm})=\overline{\psi_{k}(x_{\mp})}
=\overline{\psi_{k+1}(x_{\pm})},\qquad x\in\Delta_{k+1}.
\end{equation}
So all the coefficients in the Laurent expansion at infinity of
the branches $\psi_{k}$ are real. Given a function $F$ that
satisfies
\[
F(z)=C\,z^{k}+O(z^{k-1}),\qquad C\in\mathbb{R}\setminus\{0\},\quad
z\rightarrow\infty,
\]
we use the symbol $\sign(F(\infty))$ to denote the sign of $C$
(i.e. $\sign(F(\infty))=1$ if $C>0$ and $\sign(F(\infty))=-1$ if
$C<0$).

The function $\psi_{0}\,\psi_{1}\,\psi_{2}$ is analytic and
bounded on $\overline{\mathbb{C}}$, so this function is constant.
Let us denote this constant by $C$ (we will reserve in this
section the letter $C$ for this constant). So we have
\begin{equation}\label{eq:productobranches}
(\psi_{0}\,\psi_{1}\,\psi_{2})(z)\equiv C,\qquad
(\widetilde{\psi}_{0}\,\widetilde{\psi}_{1}\,\widetilde{\psi}_{2})(z)\equiv
1,\qquad z\in\overline{\mathbb{C}}.
\end{equation}

\begin{prop}\label{proprelationGRiemann}
The following relations hold:
\begin{equation}\label{eq:repG1}
G_{1}^{(0,3)}(z)=\frac{\sign((\psi_{1}\psi_{2})(\infty))
\,(\psi_{1}\psi_{2})(z)}{|C|^{2/3}},\qquad\qquad\,\,\,
G_{2}^{(0,3)}(z)=\frac{\sign(\psi_{2}(\infty))\,\psi_{2}(z)}{|C|^{1/3}}.
\end{equation}
\begin{proof}
By (\ref{eq:boundaryvalue1}) and (\ref{eq:boundaryvalue4}) we have
\begin{equation}\label{eq:auxRiemann1}
|G_{1}^{(0,3)}(\tau)|^{2}\frac{1}{G_{2}^{(0,3)}(\tau)}=1,\qquad
\tau\in(0,\alpha^3],
\end{equation}
\begin{equation}\label{eq:auxRiemann2}
|G_{2}^{(0,3)}(\tau)|^{2}\frac{1}{|G_{1}^{(0,3)}(\tau)|}=1,\qquad
\tau\in[-b^3,-a^3].
\end{equation}
Observe also that $G_{1}^{(0,3)}$ and $G_{2}^{(0,3)}$ are bounded
on $\overline{\mathbb{C}}\setminus\Delta_{1}$ and
$\overline{\mathbb{C}}\setminus\Delta_{2}$, respectively. Let us
call $v_{1}$ and $v_{2}$ the functions on the right hand side of
the relations (\ref{eq:repG1}), respectively. The function $v_{2}$
is positive on $\Delta_{1}=[0,\alpha^3]$ since
$\sign(v_{2}(\infty))=1$. Using
(\ref{eq:symmetrybranches})--(\ref{eq:productobranches}), for any
$x\in(0,\alpha^3)$,
\[
\frac{|v_{1}(x_{\pm})|^2}{v_{2}(x)}
=\frac{|\psi_{1}(x_{\pm})|^2\,\psi_{2}(x)^2}
{\sign(\psi_{2}(\infty))\,\psi_{2}(x)\,|C|}=\frac{|\psi_{0}(x_{\mp})||\psi_{1}(x_{\pm})||\psi_{2}(x)|}{|C|}
=\frac{|\overline{\psi_{0}(x_{\pm})}||\psi_{1}(x_{\pm})||\psi_{2}(x)|}
{|C|}=1,
\]
i.e. $v_{1}$ and $v_{2}$ satisfy (\ref{eq:auxRiemann1}) on
$(0,\alpha^3)$. On the other hand, for $x\in(-b^3,-a^3)$,
\[
\frac{|v_{2}(x_{\pm})|^2}{|v_{1}(x)|}=\frac{|\psi_{2}(x_{\pm})|}
{|\psi_{1}(x_{\pm})|}=1,
\]
so $v_{1}$ and $v_{2}$ also satisfy $(\ref{eq:auxRiemann2})$ on
$(-b^{3},-a^3)$. Finally, the same argument used to prove
Proposition \ref{propuniqueness} yields the validity of
(\ref{eq:repG1}).
\end{proof}
\end{prop}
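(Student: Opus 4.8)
The plan is to reduce both identities to a pair of boundary-value problems on $\Delta_1=[0,\alpha^3]$ and $\Delta_2=[-b^3,-a^3]$, exhibit explicit solutions built from the branches $\psi_k$, and then invoke the uniqueness mechanism of Proposition \ref{propuniqueness}. First I would form $G_{1}^{(0,3)}=F_{1}^{(0)}/F_{1}^{(3)}$ and $G_{2}^{(0,3)}=F_{2}^{(0)}/F_{2}^{(3)}$ and divide the boundary relation (\ref{eq:boundaryvalue1}) for $l=0$ by the same relation for $l=3$; since both indices share the multiplier $\tau$, this factor cancels and yields $|G_{1}^{(0,3)}(\tau)|^{2}/G_{2}^{(0,3)}(\tau)=1$ on $(0,\alpha^3]$. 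Dividing (\ref{eq:boundaryvalue4}) for $l=0,3$ in the same way gives $|G_{2}^{(0,3)}(\tau)|^{2}/|G_{1}^{(0,3)}(\tau)|=1$ on $[-b^3,-a^3]$. I would also record that $G_{1}^{(0,3)}$, together with its reciprocal, is analytic and bounded on $\overline{\mathbb{C}}\setminus\Delta_1$, and $G_{2}^{(0,3)}$ on $\overline{\mathbb{C}}\setminus\Delta_2$.

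Next I would introduce the candidate functions $v_1$ and $v_2$ equal to the right-hand sides of (\ref{eq:repG1}) and check that they have the correct analytic structure. The function $\psi_2$ is a single branch on $\overline{\mathbb{C}}\setminus\Delta_2$, so $v_2$ is analytic there; and from the product identity (\ref{eq:productobranches}) we have $\psi_1\psi_2=C/\psi_0$, which is analytic and nonvanishing on $\overline{\mathbb{C}}\setminus\Delta_1$, so $v_1$ is analytic and bounded there. The normalizing powers $|C|^{2/3},|C|^{1/3}$ and the sign factors are arranged precisely so that $v_2>0$ on $\Delta_1$ and so that the moduli balance (the exponents $2/3$ and $1/3$ reflect that $v_1$ carries two branches and $v_2$ one, with total product $C$).

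The decisive and most delicate step is to verify that $v_1,v_2$ satisfy the very same two boundary equations. Writing $|C|=|\psi_0\psi_1\psi_2|$ everywhere and using that $\psi_2$ is continuous across $\Delta_1$ (its only cut being $\Delta_2$), for $x\in(0,\alpha^3)$ I would compute $|v_1(x_\pm)|^2/v_2(x)=|\psi_1(x_\pm)|\,|\psi_0(x_\pm)|\,|\psi_2(x)|/|C|=1$, where the identification $|\psi_0(x_\pm)|=|\psi_1(x_\pm)|$ comes from the gluing/symmetry relation (\ref{eq:symmetrybranches}) in the form $\psi_0(x_\pm)=\overline{\psi_1(x_\pm)}$. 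Analogously, for $x\in(-b^3,-a^3)$ the relation (\ref{eq:symmetrybranches}) gives $|\psi_2(x_\pm)|=|\psi_1(x_\pm)|$, whence $|v_2(x_\pm)|^2/|v_1(x)|=1$. The careful bookkeeping of which boundary value is glued to which sheet, and of the sign conventions, is where the argument requires the most attention.

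Finally, with $(G_1^{(0,3)},G_2^{(0,3)})$ and $(v_1,v_2)$ both solving the same boundary system and sharing the same analyticity and boundedness, I would set $u_1:=\log|G_1^{(0,3)}|-\log|v_1|$ and $u_2:=\log|G_2^{(0,3)}|-\log|v_2|$. These are bounded harmonic functions off $\Delta_1$ and $\Delta_2$ respectively, satisfying $2u_1-u_2=0$ on $\Delta_1$ and $-u_1+2u_2=0$ on $\Delta_2$. Exactly as in Proposition \ref{propuniqueness}, the maximum--minimum principle recasts this as $(2I-T)(\mathbf{u})=\mathbf{0}$, and Lemma \ref{lemaaux} forces $u_1\equiv u_2\equiv 0$. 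Since both sides of each identity in (\ref{eq:repG1}) are then equal in modulus, analytic, real on the real axis and normalized to the same sign at infinity, they coincide, which proves (\ref{eq:repG1}).
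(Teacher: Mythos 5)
Your proposal is correct and takes essentially the same route as the paper's own proof: you obtain the boundary relations (\ref{eq:auxRiemann1})--(\ref{eq:auxRiemann2}) by dividing (\ref{eq:boundaryvalue1}) and (\ref{eq:boundaryvalue4}) for $l=0,3$, verify via (\ref{eq:symmetrybranches})--(\ref{eq:productobranches}) that the candidates $v_{1},v_{2}$ satisfy the same relations (your use of $\psi_{0}(x_{\pm})=\overline{\psi_{1}(x_{\pm})}$ on $\Delta_{1}$ and $\psi_{1}(x_{\pm})=\overline{\psi_{2}(x_{\pm})}$ on $\Delta_{2}$ is exactly the paper's computation in equivalent form), and conclude by the uniqueness mechanism of Proposition \ref{propuniqueness}. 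The only difference is cosmetic: you spell out the step $(2I-T)(\mathbf{u})=\mathbf{0}$ and the final passage from $|G_{j}^{(0,3)}|=|v_{j}|$ to equality, which the paper invokes by reference.
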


\noindent\textbf{Proof of Theorem
\ref{theoRiemannsurfacerepresentation}.} By Proposition
\ref{proprelationGRiemann} we have:
\begin{equation}\label{relaFPsi1}
\frac{\widetilde{F}_{1}^{(4)}}{\widetilde{F}_{1}^{(1)}}=\frac{\widetilde{F}_{1}^{(0)}}{\widetilde{F}_{1}^{(3)}}
=\widetilde{\psi}_{1}\,\widetilde{\psi}_{2}=1/\widetilde{\psi}_{0},
\end{equation}
\begin{equation}\label{relaFPsi2}
\frac{\widetilde{F}_{2}^{(0)}}{\widetilde{F}_{2}^{(3)}}
=\widetilde{\psi}_{2}.
\end{equation}
From the first relation in (\ref{eq:relfunc13}) and
(\ref{relaFPsi1}), simple algebraic manipulations show that
\[
\widetilde{F}_{1}^{(0)}
=\frac{a^{(0)}-a^{(3)}}{a^{(0)}\widetilde{\psi}_{0}-a^{(3)}},\qquad
\widetilde{F}_{1}^{(3)}=\frac{(a^{(0)}-a^{(3)})
\,\widetilde{\psi}_{0}}{a^{(0)}\widetilde{\psi}_{0}-a^{(3)}}.
\]
The representations of $\widetilde{F}_{1}^{(2)}$ and
$\widetilde{F}_{1}^{(5)}$ follow immediately from the relations
$\widetilde{F}_{1}^{(2)}(z)=z \widetilde{F}_{1}^{(0)}(z)$ and
$\widetilde{F}_{1}^{(5)}(z)=z \widetilde{F}_{1}^{(3)}(z)$. The
relation
$\widetilde{F}_{1}^{(1)}/\widetilde{F}_{1}^{(4)}=\widetilde{\psi}_{0}$
and (\ref{eq:relfunc13}) prove the representations of
$\widetilde{F}_{1}^{(1)}$ and $\widetilde{F}_{1}^{(4)}$.

Recall that
\begin{equation}\label{recurrencePsin}
z\Psi_{n}(z)=\Psi_{n+1}+a_{n}\Psi_{n-2}, \qquad n\geq 2.
\end{equation}
Therefore, if we define the functions
\[
U^{(i)}(z):=\lim_{k\rightarrow\infty}\frac{\Psi_{6k+i+1}(z)}
{\Psi_{6k+i}(z)},\qquad z\in\mathbb{C}\setminus(S_{0}\cup S_{1}),
\qquad 0\leq i\leq 5,
\]
(by Proposition \ref{propratioasymporthonPsi} we know that such
limits exist) then we know by (\ref{recurrencePsin}) that
\[
a^{(i)}=U^{(i-2)}(z)U^{(i-1)}(z)(z-U^{(i)}(z)),\qquad 0\leq i\leq
5,
\]
where we understand that $U^{(-2)}=U^{(4)}, U^{(-1)}=U^{(5)}$. In
particular, applying (\ref{ratioPsi1}) and (\ref{ratioPsi2}) we
obtain for $i=0,1,4,5,$
\begin{equation}\label{eq:rela0F1F2}
a^{(0)}=\frac{1}{\omega_{1}^{(4)}\omega_{1}^{(5)}}
\frac{\widetilde{F}_{2}^{(5)}(z)}
{\widetilde{F}_{1}^{(5)}(z)}\frac{\widetilde{F}_{2}^{(4)}(z)}
{\widetilde{F}_{1}^{(4)}(z)}
\Big(z-\frac{\widetilde{F}_{2}^{(0)}(z)}{\omega_{1}^{(0)}
\widetilde{F}_{1}^{(0)}(z)}\Big),
\end{equation}
\begin{equation}\label{eq:rela1F1F2}
a^{(1)}=\frac{1}{\omega_{1}^{(0)}\omega_{1}^{(5)}}
\frac{\widetilde{F}_{2}^{(0)}(z)}
{\widetilde{F}_{1}^{(0)}(z)}\frac{\widetilde{F}_{2}^{(5)}(z)}
{\widetilde{F}_{1}^{(5)}(z)}
\Big(1-\frac{\widetilde{F}_{2}^{(1)}(z)}{\omega_{1}^{(1)}
\widetilde{F}_{1}^{(1)}(z)}\Big),
\end{equation}
\begin{equation}\label{eq:rela4F1F2}
a^{(4)}=\frac{1}{\omega_{1}^{(2)}\omega_{1}^{(3)}}
\frac{\widetilde{F}_{2}^{(2)}(z)}
{\widetilde{F}_{1}^{(2)}(z)}\frac{\widetilde{F}_{2}^{(3)}(z)}
{\widetilde{F}_{1}^{(3)}(z)}
\Big(1-\frac{\widetilde{F}_{2}^{(4)}(z)}{\omega_{1}^{(4)}
\widetilde{F}_{1}^{(4)}(z)}\Big),
\end{equation}
\begin{equation}\label{eq:rela5F1F2}
a^{(5)}=\frac{1}{\omega_{1}^{(3)}\omega_{1}^{(4)}}
\frac{\widetilde{F}_{2}^{(3)}(z)}
{\widetilde{F}_{1}^{(3)}(z)}\frac{\widetilde{F}_{2}^{(4)}(z)}
{\widetilde{F}_{1}^{(4)}(z)}
\Big(z-\frac{\widetilde{F}_{2}^{(5)}(z)}{\omega_{1}^{(5)}
\widetilde{F}_{1}^{(5)}(z)}\Big),
\end{equation}
where these identities are valid for every
$z\in\mathbb{C}\setminus([-b^{3},-a^3]\cup[0,\alpha^3])$. If we
apply the relations $a^{(3)}=a^{(5)}$,
$\widetilde{F}_{1}^{(5)}=z\widetilde{F}_{1}^{(3)}$,
$\widetilde{F}_{2}^{(5)}=\widetilde{F}_{2}^{(3)}$, from
(\ref{eq:rela0F1F2}) and (\ref{eq:rela5F1F2}) we obtain
\[
z\frac{a^{(0)}}{a^{(3)}}\Big(1-\frac{1}{\omega_{1}^{(5)}}
\frac{\widetilde{F}_{2}^{(3)}(z)}{\widetilde{F}_{1}^{(5)}(z)}\Big)
=\frac{\omega_{1}^{(3)}}{\omega_{1}^{(5)}}\Big(z-
\frac{\widetilde{F}_{2}^{(0)}(z)}{\omega_{1}^{(0)}
\widetilde{F}_{1}^{(0)}(z)}\Big).
\]
Using (\ref{relaFPsi2}) and substituting in this expression the
functions $\widetilde{F}_{1}^{(0)}$ and $\widetilde{F}_{1}^{(5)}$
by their representations in terms of the branches
$\widetilde{\psi}_{k}$, we get
\[
z\Big(\frac{a^{(0)}}{a^{(3)}}
-\frac{\omega_{1}^{(3)}}{\omega_{1}^{(5)}}\Big)
=\frac{(a^{(0)}\widetilde{\psi}_{0}(z)-a^{(3)})}{(a^{(0)}-a^{(3)})}
\Big(\frac{a^{(0)}}{a^{(3)}\widetilde{\psi}_{0}(z)}
-\frac{\omega_{1}^{(3)}\widetilde{\psi}_{2}(z)}{\omega_{1}^{(0)}}\Big)
\frac{\widetilde{F}_{2}^{(3)}(z)}{\omega_{1}^{(5)}}.
\]
The factors in the right hand side of this equation never vanish
on $\mathbb{C}\setminus([0,\alpha^{3}]\cup[-b^3,-a^3])$, and so we
can write
\[\widetilde{F}_{2}^{(3)}(z)=\frac{z\big(\frac{a^{(0)}}{a^{(3)}}
-\frac{\omega_{1}^{(3)}}{\omega_{1}^{(5)}}\big)\omega_{1}^{(5)}
(a^{(0)}-a^{(3)})}{(a^{(0)}\widetilde{\psi}_{0}(z)-a^{(3)})
\big(\frac{a^{(0)}}{a^{(3)}\widetilde{\psi}_{0}(z)}
-\frac{\omega_{1}^{(3)}\widetilde{\psi}_{2}(z)}{\omega_{1}^{(0)}}\big)}.
\]
If we move $z$ to the left hand side and evaluate at infinity we
obtain
\begin{equation}\label{relomegasanda1}
\omega_{1}^{(5)}\Big(\frac{a^{(0)}}{a^{(3)}}-
\frac{\omega_{1}^{(3)}}{\omega_{1}^{(5)}}\Big)=\frac{a^{(0)}}{a^{(3)}},
\end{equation}
and so the Riemann surface representation for
$\widetilde{F}_{2}^{(3)}$ follows. This also proves the
representation for the functions $\widetilde{F}_{2}^{(5)},
\widetilde{F}_{2}^{(0)}$, and $\widetilde{F}_{2}^{(2)}$.

From (\ref{eq:rela1F1F2}) and (\ref{eq:rela4F1F2}) we derive the
relation
\[
\frac{a^{(1)}}{a^{(4)}}\Big(1- \frac{\widetilde{F}_{2}^{(4)}}
{\omega_{1}^{(4)}\widetilde{F}_{1}^{(4)}}\Big)
=\frac{\omega_{1}^{(2)}\omega_{1}^{(3)}}
{\omega_{1}^{(0)}\omega_{1}^{(5)}}
\Big(1-\frac{\widetilde{F}_{2}^{(1)}}{\omega_{1}^{(1)}
\widetilde{F}_{1}^{(1)}}\Big).
\]
From Corollary \ref{relationomegas} we know that
$\omega_{1}^{(2)}\omega_{1}^{(3)}=\omega_{1}^{(5)}\omega_{1}^{(0)}$.
Since $\widetilde{F}_{2}^{(4)}/\widetilde{F}_{2}^{(1)}
=\widetilde{F}_{2}^{(0)}/\widetilde{F}_{2}^{(3)}=\widetilde{\psi}_{2}$
and
$\widetilde{F}_{1}^{(4)}/\widetilde{F}_{1}^{(1)}=1/\widetilde{\psi}_{0}
=\widetilde{\psi}_{1}\widetilde{\psi}_{2}$, we get
\[
\frac{a^{(1)}}{a^{(4)}}-1
=\frac{\widetilde{F}_{2}^{(4)}}{\widetilde{F}_{1}^{(4)}}
\Big(\frac{a^{(1)}}{a^{(4)}\omega_{1}^{(4)}}
-\frac{\widetilde{\psi}_{1}}{\omega_{1}^{(1)}}\Big).
\]
Evaluating at infinity we obtain the relation
\begin{equation}\label{relomegasanda2}
\omega_{1}^{(1)}=\frac{a^{(4)}}{a^{(4)}-a^{(1)}},
\end{equation}
and so we can write
\[
\widetilde{F}_{2}^{(4)}=\frac{\widetilde{F}_{1}^{(4)}}
{(\widetilde{\psi}_{1}-(\omega_{1}^{(1)}-1)/\omega_{1}^{(4)})}.
\]
Therefore, the Riemann surface representation of
$\widetilde{F}_{2}^{(4)}$ follows from that of
$\widetilde{F}_{1}^{(4)}$ and the representation of
$\widetilde{F}_{2}^{(1)}$ follows from the relation
$\widetilde{F}_{2}^{(4)}=\widetilde{\psi}_{2}\widetilde{F}_{2}^{(1)}$.

Now from $(\ref{relomegasanda1})$ and Corollary
\ref{relationomegas} we get
\begin{equation}\label{eq:relationomega3aes}
\omega_{1}^{(3)}=\omega_{1}^{(5)}=\frac{a^{(0)}}{a^{(0)}-a^{(3)}}.
\end{equation}
If we evaluate both sides of the equation (\ref{eq:rela5F1F2}) at
infinity we obtain
\[
a^{(5)}=a^{(3)}
=\frac{1}{\omega_{1}^{(3)}\omega_{1}^{(4)}}(1-1/\omega_{1}^{(3)}),
\]
and so (\ref{eq:relationomega3aes}) gives
$\omega_{1}^{(4)}=(a^{(0)}-a^{(3)})/(a^{(0)})^2$. Finally, from
Corollary \ref{relationomegas} and the above computations we
deduce that
$\omega_{1}^{(0)}=\omega_{1}^{(2)}=(a^{(4)}-a^{(1)})/(a^{(0)}a^{(4)})$.
\hfill$\Box$

\begin{rmk}\label{remarksobreaes}
Since $\omega_{1}^{(1)}>0$, it follows from
$(\ref{relomegasanda2})$ that $a^{(4)}>a^{(1)}$.
\end{rmk}

\noindent\textbf{Proof of Proposition \ref{algebraicequation}.} It
is straightforward to check that the function
\[
\chi(z)=\psi\Big(-\frac{a^3}{2}(1+z)\Big)-\psi(\infty^{(0)}),\qquad
\infty^{(0)}\in\mathcal{R},
\]
is a conformal representation of the Riemann surface $\mathcal{S}$
constructed as $\mathcal{R}$ (\ref{definitionRiemannsurfaceR}) but
formed by the sheets
\[
\mathcal{S}_{0}:=\overline{\mathbb{C}}\setminus[-\mu,-1], \qquad
\mathcal{S}_{1}:=\overline{\mathbb{C}}\setminus([-\mu,-1]\cup[1,\lambda]),
\qquad \mathcal{S}_{2}:=\overline{\mathbb{C}}\setminus[1,\lambda],
\]
where $\lambda$ and $\mu$ are defined in (\ref{defnlambdamu}).
$\chi$ also satisfies
\[
\chi(z)=z+O(1),\qquad z\rightarrow\infty^{(1)},
\]
and has a simple zero at $\infty^{(0)}\in\mathcal{S}$. Observe
that $\chi(\infty^{(2)})=-\psi(\infty^{(0)})$ (the reader is
cautioned that in this relation, $\infty^{(2)}\in\mathcal{S}$ and
$\infty^{(0)}\in\mathcal{R}$).

$\chi$ and $\mathcal{S}$ are the type of conformal mappings and
Riemann surfaces analyzed in \cite{LPRY}. It follows from
\cite[Theorem 3.1]{LPRY} that $\chi(\infty^{(2)})=2/H(\beta)$,
where $H$ and $\beta$ are described in the statement of
Proposition \ref{algebraicequation} (the uniqueness of $\beta$ and
$\gamma$ is justified in \cite{LPRY}). So
$\chi(z)=\psi(-a^3(1+z)/2)+2/H(\beta)$. It also follows from
\cite[Theorem 3.1]{LPRY} that the function $w=H(\beta)\chi(z)-1$
is the solution of the algebraic equation
\[
w^3-(H(\beta)z+\Theta_{1}-\Theta_{2}-h)w^2
-(1+\Theta_{1}+\Theta_2)w+H(\beta)z-h=0,
\]
where $\Theta_{1}, \Theta_{2},$ and $h$ are the constants
described in the statement of Proposition \ref{algebraicequation}.
Simple computations and a change of variable yield immediately
that $w=\psi(z)$ is the solution of the equation
(\ref{algebraicequationenunciado}). \hfill$\Box$

\section{The $n$th root asymptotics and zero
asymptotic distribution of the polynomials $Q_{n}$ and
$Q_{n,2}$}\label{sectionnthroot}

It is well-known (see \cite{SaffTotik}) that if
$E\subset\mathbb{C}$ is a compact set that is regular with respect
to the Dirichlet problem, and $\phi$ is a continuous real-valued
function on $E$, then there exists a unique $\widetilde{\mu}\in
\mathcal{M}_1(E)$ satisfying the variational conditions
\[
V^{\widetilde{\mu}}(z)+\phi(z) \left\{ \begin{array}{l} = w,\quad
z
\in \supp{(\widetilde{\mu})}, \\
\geq w, \quad z\in E,
\end{array} \right.
\]
for some constant $w$. The measure $\widetilde{\mu}$ is called the
equilibrium measure in the presence of the external field $\phi$
on $E$, and $w$ the equilibrium constant.

Recall that we defined $\lambda_{1}$ to be the positive,
rotationally invariant measure on $S_{0}$ whose restriction to the
interval $[0,\alpha]$ coincides with the measure $s_{1}(x)\,dx$,
and we defined $\lambda_{2}$ to be the positive, rotationally
invariant measure on $S_{1}$ whose restriction to the interval
$[-b,-a]$ coincides with the measure $s_{2}(x)\,dx$.

\begin{lem}\label{lemmaauxregular}
Suppose that $\lambda_{1}, \lambda_{2} \in \Reg$. Then the
measures
\begin{equation}\label{eq:primerasmedidas}
\frac{s_{1}(\sqrt[3]{\tau})}{\tau^{2/3}}\,d\tau,\qquad
s_{1}(\sqrt[3]{\tau})\,\sqrt[3]{\tau}\,d\tau,\qquad
\tau\in[0,\alpha^{3}],
\end{equation}
\begin{equation}\label{eq:segundasmedidas}
s_{2}(\sqrt[3]{\tau})\,d\tau,
\quad\frac{s_{2}(\sqrt[3]{\tau})}{\sqrt[3]{\tau}}\,d\tau,\quad
\frac{s_{2}(\sqrt[3]{\tau})}{\tau^{2/3}}\,d\tau,\quad
\tau\in[-b^3,-a^3],
\end{equation}
are also regular.
\end{lem}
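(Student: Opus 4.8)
The plan is to use the sup-norm characterization of regularity (Stahl–Totik): a measure $\mu$ with compact support $E=\supp(\mu)$ of positive capacity lies in $\Reg$ if and only if
\[
\limsup_{n\to\infty}\Big(\frac{\|P_n\|_{L^\infty(E)}}{\|P_n\|_{L^2(\mu)}}\Big)^{1/n}\le 1
\]
for every sequence of polynomials with $\deg P_n\le n$. Each of the asserted measures arises from $\lambda_1$ or $\lambda_2$ through the cubic substitution $\tau=x^3$, so the idea is to transport this condition along the map $z\mapsto z^3$, which sends $S_0$ (resp. $S_1$) onto $[0,\alpha^3]$ (resp. $[-b^3,-a^3]$).

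First I would record the underlying isometry. Since $\lambda_1$ is rotationally invariant and restricts to $s_1\,dx$ on $[0,\alpha]$, any $G$ with $G(e^{2\pi i/3}z)=G(z)$ satisfies $\int_{S_0}G\,d\lambda_1=3\int_0^\alpha G\,s_1\,dx$. Applying this to the rotation-invariant functions $|P(z^3)|^2$ and $|zP(z^3)|^2$ and changing variables $\tau=x^3$ gives
\[
\int_0^{\alpha^3}|P(\tau)|^2\,\frac{s_1(\sqrt[3]{\tau})}{\tau^{2/3}}\,d\tau=\int_{S_0}|P(z^3)|^2\,d\lambda_1(z),\qquad \int_0^{\alpha^3}|P(\tau)|^2\,s_1(\sqrt[3]{\tau})\sqrt[3]{\tau}\,d\tau=\int_{S_0}|zP(z^3)|^2\,d\lambda_1(z),
\]
together with the analogous identity on $S_1$ for $\frac{s_2(\sqrt[3]{\tau})}{\tau^{2/3}}\,d\tau$. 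Because $z\mapsto z^3$ maps $S_0$ onto $[0,\alpha^3]$, one also has the exact equality $\|P\|_{L^\infty([0,\alpha^3])}=\|P(\cdot^3)\|_{L^\infty(S_0)}$. For $\mu_1^{(0)}:=\frac{s_1(\sqrt[3]{\tau})}{\tau^{2/3}}\,d\tau$ (and likewise $\frac{s_2(\sqrt[3]{\tau})}{\tau^{2/3}}\,d\tau$ on $[-b^3,-a^3]$) this already suffices: writing $R_n=P_n(\cdot^3)$ of degree $\le 3n$, the two equalities give $\|P_n\|_{L^\infty}/\|P_n\|_{L^2(\mu_1^{(0)})}=\|R_n\|_{L^\infty(S_0)}/\|R_n\|_{L^2(\lambda_1)}$, and applying the criterion for $\lambda_1\in\Reg$ to the sequence $R_n$ yields $\limsup_n(\cdots)^{1/n}\le1$, i.e.\ $\mu_1^{(0)}\in\Reg$.

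Next I would dispose of the remaining measures on $[-b^3,-a^3]$. As this interval is bounded away from the origin, $\tau^{1/3}$ and $\tau^{2/3}$ are bounded above and below there by positive constants, so $s_2(\sqrt[3]{\tau})\,d\tau$ and $\frac{s_2(\sqrt[3]{\tau})}{\sqrt[3]{\tau}}\,d\tau$ differ from the already-regular $\frac{s_2(\sqrt[3]{\tau})}{\tau^{2/3}}\,d\tau$ only by such a factor. Multiplying a measure by a weight confined between positive constants alters $\|P_n\|_{L^2}$ and leaves $\|P_n\|_{L^\infty}$ unchanged up to bounded factors, so the quotient criterion is preserved and regularity transfers at once.

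The one genuinely delicate case is $\mu_1^{(1)}:=s_1(\sqrt[3]{\tau})\sqrt[3]{\tau}\,d\tau$ on $[0,\alpha^3]$, where the substitution produces the extra factor $z$ and the interval reaches the origin. Here the second isometry gives $\|P_n\|_{L^2(\mu_1^{(1)})}=\|R_n\|_{L^2(\lambda_1)}$ with $R_n(z)=zP_n(z^3)$, but the sup-norm now transports with a degenerate weight, $\|R_n\|_{L^\infty(S_0)}=\max_{\tau\in[0,\alpha^3]}\tau^{1/3}|P_n(\tau)|$. Comparing this weighted norm with $\|P_n\|_{L^\infty([0,\alpha^3])}$ is, I expect, the main obstacle. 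I would resolve it by Markov's inequality: if $|P_n|$ peaks at $\tau^*$, then $\|P_n'\|\le(2n^2/\alpha^3)\|P_n\|$ forces $|P_n|\ge\tfrac12\|P_n\|$ on an adjacent subinterval of length $\sim\alpha^3/n^2$, inside which one finds a point $\tau_0\gtrsim\alpha^3/n^2$; hence $\max_\tau\tau^{1/3}|P_n(\tau)|\ge c\,n^{-2/3}\|P_n\|_{L^\infty}$. The loss $C\,n^{2/3}$ is subexponential, so $\limsup_n(\|P_n\|_{L^\infty}/\|P_n\|_{L^2(\mu_1^{(1)})})^{1/n}\le1$ follows exactly as before and $\mu_1^{(1)}\in\Reg$. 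Equivalently, this last step is the assertion that multiplying the regular measure $\lambda_1$ by the polynomial weight $|z|^2$, which vanishes only at $z=0$, preserves membership in $\Reg$.
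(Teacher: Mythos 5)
Your strategy of transporting a regularity criterion through $z\mapsto z^3$ is sound, and it does handle the first measure in (\ref{eq:primerasmedidas}) and all three measures in (\ref{eq:segundasmedidas}) correctly. But the case you yourself flag as delicate contains a genuine computational error. With $R_n(z)=zP_n(z^3)$, the change of variables $\tau=x^3$ gives
\[
\int_{S_0}|zP_n(z^3)|^2\,d\lambda_1(z)=3\int_0^{\alpha}x^2\,|P_n(x^3)|^2\,s_1(x)\,dx
=\int_0^{\alpha^3}|P_n(\tau)|^2\,s_1(\sqrt[3]{\tau})\,d\tau,
\]
with weight $1$, not $\sqrt[3]{\tau}$: the factor $x^2=\tau^{2/3}$ is exactly cancelled by $dx=\tfrac13\tau^{-2/3}\,d\tau$. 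More generally, $|z^kP(z^3)|^2\,d\lambda_1$ pushes forward to the weight $\tau^{(2k-2)/3}$, so the reachable weights are $\tau^{-2/3},\,1,\,\tau^{2/3},\dots$; the weight $\tau^{1/3}$ of the second measure in (\ref{eq:primerasmedidas}) is never of this form. Your Markov-inequality device repairs the sup-norm side (which was indeed manageable), but it cannot repair the $L^2$ side: as written, your chain of identities proves regularity of $s_1(\sqrt[3]{\tau})\,d\tau$, not of $\mu_1^{(1)}=s_1(\sqrt[3]{\tau})\sqrt[3]{\tau}\,d\tau$. This is exactly the point where the paper inserts a separate argument: it first obtains regularity of $d\rho_2=s_1(\sqrt[3]{\tau})\,\tau^{2/3}\,d\tau$ (from the subsequence $\pi_{3k+2}$ of monic orthogonal polynomials for $\lambda_1$, i.e.\ the transport with $k=2$), and then sandwiches the monic orthogonal norms of $d\rho_1=s_1(\sqrt[3]{\tau})\sqrt[3]{\tau}\,d\tau$ using $d\rho_2\le\alpha\,d\rho_1$ from below and a Chebyshev-polynomial bound $\limsup_n\|l_n\|_{L^2(\rho_1)}^{1/n}\le\capp(\supp(\rho_1))$ from above. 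You can graft the same repair onto your framework and discard the Markov step entirely: since $\tau^{1/3}\ge\tau^{2/3}/\alpha$ on $[0,\alpha^3]$, one has $\|P_n\|_{L^2(\mu_1^{(1)})}\ge\alpha^{-1/2}\|P_n\|_{L^2(\rho_2)}$, and $\rho_2$ is regular by your own direct transport applied to $R_n(z)=z^2P_n(z^3)$.

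A secondary caveat: the \emph{iff} form of the sup-norm criterion you invoke, with $\|P_n\|_{L^\infty(E)}$, $E=\supp(\mu)$, is valid only when $\supp(\mu)$ is regular for the Dirichlet problem; the lemma assumes only $\lambda_1,\lambda_2\in\Reg$. The direction you apply to the image measures is unconditional (for monic orthogonal $l_n$ one has $\capp(E)^n\le\|l_n\|_E$, so the criterion forces $\liminf_n\|l_n\|_{L^2}^{1/n}\ge\capp(E)$, while the Chebyshev upper bound is automatic), but the direction you apply to $\lambda_1,\lambda_2$ requires Dirichlet-regular support, or else the quasi-everywhere/Green-function version of the criterion --- which does transport under the cube map, since $g_{\mathbb{C}\setminus K}(z)=\tfrac13\,g_{\mathbb{C}\setminus K^3}(z^3)$ for $K=\{w:\,w^3\in K^3\}$. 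The paper's proof sidesteps this entirely by working with the defining $n$th-root behavior of monic orthogonal $L^2$ norms together with the capacity identity $\capp(\supp(\lambda_1))=\capp(\supp(\rho))^{1/3}$; if you keep the sup-norm route, you should also take suprema over $\supp(\mu)$ rather than over all of $[0,\alpha^3]$ (the supports do correspond under $z\mapsto z^3$, so your norm identity survives this correction).
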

\begin{proof}
Let $\pi_{n}$ be the $n$th monic orthogonal polynomial associated
with $\lambda_{1}$, i.e. $\pi_{n}$ is the monic polynomial of
degree $n$ that satisfies
\begin{equation}\label{eq:defnortpin}
\int_{S_{0}}\pi_{n}(t)\,\overline{t^{k}}\,d\lambda_{1}(t)=0,\qquad
0\leq k\leq n-1.
\end{equation}
It is immediate to check that
\[\pi_{n}(e^{\frac{2\pi
i}{3}}z)=e^{\frac{2\pi i n}{3}}\pi_{n}(z).
\]
We deduce from this property and (\ref{eq:defnortpin}) that the
polynomials
$$
\pi_{3k}(\sqrt[3]{\tau}),\qquad\frac{\pi_{3k+1}(\sqrt[3]{\tau})}{\sqrt[3]{\tau}},\qquad
\frac{\pi_{3k+2}(\sqrt[3]{\tau})}{\tau^{2/3}},
$$
are precisely the monic orthogonal polynomials of degree $k$
associated, respectively, with the measures
\begin{equation}\label{tresmedidas}
\frac{s_{1}(\sqrt[3]{\tau})}{\tau^{2/3}}\,d\tau,\qquad
s_{1}(\sqrt[3]{\tau})\,d\tau,\qquad
s_{1}(\sqrt[3]{\tau})\,\tau^{2/3}\,d\tau.
\end{equation}
We also have:
\[
\int_{S_{0}}|\pi_{3k}(t)|^2\,d\lambda_{1}(t)=\int_{0}^{\alpha^3}(\pi_{3k}(\sqrt[3]{\tau}))^2\,
\frac{s_{1}(\sqrt[3]{\tau})}{\tau^{2/3}}\,d\tau,
\]
\[
\int_{S_{0}}|\pi_{3k+1}(t)|^2\,d\lambda_{1}(t)=\int_{0}^{\alpha^3}
\Big(\frac{\pi_{3k+1}(\sqrt[3]{\tau})}{\sqrt[3]{\tau}}\Big)^2\,s_{1}(\sqrt[3]{\tau})\,d\tau,
\]
\[
\int_{S_{0}}|\pi_{3k+2}(t)|^2\,d\lambda_{1}(t)=\int_{0}^{\alpha^3}
\Big(\frac{\pi_{3k+2}(\sqrt[3]{\tau})}{\tau^{2/3}}\Big)^2\,s_{1}(\sqrt[3]{\tau})\,\tau^{2/3}\,d\tau.
\]
So taking into account (see \cite[Theorem 5.2.5]{Ransford}) that
$$
\capp(\supp(\lambda_{1}))=\capp(\supp(\rho))^{1/3},
$$
where $\capp(A)$ denotes the logarithmic capacity of $A$, and
$\rho$ is any of the three measures in (\ref{tresmedidas}), the
regularity of $\lambda_{1}$ implies the regularity of the three
measures in (\ref{tresmedidas}).

Let $l_{n}$ denote the $n$th monic orthogonal polynomial
associated with the measure
$d\rho_{1}(\tau):=s_{1}(\sqrt[3]{\tau})\,\sqrt[3]{\tau}\,d\tau$,
and let $T_{n}$ be the $n$th Chebyshev polynomial (see
\cite{Ransford}, page 155) for the set $E:=\supp{(\rho_{1})}$. We
have
\[
\Big(\int l_{n}^2(\tau)\,d\rho_{1}(\tau)\Big)^{1/2}\leq \Big(\int
T_{n}^{2}(\tau)\,d\rho_{1}(\tau)\Big)^{1/2} \leq
\|T_{n}\|_{E}\,\rho_{1}(E)^{1/2},
\]
where $\|T_{n}\|_{E}$ denotes the supremum norm of $T_{n}$ on $E$,
and so by \cite[Corollary 5.5.5]{Ransford} we obtain
\begin{equation}\label{eq:upperboundreg}
\limsup_{n\rightarrow\infty}\|l_{n}\|_{2}^{1/n}\leq
\lim_{n\rightarrow\infty}\|T_{n}\|_{E}^{1/n}=\capp(\supp(\rho_{1})).
\end{equation}
If we call $\widetilde{l}_{n}$ the $n$th monic orthogonal
polynomial associated with the measure
$d\rho_{2}(\tau):=s_{1}(\sqrt[3]{\tau})\,\tau^{2/3}\,d\tau$, we
have
\[
\Big(\int \widetilde{l}_{n}^2(\tau)d\rho_{2}(\tau)\Big)^{1/2}\leq
\alpha^{1/2}\Big(\int l_{n}^{2}(\tau)\,d\rho_{1}(\tau)\Big)^{1/2},
\]
and so the regularity of $\rho_{2}$ and (\ref{eq:upperboundreg})
imply the regularity of $\rho_{1}$. Similar arguments show that
the measures in (\ref{eq:segundasmedidas}) are regular.
\end{proof}

\noindent\textbf{Proof of Theorem \ref{theodist}.} Recall that if
$P$ is a polynomial, we indicate by $\mu_{P}$ the associated
normalized zero counting measure. Let $j\in\{0,\ldots,5\}$ be
fixed, and assume that for some subsequence
$\Lambda\subset\mathbb{N}$ we have:
\[
\mu_{P_{6k+j}}\stackrel{*}{\longrightarrow}\mu_{1}\in\mathcal{M}_{1}(\Delta_{1}),\qquad\qquad
\mu_{P_{6k+j,2}}\stackrel{*}{\longrightarrow}\mu_{2}\in\mathcal{M}_{1}(\Delta_{2}).
\]
Consequently,
\begin{equation}\label{eq:conshyp1}
\lim_{k\in\Lambda}\frac{1}{2k}\log|P_{6k+j}(z)|=-V^{\mu_{1}}(z),\qquad
z\in\mathbb{C}\setminus\Delta_{1},
\end{equation}
\begin{equation}\label{eq:conshyp2}
\lim_{k\in\Lambda}\frac{1}{4k}\log|P_{6k+j,2}(z)|=-\frac{1}{4}V^{\mu_{2}}(z),\qquad
z\in\mathbb{C}\setminus\Delta_{2},
\end{equation}
uniformly on compact subsets of the indicated regions.

We know by Proposition \ref{prop4} that there exists a fixed
measure $d\rho$ supported on $\Delta_{1}$ ($d\rho$ is one of the
measures in (\ref{eq:primerasmedidas})) such that
\begin{equation}\label{eq:orthogonality1}
0=\int_{\Delta_{1}}\tau^{j}\,P_{6k+j}(\tau)\,\frac{d\rho(\tau)}{P_{6k+j,2}(\tau)},\qquad
0\leq j<\deg(P_{6k+j}).
\end{equation}
We know by Lemma \ref{lemmaauxregular} that the measure $d\rho$ is
regular. If we apply \cite[Lemma 4.2]{FLLS} (taking, in the
notation of \cite{FLLS}, $d\sigma=d\rho$, $\phi_{2k}=1/P_{6k+j,2}$
and $\phi=-(1/4)V^{\mu_{2}}$), we obtain from (\ref{eq:conshyp2})
and (\ref{eq:orthogonality1}) that $\mu_{1}$ is the equilibrium
measure in the presence of the external field
$\phi=-(1/4)V^{\mu_{2}}$, hence
\begin{equation}\label{eq:vectequilprob1}
V^{\mu_{1}}(\tau)-\frac{1}{4}\,V^{\mu_{2}}(\tau)\left\{
\begin{array}{lll}
=w_{1}, &  &\quad\tau\in\supp(\mu_{1}),\\
\\
\geq w_{1}, &  &\quad\tau\in\Delta_{1},
\end{array}
\right.
\end{equation}
and
\begin{equation}\label{eq:asymp1}
\lim_{k\in\Lambda}\Big(\int_{\Delta_{1}}P_{6k+j}^{2}(\tau)\,d\nu_{6k+j}(\tau)\Big)^{1/4k}
=e^{-w_{1}},
\end{equation}
where the measure $d\nu_{6k+j}$ is defined in
(\ref{orthogvaryingmeasures}).

By Proposition \ref{prop3}, there exists a fixed measure $d\eta$
($d\eta$ is one of the measures in (\ref{eq:segundasmedidas}))
supported on $\Delta_{2}$ such that
\begin{equation}\label{eq:orthogonality2}
0=\int_{\Delta_{2}}\tau^{j}\,P_{6k+j,2}(\tau)\,\frac{|h_{6k+j}(\sqrt[3]{\tau})|}{|P_{6k+j}(\tau)|}\,d\eta(\tau),\qquad
0\leq j<\deg(P_{6k+j,2}).
\end{equation}
The function $h_{6k+j}$ is defined in (\ref{eq:definitionhn}). We
also know by Lemma \ref{lemmaauxregular} that $d\eta$ is regular.
Taking into account the representations $(\ref{intreph3k})$ and
the fact that $p_{n}$ is orthonormal with respect to $d\nu_{n}$
(see (\ref{eq:definitionpn}) and Proposition
\ref{orthonormalitypn}), it follows that there exist positive
constants $C_{1}, C_{2}$ such that
\[
C_{1}\leq |h_{6k+j}(\sqrt[3]{\tau})|\leq C_{2}\qquad \mbox{for
all}\quad \tau\in\Delta_{2}.
\]
So applying again \cite[Lemma 4.2]{FLLS} (now take
$d\sigma=d\eta$,
$\phi_{k}(\tau)=|h_{6k+j}(\sqrt[3]{\tau})|/|P_{6k+j}(\tau)|$ and
$\phi=-V^{\mu_{1}}$), we get from (\ref{eq:orthogonality2}) and
(\ref{eq:conshyp1}) that $\mu_{2}$ is the equilibrium measure in
the presence of the external field $\phi=-V^{\mu_{1}}$, and so
\begin{equation}\label{eq:vectequilprob2}
V^{\mu_{2}}(\tau)-V^{\mu_{1}}(\tau)\left\{
\begin{array}{lll}
=w_{2}, &  &\quad\tau\in\supp(\mu_{2}),\\
\\
\geq w_{2}, &  &\quad\tau\in\Delta_{2},
\end{array}
\right.
\end{equation}
and
\begin{equation}\label{eq:asymp2}
\lim_{k\in\Lambda}\Big(\int_{\Delta_{2}}P_{6k+j,2}^{2}(\tau)\,d\nu_{6k+j,2}(\tau)\Big)^{1/2k}
=e^{-w_{2}},
\end{equation}
where the measure $d\nu_{6k+j,2}$ is defined in
(\ref{orthogvaryingmeasures}).

By (\ref{eq:vectequilprob1}) and (\ref{eq:vectequilprob2}), the
vector measure $(\mu_{1},\mu_{2})$ solves the equilibrium problem
determined by the interaction matrix (\ref{defninteractionmatrix})
on the intervals $\Delta_{1}, \Delta_{2}$. Since the solution to
this equilibrium problem must be unique, (\ref{weakstar1})
follows. (\ref{eq:asymp1}) and (\ref{eq:asymp2}) imply
(\ref{asympK1})--(\ref{asympK2}). Finally,
(\ref{eq:nthrootasympPn})--(\ref{eq:nthrootasympPn2}) are an
immediate consequence of (\ref{weakstar1}).\hfill $\Box$

\vspace{0.2cm}

\noindent{\bf Proof of Proposition
\ref{relratioandnthrootasymptotics}.} By Theorem
\ref{introd:ratioasymptotics} we know that the following limit
holds:
\[
\lim_{k\rightarrow\infty}\frac{Q_{6(k+1)}(z)}{Q_{6k}(z)}
=\prod_{i=0}^{5}\widetilde{F}_{1}^{(i)}(z^3),\qquad
z\in\mathbb{C}\setminus S_{0}.
\]
Therefore we obtain that
\[
\lim_{k\rightarrow\infty}|Q_{6k}(z)|^{1/k}
=\prod_{i=0}^{5}|\widetilde{F}_{1}^{(i)}(z^3)|,\qquad
z\in\mathbb{C}\setminus S_{0},
\]
and by Corollary \ref{cortheodist} it follows that
\[
e^{-\frac{1}{3}V^{\overline{\mu}_{1}}(z^3)}
=\prod_{i=0}^{5}|\widetilde{F}_{1}^{(i)}(z^3)|^{1/6}\qquad
z\in\mathbb{C}\setminus S_{0}.
\]
So (\ref{relrationthroot1}) is proved. The same argument proves
(\ref{relrationthroot2}). \hfill $\Box$

\vspace{0.2cm}

\noindent{\bf Acknowledgments.} The results of this paper are part
of my Ph.D. dissertation at Vanderbilt University. I am thankful
to my advisor Edward B. Saff for the many useful discussions we
had concerning this work. This research was partially supported by
the U.S. National Science Foundation grant DMS-0808093. I am also
thankful to the Belgian Interuniversity Attraction Pole (grant
P06/02) and the Fonds voor Wetenschappelijk Onderzoek (FWO) for
financing my postdoctoral studies at Katholieke Universiteit
Leuven, where the writing of this paper was completed.












\end{document}